\newtheorem{Theorem}{Theorem}[section]
\newtheorem{Proposition}[Theorem]{Proposition} 
\newtheorem{Lemma}[Theorem]{Lemma}
\newtheorem{Corollary}[Theorem]{Corollary}
\theoremstyle{definition}
\newtheorem{Remark}[Theorem]{Remark}
\newtheorem{Example}[Theorem]{Example}
\def\P{\mathcal{P}}
\def\Q{\mathcal{Q}}
\def\k{{\Bbbk}}
\def\id{{\mathrm{id}}}
\def\l{{\lambda}}
\def\sl{\mathfrak{sl}}
\def\gl{\mathfrak{gl}}
\def\g{\mathfrak{g}}
\def\h{\mathfrak{h}}
\newcommand{\Z}{\mathbb{Z}}
\newcommand{\C}{\mathbb{C}}
\newcommand{\p}{\mathbb{P}}
\newcommand{\Cone}{\operatorname{Cone}}
\newcommand{\D}{\mathcal{D}}
\def\O{{\mathcal O}}
\def\e{{\varepsilon}}
\def\i{{\iota}}
\def\sA{{\mathcal{A}}}
\def\sE{{\mathcal{E}}}
\def\sS{{\mathcal{S}}}
\def\tsE{{\tilde{\mathcal{E}}}}
\def\sF{{\mathcal{F}}}
\def\sL{{\mathcal{L}}}
\def\sG{{\mathcal{G}}}
\def\sH{{\mathcal{H}}}
\def\sT{{\mathcal{T}}}
\def\sP{{\mathcal{P}}}
\def\bG{\mathbb{G}}
\def\pt{\mathrm{pt}}
\def\tY{\tilde{Y}}
\newcommand{\E}{\mathsf{E}}
\newcommand{\F}{\mathsf{F}}
\newcommand{\U}{\mathsf{U}}
\newcommand{\T}{\mathsf{T}}
\newcommand{\la}{\langle}
\newcommand{\ra}{\rangle}
\newcommand{\Ext}{\operatorname{Ext}}
\newcommand{\Hom}{\operatorname{Hom}}
\newcommand{\End}{\operatorname{End}}
\newcommand{\spn}{\operatorname{span}}
\DeclareMathOperator{\supp}{supp}
\begin{document}

\title[Braiding via geometric Lie algebra actions]{Braiding via geometric Lie algebra actions}

\author{Sabin Cautis}
\email{scautis@math.columbia.edu}
\address{Department of Mathematics\\ Columbia University \\ New York, NY}

\author{Joel Kamnitzer}
\email{jkamnitz@math.toronto.edu}
\address{Department of Mathematics\\ University of Toronto \\ Toronto, ON Canada}

\begin{abstract}
We introduce the idea of a geometric categorical Lie algebra action on derived categories of coherent sheaves. The main result is that such an action induces an action of the braid group associated to the Lie algebra. The same proof shows that strong categorical actions in the sense of Khovanov-Lauda and Rouquier also lead to braid group actions. As an example, we construct an action of Artin's braid group on derived categories of coherent sheaves on cotangent bundles to partial flag varieties. 
\end{abstract}

\date{\today}
\maketitle
\tableofcontents
\section{Introduction}
Let $ X $ be a smooth complex variety.  Often the derived category of coherent sheaves on $ X$, denoted $D(X)$, possesses interesting autoequivalences, not coming from automorphisms of $ X$ itself.  For example, if $ D(X) $ contains a spherical object $ \sE $, then Seidel-Thomas \cite{ST} defined a spherical twist $ T_\sE : D(X) \rightarrow D(X) $ which is a non-trivial autoequivalence. 

The notion of twists in spherical objects has been generalized by various authors (Horja \cite{Ho}, Anno \cite{A}, and Rouquier \cite{Rold}) to twists in spherical functors (a relative version).  In \cite{ckl2}, \cite{ckl3}, we (jointly with Anthony Licata and following ideas of Chuang-Rouquier \cite{CR}) defined the notion of geometric categorical $ \sl_2 $ actions as a generalization of the notion of spherical functors.  We showed that geometric categorical $\sl_2 $ actions give rise to equivalences of derived categories of coherent sheaves. 

Often autoequivalences of $D(X)$ can be organized into an action of a braid group.  Seidel-Thomas \cite{ST} showed that given a collection of spherical objects which form a type $ \Gamma $ arrangement, the spherical twists generate an action of the braid group $ B_\Gamma $.  An important example from \cite{ST} of this situation concerned the case where $ X $ is the resolution of a surface quotient singularity $\C^2/H$ and the spherical objects come from the exceptional $ \p^1$s.

Another example of a braid group action was given by Khovanov-Thomas in \cite{KT}.  They showed that $ B_n $ acts on $D(T^\star Fl(\C^n))$, the derived category of the cotangent bundle to the full flag variety, with the generators acting by spherical twists.  Our purpose in this paper is to introduce a new method of constructing braid group actions (called geometric categorical $\g $ actions), where the generators act by the equivalences coming from geometric categorical $ \sl_2 $ actions.  Roughly speaking, spherical objects are a special case of geometric categorical $\sl_2 $ actions and type $ \Gamma $ arrangements of spherical objects are a special case of geometric categorical $\g $ actions. 

To explain our motivation for this notion, let us recall that our proof that a geometric categorical $ \sl_2 $ action gives an equivalence came in two parts.  First in \cite{ckl2}, we showed that a geometric categorical $ \sl_2 $ action gives a strong categorical $\sl_2 $ action, a notion introduced by Chuang-Rouquier \cite{CR}.  We then showed in \cite{ckl3} that a strong categorical $\sl_2 $ action gives an equivalence, using an explicit complex introduced by Chuang-Rouquier \cite{CR}.  The reason for introducing the notion of geometric categorical $ \sl_2 $ action, rather than working with strong categorical $ \sl_2 $ actions, is that the axioms of the former are much easier to check in examples.

The notion of strong categorical $ \sl_2 $ action has been generalized by Rouquier \cite{Rnew} and Khovanov-Lauda \cite{kl1, kl2, kl3} to the notion of strong categorical $ \g$ action.  Hence it is natural to conjecture that a strong categorical $\g $ action gives an action of the braid group of type $ \g $ (denoted $ B_\g$).  Also it is natural to search for a notion of geometric categorical $ \g $ action which implies strong categorical $ \g $ action but which is easier to check in geometric examples. 

In this paper, we essentially accomplish these goals.  More specifically, we define the notion of geometric categorical $ \g $ action, whenever $\g$ is a simply-laced Kac-Moody Lie algebra. We then prove that a geometric categorical $ \g $ action gives rise to an action of $B_\g$ (Theorem \ref{thm:main}).  We also show that a strong categorical $ \g $ action gives an action of $ B_\g $ (Theorem \ref{thm:main2}).  This essentially answers a conjecture of Rouquier \cite{Rnew} (Rouquier has also recently proven his conjecture via a different method).  However, we do not prove that a geometric categorical $ \g $ action gives a strong categorical $ \g $ action, though we expect this to be the case (the proof should follow along the same lines as \cite{ckl2}, where we established this result for $ \g = \sl_2 $).

We give a quick example showing how resolutions of $ \C^2/H $ give geometric categorical $ \g $ actions.  In greater detail in section \ref{sec:example}, we discuss the more complicated example of a geometric categorical $ \sl_n $ action on cotangent bundles to $ n $-step partial flag varieties.  This generalizes the work of Khovanov-Thomas \cite{KT} for $ T^\star Fl(\C^n) $ and also our previous work \cite{ckl2, ckl3} on cotangent bundles to Grassmannians.

In a forthcoming paper with Anthony Licata \cite{ckl4}, we will construct geometric categorical $ \g $ actions on Nakajima quiver varieties, generalizing the two examples in this paper. Using the main result of this paper, this will provide many more examples of braid group actions.

There are also many interesting examples of strong categorical $\g $ actions, not involving coherent sheaves.  For examples, Khovanov-Lauda \cite{kl1, kl2, kl3} have considered strong categorical $ \sl_n $ actions on categories of modules over cohomology rings of partial flag varieties and Chuang-Rouquier \cite{CR} have defined strong categorical $\widehat{\sl}_p $ actions on categories of representations of the symmetric group in characteristic $p$.  Our Theorem \ref{thm:main2} can be applied to these situations to produce braid group actions.

\subsection*{Acknowledgements}
We would like to thank Pierre Baumann, Chris Brav, Mikhail Khovanov, Aaron Lauda, Anthony Licata and Rapha\"el Rouquier for helpful discussions. In the course of this work, S.C. was supported by NSF Grant 0801939 and J.K. by NSERC. We would also like to thank MSRI for excellent working conditions and hospitality. We also thank the referee for his very careful reading of this paper.

\section{Definitions and Main Results}

In this section we define the concept of a geometric categorical $\g$ action, review the construction of equivalences from strong categorical $\sl_2$ actions and state our main result (Theorem \ref{thm:main}). 

\subsection{Notation}
Fix a base field $ \k $, which is not assumed to be of characteristic 0, nor algebraically closed.

Let $\Gamma$ be a graph without multiple edges or loops and with finite vertex set $I$. 
In addition, fix the following data.
\begin{enumerate}
\item a free $\Z$ module $X$  (the weight lattice),
\item for $i \in I$ an element $\alpha_i \in X$ (simple roots),
\item for $i \in I$ an element $\Lambda_i \in X$ (fundamental weight),
\item a symmetric non-degenerate bilinear form $\la \cdot,\cdot \ra$ on $X$.
\end{enumerate}
These data should satisfy:
\begin{enumerate}
\item the set $\{\alpha_i\}_{i \in I}$ is linearly independent,
\item $C_{i,j} = \la \alpha_i, \alpha_j \ra$ (the Cartan matrix) so that $\la \alpha_i, \alpha_i \ra = 2$ and for $i \neq j$, $\la \alpha_i, \alpha_j \ra = \la \alpha_j, \alpha_i \ra \in \{0,-1\}$ depending on whether or not $i,j \in I$ are joined by an edge,
\item $\la \Lambda_i, \alpha_j \ra = \delta_{i,j}$ for all $i, j \in I$,
\item $\dim X = |I| + \mathrm{corank}(C)$, where $ C $ is the Cartan matrix associated to $\Gamma$.
\end{enumerate}

Let $ \h_\k = X \otimes_\Z \k $ and let $ \h'_{\k} = \spn(\Lambda_i) \subset \h_\k $.

Associated to $ \Gamma$, we have a Kac-Moody Lie algebra $ \g $ (defined over $\C$). Let $ B_\g $ denote the braid group of type $ \g $.  It has generators $ \sigma_i $ for $i \in I$ and relations 
\begin{gather*}
\sigma_i \sigma_j \sigma_i = \sigma_j \sigma_i \sigma_j \text{ if $ i $ and $ j $ are connected in $\Gamma $,} \\
\sigma_i \sigma_j = \sigma_j \sigma_i \text{ if $ i $ and $ j $ are not connected in $\Gamma $.}
\end{gather*}
Recall that $ B_\g $ maps to the Weyl group $W_\g $ of type $\g $ which has the same generators and relations, except that the generators square to the identity.

When we write $H^\star(X)$ we will mean the cohomology with $\k$ coefficients of $X$ as a variety over $\C$ but shifted so that it lies between degrees $-\dim(X)$ and $\dim(X)$. For example, $H^\star(\p^1) = \k[-1] \oplus \k[1]$. By convention, $H^\star(\p^{-1}) = 0$. 

\subsection{Geometric categorical $\g$ actions}\label{sec:geomcat}

In \cite{ckl2} we introduced the concept of a geometric categorical $\g$ action when $\g = \sl_2$. We now extend this definition to arbitrary simply-laced $\g$.  All varieties will be defined over $ \k $.

\subsubsection{Fourier-Mukai formalism}
We briefly recall the formalism of Fourier-Mukai (FM) kernels (see \cite{Hu}, section 5.1, for more details). All the functors which follow are derived. Let $ X, Y $ be two smooth varieties.  A FM kernel is any object $\P \in D(X \times Y)$ of the derived category of coherent sheaves on $ X \times Y $ whose support is proper over $X$ and $Y$. It defines the associated Fourier-Mukai transform
\begin{equation*}
\begin{aligned}
\Phi_\P : D(X) &\rightarrow D(Y) \\\sF &\mapsto {\pi_2}_* (\pi_1^* (\sF) \otimes \P)
\end{aligned}
\end{equation*}
FM transforms have right and left adjoints which are themselves FM transforms. In particular, the right adjoint of $ \Phi_\P $ is the FM transform with respect to $ \P_R := \P^\vee \otimes \pi_2^* \omega_X [\dim(X)] \in D(Y \times X)$ (here we use the natural isomorphism $X \times Y \xrightarrow{\sim} Y \times X$). Similarly, the left adjoint of $ \Phi_{\P} $ is the FM transform with respect to $ \P_L := \P^\vee \otimes \pi_1^* \omega_Y [\dim(Y)] $.  Here $ \P^\vee $ denotes the dual of $ \P $.

We can express composition of FM transforms in terms of their kernels.  If $ X, Y, Z $ are varieties and $\Phi_\P : D(X) \rightarrow D(Y),  \Phi_\Q : D(Y) \rightarrow D(Z) $ are FM transforms, then $ \Phi_\Q \circ \Phi_\P $ is a FM transform with respect to the kernel
\begin{equation*}
\Q * \P := {\pi_{13}}_*(\pi^*_{12}(\P) \otimes \pi^*_{23}(\Q))
\end{equation*}
where $*$ is called the convolution product. 

\subsubsection{} A {\bf geometric categorical $\g$ action} consists of the following data.

\begin{enumerate}
\item A collection of smooth varieties $Y(\l)$ for $\l \in X$.
\item Fourier-Mukai kernels 
\begin{equation*}
\sE^{(r)}_i(\l) \in D(Y(\l) \times Y(\l + r\alpha_i)) \text{ and } \sF^{(r)}_i(\l) \in D(Y(\l + r\alpha_i) \times Y(\l)).
\end{equation*}
We will usually write just $\sE^{(r)}_i$ and $\sF^{(r)}_i$ to simplify notation whenever possible. When $r=1$ we just write $\sE_i$ and $\sF_i$.  
\item For each $Y(\l)$ a flat deformation $\tY(\l) \rightarrow \h'_\k$ (where the fibre over $0 \in \h'_\k$ is identified with $Y(\l)$).  
\end{enumerate}

Denote by $\tY_i(\l) \rightarrow \mathrm{span}(\Lambda_i) \subset \h'_\k$ the restriction of $\tY(\l)$ to $\mathrm{span}(\Lambda_i)$ (this is a one parameter deformation of $Y(\l)$). 

\begin{Remark} In practice we only need the first order deformation of $\tY(\l)$, but in geometric examples there exists a natural deformation over $\h'_\k$. Replacing this deformation by the corresponding first order deformation does not change the results and arguments in the rest of the paper.  

Similarly, one can replace $\h'_\k$ by some abstract smooth base of the same dimension, $\mathrm{span}(\Lambda_i) \subset \h'_\k$ by one-dimensional subvarieties etc. But we use $\h'_\k$ to keep notation simpler and because in many examples the base is naturally isomorphic to $\h'_\k$. 
\end{Remark}

On this data we impose the following conditions. 
\begin{enumerate}
\item Each $\Hom$ space between two objects in $D(Y(\l))$ is finite dimensional. In particular, this means that $\End(\O_{Y(\l)}) = \k \cdot I$. \label{co:trivial}
\item All $\sE_i^{(r)}$s and $\sF_i^{(r)}$s are sheaves (i.e. complexes supported in cohomological degree zero). \label{co:sheaves}
\item $\sE^{(r)}_i(\l)$ and $\sF^{(r)}_i(\l)$ are left and right adjoints of each other up to shift. More precisely \label{co:adjoints}
\begin{enumerate}
\item $\sE^{(r)}_i(\l)_R = \sF^{(r)}_i(\l) [r(\la \l, \alpha_i \ra + r)]$ 
\item $\sE^{(r)}_i(\l)_L = \sF^{(r)}_i(\l) [-r(\la \l, \alpha_i \ra + r)]$.
\end{enumerate}
\item For each $ i \in I $, \label{co:EE}
$$\sH^*(\sE_i * \sE^{(r)}_i) \cong \sE^{(r+1)}_i \otimes_\k H^\star(\p^r).$$

\item If $\la \l, \alpha_i \ra \le 0$ then \label{co:EF}
$$\sF_i(\l) * \sE_i(\l) \cong \sE_i(\l-\alpha_i) * \sF_i(\l-\alpha_i) \oplus \sP$$
where $\sH^*(\sP) \cong \O_\Delta \otimes_\k H^\star(\p^{- \la \l, \alpha_i \ra - 1})$. 

Similarly, if $\la \l, \alpha_i \ra \ge 0$ then 
$$\sE_i(\l-\alpha_i) * \sF_i(\l-\alpha_i) \cong \sF_i(\l) * \sE_i(\l) \oplus \sP'$$
where $\sH^*(\sP') \cong \O_\Delta \otimes_\k H^\star(\p^{\la \l, \alpha_i \ra - 1})$. 

\item We have \label{co:EEdef}
\begin{equation} \label{eq:defEE}
\sH^*(i_{23*} \sE_i * i_{12*} \sE_i) \cong \sE_i^{(2)}[-1] \oplus \sE_i^{(2)}[2] 
\end{equation}
where $i_{12}$ and $i_{23}$ are the closed immersions
\begin{align*} 
i_{12}: Y(\l) \times Y(\l+ \alpha_i) &\rightarrow Y(\l) \times \tY_i(\l+\alpha_i) \\
i_{23}: Y(\l + \alpha_i) \times Y(\l + 2\alpha_i) &\rightarrow \tY_i(\l+\alpha_i) \times Y(\l+2\alpha_i).
\end{align*}

\item \label{co:contain} If $\la \l, \alpha_i \ra \le 0$ and $k \ge 1$ then the image of $\supp(\sE_i^{(r)}(\l-r \alpha_i))$ under the projection to $Y(\l)$ is not contained in the image of $\supp(\sE_i^{(r+k)}(\l-(r+k) \alpha_i))$ also under the projection to $Y(\l)$. 
Similarly, if $\la \l, \alpha_i \ra \ge 0$ and $k \ge 1$ then the image of $\supp(\sE_i^{(r)}(\l))$ in $Y(\l)$ is not contained in the image of $\supp(\sE_i^{(r+k)}(\l))$.

\item \label{co:EiEj} If $i \ne j \in I$ are joined by an edge in $\Gamma$ then  
$$\sE_i * \sE_j * \sE_i \cong \sE_i^{(2)} * \sE_j \oplus \sE_j * \sE_i^{(2)}$$
while if they are not joined then $\sE_i * \sE_j \cong \sE_j * \sE_i$. 

\item If $i \ne j \in I$ then $\sF_j * \sE_i \cong \sE_i * \sF_j$. \label{co:EiFj}

\item \label{co:Eidef} For $i \in I$ the sheaf $\sE_i$ deforms over $\alpha_i^\perp$ to some 
$$\tsE_i \in D(\tY(\l)|_{\alpha_i^\perp} \times_{\alpha_i^\perp} \tY(\l+\alpha_i)|_{\alpha_i^\perp}).$$

\item \label{co:Eijdef}
If $i \ne j \in I$ are joined by an edge, by Lemma \ref{lem:Eij}, there exists a unique non-zero map (up to multiple) $T_{ij}: \sE_i * \sE_j [-1] \rightarrow \sE_j * \sE_i$ whose cone we denote 
$$\sE_{ij} := \Cone \left( \sE_i * \sE_j [-1] \xrightarrow{T_{ij}} \sE_j * \sE_i \right) \in D(Y(\l) \times Y(\l+\alpha_i+\alpha_j)).$$
Then $\sE_{ij}$ deforms over $B := (\alpha_i+\alpha_j)^\perp \subset \h'_\k$ to some 
$$\tsE_{ij} \in D(\tY(\l)|_B \times_{B} \tY(\l+\alpha_i+\alpha_j)|_B).$$
\end{enumerate}

\begin{Remark}
The conditions (\ref{co:trivial}), (\ref{co:sheaves}), (\ref{co:adjoints}), (\ref{co:contain}) are technical conditions.  The conditions (\ref{co:EE}), (\ref{co:EF}), (\ref{co:EiEj}), (\ref{co:EiFj}) are categorical versions of the relations in the usual presentation of the Kac-Moody Lie algebra $\g$ (except as in \cite{ckl2}, we only impose parts of (\ref{co:EE}), (\ref{co:EF}) at the level of cohomology which is much easier to check).  The conditions (\ref{co:EEdef}), (\ref{co:Eidef}) and (\ref{co:Eijdef}) relate to the deformation.   

Notice that conditions (\ref{co:trivial}) - (\ref{co:contain}) are precisely equivalent to saying that $\{Y(\l+n\alpha_i)\}_{n \in \Z}$, together with $\sE_i$ and $\sF_i$ and deformations $\tY_i(\l+n\alpha_i)$ generate a geometric categorical $\sl_2$ action. Relations (\ref{co:EiEj}) - (\ref{co:Eijdef}) then describe how these various $\sl_2$ actions are related. 

One can compare the geometric definition above to the notion of a 2-representation of $ \g $ in the sense of Rouquier \cite{Rnew}, which in turn is very similar to the notion of an action of Khovanov-Lauda's 2-category \cite{kl1, kl2, kl3}.  In these definitions, there are functors $ \E_i, \F_i $ as well as some natural transformations $X, T$ between these functors.  The additional data of our deformations is perhaps equivalent to the additional deformation of these natural transformations.  In the case of $ \g = \sl_2 $, we were able to make this connection precise (see \cite{ckl2}).  For general $ \g $, it remains an open problem to show that a geometric categorical $\g $ action gives an action of Khovanov-Lauda or Rouquier's 2-category.  In any case, we work here with the above definition since these axioms can be checked in examples (as in section \ref{sec:example}).
\end{Remark}

\begin{Remark}
Since $ \sE_i, \sF_i $ are biadjoint (up to shift), the conditions (\ref{co:EE}), (\ref{co:EEdef}) and (\ref{co:EiEj}) immediately imply the same conditions where all $ \sE_i $ are replaced by $ \sF_i$.
\end{Remark}

\begin{Theorem} \label{thm:naive}
A geometric categorical $ \mathfrak{g} $ action gives a naive categorical $\mathfrak{g} $ action. By this we mean that the functors $ \E_i^{(r)} := \Phi_{\sE_i^{(r)}}$ and $\F_i^{(r)} := \Phi_{\sF_i^{(r)}}$ satisfy the defining relations of $\mathfrak{g}$, up to isomorphism:
\begin{gather*}
\E_i \circ \E_i^{(r)} \cong \E_i^{(r)} \circ \E_i^{(r+1)} \otimes_\C H^\star(\p^r) \text{ (and similarly with $\E$ replaced by $\F$), } \\
\F_i \circ \E_i \cong \E_i \circ \F_i \oplus \id_{Y(\l)} \otimes_\C H^\star(\p^{-\la \l, \alpha_i \ra -1}) \text{ if $\la \l, \alpha_i \ra \le 0$ and }\\
\E_i \circ \F_i \cong \F_i \circ \E_i \oplus \id_{Y(\l)} \otimes_\C H^\star(\p^{\la \l, \alpha_i \ra +1}) \text{ if $\la \l, \alpha_i \ra \ge 0$, } \\
\E_i \circ \E_j \circ \E_i \cong \E_i^{(2)} \circ \E_j \oplus \E_j \circ \E_i^{(2)} \text{ if $i,j$ are joined by an edge and} \\
\E_i \circ \E_j \cong \E_j \circ \E_i \text{ if they are not (and similarly with $ \E $ replaced by $ \F $),}\\
\F_j \circ \E_i \cong \E_i \circ \F_j \text{ if $i \ne j$}.
\end{gather*}
Hence  $\E_i, \F_i $ acting on the Grothendieck groups $ \{ K(Y(\l)) \} $ gives a representation of $ U(\g) $.
\end{Theorem}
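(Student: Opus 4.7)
The plan is to verify each relation separately, splitting into single-index relations (the $\E_i \circ \E_i^{(r)}$ identity and the $\E_i\F_i$ commutation) and multi-index relations. As observed in the remark following the axioms, for each fixed $i \in I$ the data $\{Y(\l + n\alpha_i)\}_{n \in \Z}$ together with the kernels $\sE_i, \sF_i$ and the one-parameter deformations $\tY_i(\l + n\alpha_i)$ form a geometric categorical $\sl_2$ action. Thus the single-index relations reduce to the $\sl_2$ case already treated in \cite{ckl2}: the cohomology-level identities supplied by axioms (\ref{co:EE}) and (\ref{co:EF}) are promoted there to honest derived-category isomorphisms $\sE_i * \sE_i^{(r)} \cong \sE_i^{(r+1)} \otimes_\k H^\star(\p^r)$ and $\sF_i * \sE_i \cong \sE_i * \sF_i \oplus \O_\Delta \otimes_\k H^\star(\p^{-\la \l, \alpha_i \ra - 1})$ (when $\la \l, \alpha_i \ra \le 0$, with the analogous statement in the opposite case), whereupon passing to Fourier--Mukai transforms gives the required functor identities.

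For the multi-index relations $\E_i \E_j \E_i \cong \E_i^{(2)} \E_j \oplus \E_j \E_i^{(2)}$ (or $\E_i \E_j \cong \E_j \E_i$ when $i,j$ are not joined) and $\F_j \E_i \cong \E_i \F_j$ for $i \ne j$, there is essentially nothing to prove: conditions (\ref{co:EiEj}) and (\ref{co:EiFj}) already assert precisely these isomorphisms at the level of Fourier--Mukai kernels, so passing to the associated transforms yields the required functor isomorphisms.

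The final Grothendieck-group assertion is then formal. Taking classes in $K$-theory and identifying $[H^\star(\p^k)]$ with $k+1 \in \Z$, each naive identity descends to a defining Serre relation for the integral form of $U(\g)$ with divided powers. Thus the operators $[\E_i], [\F_i]$ assemble into a $U(\g)$-representation on $\bigoplus_\l K(Y(\l))$, graded by $\l$. The only nontrivial step in the whole argument is the promotion of the cohomology-level statements in axioms (\ref{co:EE}) and (\ref{co:EF}) to derived-category isomorphisms; this is the technical heart of the $\sl_2$ case in \cite{ckl2}, where the deformation data of axiom (\ref{co:EEdef}) and the support-containment condition of axiom (\ref{co:contain}) are used to rule out possible nonzero extensions between the various cohomology sheaves.
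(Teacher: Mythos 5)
Your proposal is correct and follows essentially the same route as the paper's proof: the multi-index relations are literally axioms (\ref{co:EiEj}) and (\ref{co:EiFj}), while the single-index relations reduce to the $\sl_2$ case, where \cite{ckl2} upgrades the cohomology-level axioms (\ref{co:EE}) and (\ref{co:EF}) to actual direct sum decompositions using the deformation data. The paper's proof is just a terse pointer to \cite{ckl2}; your version makes the reduction explicit and identifies the same deformation axioms as the crux, so there is no substantive difference.
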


\begin{proof}
Note that the first three statements differ from the conditions (\ref{co:EE}), (\ref{co:EF}) given in the definition, in that statements on the level of homology are turned into direct sums.  These facts are proven (with the help of the deformations) in \cite{ckl2}.
\end{proof} 

Finally, we define two more maps we will use repeatedly. The first map is 
$$\sE_i^{(r+1)} \xrightarrow{\i} \sE_i * \sE_i^{(r)} [-r] \cong \sE_i^{(r)} * \sE_i [-r] \text{ and } \sF_i^{(r+1)} \xrightarrow{\i} \sF_i * \sF_i^{(r)} [-r] \cong \sF_i^{(r)} * \sF_i [-r]$$
which includes into the lowest degree summand of the right hand side (the isomorphisms above follow from Proposition \ref{prop:EXE}). Notice that there is a unique such map (up to multiple) because by Lemma \ref{lem:Eij} we have that $\End^k(\sE_i^{(r+1)})$ is zero if $k < 0$ and one-dimensional if $k=0$ (similarly with $\End^k(\sF_i^{(r+1)})$). 

The second map is 
$$\sF_i^{(r)} * \sE_i^{(r)} (\l) \xrightarrow{\e} \O_\Delta [r(\la \l, \alpha_i + r\ra)] \text{ and } \sE_i^{(r)} * \sF_i^{(r)} (\l) \xrightarrow{\e} \O_\Delta [-r(\la \l, \alpha_i + r\ra)]$$
given by adjunction (by definition $\sE_i^{(r)}$ and $\sF_i^{(r)}$ are adjoint to each other up to shifts). This map is also uniquely defined (up to multiple). 

We say that a geometric categorical $\g$-action is {\bf integrable} if for every weight $\l$ and $i \in I$ we have $Y(\l + n\alpha_i) = \emptyset$ for $n \gg 0$ or $n \ll 0$. From hereon we assume all actions are integrable. 

\subsection{Role of the deformation}\label{sec:role}

\subsubsection{Generalities on deformations and obstructions}
Let  $ Y $ be a variety with a 1-parameter deformation $ \tY \rightarrow \mathbb{A}^1 $.  The obstruction to deforming $ \sA \in D(Y) $ is given by a map $ c(\sA) : \sA[-1] \rightarrow \sA[1] $.  By definition, $c(\sA)[1] $ is the connecting map coming from the exact triangle $ \sA[1] \rightarrow t^* t_* \sA \rightarrow \sA $, where $ t $ is the inclusion $ t : Y \rightarrow \tY$ (see \cite[appendix]{HT}).  More generally, if $ \tY \rightarrow V $ is a deformation over a $\k$-vector space $ V $, then for each $ v \in V $, we get a 1-parameter deformation over $ \spn(v) $ and we write $ c_v(\sA) : \sA[-1] \rightarrow \sA[1] $ for the obstruction map.  Let us summarize the properties of these maps.

\begin{Proposition} \label{th:propc1}
Let $ \tY \rightarrow V $ be a deformation over a $ \k$-vector space $ V $.
\begin{enumerate}
\item There is a functorial distinguished triangle $$ \sA[-1] \xrightarrow{c_v(A)} \sA[1] \rightarrow {t_v}^* {t_v}_* \sA \rightarrow \sA, $$ where $ t_v $ denotes the inclusion of $ Y $ into the fibre of $ \tY $ over $ \spn(v) $.
\item $c$ is linear in $ V $, in the sense that $c_{v+w}(\sA) = c_v(\sA) + c_w(\sA) $.
\item If $ \sA $ deforms over $\spn(v) $ then $ c_v(\sA) = 0 $.
\end{enumerate}
\end{Proposition}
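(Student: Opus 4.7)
The plan is to prove the three parts in turn. For (1), this is essentially the definition of $c_v(\sA)$ following \cite[appendix]{HT}. The embedding $t_v : Y \hookrightarrow \tY|_{\spn(v)}$ is a Cartier divisor with trivial conormal bundle (since the family is over a line), so a short Koszul computation shows that $t_v^* t_{v*}\O_Y$ is represented by the two-term complex $[\O_Y \xrightarrow{0} \O_Y]$ in degrees $-1,0$. Tensoring with $\sA$ produces a distinguished triangle $\sA[1] \to t_v^* t_{v*}\sA \to \sA \to \sA[2]$, with the second map given by the counit of adjunction. By definition, the connecting map $\sA \to \sA[2]$ is $c_v(\sA)[1]$, and the triangle stated in the proposition is just a rotation of this one. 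Functoriality in $\sA$ follows from the naturality of the unit and counit of the adjunction $(t_v^*, t_{v*})$.

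For (2), I would use the identification of the obstruction class $c_v(\sA)[1] \in \Ext^2(\sA, \sA)$ with the composition of the Atiyah class $\mathrm{At}(\sA) \in \Ext^1(\sA, \sA \otimes \Omega_Y^1)$ with the Kodaira-Spencer class $\kappa(v) \in \Ext^1(\Omega_Y^1, \O_Y) \cong H^1(Y, T_Y)$ of the restricted family $\tY|_{\spn(v)}$. This identification is again carried out in \cite{HT}. The Kodaira-Spencer map $\kappa : V \to H^1(Y, T_Y)$ of the total family $\tY \to V$ is linear by construction, so $\kappa(v+w) = \kappa(v) + \kappa(w)$, and pairing with $\mathrm{At}(\sA)$ transfers this linearity to $c_v(\sA)$.

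For (3), suppose $\sA$ extends to some $\tsA \in D(\tY|_{\spn(v)})$ with $t_v^* \tsA \cong \sA$. Applying $t_v^*$ to the unit morphism $\tsA \to t_{v*} t_v^* \tsA \cong t_{v*} \sA$ produces a morphism $s : \sA \cong t_v^* \tsA \to t_v^* t_{v*} \sA$. One of the triangle identities for the adjunction $(t_v^*, t_{v*})$ says that the composition of $s$ with the counit $t_v^* t_{v*} \sA \to \sA$ is the identity. Hence $s$ splits the second arrow of the triangle from part (1), forcing the connecting map $c_v(\sA)$ to vanish.

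The main obstacle is (2): making the identification of $c_v(\sA)$ with the Atiyah--Kodaira-Spencer pairing fully rigorous in the derived setting requires some care, but this is precisely what is done in the appendix of \cite{HT}, after which linearity is immediate from the linearity of $\kappa$. Parts (1) and (3) are then essentially formal consequences of the adjunction $(t_v^*, t_{v*})$ in the special case where $Y$ sits in $\tY|_{\spn(v)}$ as a Cartier divisor with trivial normal bundle.
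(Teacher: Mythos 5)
Your proof is correct and takes essentially the same approach as the paper: part (1) follows from the definition of $c_v$ via the triangle $\sA[1]\to t_v^*t_{v*}\sA\to\sA$ (the paper's proof of functoriality just points at the kernel-level definition), and part (2) is proved exactly as you do, by identifying $c_v(\sA)$ with the Atiyah class paired against the Kodaira--Spencer map and invoking linearity of the latter, citing \cite{HT}. For part (3) the paper gives no explicit argument (treating it as immediate from the definition), whereas your use of the unit of adjunction and the triangle identity to split the triangle is a clean and correct way to make it explicit.
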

\begin{proof}
The functoriality follows from the definition of $c_v $ as a morphism of FM kernels.  The linearity follows from the fact that $c_v(\sA) $ is the product of the Atiyah class of $\sA$ with the Kodaira-Spencer map $V \rightarrow H^1(\tY, T_{\tY})$ which is linear (see page 3 of \cite{HT}). 
\end{proof}

We will also need some special properties of these obstruction maps related to products and FM kernels.  Let $ Y_1, Y_2, Y_3 $ be three varieties, all with deformations $ \tY_1, \tY_2, \tY_3 $ over the same base $ V $.  Then the pairwise products $ Y_1 \times Y_2 $ and $ Y_2 \times Y_3 $ admit deformations $\tY_1 \times \tY_2, \tY_2 \times \tY_3$ over $ V \oplus V $.  Given $ v, w \in V$, we will consider the 1-parameter deformation $ \tY_1|_{\spn(w)} \times_\k \tY_2|_{\spn(v)}$ and write $ t_{v,w} $ for the corresponding inclusion and $ c_{v,w} $ for the obstruction map (the reason for ``switching'' the order of $ v,w $ will become clear in a moment).

Let $ \sA_{12} \in D(Y_1 \times Y_2) $ and $ \sA_{23} \in D(Y_2 \times Y_3) $. Let $ v, w\in V $.  Since $ \sA_{23} * (\cdot) $ is a functor, we get a map $ \sA_{23} * \sA_{12}[-1] \xrightarrow{I c_{v,w}(\sA_{12})} \sA_{23} * \sA_{12}[1]$. 

\begin{Proposition} \label{th:propc2}
With the above notation, the following holds for all $ v \in V$.
\begin{enumerate}
\item We have an equality $ I c_{0,v}(\sA_{12}) = c_{0,v}(\sA_{12}*\sA_{23})$
\item There is distinguished triangle
$$
\sA_{23}*\sA_{12}[-1] \xrightarrow{ Ic_{v,0}(\sA_{12}) } \sA_{23}*\sA_{12}[1] \rightarrow {t_{0,v}}_* (\sA_{23}) * {t_{v,0}}_*  (\sA_{12})
$$ 
\end{enumerate}
\end{Proposition}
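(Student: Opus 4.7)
Both parts are proved by starting from Proposition \ref{th:propc1}(1) applied to $\sA_{12}$ with an appropriate one-parameter deformation, convolving the resulting distinguished triangle
\[
\sA_{12}[-1] \xrightarrow{c(\sA_{12})} \sA_{12}[1] \to t^* t_* \sA_{12} \to \sA_{12}
\]
with $\sA_{23}$ on the left, and then identifying the third term of the new triangle. Since $\sA_{23} * (-)$ is a triangulated functor, the first arrow of the new triangle is automatically $I c(\sA_{12})$, so the content of each claim is the identification of the cone.

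For part (1), take the deformation $\tY_1|_{\spn(v)} \times Y_2$; the cone becomes $\sA_{23} * t_{0,v}^* t_{0,v*}\sA_{12}$. Since this deformation affects only $Y_1$ and is independent of $\sA_{23}$, flat base change along the Cartesian square
\[
\begin{CD}
Y_1 \times Y_2 \times Y_3 @>>> \tY_1|_{\spn(v)} \times Y_2 \times Y_3 \\
@VVV @VVV \\
Y_1 \times Y_3 @>>> \tY_1|_{\spn(v)} \times Y_3
\end{CD}
\]
yields a natural isomorphism $\sA_{23}*t_{0,v}^*t_{0,v*}\sA_{12} \cong \tilde t_{0,v}^* \tilde t_{0,v*}(\sA_{23}*\sA_{12})$, where $\tilde t_{0,v}$ is the inclusion $Y_1 \times Y_3 \hookrightarrow \tY_1|_{\spn(v)} \times Y_3$. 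Naturality of base change ensures this isomorphism is compatible with the maps from $\sA_{23}*\sA_{12}[1]$ and to $\sA_{23}*\sA_{12}$, promoting it to an isomorphism of triangles; comparing first arrows yields $Ic_{0,v}(\sA_{12}) = c_{0,v}(\sA_{23}*\sA_{12})$.

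For part (2), use instead the deformation $Y_1 \times \tY_2|_{\spn(v)}$, which deforms the middle space. The cone is now $\sA_{23}*t_{v,0}^*t_{v,0*}\sA_{12}$, and the task is to identify it with $t_{0,v*}\sA_{23} *_{\tY_2|_{\spn(v)}} t_{v,0*}\sA_{12}$, the convolution over the deformed middle. Let $J: Y_1 \times Y_2 \times Y_3 \hookrightarrow Y_1 \times \tY_2|_{\spn(v)} \times Y_3$ denote the closed immersion. Flat base change along the projections from the three-fold product rewrites both objects as pushforwards along $\pi_{13}$; one obtains
\[
\pi_{13*}\bigl(J^* J_* \pi_{12}^*\sA_{12} \otimes \pi_{23}^*\sA_{23}\bigr) \quad \text{and} \quad \pi_{13*}\bigl(\pi_{12}^*\sA_{12} \otimes J^* J_* \pi_{23}^*\sA_{23}\bigr),
\]
respectively. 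Applying the projection formula in two different orders gives $J_*(F \otimes J^* J_* G) \cong J_*(J^* J_* F \otimes G)$, and full faithfulness of $J_*$ produces a natural isomorphism between the two expressions. That $J$ is a codimension-one embedding with trivial normal bundle (coming from the trivial tangent direction to $\spn(v)$) ensures no further corrections arise.

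The main obstacle is to promote the identification of third terms to an isomorphism of distinguished triangles, rather than merely of objects. For part (1) this is essentially formal, as base change commutes with the convolution functor $\sA_{23} * (-)$ on the nose. Part (2) is more delicate: verifying compatibility with the connecting morphism requires tracking the unit of the $(J^*, J_*)$-adjunction through the projection-formula manipulations, as it is through this unit that the Atiyah-class description of $c_{v,0}(\sA_{12})$ enters.
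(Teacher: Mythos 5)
Your argument is correct and supplies the details that the paper itself dismisses in one line (for (i), the paper just invokes ``functoriality''; for (ii) it cites Lemma~4.1 of \cite{ckl2} without proof). Your route through flat base change and the projection formula does establish both claims. Two remarks on streamlining and one on emphasis.

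First, for part (ii) your stated ``main obstacle'' is not actually present. The statement of Proposition~\ref{th:propc2}(ii) specifies only the first arrow of the triangle, $Ic_{v,0}(\sA_{12})$, and leaves the second arrow anonymous. The triangle
$$\sA_{23}*\sA_{12}[-1] \xrightarrow{Ic_{v,0}(\sA_{12})} \sA_{23}*\sA_{12}[1] \to \sA_{23}*{t_{v,0}}^*{t_{v,0}}_*\sA_{12}$$
is distinguished because it is the image under the exact functor $\sA_{23}*(-)$ of the distinguished triangle from Proposition~\ref{th:propc1}(i). Once you have \emph{any} isomorphism of the third term with ${t_{0,v}}_*(\sA_{23})*{t_{v,0}}_*(\sA_{12})$, you may replace it and the resulting triangle is again distinguished. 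You do not need to track the unit of the $(J^*,J_*)$-adjunction; that would be required only if the Proposition asserted a specific second arrow, which it does not. By contrast, for part (i) the payoff is a genuine equality of morphisms, so there the compatibility of the cone identification with the other legs of the triangle \emph{is} needed, and your appeal to naturality of base change is the right thing to say.

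Second, the aside in part (ii) about $J$ being a codimension-one embedding with trivial normal bundle is not used anywhere; the projection-formula and base-change manipulations go through regardless of the normal bundle, so this sentence can be deleted.

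Finally, on part (i), there is a conceptually cleaner route that matches the paper's terse ``follows from functoriality'': observe that for a deformation of $Y_1$ alone, the obstruction $c_{0,v}(\sB)$ for any $\sB \in D(Y_1 \times Z)$ can be written as right-convolution $\sB * \kappa_v$ with a fixed morphism of kernels $\kappa_v : \O_{\Delta_{Y_1}}[-1] \to \O_{\Delta_{Y_1}}[1]$ in $D(Y_1 \times Y_1)$ (the Kodaira--Spencer class paired with the universal Atiyah class of the diagonal). Then associativity of convolution gives immediately
$$c_{0,v}(\sA_{23}*\sA_{12}) = (\sA_{23}*\sA_{12})*\kappa_v = \sA_{23}*(\sA_{12}*\kappa_v) = Ic_{0,v}(\sA_{12}).$$
Your base-change argument is a hands-on verification of the same fact and is perfectly acceptable; this is just a shorter way to say it.

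Note also that the statement of (i) in the paper contains the typo $\sA_{12}*\sA_{23}$; the correct convolution is $\sA_{23}*\sA_{12}$, as you implicitly assume.
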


\begin{proof}
(i) follows from functoriality.  (ii) follows from \cite[Lemma 4.1]{ckl2}.
\end{proof}

\subsubsection{Deformations in geometric categorical $ \g $ actions} \label{se:deform}
In a geometric categorical $ \g $ action, we have a deformation  $ \tY(\l) \rightarrow \h'_\k $.  In light of  Proposition \ref{th:propc1}.(iii), condition (x) implies that $ c_{v,v}(\sE_i) = 0 $ for $ v \in \alpha_i^\perp $.  Similarly, condition (xi) implies that $ c_{v,v} (\sE_{ij}) = 0 $ for $ v \in (\alpha_i + \alpha_j)^\perp$.

Let us now study $ c_{\Lambda_i,\Lambda_i}(\sE_i) $ and in particular the map 
$$
Ic_{\Lambda_i, \Lambda_i}(\sE_i) : \sE_i * \sE_i[-1] \rightarrow \sE_i * \sE_i[1]
$$
By Proposition \ref{th:propc1}.(ii),  $c_{\Lambda_i, \Lambda_i}(\sE_i) = c_{\Lambda_i, 0}(\sE_i) + c_{0,\Lambda_i}(\sE_i)$.  

First consider $ I c_{\Lambda_i,0}( \sE_i) : \sE_i * \sE_i [-1] \rightarrow \sE_i * \sE_i [1] $.  We will examine this map on the level of cohomology.  Proposition \ref{th:propc2}.(ii) and Condition (vi) imply that 
$$\sH^*(\Cone(I c_{\Lambda_i, 0}( \sE_i))) \cong \sE_i^{(2)} [-1] \oplus \sE_i^{(2)} [2].$$

This means that on cohomology, $I c_{\Lambda_i, 0}(\sE_i)$ induces an isomorphism 
$$ \sE_i^{(2)} \cong \sH^0(\sE_i * \sE_i [-1]) \rightarrow \sH^0(\sE_i * \sE_i [1]) \cong \sE_i^{(2)}.$$

On the other hand, by Proposition \ref{th:propc2}.(i), $ Ic_{ 0,\Lambda_i}(\sE_i) = c_{0,\Lambda_i}(\sE_i * \sE_i) $.  Hence $ Ic_{0,\Lambda_i}(\sE_i)$ is given by a diagonal matrix 
$$\left( \begin{matrix} * & 0 \\ 0 & * \end{matrix} \right): \left( \begin{matrix} \sE_i^{(2)} [-2] \\ \sE_i \end{matrix} \right) \rightarrow \left( \begin{matrix} \sE_i^{(2)} \\ \sE_i^{(2)}[2] \end{matrix} \right)$$
and thus acts by 0 at the level of cohomology. 

Combining together these observations, we deduce that $ Ic_{\Lambda_i, \Lambda_i}(\sE_i) $ gives an isomorphism $ \sH^0(\sE_i * \sE_i [-1]) \rightarrow \sH^0(\sE_i * \sE_i [1])$.

We can summarize the results of this section, with the following statement.

\begin{Proposition} \label{th:deformsum}
Let $ v \in \h'_k $. 
\begin{enumerate}
\item If $ \la v, \alpha_i \ra = 0 $, then $ c_{v,v}(\sE_i) = 0 $.
\item If $ \la v, \alpha_i \ra \ne 0 $, then $ c_{v,v}(\sE_i) $ is non-degenerate in the sense that the resulting map
$$ \sH^0(Ic_{v,v}(\sE_i)) : \sE_i^{(2)} \cong \sH^0(\sE_i * \sE_i [-1]) \rightarrow \sH^0(\sE_i * \sE_i [1]) \cong \sE_i^{(2)}$$
is an isomorphism.
\end{enumerate}
\end{Proposition}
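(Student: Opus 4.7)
The plan is to handle the vanishing in (i) immediately from the axioms, and then reduce the non-degeneracy in (ii) to the already-computed case $v=\Lambda_i$ by linearity.

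For part (i), condition (x) supplies a deformation $\tsE_i$ of $\sE_i$ over all of $\alpha_i^\perp \subset \h'_\k$, living on the fibre product $\tY(\l)|_{\alpha_i^\perp} \times_{\alpha_i^\perp} \tY(\l+\alpha_i)|_{\alpha_i^\perp}$. For any $v \in \alpha_i^\perp$, restriction along $\spn(v)$ yields a one-parameter deformation of $\sE_i$ in the diagonal direction $(v,v)$, so Proposition~\ref{th:propc1}(iii) immediately gives $c_{v,v}(\sE_i) = 0$.

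For part (ii), the plan is to first reduce to $v = \Lambda_i$ via the $\k$-linearity of $(v,w) \mapsto c_{v,w}(\sE_i)$ on the two-factor deformation parameter space $\h'_\k \oplus \h'_\k$ (Proposition~\ref{th:propc1}(ii) applied to this larger vector space). Writing $v = v' + t\Lambda_i$ with $v' \in \alpha_i^\perp$ and $t = \la v, \alpha_i\ra \ne 0$, additivity and scaling along the diagonal give
\[
c_{v,v}(\sE_i) \;=\; c_{v',v'}(\sE_i) + c_{t\Lambda_i,\, t\Lambda_i}(\sE_i) \;=\; t \cdot c_{\Lambda_i, \Lambda_i}(\sE_i),
\]
where the first term vanishes by part (i). Consequently $\sH^0(Ic_{v,v}(\sE_i)) = t \cdot \sH^0(Ic_{\Lambda_i, \Lambda_i}(\sE_i))$, and since $t \ne 0$ the claimed non-degeneracy reduces to the case $v = \Lambda_i$.

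The case $v=\Lambda_i$ is precisely what the discussion preceding the proposition accomplishes. Decompose $c_{\Lambda_i, \Lambda_i} = c_{\Lambda_i, 0} + c_{0, \Lambda_i}$. Proposition~\ref{th:propc2}(ii) combined with condition (vi) identifies the cone of $Ic_{\Lambda_i, 0}(\sE_i)$ on cohomology with $\sE_i^{(2)}[-1] \oplus \sE_i^{(2)}[2]$, and the resulting long exact sequence extracts an isomorphism $\sE_i^{(2)} \to \sE_i^{(2)}$ in cohomological degree $0$. Meanwhile Proposition~\ref{th:propc2}(i) identifies $Ic_{0, \Lambda_i}(\sE_i)$ with $c_{0, \Lambda_i}(\sE_i * \sE_i)$, whose diagonal form relative to the cohomology decomposition $\sH^\star(\sE_i * \sE_i) \cong \sE_i^{(2)} \otimes H^\star(\p^1)$ from condition (iv) forces it to act as zero on $\sH^0$. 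Summing the two contributions produces the required isomorphism. The hard step is really this last verification---the diagonal vanishing of $Ic_{0, \Lambda_i}(\sE_i)$ on $\sH^0$, which crucially combines naturality of the obstruction with the precise cohomological form of $\sE_i * \sE_i$; the remaining steps are linearity bookkeeping or direct appeals to Propositions~\ref{th:propc1} and~\ref{th:propc2}.
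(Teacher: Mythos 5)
Your proof is correct and follows essentially the same route as the paper. For part (i) you invoke condition (x) together with Proposition~\ref{th:propc1}(iii); for part (ii) you decompose $c_{\Lambda_i,\Lambda_i} = c_{\Lambda_i, 0} + c_{0,\Lambda_i}$ and argue exactly as in the discussion preceding the proposition (non-degeneracy of the first piece on $\sH^0$ via Proposition~\ref{th:propc2}(ii) and condition (vi), vanishing of the second piece on $\sH^0$ via Proposition~\ref{th:propc2}(i) and the diagonal form coming from the decomposition of $\sE_i * \sE_i$). The one thing you make explicit that the paper leaves tacit is the reduction from an arbitrary $v$ with $\la v, \alpha_i\ra \ne 0$ to the case $v = \Lambda_i$: writing $v = v' + t\Lambda_i$ with $v' \in \alpha_i^\perp$ and using $\k$-linearity of $(v,w) \mapsto c_{v,w}$ on the diagonal, so that $c_{v,v}(\sE_i) = c_{v',v'}(\sE_i) + t\,c_{\Lambda_i,\Lambda_i}(\sE_i) = t\,c_{\Lambda_i,\Lambda_i}(\sE_i)$; this is a genuine (if small) gap in the paper's exposition that you have filled correctly.
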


\begin{Remark}
For all practical purposes we do not actually need that $\sE_i$ deforms over all of $\alpha_i^\perp$ and $\sE_{ij}$ deforms over all of $(\alpha_i + \alpha_j)^\perp$. We just need them to deform over some one-dimensional subspaces. We use the definition above since it is satisfied in all examples we know and, in our opinion, is more aesthetically pleasing. 
\end{Remark}

\subsection{$\k^\times$-equivariance}

There is a slightly more general $\k^\times$-equivariant version of a geometric categorical $\g$ action. In this setting every variety $Y(\l)$ and deformation $\tY(\l)$ is equipped with a $\k^\times$-action. The action on $\tY(\l)$ is equivariant with respect to a weight 2 action on the base $\h'_\k$. Subsequently, each $D(Y(\l))$ becomes the derived category of $\k^\times$-equivariant coherent sheaves. This extra $\k^\times$-structure gives us another grading on our categories which we denote by $\{ \cdot \}$. More precisely, $\{k\}$ is tensoring with the line bundle $\O\{k\}$ where if $f \in \O(U)$ is a local function and $t \in \k^\times$ then viewed as a section $f' \in \O\{k\}(U)$ we have $t \cdot f' = t^{-k}(t \cdot f)$. 

The conditions on the data are essentially the same. The adjoint conditions become 
\begin{enumerate}
\item $\sE^{(r)}_i(\l)_R = \sF^{(r)}_i(\l) [r(\la \l, \alpha_i \ra + r)] \{-r(\la \l, \alpha_i \ra + r) \}$ 
\item $\sE^{(r)}_i(\l)_L = \sF^{(r)}_i(\l) [-r(\la \l, \alpha_i \ra + r)] \{r(\la \l, \alpha_i \ra + r)\}.$
\end{enumerate}
Condition (\ref{co:EEdef}) on composition of deformed $\sE$s becomes
$$\sH^*(i_{23*} \sE_i * i_{12*} \sE_i) \cong \sE_i^{(2)}[-1]\{1\} \oplus \sE_i^{(2)}[2]\{-3\}$$
while $\sE_{ij}$ is defined as the cone of $\sE_i * \sE_j [-1]\{1\} \rightarrow \sE_j * \sE_i$. All the other conditions are the same once we replace $H^\star(\p^n)$ by the doubly graded version 
$$H^\star(\p^n) := \k[-n]\{n\} \oplus \k[-n+2]\{n-2\} \oplus \dots \oplus \k[n]\{-n\}.$$

In this setup, Theorem \ref{thm:naive} shows that the functors $\E_i$ and $\F_i$ acting on the Grothendieck groups $\{K^{\k^\times}(Y(\l))\}$ gives us a representation of the quantum enveloping algebra $U_q(\g)$. 

All the results in this paper have natural $\k^\times$-equivariant analogues. However, we will not work $\k^\times$-equivariantly because keeping track of the extra $\{ \cdot \}$ shifts would make the notation hard to read. One of the reasons to even consider this $\k^\times$-equivariant setup is that it shows up naturally in various examples. The cotangent bundles of partial flag varieties considered in section \ref{sec:example} is one such example.

\subsection{Example from resolutions of Kleinian singularities} \label{se:eg}

An instructive example of geometric categorical $ \g $ action comes from the minimal resolution of a Kleinian singularity.  Let $ H $ denote a finite subgroup of $ SL_2(\C) $ and let $ \pi : Y \rightarrow \C^n/H $ be a minimal resolution.  Recall that $H $ determines a finite type simply-laced Dynkin diagram $\Gamma $ whose vertex set $I $ is in bijection with the components of the exceptional fibre $ \pi^{-1}(0) $.   

From the work of Seidel-Thomas \cite{ST}, we know that each component $ E_i $ of $ \pi^{-1}(0) $ determines a spherical object $ \sS_i = \O_{E_i}(-1) $.  These $ \sS_i $ form a type $ \Gamma $ arrangement of spherical object and thus by the work of Seidel-Thomas give an action of the braid group $ B_\g $ on $D(Y) $ (as usual, here $ \g $ is the Lie algebra associated to $ \Gamma$).  

Let us use the same data to construct a $ \C^\times $-equivariant geometric categorical $ \g $ action.  
Let $ Y(\l) $ be defined as follows. Let $ Y(0) = Y $, $ Y(\l) = \pt $ for $ \l $ a root of $ \g $, and $ Y(\l) = \emptyset $ for all other $ \l $. The action of $ \C^\times $ on $ Y $ is comes from the scaling action on $ \C^n $.  We define $ \sE_i(0): D(Y) \rightarrow D(\pt)$ using the kernel $\sS_i \in D(Y \times \pt)$ and similarly with $\sE_i(-\alpha_i)$, $\sF_i(0)$ and $\sF_i(-\alpha_i)$ (all other $ \sE_i, \sF_i $ we need to define are functors $D(\pt) \rightarrow D(\pt)$ which we take to be the identity). The deformation $\tilde{Y} $ of $ Y $ is the standard deformation (which may be constructed by thinking of $\C^n/H $ as a Slodowy slice or by deforming the polynomial defining the singularity $\C^n/H$).

Let us check condition (\ref{co:EiEj}) of the geometric categorical $ \sl_n $ action (all other conditions are immediate or follow along the same lines).  Let $ i, j$ be connected by an edge in $\Gamma $ (so $ E_i, E_j $ intersect in a point).  Then condition (\ref{co:EiEj}) states that 
$$ \sE_i(\alpha_{j}) * \sE_{j}(0) * \sE_i (-\alpha_i) \cong \sE_{j}(\alpha_i) * \sE^{(2)}_i(-\alpha_i) \oplus \sE_i^{(2)}(-\alpha_i + \alpha_{j}) * \sE_{j}(-\alpha_i).$$
Now $ Y(-\alpha_i + \alpha_{j}) = \emptyset$ while $\sE^{(2)}_i(-\alpha_i) = \O_{\Delta_{\pt}} = \sE_i(\alpha_{j})$. So we see that this is equivalent to the fact that the composition 
$$D(\pt) \xrightarrow{\sE_i(-\alpha_i)} D(Y) \xrightarrow{\sE_{j}(0)} D(\pt)$$ 
is the identity. Since the first functor is given by tensoring with the object $\O_{E_i}(-1)$ and the second functor is $\Ext^*(\O_{E_j}(-1)[-1], \cdot)$, this condition corresponds to the fact that $$\Ext^m(\O_{E_j}(-1), \O_{E_i}(-1))=0,$$  unless $m=1$ in which case it is $\C$. 

\subsection{Equivalences via geometric categorical $\sl_2$ actions}

In \cite{ckl2} we proved that a geometric categorical $\sl_2$ action induces a strong categorical $\sl_2$ action. In \cite{ckl3} we showed that a strong categorical $\sl_2$ action can be used to construct equivalences (using ideas of Chuang-Rouquier \cite{CR}). We briefly review this construction starting from a categorical $\g$ action. 

Given a geometric categorical $\g$ action one can construct for each vertex $i \in I$ a geometric categorical $\sl_2$ action. More precisely, we use as kernels $\sE^{(r)}_i$ and $\sF^{(r)}_i$ and use the one parameter deformation $\tY_i(\l)$.  Consequently by the main result of \cite{ckl2}, we obtain a strong $\sl_2$ action generated by the functors induced by the kernels $\sE_i$ and $\sF_i$. 

Consider for each $s \ge 0$ and $\la \l, \alpha_i \ra \ge 0$ the kernel
$$\sT_i^s(\l) := \sF_i^{(\la \l, \alpha_i \ra + s)} * \sE_i^{(s)}(\l)[-s] \in D(Y(\l) \times Y(\l - \la \l, \alpha_i \ra \alpha_i))$$
(if $\la \l, \alpha_i \ra \le 0$ then we consider instead the kernel $\sE_i^{(- \la \l, \alpha_i \ra + s)} * \sF_i^{(s)}$). There exists a natural map $d_i^s: \sT_i^s(\l) \rightarrow \sT_i^{s-1}(\l)$ given as the composition
\begin{eqnarray*}
\sT_i^s(\l)
&\cong& \sF_i^{(\la \l, \alpha_i \ra + s)} * \sE_i^{(s)} [-s]  \\
&\xrightarrow{\i \i}& \sF_i^{(\la \l, \alpha_i \ra + s-1)} * \sF_i [- \la \l, \alpha_i \ra - s+1] * \sE_i * \sE_i^{(s-1)} [-(s-1)][-s] \\
&\xrightarrow{I \e I}& \sF_i^{(\la \l, \alpha_i \ra + s-1)} * \sE_i^{(s-1)} [-s+1] \cong \sT_i^{s-1}(\l). 
\end{eqnarray*}

Note that $\sT_i^s(\l) = 0$ for $s \gg 0$ since we only deal with integrable representations. The main result of \cite{ckl3} is the following. 

\begin{Theorem}\label{thm:ckl3} 
$$\dots \rightarrow \sT_i^s(\l) \xrightarrow{d_i^s} \sT_i^{s-1}(\l) \xrightarrow{d_i^{s-1}} \dots \xrightarrow{d_i^1} \sT_i^0(\l)$$
is a complex of kernels which has a unique right convolution denoted $\sT_i(\l)$. Moreover, the kernel $\sT_i(\l)$ induces an equivalence of triangulated categories $D(Y(\l)) \xrightarrow{\sim} D(Y(\l - \la \l, \alpha_i \ra \alpha_i))$. 
\end{Theorem}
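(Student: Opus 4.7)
The plan is to adapt the Chuang-Rouquier Rickard-complex strategy to the Fourier-Mukai setting, in three stages: show the sequence is a complex, establish existence and uniqueness of a right convolution, and prove the resulting kernel is invertible. For the complex condition, I would unpack $d_i^{s-1} \circ d_i^s$ into its defining composition of divided-power inclusions $\i$ and adjunction counits $\e$, reducing it to a canonical endomorphism of the kernel $\sF_i^{(m+s)} * \sE_i * \sE_i * \sE_i^{(s-1)}$ with appropriate shifts (where $m = \la \l, \alpha_i \ra$). The zigzag identities for the biadjunction (axiom (iii)) together with the upgraded divided-power relations from Theorem~\ref{thm:naive} then force this composition to vanish.

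For existence and uniqueness of a right convolution, it suffices, by the standard iterated-cone criterion for Postnikov systems, to verify vanishing of $\Hom(\sT_i^a, \sT_i^b[k])$ in a specified degree range. By biadjointness these reduce to endomorphism spaces of compositions $\sF_i^{(m')} * \sE_i^{(n')}$, which can be computed by iteratively applying axioms (iv) and (v) until only $\O_\Delta \otimes_\k H^\star(\p^N)$ summands remain. Combined with $\End(\O_{Y(\l)}) = \k$ from axiom (i), the required vanishings follow by tracking cohomological shifts.

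To show invertibility, construct a parallel ``inverse'' complex running from $Y(\l - \la\l,\alpha_i\ra\alpha_i)$ back to $Y(\l)$, with terms built from $\sE_i^{(s)} * \sF_i^{(\la\l,\alpha_i\ra + s)}$, admitting a convolution $\sT_i'$. Convolving the two complexes termwise and using axiom (v) to split each occurrence of $\sF_i * \sE_i$ produces a double complex in which explicit null-homotopies telescope all entries away except a single copy of $\O_\Delta$, giving $\sT_i' * \sT_i \simeq \O_\Delta$ and symmetrically for the other composition.

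The hardest step is this last one: one needs genuine null-homotopies, not just cancellations at the level of isomorphism classes. This is where the deformation axioms (vi), (x), (xi) enter decisively. By Proposition~\ref{th:deformsum}, the obstruction class $c_{\Lambda_i,\Lambda_i}(\sE_i)$ supplies a canonical isomorphism between the two copies of $\sE_i^{(2)}$ appearing inside $\sH^*(\sE_i * \sE_i)$, rigidifying the divided-power identities at the level of morphisms of Fourier-Mukai kernels. Without such rigidification the Rickard double-complex cancellations would depend on non-canonical choices and could not be assembled into a coherent null-homotopy, and one would be stuck at the naive-categorical level of Theorem~\ref{thm:naive}, which is not enough to produce the equivalence.
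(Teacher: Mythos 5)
The theorem you are attempting to prove is not proved in this paper; it is quoted as the main result of \cite{ckl3}, and the argument there factors through two earlier results: \cite{ckl2} shows that a geometric categorical $\sl_2$ action gives a \emph{strong} categorical $\sl_2$ action (i.e. constructs the 2-morphisms $X_i, T_{ii}$ and verifies the nil affine Hecke relations), and \cite{ckl3} then runs the Chuang-Rouquier Rickard-complex argument in the strong setting. Your outline tries to compress both steps into one, and while the overall shape (complex, convolution, inverse complex, telescoping double complex) is right, the collapse creates a real gap.

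Concretely: the hard part is, as you correctly identify, producing genuine null-homotopies in the termwise convolution $\sT_i' * \sT_i$, not just isomorphism-class cancellations. But what supplies those homotopies in \cite{ckl3} is the full nil affine Hecke algebra acting on $\sE_i^n$ --- in particular the relations $T_{ii}^2 = 0$, the braid relation for $T_{ii}$, and the relation $X_iI \circ T_{ii} - T_{ii}\circ IX_i = I$ --- and these are used as algebraic input to build explicit chain maps and homotopies between the rows of the double complex. Your proposal invokes Proposition~\ref{th:deformsum} as the rigidifying input, but that proposition only produces an isomorphism $\sH^0(Ic_{v,v}(\sE_i))$ between the two copies of $\sE_i^{(2)}$ \emph{on cohomology sheaves}. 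Lifting this to a 2-morphism $X_i : \sE_i[-1]\to\sE_i[1]$ and then verifying the full nil affine Hecke relations (including the much subtler interaction with $T_{ii}$, whose construction requires the adjunction data and more) is exactly the content of \cite{ckl2}, and is nontrivial --- it is not a corollary of the single cohomological isomorphism you cite. As written, the step ``the deformation class rigidifies the divided-power identities at the level of morphisms'' is a placeholder for most of the work.

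Two smaller points. First, $d_i^{s-1}\circ d_i^s = 0$ does not need any zigzag or divided-power manipulation: by the $\Hom$-vanishing $\Ext^{\le 0}(\sT_i^t,\sT_i^s)=0$ for $t>s+1$ (the estimate underlying Lemma~\ref{lem:maps}, from \cite[Prop.~5.2]{ckl3}), the space $\Hom(\sT_i^s,\sT_i^{s-2})$ is simply zero, so the composite vanishes for free. Attempting to verify it by hand via the defining $\i$'s and $\e$'s is both harder and unnecessary. Second, the same $\Hom$-vanishing (at degrees $k<0$ for $t>s+1$ and $k<0$ for $t=s,s+1$) is precisely what gives existence and uniqueness of the right convolution, so your second step is correct but should cite the specific vanishing ranges rather than gesture at ``tracking cohomological shifts.''

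In short: your outline matches the high-level strategy, but the invertibility step cannot be carried out from the geometric axioms alone without first constructing the strong categorical $\sl_2$ action --- the intermediate step you omitted is where the deformations actually do their work, and Proposition~\ref{th:deformsum} is only the opening move of that construction.
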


In the theorem above, by right convolution we mean an iterated cone starting from the right (see section \ref{sec:step3}). 

\subsection{Braid group action via geometric categorical $\g$ actions}

As noted in the previous section, each vertex $i \in I$ induces a geometric categorical $\sl_2$ action and subsequently an equivalence $\sT_i$ (or, more precisely, a series of equivalences $\sT_i(\l)$, one for each weight $\l$). The main result of this paper is to prove that these equivalences braid. 

\begin{Theorem}\label{thm:main} 
Let $ Y(\lambda), \dots $ be a geometric categorical $ \sl_2 $ action.

If $i,j \in I$ are joined by an edge then the corresponding equivalences $\sT_i$ and $\sT_j$ satisfy the braid relation $\sT_i * \sT_j * \sT_i \cong \sT_j * \sT_i * \sT_j$. If $i, j \in I$ are not joined by an edge then $\sT_i * \sT_j \cong \sT_j * \sT_i$.  Hence there is an action of the braid group of type $ \g $ on $ D(\sqcup Y(\l)) $ compatible with the action of the Weyl group on the weight lattice.   
\end{Theorem}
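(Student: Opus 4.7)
The plan is to handle the two cases separately. The commutation relation for non-adjacent $i,j$ should follow almost termwise from the defining axioms, while the braid relation will require a more delicate comparison of iterated convolutions, organized so that both sides are seen to compute a common kernel attached to the length-$3$ Weyl group element $s_is_js_i = s_js_is_j$.

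For the commutation case, I would first observe that the complex defining $\sT_i(\l)$ only involves the kernels $\sE_i^{(r)}$ and $\sF_i^{(r)}$ applied to various weights $\mu$ on the $\alpha_i$-string through $\l$. By axiom (\ref{co:EiEj}) (the non-adjacent case) combined with axiom (\ref{co:EiFj}) and the biadjointness (\ref{co:adjoints}), every such kernel commutes up to isomorphism with each $\sE_j^{(s)}$ and $\sF_j^{(s)}$. Convolving termwise, the complex for $\sT_j * \sT_i$ is isomorphic, term-by-term and differential-by-differential, to that of $\sT_i * \sT_j$, and uniqueness of the convolution (Theorem \ref{thm:ckl3}) finishes the argument. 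The only subtlety is to check that the term-wise isomorphisms assemble into a chain map, which reduces to naturality of the adjunction maps $\i$ and $\e$ used in defining $d_i^s$.

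For the braid case, the first step is the rank-$2$ reduction: since only $\sE_i$, $\sF_i$, $\sE_j$, $\sF_j$ enter into $\sT_i * \sT_j * \sT_i$ and $\sT_j * \sT_i * \sT_j$, the argument involves only the $\sl_3$-subsystem at $\{i,j\}$ and the weights of the form $\l + a\alpha_i + b\alpha_j$. I would then expand $\sT_i * \sT_j * \sT_i(\l)$ into a triple iterated convolution whose terms are products $\sF_i^{(\bullet)} \sE_i^{(\bullet)} \sF_j^{(\bullet)} \sE_j^{(\bullet)} \sF_i^{(\bullet)} \sE_i^{(\bullet)}$ and, using the naive $\g$-relations of Theorem \ref{thm:naive} and the Serre-type relation $\sE_i\sE_j\sE_i \cong \sE_i^{(2)}\sE_j \oplus \sE_j \sE_i^{(2)}$ (together with its $\sF$-analogue and the mixed $\sE$-$\sF$ commutation), rewrite each term as a sum of the ``symmetric'' kernels $\sF_j^{(p)}\sF_i^{(q)}\sF_j^{(r)} * \sE_j^{(r)}\sE_i^{(q)}\sE_j^{(p)}$ (up to shifts), with the divided powers $(p,q,r)$ indexed by a triple satisfying $p+q+r$ equal to the cohomological degree. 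The goal is to show that the result is manifestly symmetric in $i \leftrightarrow j$; once this is achieved, the same formula computed starting from $\sT_j * \sT_i * \sT_j$ produces an isomorphic kernel.

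The main obstacle, and the place where the deformation data really enters, is producing and controlling the differentials. On the level of objects, the Serre relation suffices to match terms, but one must show that the specific maps $d_i^s$ from the definition of $\sT_i$ assemble into the correct differential on the symmetrized complex. Here I would invoke Lemma \ref{lem:Eij} (implicitly used to build $T_{ij}$ and $\sE_{ij}$) together with Proposition \ref{th:deformsum}, which says that the obstruction $c_{\Lambda_i,\Lambda_i}(\sE_i)$ is non-degenerate on $\sE_i^{(2)}$ and vanishes on $\alpha_i^\perp$. This non-degeneracy is what pins down the relevant Hom spaces to be one-dimensional, so that each required map between consecutive terms of the iterated convolution is unique up to a nonzero scalar and must agree on both sides. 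The bookkeeping of signs/scalars is handled by the same rigidity: any endomorphism of $\sT_i * \sT_j * \sT_i$ acts as a scalar on each divided-power summand, and one chooses the matching scalars by comparing with the deformation. I would organize this via a spectral sequence or filtration argument on one of the convolutions, with $E_\infty$-page built from the symmetric kernels, reducing the braid relation to the computation of $\Hom$ spaces between divided-power products, already controlled by Proposition \ref{prop:EXE} and Lemma \ref{lem:Eij}.
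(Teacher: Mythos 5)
Your treatment of the non-adjacent case is essentially the paper's (termwise commutation of the defining complexes plus uniqueness of convolution), so that part is fine. For the braid case, however, you propose a direct symmetrization of the triple convolution $\sT_i * \sT_j * \sT_i$, and this is where there is a genuine gap. The paper does not attempt to symmetrize the triple convolution. Instead, the organizing idea — which is absent from your proposal — is to introduce the auxiliary kernel $\sE_{ij} = \Cone(T_{ij})$ (a ``root vector'' for $\alpha_i + \alpha_j$), together with its divided powers $\sE_{ij}^{(r)} := \sT_i * \sE_j^{(r)} * \sT_i^{-1}$, and to prove the single intertwining isomorphism $\sE_{ij} * \sT_i \cong \sT_i * \sE_j$ (Corollary \ref{cor:EijTi=TiEj}). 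This is established by an induction on partial convolutions $\sT_i^{\le s}$, propagated via a $3 \times 3$ diagram argument (Proposition \ref{prop:step3}) and the fine $\Hom$-space bounds of Lemma \ref{lem:maps}. From the intertwining relation one obtains $\sT_{ij} * \sT_i \cong \sT_i * \sT_j$ and, by the analogous computation on the other side, $\sT_j * \sT_{ij} \cong \sT_i * \sT_j$; the braid relation then drops out by composing these two identities. Nothing in your sketch supplies this.

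Concretely, your plan to rewrite each term of $\sT_i * \sT_j * \sT_i$ as a sum of ``symmetric'' kernels and then recognize $i \leftrightarrow j$ symmetry faces two problems you do not resolve. First, the proposed form $\sF_j^{(p)}\sF_i^{(q)}\sF_j^{(r)} * \sE_j^{(r)}\sE_i^{(q)}\sE_j^{(p)}$ is not symmetric under exchanging $i$ and $j$ and cannot be obtained in the way you describe from the terms $\sF_i^{(\cdot)}\sE_i^{(a)}\sF_j^{(\cdot)}\sE_j^{(b)}\sF_i^{(\cdot)}\sE_i^{(c)}$ appearing in the triple convolution — there is a genuine combinatorial asymmetry to handle, which is precisely why the paper introduces $\sE_{ij}$ rather than trying to force a symmetric presentation. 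Second, even granting a termwise matching, the differentials of the two triple convolutions live in $\Hom$-spaces that are not a priori one-dimensional (they become so only after carefully reorganizing via $\sE_{ij}$, as in Lemma \ref{lem:maps} and Corollary \ref{cor3}), so the rigidity you invoke to pin down scalars is not available in the form you need it. The appeal to Proposition \ref{th:deformsum} is also misplaced: the deformation enters the argument only once, through the proof of Lemma \ref{lem1} (showing $T_{ij} I$ induces an isomorphism on the $\sE_j * \sE_i^{(2)}$ summand), and everything downstream — Corollaries \ref{cor1}, \ref{cor2}, and the induction in Section \ref{sec:proof} — is a formal consequence of that one geometric input, not a direct ``non-degeneracy of $c_{\Lambda_i,\Lambda_i}$ pins down the differentials'' argument. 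In short, the proposal omits the key reduction to the conjugation relation $\sE_{ij} * \sT_i \cong \sT_i * \sE_j$, and the suggested replacement (spectral sequence on a symmetrized complex) is not filled in at the point where the real work happens.
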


Let us examine this action on the level of the Grothendieck groups $ \oplus K(Y(\l)) $.  The above theorem provides us with an action of the braid group $ B_\g $ on $ \oplus K(Y(\lambda))$.  On the other hand, Theorem \ref{thm:naive} provides us with an action of $ U_q(\g)$ on $ \oplus K(Y(\l)) $.  These two structures are compatible via Lusztig's quantum Weyl group map $B_\g \rightarrow \widehat{U}_q(\g) $.  This follows from \cite{ckl3}. (In the non-equivariant case, i.e. $q=1$, then this is the same as the usual map $B_\g \rightarrow \widehat{U}(\g) $).

\begin{Example}
As a simple application of this theorem, we can consider the minimal resolution $ Y$ of the Kleinian singularity $\C^2/H$.  In section \ref{se:eg}, we explained that $ Y = Y(0) $ was the 0 weight space of a geometric categorical $ \g $ action.  Hence by Theorem \ref{thm:main}, we obtain an action of the braid group $ B_\g$ on $ D(Y)$.  As mentioned earlier, such an action was previously studied by Seidel-Thomas \cite{ST}.  A more substantial application will be given in the next section.
\end{Example}

\section{Example: cotangent bundles to flag varieties}\label{sec:example}

Before we prove Theorem \ref{thm:main} we would like to illustrate a geometric categorical $\sl_n$ action on the $\C^\times$-equivariant derived category of coherent sheaves on the cotangent bundle to partial flag varieties (Theorem \ref{thm:sl_nexample}).  We work $\C^\times$-equivariantly in order to have condition (\ref{co:trivial}) hold (if not, $\End(\O_{Y(\l)}) \cong H^0(\O_{Y(\l)}) $ will be infinite dimensional). Functors will always be considered in the derived sense (i.e. as functors between derived categories). 

\subsection{The categorical $ \g $ action}

Fix integers $ n \le N $.  We consider the variety $ Fl_n(\C^N) $ of $n$-step flags in $\C^N $.  This variety has many connected components, which are indexed by the possible dimensions of the spaces in the flags.  In particular, let 
\begin{equation*}
C(n, N) := \{ \l = (\l_1, \dots, \l_n) \in \mathbb{N}^N: \l_1 + \dots + \l_n = N \}.
\end{equation*}

For $ \l \in C(n,N)$, we can consider the variety of $n$-steps flags where the jumps are given by $ \l$:
\begin{equation*}
Fl_\l(\C^N) := \{ 0= V_0 \subset V_1 \subset \cdots \subset V_n = \C^N : \dim V_i/V_{i-1} = \l_i \}.
\end{equation*}

Let $ Y(\l) = T^\star Fl_\l(\C^N)$ (if $\l \not\in C(n,N)$ we take $Y(\l) = \emptyset$).  These will be our varieties for the geometric categorical $\sl_n $ action.  We regard each $ \l $ as a weight for $ \sl_n $ via the identification of the weight lattice of $ \sl_n $ with the quotient $ \Z^n / (1, \cdots, 1)$.  For compatibility with \cite{ckl4}, we choose the convention that the simple roots $\alpha_i$ are equal to $(0, \dots, 0,-1,1,0, \dots, 0)$ where the $-1$ is in position $i$.  

We will make use of the following description of the cotangent bundle to the partial flag varieties.  
\begin{equation*}
Y(\l) = \{(X, V): X \in \End(\C^N), V \in Fl_\l(\C^N), X V_i \subset V_{i-1} \}
\end{equation*}
This description immediately leads to the following deformations of $Y(\l) $ over $ \C^n $
\begin{equation*}
\tilde{Y}(\l) := \{(X, V, x) : X \in \End(\C^N), V \in Fl_\l(\C^N), x \in \C^n, X V_i \subset V_i, X |_{V_i/V_{i-1}} = x_i \cdot \id \}. 
\end{equation*}
In more Lie-theoretic terms, $ Fl_\l(\C^N)$ is the variety of parabolic subalgebras $\mathfrak{p}$ of $ \gl_N $ of type $ \l$.  $ Y(\l) $ is the variety of pairs $(X, \mathfrak{p}) $ where $ X \in \gl_N $, $ \mathfrak{p} $ is a parabolic subalgebra of type $\l$ and $ X $ is in the nilradical of $ \mathfrak{p} $.  Finally $ \tilde{Y}(\l) $ is the variety of triples $ (X, \mathfrak{p}, x) $ where $ X $ is in $ \mathfrak{p} $ and its image in the Levi of $\mathfrak{p} $ is the central element $ x $.

We will restrict our deformation over the locus $ \{(x_1, \dots, x_n) \in \C^n : x_n = 0 \} $ which we identify with $ \h' $, the Cartan for $ \sl_n $.

We define an action of $ \C^\times $ on $ \tilde{Y}(\l) $ by $ t \cdot (X, V, x) = (t^2 X, V, t^2 x) $.  Restricting to $ Y(\l) = T^\star Fl_\l(\C^N) $ this corresponds to a trivial action on the base and a scaling of the fibres.

To construct the kernels $ \sE^{(r)}_i $, we consider correspondences $ W_i^{r} $.  More specifically, let $ \l, i,r $ be such that $ \l \in C(n,N) $ and $ \l + r \alpha_i \in C(n,N) $ (ie $ \lambda_{i} \ge r $).  Then we define
\begin{equation*} 
W_i^{r}(\l) := \{ (X, V, V') : (X, V) \in Y(\l), (X,V') \in Y(\l + r\alpha_i), V_j = V'_j \text{ for } j \ne i, \text{ and } V'_i \subset V_i \}
\end{equation*}
From this correspondence we define the kernel 
\begin{equation*}
 \sE^{(r)}_i(\l) = \O_{W_i^{r}(\l)} \otimes \det(V_{i+1}/V_i)^{-r} \otimes \det(V'_i/V_{i-1})^r \{r(\l_i-r)\}  \in D(Y(\l) \times Y(\l+r\alpha_i) )
 \end{equation*}
where $\{\cdot\}$ denotes an equivariant shift and, abusing notation, $V_i$ denotes the vector bundle on $Y(\l)$ whose fibre over $(X,V) \in Y(\l)$ is naturally identified with $V_i$. Similarly, we define the kernel
\begin{equation*}
 \sF^{(r)}_i(\l) = \O_{W_i^{r}(\l)} \otimes \det(V'_i/V_i)^{\l_{i+1} - \l_i + r}\{ r \l_{i+1} \}  \in D(Y(\l+r \alpha_i) \times Y(\l) ).
 \end{equation*}
Note that now we regard $ W_i^{r}(\l) $ as a subvariety of $ Y(\l+r \alpha_i) \times Y(\l) $ which means that $V_i \subset V_i'$ (we will continue to use this convention). 

\begin{Theorem}\label{thm:sl_nexample}
This datum defines a geometric categorical $ \sl_n $ action on $D(T^\star Fl_n(\C^N))$. 
\end{Theorem}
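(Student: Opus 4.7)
The plan is to verify the eleven conditions (i)--(xi) defining a geometric categorical $\sl_n$ action, organized according to whether they involve a single index $i$, two distinct indices $i \ne j$, or the deformations $\tY(\l)$.

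\emph{Single-index conditions (i)--(vii).} For each fixed $i \in I$, the plan is to reduce to the geometric categorical $\sl_2$ action on cotangent bundles to Grassmannians established in \cite{ckl2}. Fixing all subspaces $V_j$ for $j \notin \{i-1, i, i+1\}$ exhibits the varieties $\{Y(\l + n\alpha_i)\}_n$, the kernels $\sE_i^{(r)}, \sF_i^{(r)}$, and the one-parameter deformation $\tY_i$, as a family over a base parametrizing those fixed subspaces, whose fibre is precisely the Grassmannian $\sl_2$ setup. All of (i)--(vii) are then relative versions of axioms already checked fibrewise in \cite{ckl2}: condition (i) uses the $\C^\times$-equivariance for finiteness, (iii) follows from a direct canonical bundle computation on the smooth correspondence $W_i^{r}$, and the remaining cohomological statements reduce to the $\sl_2$ case by flat base change.

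\emph{Multi-index relations (viii), (ix).} When $|i-j| \geq 2$, the correspondences $W_i^{r}$ and $W_j^{s}$ modify disjoint pairs of consecutive subspaces, so their convolutions agree transversely on the level of correspondences, yielding both parts of (viii) and (ix) in this case immediately. The adjacent case of (ix), namely $\sF_j * \sE_i \cong \sE_i * \sF_j$ when $|i-j|=1$, likewise follows by transversality of supports. The substantive case is (viii) for adjacent $j = i \pm 1$, where we must verify the Serre decomposition
$$\sE_i * \sE_{i+1} * \sE_i \cong \sE_i^{(2)} * \sE_{i+1} \oplus \sE_{i+1} * \sE_i^{(2)};$$
this will follow from a stratification of the triple correspondence parametrizing the appropriate chains of subspace modifications, together with careful bookkeeping of the line-bundle twists defining $\sE_i^{(r)}$ to match the two summands.

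\emph{Deformation conditions (x), (xi).} Using the explicit description $\tY(\l) = \{(X, V, x) : X V_i \subset V_i, X|_{V_i/V_{i-1}} = x_i \cdot \id\}$, the hyperplane $\alpha_i^\perp \subset \h'_\k$ is exactly the locus $\{x_i = x_{i+1}\}$. On this locus, for any $V'_i$ with $V_{i-1} \subset V'_i \subset V_i$ the operator $X$ automatically preserves $V'_i$ and acts as $x_i \cdot \id$ on $V'_i/V_{i-1}$, so the correspondence $W_i^{r}$ extends as a flat family over $\alpha_i^\perp$, yielding $\tsE_i$. For (xi), the sheaf $\sE_{ij}$ for adjacent $i,j$ admits a geometric realization (up to line-bundle twist) as the structure sheaf of the correspondence that simultaneously modifies $V_i$ and $V_{i+2}$ by one dimension; on $(\alpha_i + \alpha_{i+1})^\perp = \{x_i = x_{i+2}\}$ the analogous $X$-stability constraint is automatic, producing a flat deformation. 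The principal obstacle is the Serre relation in (viii), which requires the direct geometric computation on the triple correspondence; the deformation axioms are concrete once one identifies the right correspondences but require care with flatness and with matching the canonical maps of Lemma \ref{lem:Eij}.
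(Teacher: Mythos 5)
Your outline correctly identifies the structure of the verification and several parts (the reduction of conditions (i)--(vii) to the Grassmannian case, the transversality argument for the non-adjacent cases of (viii) and for (ix), the identification of $\alpha_i^\perp$ and $(\alpha_i + \alpha_{i+1})^\perp$ with the correct diagonal loci) agree in spirit with the paper. However, the treatment of the deformation conditions (x), (xi) contains a genuine error, and the Serre relation is only gestured at.

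For condition (x), you claim that over $\alpha_i^\perp = \{x_i = x_{i+1}\}$ the deformed correspondence is automatic: ``for any $V'_i$ with $V_{i-1}\subset V'_i\subset V_i$ the operator $X$ automatically preserves $V'_i$ and acts as $x_i\cdot\id$ on $V'_i/V_{i-1}$, so the correspondence $W_i^r$ extends as a flat family.'' The first assertion is correct, but the conclusion does not follow. For $(X,V',x)$ to lie in $\tY(\l+r\alpha_i)$ one also needs $X$ to act as $x_{i+1}\cdot\id$ on $V'_{i+1}/V'_i = V_{i+1}/V'_i$, i.e.\ $(X-x_{i+1})(V_{i+1})\subset V'_i$. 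Over $\alpha_i^\perp$ one has $(X-x_{i+1})(V_{i+1})\subset V_i$ and $(X-x_{i+1})(V_i)\subset V_{i-1}\subset V'_i$, hence only $(X-x_{i+1})^2(V_{i+1})\subset V'_i$; the first-order condition $(X-x_{i+1})(V_{i+1})\subset V'_i$ is a nontrivial closed condition that must still be imposed. The deformed correspondence is therefore cut out by a genuine equation, and what one must prove is that the resulting scheme is flat over $\alpha_i^\perp$ with reduced central fibre equal to $W_i^r$. The paper establishes exactly this (in the harder case (xi)) by a local-coordinate computation, using Riche's device of restricting to a slice with a parabolic group action and exhibiting the deformed correspondence locally as a hypersurface $\{x_i - x_{i+1} = a_k c_l\}$ in an affine space. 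Your proposal omits this step entirely, and the claim of ``automatic'' flatness is false.

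Your discussion of (xi) has the same gap, and in addition misidentifies the support of $\sE_{ij}$: you describe it as the correspondence ``that simultaneously modifies $V_i$ and $V_{i+2}$,'' but for $j=i+1$ the paper's $U_{i\,i+1}$ modifies $V_i$ and $V_{i+1}$ (the root $\alpha_i+\alpha_{i+1}$ corresponds to positions $i$ and $i+2$ in the deformation base, not to modified subspaces). Finally, for the adjacent Serre relation (viii), the phrase ``will follow from a stratification of the triple correspondence'' is consistent with the paper's argument but stops short of the key geometric input: after splitting the fibre product into components $A$ and $B$, one must still show that the contribution of $E=A\cap B$ vanishes, which the paper does by observing that $\pi_{13}|_E$ is a $\p^1$-bundle along which the twisting line bundle restricts to $\O_{\p^1}(-1)$. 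Without this vanishing there is no direct sum decomposition, so this cannot be dismissed as bookkeeping.
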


By Theorem \ref{thm:naive}, this gives us a representation of $ U_q(\sl_n)$ on $$ K(T^\star Fl_n(\C^N)) = \oplus_\lambda  K(T^\star Fl_\lambda(\C^N)).$$    
Since 
$$ \dim K(T^\star Fl_\lambda(\C^N)) = \begin{pmatrix} N \\ \lambda_1 \cdots \lambda_n \end{pmatrix}  $$
and since representations of $ U_q(\sl_n) $ are determined by the dimensions of their weight spaces, we can identify this representation with $V_{\Lambda_1}^{\otimes N}$.

\subsection{The braid group action}

As a corollary of this theorem and the main result of this paper (Theorem \ref{thm:main}), we obtain the following.

\begin{Theorem}
There is an action of the braid group $ B_n $ on the derived category of coherent sheaves on $ T^\star Fl_n(\C^N) $.  This action is compatible with the action of $ S_n $ on the set of connected components $ C(n,N)$.
\end{Theorem}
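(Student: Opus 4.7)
The plan is to view this statement as a direct consequence of the two preceding theorems. First I would invoke Theorem \ref{thm:sl_nexample}, which packages the data $\{Y(\l), \sE_i^{(r)}(\l), \sF_i^{(r)}(\l), \tilde{Y}(\l)\}$ defined via the correspondences $W_i^r(\l)$ into a genuine geometric categorical $\sl_n$ action. Then I would apply the main result of the paper, Theorem \ref{thm:main}, to this action. Since the braid group of $\sl_n$ (as defined in the notation section) is generated by $\sigma_1, \dots, \sigma_{n-1}$ subject to the usual type $A_{n-1}$ braid relations, it coincides with Artin's braid group $B_n$. The equivalences $\sT_i(\l) : D(Y(\l)) \xrightarrow{\sim} D(Y(\l - \la \l, \alpha_i \ra \alpha_i))$ supplied by Theorem \ref{thm:ckl3} satisfy the braid relations by Theorem \ref{thm:main}, so they assemble into a $B_n$-action on
\[
D(T^\star Fl_n(\C^N)) \;=\; \bigoplus_{\l \in C(n,N)} D(T^\star Fl_\l(\C^N)).
\]

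For the compatibility with $S_n$, recall that Theorem \ref{thm:main} asserts compatibility of the $B_\g$-action with the Weyl group action on the weight lattice. The Weyl group of $\sl_n$ is $S_n$, which acts on the weight lattice $\Z^n/(1,\dots,1)$ by permutation of coordinates. Under our identification of the connected components of $Fl_n(\C^N)$ with $C(n,N) \subset \Z^n$, this is exactly the $S_n$-action on the set of composition types. Hence $\sigma_i$ sends $D(T^\star Fl_\l(\C^N))$ to $D(T^\star Fl_{s_i \cdot \l}(\C^N))$, where $s_i \in S_n$ is the simple transposition.

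There is essentially no obstacle at this stage: all the real content lies either in the construction and verification of the categorical $\sl_n$ action of Theorem \ref{thm:sl_nexample} (where each of the axioms (i)--(xi) of section \ref{sec:geomcat} must be checked against the explicit correspondences $W_i^r(\l)$ and the deformation $\tilde{Y}(\l)$ of pairs $(X,\mathfrak{p},x)$), or in Theorem \ref{thm:main} itself, which requires the full convolution machinery of Theorem \ref{thm:ckl3} together with the obstruction calculus from Proposition \ref{th:deformsum}. The present theorem is simply the specialization of that abstract output to this concrete geometric setting.
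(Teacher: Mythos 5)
Your proposal is correct and matches the paper's approach exactly: the paper presents this theorem as an immediate corollary of Theorem \ref{thm:sl_nexample} (the geometric categorical $\sl_n$ action) combined with Theorem \ref{thm:main} (geometric categorical $\g$ actions yield $B_\g$-actions), with the $S_n$-compatibility coming from the Weyl-group compatibility built into Theorem \ref{thm:main}. You have simply spelled out the deduction that the paper leaves implicit.
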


In particular, if $ N = dn $ for some integer $ d $ and we choose $ \l = (d, \dots, d) $, then we obtain an action of the braid group of the derived category of coherent sheaves on the connected variety $ T^\star Fl_\l(\C^N) $.

\begin{Example}
Consider the case $n =N$.  Let $ T^\star(Fl(\C^n)) $ denote the cotangent bundle to the full flag variety. We have constructed an action of the braid group $ B_n $ on $D(T^\star(Fl(\C^n)))$.  Such an action was previously constructed by Khovanov-Thomas \cite{KT} and by Riche \cite{Ric}, Bezrukavnikov-Mirkovic-Rumynin \cite{BMR}.  Their work served as motivation for this paper.  In this case, the generators of the braid group act by spherical twists (see \cite[section 2.5]{ckl3}).  This is the simplest case of our result, since in general the equivalences which generate the braid group action are given by more complicated complexes than spherical twists.

In \cite{KT,Ric,BMR}, the braid group action is extended to an affine braid group action. The extra generators for this extended action are given by tensoring with certain line bundles. One can similarly construct such an affine braid group action on $D(T^\star(Fl_n(\C^N)))$ using line bundles. We discuss this in greater detail in \cite{ckl4} where we build on the results from this paper to construct affine braid group actions on Nakajima quiver varieties.
\end{Example}

Even though the construction of each equivalence 
$$\T_i: D(T^\star Fl_\l(\C^N)) \rightarrow D(T^\star Fl_{\l-\la \l, \alpha_i \ra \alpha_i}(\C^N))$$
via a categorical $\sl_2$ action is indirect, the kernels one obtains are fairly concrete. More precisely, the kernel $\sT_i$ which induces the functor $\T_i$ is always a sheaf supported on the variety 
\begin{eqnarray*}
Z_i(\l) := \{(X,V,V')&:& X \in \End(\C^N), V \in Fl_\l(\C^N), V' \in Fl_{\l-\la \l, \alpha_i \ra \alpha_i}(\C^N) \\
&& XV_j \subset V_{j-1}, XV'_j \subset V'_{j-1} \text{ and } V_j=V'_j \text{ if } j \ne i \}.
\end{eqnarray*}
In general $\sT_i$ is not the structure sheaf of $Z_i(\l)$ but rather some rank one Cohen-Macaulay sheaf on $Z_i(\l)$. In \cite{C} we give a concrete description of this sheaf in the Grassmannian case ($n=2$ case). A similar description of $\sT_i$ is possible in general.

In the rest of this section we prove Theorem \ref{thm:sl_nexample}.

\subsection{Proof of $\sl_2$ conditions (\ref{co:trivial}) - (\ref{co:contain})}

In \cite{ckl2} (based on the computations in \cite{ckl1}) we proved that the $\sl_2$ relations (\ref{co:trivial}) - (\ref{co:contain}) hold for cotangent bundles to Grassmannians (i.e. the case $n=2$). The same proof with virtually no changes necessary applies to prove these relations for any $n$. 

As an example, we will check the adjunction relations (\ref{co:adjoints}).  We begin by computing some canonical bundles.

\begin{Lemma}
$\omega_{T^\star Fl_\l(\C^N)} \cong \O_{T^\star Fl_\l(\C^N)} \{- 2 \sum_{i < j} \lambda_i \lambda_j \}$
\end{Lemma}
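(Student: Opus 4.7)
The plan is to exploit the fact that for any smooth $X$ the cotangent bundle $T^\star X$ is symplectic, so $\omega_{T^\star X}\cong \O_{T^\star X}$ as an ordinary (non-equivariant) line bundle, and then to track the $\C^\times$-weight contributed by the fibre scaling. I would first recall the standard dimension formula $\dim Fl_\l(\C^N)=\sum_{i<j}\lambda_i\lambda_j$ for the partial flag variety $GL_N/P_\l$.

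Next, I would apply the relative cotangent sequence to the vector bundle projection $\pi:T^\star Fl_\l(\C^N)\to Fl_\l(\C^N)$. The canonical identification of the vertical cotangent sheaf $\Omega^1_{T^\star Fl_\l(\C^N)/Fl_\l(\C^N)}$ with $\pi^* T_{Fl_\l(\C^N)}$ (the dual of the bundle whose total space is being taken) gives a short exact sequence
\[
0\to \pi^*\Omega^1_{Fl_\l(\C^N)}\to \Omega^1_{T^\star Fl_\l(\C^N)}\to \pi^* T_{Fl_\l(\C^N)}\to 0.
\]
Taking top exterior powers yields
\[
\omega_{T^\star Fl_\l(\C^N)}\cong \pi^*\bigl(\omega_{Fl_\l(\C^N)}\otimes \omega_{Fl_\l(\C^N)}^{-1}\bigr)\cong \O_{T^\star Fl_\l(\C^N)}
\]
as an ordinary line bundle.

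It remains to identify the $\C^\times$-weight of a trivialising top form. Under the action $t\cdot(X,V)=(t^2X,V)$, the base $Fl_\l(\C^N)$ is fixed pointwise and the fibres scale with weight $2$; hence each fibre-direction coordinate function (and therefore its differential) has weight $-2$, while each base-direction differential carries weight $0$. A nowhere-vanishing local top form is a wedge of $\dim Fl_\l(\C^N)=\sum_{i<j}\lambda_i\lambda_j$ base-direction differentials with the same number of fibre-direction differentials, so it accumulates total weight $-2\sum_{i<j}\lambda_i\lambda_j$. Translating this weight back into the $\O\{k\}$ convention fixed in the $\k^\times$-equivariance subsection yields the claimed isomorphism.

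The only real subtlety is keeping the sign conventions straight between the raw $\C^\times$-weight of the trivialising form and the $\{k\}$-shift; once that is pinned down, the computation is a routine count of fibre-direction differentials.
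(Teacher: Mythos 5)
Your proof is correct and captures the same underlying mechanism as the paper's, but the packaging differs slightly. The paper's argument is a one-liner: the symplectic form $\omega$ has weight $2$, so $\omega^d$ is a weight-$2d$ nowhere-vanishing section of the canonical bundle, and the result follows immediately. You instead separate the two steps — establishing non-equivariant triviality and computing the equivariant weight — and for the first step you offer the relative cotangent exact sequence as a concrete alternative to simply invoking the symplectic structure. The cotangent-sequence route is more elementary and makes the identification $\omega_{T^\star X}\cong\pi^*\omega_X\otimes\pi^*\omega_X^{-1}$ explicit, at the small cost of needing the identification $\Omega^1_{T^\star X/X}\cong\pi^* T_X$; the paper's version buys brevity by bundling the triviality and the weight count into one observation about $\omega^d$. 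Your weight count (one factor of $\mp 2$ per fibre-direction differential, $d=\sum_{i<j}\lambda_i\lambda_j$ of them) is exactly the content of the paper's claim that $\omega^d$ has weight $2d$, modulo the action-versus-pullback sign convention that you correctly flag as the only subtlety. Both routes are sound and lead to the same answer.
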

\begin{proof}
The variety $ T^\star Fl_\l(\C^N) $ is symplectic (since it is a cotangent bundle) and the symplectic form has weight $ 2 $ for the $\C^\times $ action.   Hence the $ d$th wedge power of the symplectic form gives a non-vanishing section of the canonical bundle, where $ d $ is the dimension of $ Fl_\l(\C^N) $.  Since $ d = \sum_{i < j} \lambda_i \lambda_j $, the result follows.
\end{proof}

\begin{Lemma}
We have 
$$ \omega_{W_i^{r}(\l)} \cong \det(V_{i+1}/V_i)^{-r} \det(V_i/V'_i)^{-\lambda_i + \lambda_{i+1} + r} \det(V_i'/V_{i-1})^{r} \{-2 \sum_{i < j} \lambda_i \lambda_j +2\lambda_{i+1} r \}. $$
\end{Lemma}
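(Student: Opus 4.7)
The idea is to realize $W_i^{r}(\l)$ as the total space of a vector bundle over a larger partial flag variety and then apply the standard formula $\omega_{\operatorname{Tot}(\mathcal{K})} \cong p^*(\omega_B \otimes \det\mathcal{K}^{-1})$. Let $\mu := (\l_1, \ldots, \l_{i-1}, \l_i - r, r, \l_{i+1}, \ldots, \l_n) \in C(n+1, N)$ and consider the $(n+1)$-step flag variety $Fl_\mu(\C^N)$. Sending a refined flag $V^{\mathrm{ref}}_\bullet$ to the pair $(V, V')$ with $V_j := V^{\mathrm{ref}}_j$ for $j < i$, $V'_i := V^{\mathrm{ref}}_i$, $V_i := V^{\mathrm{ref}}_{i+1}$, and $V_j := V^{\mathrm{ref}}_{j+1}$ for $j > i$ identifies $Fl_\mu(\C^N)$ with the incidence subvariety of $Fl_\l \times Fl_{\l + r\alpha_i}$ cut out by the compatibility conditions. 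Under this identification, the forgetful map $p \colon W_i^{r}(\l) \to Fl_\mu(\C^N)$ realizes $W_i^{r}(\l)$ as the total space of the vector bundle $\mathcal{K}$ whose fibre at $V^{\mathrm{ref}}$ consists of all $X \in \End(\C^N)$ that respect both $V$ and $V'$. Writing $U_a := V^{\mathrm{ref}}_a/V^{\mathrm{ref}}_{a-1}$, a direct inspection shows that $\mathcal{K}$ is the subbundle of $\mathfrak{n}_\mu \cong \bigoplus_{a > b} \Hom(U_a, U_b)$ obtained by dropping the two ``block'' summands $\Hom(U_{i+1}, U_i)$ and $\Hom(U_{i+2}, U_{i+1})$ (these are the blocks whose non-vanishing would violate the strengthenings $X V_{i+1} \subset V'_i$ and $X V'_i \subset V_{i-1}$ imposed beyond the generic refined-flag nilpotency).

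Using the isomorphism $\mathfrak{n}_\mu \cong T^*Fl_\mu$ (hence $\det \mathfrak{n}_\mu = \omega_{Fl_\mu}$) together with $\det \Hom(U, U') = \det(U)^{-\dim U'} \otimes \det(U')^{\dim U}$ applied to the two excised summands yields
$$\det \mathcal{K} \cong \omega_{Fl_\mu} \otimes \det(V_i/V'_i)^{\l_i - r - \l_{i+1}} \otimes \det(V'_i/V_{i-1})^{-r} \otimes \det(V_{i+1}/V_i)^{r}.$$
Substituting into $\omega_{W_i^{r}(\l)} \cong p^*(\omega_{Fl_\mu} \otimes \det\mathcal{K}^{-1})$ cancels the $\omega_{Fl_\mu}$ factors and recovers the claimed line bundle. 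For the equivariant shift, the $\k^\times$-action is trivial on $Fl_\mu$ and scales the fibre coordinates of $\mathcal{K}$ with weight $2$, so $\omega_{W_i^{r}(\l)}$ picks up an equivariant shift of $\{-2\rank \mathcal{K}\}$; a brief count gives $\rank \mathcal{K} = \dim \mathfrak{n}_\mu - r(\l_i - r) - r\l_{i+1} = \sum_{j<k} \l_j \l_k - r \l_{i+1}$, matching the required $\{-2\sum_{i<j} \l_i \l_j + 2 \l_{i+1} r\}$. The most delicate step is the identification of $\mathcal{K}$ as $\mathfrak{n}_\mu$ with exactly those two blocks excised; after that the remaining determinant computation is essentially mechanical.
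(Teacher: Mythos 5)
Your proof is correct and reaches the stated formula. The key move---passing to the refined flag variety $Fl_\mu(\C^N)$ with $\mu = (\lambda_1, \ldots, \lambda_{i-1}, \lambda_i - r, r, \lambda_{i+1}, \ldots, \lambda_n)$---is the same as in the paper, but you organize the determinant bookkeeping differently. The paper's proof introduces the intermediate variety $T$ (the locus in $T^\star Fl_\mu$ where $XU_{i+1} \subset U_{i-1}$) and applies the formula $\omega_A \cong \omega_B \otimes \det W$ twice: once to cut $T$ out of $T^\star Fl_\mu(\C^N)$ by a section of $(U_{i+1}/U_i)^\vee \otimes (U_i/U_{i-1})\{2\}$, and once to cut $W_i^{r}$ out of $T$ by a section of $(U_{i+2}/U_{i+1})^\vee \otimes (U_{i+1}/U_i)\{2\}$. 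You instead realize $W_i^{r}$ in one step as the total space of a subbundle $\mathcal{K} \subset \mathfrak{n}_\mu$ over the flag variety $Fl_\mu$ itself and apply the canonical-bundle formula for a vector bundle. These are two sides of the same coin: the two bundles along which the paper cuts are precisely the two blocks $\Hom(U_{i+1}, U_i)$ and $\Hom(U_{i+2}, U_{i+1})$ that you excise from $\mathfrak{n}_\mu$, and the identity $\det \mathfrak{n}_\mu \cong \omega_{Fl_\mu}$ does the same cancellation work that the paper gets from $\omega_{T^\star Fl_\mu} \cong \O\{-2\dim Fl_\mu\}$. Your packaging is arguably a bit more streamlined since it skips the intermediate variety.

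One small slip in the parenthetical justification: the strengthening responsible for excising $\Hom(U_{i+1}, U_i)$ is $X V_i \subset V_{i-1}$ (equivalently $X V^{\mathrm{ref}}_{i+1} \subset V^{\mathrm{ref}}_{i-1}$, since $V_i = V^{\mathrm{ref}}_{i+1}$), coming from the requirement $(X,V) \in Y(\l)$. The condition you wrote, $X V'_i \subset V_{i-1}$, translates to $X V^{\mathrm{ref}}_i \subset V^{\mathrm{ref}}_{i-1}$, which is already part of the generic nilradical condition and imposes nothing extra. This does not affect your argument, since you excise the correct pair of blocks and the determinant and rank computations that follow are right.
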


\begin{proof}
Let $ \mu := (\lambda_1, \dots, \lambda_{i-1}, \lambda_i - r, r, \lambda_{i+1}, \dots, \lambda_n)$ and let $ T $ denote the variety 
\begin{equation*}
T:=  \{ (X, U) : (X, U) \in Y(\mu), X U_{i+1} \subset U_{i-1}\}
\end{equation*}

We can view $ W_i^{r} $ as a subvariety of $ T $, by setting $ U_j = V_j $ for $ j < i$, $ U_i = V'_i, U_{i+1} = V_i $ and $ U_j = V_{i-1} $ for $ j > i+1 $.  Moreover we can see that $ W_i^{r} $ is carved out of $T$ as the vanishing locus of $X: U_{i+2}/U_{i+1} \rightarrow U_{i+1}/U_i \{2\}$. Thus $W_i^{r}$ is cut out of $T$ by a section of the vector bundle $(U_{i+2}/U_{i+1})^\vee \otimes U_{i+1}/U_i \{2\}$.  

Similarly, $ T $ is cut out of $ T^\star Fl_\mu(\C^N) $ by a section of $ (U_{i+1}/U_i)^\vee \otimes U_i/U_{i-1} \{2\}$. Thus 
$$\omega_{W_i^{r}(\l)} \cong \det((U_{i+2}/U_{i+1})^\vee \otimes U_{i+1}/U_i \{2\}) \otimes \det((U_{i+1}/U_i)^\vee \otimes U_i/U_{i-1} \{2\}) \otimes \omega_{ T^\star Fl_\mu(\C^N) }|_{W_i^r(\l)}.$$
Here we use that if $A \subset B$ is cut out by a section of a vector bundle $W$ then $\omega_A = \omega_B \otimes \det(W)$.  

Combining all this with our previous calculation of $ \omega_{T^\star Fl_\mu(\C^N)} $ we obtain the desired result. 
\end{proof}

We will now give the proof of the first adjunction statement.  The other proofs are similar.
\begin{Corollary}
${\sE_i^{(r)}}(\l)_R \cong \sF_i^{(r)}(\l)[r(\lambda_{i+1} - \lambda_i) +r^2]\{-r(\lambda_{i+1} - \lambda_i) -r^2\}
$
\end{Corollary}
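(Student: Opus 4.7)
The plan is to apply Grothendieck duality to the closed embedding $\iota: W_i^r(\l) \hookrightarrow Y(\l) \times Y(\l+r\alpha_i)$. Since $W_i^r(\l)$ was exhibited in the previous proof as the zero locus of regular sections in a smooth ambient space, it is a local complete intersection, so if we write $\sE_i^{(r)}(\l) = \iota_*\sL$ with
$$\sL = \det(V_{i+1}/V_i)^{-r} \otimes \det(V'_i/V_{i-1})^r \{r(\l_i-r)\},$$
Grothendieck duality yields
$$\sE_i^{(r)}(\l)^\vee \cong \iota_*\bigl(\sL^{-1} \otimes \det N_{W/Y \times Y'}\bigr)[-c],$$
where $c$ denotes the codimension of $W_i^r(\l)$ and $\det N_{W/Y\times Y'} = \omega_{W_i^r(\l)} \otimes \iota^*\omega_{Y(\l)\times Y(\l+r\alpha_i)}^{-1}$.

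Next I would assemble the right adjoint via the formula $\sE_i^{(r)}(\l)_R = \sE_i^{(r)}(\l)^\vee \otimes \pi_2^*\omega_{Y(\l)}[\dim Y(\l)]$ and substitute the already-computed canonical bundles. The first lemma gives $\omega_{Y(\mu)} = \O_{Y(\mu)}\{-2\sum_{j<k}\mu_j\mu_k\}$, and a short combinatorial identity shows $\sum_{j<k}(\l+r\alpha_i)_j(\l+r\alpha_i)_k = \sum_{j<k}\l_j\l_k + r\l_i - r\l_{i+1} - r^2$. Together with the formula for $\omega_{W_i^r(\l)}$ proved in the previous lemma, the line-bundle factors $\det(V_{i+1}/V_i)^{\pm r}$ and $\det(V_i'/V_{i-1})^{\pm r}$ coming from $\sL^{-1}$ and $\omega_{W_i^r(\l)}$ cancel, leaving only the single factor $\det(V_i/V_i')^{-\l_i+\l_{i+1}+r}$.

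Finally, the codimension is computed from the dimension identity $\dim W_i^r(\l) = 2\sum_{j<k}\l_j\l_k + r\l_i - r^2 - r\l_{i+1}$ (read off the construction via $T \subset T^\star Fl_\mu$), so $-c + \dim Y(\l)$ simplifies to $r(\l_{i+1}-\l_i)+r^2$, and the equivariant shifts collapse to $\{r\l_i - r^2\}$. After performing the swap $Y(\l)\times Y(\l+r\alpha_i) \to Y(\l+r\alpha_i)\times Y(\l)$, which interchanges the roles of $V$ and $V'$ and sends $\det(V_i/V_i')$ to $\det(V_i'/V_i)$, I compare with
$$\sF_i^{(r)}(\l)[r(\l_{i+1}-\l_i)+r^2]\{-r(\l_{i+1}-\l_i)-r^2\} = \iota_*\bigl(\det(V_i'/V_i)^{\l_{i+1}-\l_i+r}\{r\l_i - r^2\}\bigr)[r(\l_{i+1}-\l_i)+r^2],$$
and observe the two sides agree.

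The main obstacle is purely bookkeeping: tracking the equivariant $\{\cdot\}$ shifts, the homological shifts, and the convention swap between the two orderings of the factors without sign or exponent errors. The Grothendieck duality step itself requires justifying that $W_i^r(\l)$ is Cohen-Macaulay, which follows because the two successive zero-locus constructions used to cut it out of $T^\star Fl_\mu(\C^N)$ are regular (the codimensions match the ranks of the relevant vector bundles).
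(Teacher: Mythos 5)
Your proposal is correct and follows essentially the same route as the paper: both start from $\sE_i^{(r)}(\l)_R = {\sE_i^{(r)}}^\vee \otimes \pi_2^*\omega_{Y(\l)}[\dim Y(\l)]$, dualize the line bundle on $W_i^r(\l)$ via Grothendieck duality for the closed embedding into the smooth ambient $Y(\l)\times Y(\l+r\alpha_i)$ (the paper phrases this as $\O_{W_i^r}^\vee \cong \omega_{W_i^r}\omega_{Y\times Y'}^\vee[-\mathrm{codim}\, W_i^r]$), and then substitute the previously computed canonical bundles and track exponents. The only cosmetic difference is that you appeal to $W_i^r(\l)$ being lci/Cohen--Macaulay to justify the duality step, whereas the paper has already noted the stronger fact that $W_i^r(\l)$ is smooth.
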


\begin{proof}
We have 
\begin{align*}
{\sE_i^{(r)}}(\l)_R &\cong {\sE_i^{(r)}}^\vee \otimes \pi_2^* \omega_{Y(\l)} [\dim Y(\l)] \\
&\cong \O_{W_i^{r}}^\vee \otimes \det(V_{i+1}/V'_i)^r \det(V_i/V_{i-1})^{-r} \{-r(\l_i-r)\} \otimes \pi_2^* \omega_{Y(\l)} [\dim Y(\l)] \\
&\cong \omega_{W_i^{r}(\l)} \omega_{Y(\lambda + r\alpha_i) \times Y(\l)}^\vee [-\mathrm{codim} W_i^{r}(\l)]
\otimes \det(V_{i+1}/V'_i)^r \otimes \det(V_i/V_{i-1})^{-r} \\
&\qquad \otimes \pi_2^* \omega_{Y(\l)} \{-r(\l_i-r))\}  [\dim Y(\l)] \\
&\cong \O_{W_i^{r}(\l)} \otimes \det(V_{i+1}/V'_i)^{-r} \det(V'_i/V_i)^{-\lambda_i + \lambda_{i+1} + r} \det(V_i/V_{i-1})^{r} \{-2 \sum_{i < j} \lambda_i \lambda_j +2\lambda_{i+1} r \} \\
&\qquad \otimes \pi_1^* \omega_{Y(\lambda + r\alpha_i)}^\vee \det(V_{i+1}/V'_i)^r \det(V_i/V_{i-1})^{-r} \{-r(\l_i-r)\}  [\dim Y(\l)-\mathrm{codim} W_i^{r}(\l)] \\
&\cong  \O_{W_i^{r}(\l)} \otimes \det(V'_i/V_i)^{-\lambda_i + \lambda_{i+1} + r} [r(\lambda_{i+1} - \lambda_i) + r^2] \\
&\qquad \{ 2 (\sum_{i < j} \lambda_i \lambda_j + r \l_i - r \l_{i+1} - r^2) + (- 2 \sum_{i < j} \lambda_i \lambda_j + 2 \l_{i+1} r) - r(\l_i-r) \} \\
&\cong  \sF_i^{(r)}(\l)[r(\lambda_{i+1} - \lambda_i) +r^2]\{-r(\lambda_{i+1} - \lambda_i) -r^2\}
\end{align*}
where for the last isomorphism we use that $\sF^{(r)}_i(\l) = \O_{W_i^{r}(\l)} \otimes \det(V'_i/V_i)^{\l_{i+1} - \l_i + r}\{ r \l_{i+1} \}$. 
\end{proof}

\subsection{Proof of Serre relation (\ref{co:EiEj})}

Since we are in the Lie algebra $ \sl_n $, having $ i,j \in I $ joined by an edge is equivalent to $ j = i \pm 1 $.  So let us consider $ j = i + 1 $ (the case $j = i-1$ is the same). We will show that $$ \sE_i * \sE_{i+1} * \sE_i = \sE_{i}^{(2)} * \sE_{i+1} \oplus \sE_{i+1} * \sE_i^{(2)}. $$

Here is the outline of the proof. On the left hand side computing $\sE_i * \sE_{i+1}$ is straight-forward, meaning that intersections are of the expected dimension and the pushforward is one-to-one. The intersection when computing $\sE_i * \sE_{i+1} * \sE_i$ is also of the expected dimension but contains two components $A$ and $B$. Pushing forward by $\pi_{13}$ then gives us two terms (one for each component) which are equal to $\sE_{i}^{(2)} * \sE_{i+1}$ and $\sE_{i+1} * \sE_i^{(2)}$ (these are also easy to compute). 

\begin{Lemma}
$\sE_i^{(2)} * \sE_{i+1}$ and $\sE_{i+1} * \sE_i^{(2)}$ are isomorphic to 
$$\O_{W_{i^{(2)} i+1}} \otimes \det(V_{i+2}/V_{i+1})^{-1} \det(V'_{i+1}/V_i)^{-1} \det(V'_i/V_{i-1})^2\{ 2\l_i + \l_{i+1} - 5\}$$
$$\O_{W_{i+1 i^{(2)}}} \otimes\det(V_{i+2}/V_{i+1})^{-1} \det(V_{i+1}/V_i)^{-2} \det(V'_{i+1}/V'_i) \det(V'_i/V_{i-1})^2 \{2\l_i + \l_{i+1} - 3\}$$
respectively where
$$W_{i^{(2)} i+1} :=  \bigl\{(X, V, V') : V'_i \subset V_i \subset V'_{i+1} \subset V_{i+1}, \text{ and } V_j = V'_j \text{ for } j \ne i, i+1 \bigr\}$$
$$W_{i+1 i^{(2)} } :=  \bigl\{(X, V, V') : V'_i \subset V_i, V'_{i+1} \subset V_{i+1}, XV_{i+1} \subset V'_i, \text{ and } V_j = V'_j \text{ for } j \ne i, i+1 \bigr\}$$
inside $Y(\l) \times Y(\l + 2\alpha_i + \alpha_{i+1})$.
\end{Lemma}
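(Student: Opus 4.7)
The plan is to compute each convolution directly from the definition $\Q * \P = \pi_{13*}(\pi_{12}^* \P \otimes \pi_{23}^* \Q)$ on the appropriate triple product, and to verify in each case (i) that the two supporting subvarieties intersect transversally, so that the derived tensor is concentrated in cohomological degree zero, and (ii) that $\pi_{13}$ restricts to an isomorphism from this intersection onto the claimed correspondence in $Y(\l) \times Y(\l + 2\alpha_i + \alpha_{i+1})$. The equivariant line bundle data is then obtained by restricting the pulled-back twists.

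For $\sE_i^{(2)} * \sE_{i+1}$ the relevant triple is $Y(\l) \times Y(\l+\alpha_{i+1}) \times Y(\l+\alpha_{i+1}+2\alpha_i)$; a point of the intersection is a quadruple $(X, V, V', V'')$ with $V'_{i+1} \subset V_{i+1}$, $V'_j = V_j$ for $j \ne i+1$, $V''_i \subset V'_i$, and $V''_j = V'_j$ for $j \ne i$. These constraints force $V'_i = V_i$ and $V'_{i+1} = V''_{i+1}$, so the intermediate flag $V'$ is uniquely determined by $(V, V'')$, and the flag condition $V'_i \subset V'_{i+1}$ becomes exactly the chain $V''_i \subset V_i \subset V''_{i+1} \subset V_{i+1}$ defining $W_{i^{(2)} i+1}$; the required nilpotency for $V'$ follows from that of $V$ and $V''$. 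The analogous analysis for $\sE_{i+1} * \sE_i^{(2)}$ shows that the intermediate satisfies $V'_i = V''_i$ and $V'_{i+1} = V_{i+1}$; the condition $X V'_{i+1} \subset V'_i$ from $(X, V') \in Y(\l+2\alpha_i)$ now produces the extra relation $X V_{i+1} \subset V'_i$ cutting out $W_{i+1 i^{(2)}}$. In both cases the fibre of $\pi_{13}$ over a point of the image is a single reduced point, so $\pi_{13}$ restricts to an isomorphism onto the target correspondence.

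The main obstacle is transversality of the two subvarieties in the triple, which I would verify by a dimension count: realize each supporting correspondence as the zero locus of a regular section of an appropriate vector bundle (along the lines used in the adjunction computation above), and check that the total expected codimension matches the dimension of the intersection, which equals $\dim W_{i^{(2)} i+1}$ (respectively $\dim W_{i+1 i^{(2)}}$) computed directly from the defining conditions. Once transversality is in hand, the convolution is the structure sheaf of the target correspondence twisted by the pullbacks of the determinant factors from $\sE_i^{(2)}$ and $\sE_{i+1}$, with the intermediate $V'$ eliminated via the identifications above; adding the equivariant shifts $2(\l_i-2)$ from $\sE_i^{(2)}$ and $\l'_{i+1}-1$ from $\sE_{i+1}$ (with $\l' = \l$ or $\l+2\alpha_i$ according to the order of composition) yields $\{2\l_i + \l_{i+1} - 5\}$ and $\{2\l_i + \l_{i+1} - 3\}$ respectively, completing the statement.
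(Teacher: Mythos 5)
Your argument follows the same route as the paper: observe that the intersection in the triple fibre product has the expected dimension, that $\pi_{13}$ restricted to this intersection is one-to-one onto the target correspondence (because the intermediate flag is uniquely determined, as you verify explicitly), and then track the determinant twists and equivariant shifts. The line-bundle bookkeeping and the values $\{2\l_i+\l_{i+1}-5\}$ and $\{2\l_i+\l_{i+1}-3\}$ you obtain are correct, so this matches the paper's proof in both method and conclusion.
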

\begin{proof}
Computing $\sE_i^{(2)} * \sE_{i+1}$ is easy since the intersection $\pi_{12}^{-1}(W_{i+1}) \cap \pi_{23}^{-1}(W_i^{2})$ is of the expected dimension and the map $\pi_{13}$ maps this intersection one-to-one onto its image 
$$\pi_{13}(\pi_{12}^{-1}(W_{i+1}) \cap \pi_{23}^{-1}(W_i^{2})) \cong W_{i^{(2)} i+1}.$$  
So neither the tensor product nor the pushforward ${\pi_{13}}_*$ have lower or higher terms. Keeping track of the line bundles gives the result. 

Computing $\sE_{i+1} * \sE_i^{(2)}$ is very similar. 
\end{proof}

\begin{Lemma}\label{lem:Eii+1}
We have 
$$\sE_i * \sE_{i+1} \cong \O_{W_{i i+1}} \otimes \det(V_{i+2}/V_{i+1})^{\vee} \det(V_i'/V_{i-1})\{\l_i + \l_{i+1} - 2\}$$
$$\sE_{i+1} * \sE_i \cong \O_{W_{i+1 i}} \otimes \det(V_{i+2}/V_i)^{\vee} \det(V'_{i+1}/V_{i-1})\{\l_i + \l_{i+1} - 1\}$$
where
$$W_{i i+1} := \bigl\{(X, V, V') : V'_i \subset V_i \subset V'_{i+1} \subset V_{i+1}, \text{ and } V_j = V'_j \text{ for } j \ne i, i+1 \}$$
$$W_{i+1 i} := \bigl\{(X, V, V') : V'_i \subset V_i, V'_{i+1} \subset V_{i+1}, XV_{i+1} \subset V'_i, \text{ and } V_j = V'_j \text{ for } j \ne i, i+1 \bigr\}$$
inside $Y(\l + \alpha_i) \times Y(\l + 2 \alpha_i + \alpha_{i+1})$ and $Y(\l) \times Y(\l + \alpha_i + \alpha_{i+1})$ respectively.
\end{Lemma}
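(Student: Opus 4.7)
The two isomorphisms are computed by the same convolution recipe, so I describe the argument for $\sE_i * \sE_{i+1}$ in detail and note the modifications for $\sE_{i+1} * \sE_i$ at the end. By definition,
$$\sE_i*\sE_{i+1}=\pi_{13*}\bigl(\pi_{12}^*\sE_{i+1}\otimes\pi_{23}^*\sE_i\bigr)$$
on the triple product $Y(\l+\alpha_i)\times Y(\l+\alpha_i+\alpha_{i+1})\times Y(\l+2\alpha_i+\alpha_{i+1})$. Writing $V^{(1)},V^{(2)},V^{(3)}$ for the flags on the three factors, the scheme-theoretic intersection $Z:=\pi_{12}^{-1}(W_{i+1})\cap\pi_{23}^{-1}(W_i)$ forces the three $X$'s to agree and imposes $V^{(2)}_j=V^{(1)}_j$ for $j\ne i+1$ with $V^{(2)}_{i+1}\subset V^{(1)}_{i+1}$ of codimension one, together with $V^{(3)}_j=V^{(2)}_j$ for $j\ne i$ and $V^{(3)}_i\subset V^{(2)}_i$ of codimension one. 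In particular $V^{(2)}$ is uniquely determined by $(V^{(1)},V^{(3)})$ via $V^{(2)}_i=V^{(1)}_i$ and $V^{(2)}_{i+1}=V^{(3)}_{i+1}$; the flag axiom $V^{(2)}_i\subset V^{(2)}_{i+1}$ then reads $V^{(1)}_i\subset V^{(3)}_{i+1}$, and the nested chain $V^{(3)}_i\subset V^{(1)}_i\subset V^{(3)}_{i+1}\subset V^{(1)}_{i+1}$ is precisely the condition defining $W_{i\,i+1}$. Hence $\pi_{13}|_Z\colon Z\to W_{i\,i+1}$ is an isomorphism of schemes.

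The main technical point is to verify that $\pi_{12}^{-1}(W_{i+1})$ and $\pi_{23}^{-1}(W_i)$ meet in the expected codimension, so that $\pi_{12}^*\sE_{i+1}\otimes\pi_{23}^*\sE_i$ is concentrated in cohomological degree zero. The key observation is that the only $X$-compatibilities on $V^{(2)}$ that could potentially contribute new equations, namely $XV^{(1)}_{i+2}\subset V^{(2)}_{i+1}$ and $XV^{(2)}_{i+1}\subset V^{(3)}_i$, are already implied by the $X$-conditions on $V^{(1)}$ and $V^{(3)}$ after the identifications $V^{(2)}_{i+1}=V^{(3)}_{i+1}$ and $V^{(3)}_{i+2}=V^{(1)}_{i+2}$. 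Granting this, the defining equations of $Z$ form a regular sequence and $Z$ is a local complete intersection of the expected dimension. I expect this transversality verification to be the most delicate step, and would check it by an explicit local computation of the defining equations.

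Given transversality, the convolution is $\pi_{13*}$ of a pulled-back line bundle on $Z\cong W_{i\,i+1}$. Substituting the defining formulas for $\sE_{i+1}(\l+\alpha_i)$ and $\sE_i(\l+\alpha_i+\alpha_{i+1})$ and using $V^{(2)}_i=V^{(1)}_i$ to cancel the mixed factor $\det(V^{(2)}_{i+1}/V^{(1)}_i)\cdot\det(V^{(2)}_{i+1}/V^{(2)}_i)^{-1}$, one obtains the twist $\det(V_{i+2}/V_{i+1})^\vee\otimes\det(V'_i/V_{i-1})$, while the equivariant shifts $\l_{i+1}$ and $\l_i-2$ (obtained by plugging the two weights into $r(\lambda_i-r)$ with $r=1$) add to $\l_i+\l_{i+1}-2$, matching the claim.

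For $\sE_{i+1}*\sE_i$ the same strategy is run on $Y(\l)\times Y(\l+\alpha_i)\times Y(\l+\alpha_i+\alpha_{i+1})$; the one new feature is that the intermediate $X$-condition $XV''_{i+1}\subset V''_i$ translates under the identifications $V''_{i+1}=V_{i+1}$ and $V''_i=V'_i$ into $XV_{i+1}\subset V'_i$, which is strictly stronger than either $XV_{i+1}\subset V_i$ or $XV'_{i+1}\subset V'_i$ and so contributes the additional equation $XV_{i+1}\subset V'_i$ present in the definition of $W_{i+1\,i}$. The projection $\pi_{13}$ is again an isomorphism onto $W_{i+1\,i}$, and the analogous line-bundle calculation uses $\det(V_{i+1}/V_i)^{-1}\det(V_{i+2}/V_{i+1})^{-1}=\det(V_{i+2}/V_i)^{-1}$ together with $\det(V'_i/V_{i-1})\det(V'_{i+1}/V'_i)=\det(V'_{i+1}/V_{i-1})$, and shifts $\l_i-1$ and $\l_{i+1}$ summing to $\l_i+\l_{i+1}-1$, yielding the stated formula.
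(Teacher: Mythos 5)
Your proof is correct and takes essentially the same approach as the paper's: identify $Z := \pi_{12}^{-1}(W_{i+1}) \cap \pi_{23}^{-1}(W_i)$ with $W_{i\,i+1}$ (resp.\ $W_{i+1\,i}$) via $\pi_{13}$, observe the intersection has the expected dimension so that neither the derived tensor product nor $\pi_{13*}$ contributes extra terms, and then track the line bundles and equivariant shifts. Your bookkeeping matches the paper's, and you correctly isolate the extra incidence $XV_{i+1}\subset V'_i$ in $W_{i+1\,i}$ as the genuinely new condition contributed by the middle flag.
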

\begin{proof}
At the level of sets 
$$W_{i i+1} \cong \pi_{13}(\pi_{12}^{-1}(W_{i+1}) \cap \pi_{23}^{-1}(W_i))$$
where the intersection is transverse and the push forward is one-to-one. Hence in computing $\sE_i * \sE_{i+1}$ neither the tensor product nor the pushforward $\pi_{13*}$ have lower or higher terms. Keeping track of the line bundles gives the result. Computing $\sE_{i+1} * \sE_i$ is very similar.
\end{proof}

Notice that the varieties $ W_{i^{(2)} i+1}, W_{i+1 i^{(2)}}, W_{i i+1} $ are all smooth.  This is because each of them is a vector bundle over a iterated Grassmannian bundle.  The fibre of these vector bundles is given by the $ X $ data and the base is given by the $ V, V' $ data.

Now we can compute $ (\sE_i * \sE_{i+1}) * \sE_i $. We find that
\begin{eqnarray*}
\pi_{12}^{-1}(W_i) \cap \pi_{23}^{-1}(W_{i i+1}) = \bigl\{(X, V, V',V'') &:& V''_i \subset V'_i \subset V_i, V'_i \subset V''_{i+1} \subset V_{i+1} = V'_{i+1}, \\
&& \text{ and } V_j = V'_j = V_j'' \text{ if } j \ne i, i+1 \bigr\}.
\end{eqnarray*}
This intersection is of the expected dimension but the push-forward under $ \pi_{13} $ is only generically one-to-one. This variety has two components which we denote by $ A $ and $ B$ which are defined by
$$A :=  \bigl\{(X, V, V',V'') : V''_i \subset V'_i \subset V_i \subset V''_{i+1} \subset V_{i+1} = V'_{i+1}, \text{ and } V_j = V'_j = V_j'' \text{ if } j \ne i, i+1 \bigr\}$$
\begin{eqnarray*}
B := \bigl\{(X, V, V',V'') &:& V''_i \subset V'_i \subset V_i, V'_i \subset V''_{i+1} \subset V_{i+1} = V'_{i+1}, XV_{i+1} \subset V''_i, \\
&& \text{ and } V_j = V'_j = V_j'' \text{ if } j \ne i, i+1 \bigr\}.
\end{eqnarray*}
The varieties $ A, B $ are smooth for the same reasons as explained above for $ W_{i^{(2)} i+1}, W_{i+1 i^{(2)}}, W_{i i+1} $.

Keeping track of the line bundles shows that $ (\sE_i * \sE_{i+1}) * \sE_i \cong {\pi_{13}}_* \big( \O_{A \cup B} \otimes \sL)$ where 
$$ \sL := \det(V_{i+2}/V_i)^{-1} \det(V'_i/V''_i) \det(V''_i/V_{i-1})^2\{2 \l_i + \l_{i+1} -3 \}.$$
Let $ E := A \cap B $.  It is a divisor inside of each of $ A, B $.  Consider the standard short exact sequence
\begin{equation*}
0 \rightarrow \O_A(-E) \oplus \O_B(-E) \rightarrow \O_{A \cup B} \rightarrow \O_E \rightarrow 0. 
\end{equation*}
Now, $ E $ is cut out of $ A $ by a section of $ \Hom(V_{i+1}/V''_{i+1}, V'_i/V''_i \{2\}) $, namely the map $X: V_{i+1}/V''_{i+1} \rightarrow V'_i/V''_i \{2\}$. Hence
$$ \O_A(-E) = \O_A \otimes (V_{i+1}/V''_{i+1}) \otimes (V'_i/V''_i)^\vee\{-2\} .$$
Similarly, $ E $ is cut out of $ B $ by a section of $ \Hom(V_i/V'_i, V_{i+1}/V''_{i+1})$, namely the natural map $V_i/V'_i \rightarrow V_{i+1}/V''_{i+1}$ induced by the inclusion $V_i \rightarrow V_{i+1}$. Hence we see that 
$$ \O_B(-E) = \O_B \otimes (V_i/V'_i) \otimes (V_{i+1}/V''_{i+1})^\vee .$$  

Putting all this together, we obtain a distinguished triangle
\begin{equation} \label{eq:bigtri}  
{\pi_{13}}_*(\O_A \otimes \sL_A) \oplus {\pi_{13}}_*(\O_B \otimes  \sL_B) \rightarrow  \sE_i * \sE_{i+1} * \sE_i \rightarrow {\pi_{13}}_*(\O_E \otimes \sL)
\end{equation}
where 
\begin{align*}
\sL_A &:= \det(V_{i+2}/V_i)^{-1} \det(V_{i+1}/V''_{i+1}) \det(V''_i/V_{i-1})^2\{2 \l_i + \l_{i+1} - 5 \}, \\ 
\sL_B &:= \det(V_i/V''_i) \det(V''_i/V_{i-1})^2 \det(V_{i+2}/V_i)^{-1} \det(V_{i+1}/V''_{i+1})^{-1}\{2 \l_i + \l_{i+1} -3 \}.
\end{align*}
Now, we note that $ \pi_{13}(A) = W_{i^{(2)} i+1} $. The map $ \pi_{13}|_A: A \rightarrow W_{i^{(2)} i+1} $ is generically one-to-one and $ \sL_A $ is pulled back from $ W_{i^{(2)} i+1}$. Since $A$ and $W_{i^{(2)} i+1}$ are both smooth we have $\pi_{13*}(\O_A) \cong \O_{ W_{i^{(2)} i+1}}$ and hence
\begin{align*}
{\pi_{13}}_*(\O_A \otimes \sL_A ) &\cong \O_{W_{i^{(2)} i+1}} \otimes \det(V_{i+2}/V_{i+1})^{-1} \det(V'_{i+1}/V_i)^{-1} \det(V'_i/V_{i-1})^2\{ 2 \l_i + \l_{i+1} - 5 \}\\
 &\cong  \sE_i^{(2)} * \sE_{i+1}. 
\end{align*}

A very similar argument shows that $ {\pi_{13}}_*(\O_B \otimes \sL_B) \cong  \sE_{i+1} *\sE_i^{(2)}. $

Finally, we see that $ \pi_{13}|_E $ is a $ \p^1 $ bundle.  Moreover $ \sL $ restricts to $ \O_{\p^1}(-1) $ on these fibres.  Hence we conclude that $  {\pi_{13}}_*(\O_E \otimes \sL) = 0 $.  So distinguished triangle (\ref{eq:bigtri}) gives us an isomorphism
$$ \sE_i^{(2)} * \sE_{i+1} \oplus  \sE_{i+1} * \sE_i^{(2)} \cong \sE_i * \sE_{i+1} * \sE_i$$
as desired.

\begin{Remark} There is an interesting similarity between the proof of the braid relation in \cite{KT} and the proof of the Serre relation above. In particular, the proof of Proposition 4.6 of \cite{KT} inspired our proof above. The geometry occuring in that proof is similar to the geometry we consider here.
\end{Remark}

Finally, the identity $\sE_i * \sE_j \cong \sE_j * \sE_i$ when $i,j \in  I$ are not joined by an edge (i.e. when $|i-j| > 1$) follows from a direct calculation of both sides (all intersections are of the expected dimension and push-forwards are one-to-one so this calculation is straight-forward). The same argument works to show that $\sF_j * \sE_i \cong \sE_i * \sF_j$ for any $i,j \in I$ (condition (\ref{co:EiFj})). 

\subsection{Existence of deformations: conditions (\ref{co:Eidef}) and (\ref{co:Eijdef})}
We now explain why condition (\ref{co:Eijdef}) holds. Since we are in the Lie algebra $ \sl_n $, having $ i,j \in I $ joined by an edge is equivalent to $ j = i \pm 1 $.  So let us consider $ j = i + 1 $ (the case $j = i-1$ is the same).  We must show that the sheaf $ \sE_{i i+1} = \Cone(T_{i i+1}) $ deforms over the subspace $(\alpha_i + \alpha_{i+1})^\perp$. 

Here is an outline of the argument. Recall that $\sE_{ii+1} \cong \Cone(\sE_i * \sE_{i+1}[-1] \xrightarrow{T_{ii+1}} \sE_{i+1} * \sE_i)$. Now $\sE_i * \sE_{i+1}$ is a line bundle supported on $W_{ii+1}$ and $\sE_{i+1} * \sE_i$ a line bundle supported on $W_{i+1i}$. We show that the connecting map $T_{ii+1}$ must be (up to tensoring by a line bundle) the connecting map in the standard triangle
\begin{equation*}  
\O_{W_{i+1 i}}(-D) \rightarrow \O_{U_{i i+1}} \rightarrow \O_{W_{i i+1}}
\end{equation*}
where $U_{i i+1} = W_{i+1 i} \cup W_{i i+1}$ and $D := W_{i+1 i} \cap W_{i i+1}$. Thus we identify $\sE_{ii+1}$ with a line bundle supported on $U_{i i+1}$ and then write down an explicit deformation of it. 

\begin{Lemma}
We have 
$$\sE_{i i+1} \cong \O_{U_{i i+1}} \otimes \det(V_{i+2}/V'_i)^{\vee} \det(V_{i+1}/V_{i-1}) \{ \l_i +  \l_{i+1} -1 \}$$
where
$$U_{i i+1} := \{(X, V, V') : V'_i \subset V_i, V'_{i+1} \subset V_{i+1}, V_j = V'_j \text{ if } j \ne i, i+1 \} \subset Y(\l) \times Y(\l+\alpha_i+\alpha_j).$$
\end{Lemma}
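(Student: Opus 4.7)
The strategy is to identify $\sE_{ii+1}$ with a twist of the structure sheaf of $U_{ii+1}$ via a Mayer--Vietoris argument. Set $M := \det(V_{i+2}/V'_i)^{\vee} \det(V_{i+1}/V_{i-1})\{\l_i + \l_{i+1} - 1\}$ and $D := W_{ii+1} \cap W_{i+1\, i}$. The key input is the standard short exact sequence
\[ 0 \to \O_{W_{i+1\, i}}(-D) \to \O_{U_{ii+1}} \to \O_{W_{ii+1}} \to 0, \]
which, after tensoring by $M$ and rotating, becomes a distinguished triangle whose two outer terms should match $\sE_i * \sE_{i+1}[-1]\{1\}$ and $\sE_{i+1} * \sE_i$; the goal is then to recognize its connecting map as $T_{ii+1}$.

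First I would compute the divisor class of $D$ inside $W_{i+1\, i}$. The extra condition needed for a point of $W_{i+1\, i}$ to lie in $W_{ii+1}$ is $V_i \subset V'_{i+1}$; since $V'_i \subset V'_{i+1}$ already holds on $W_{i+1\, i}$, this factors through a map of line bundles $V_i/V'_i \to V_{i+1}/V'_{i+1}$. Hence $D \subset W_{i+1\, i}$ is the vanishing locus of a section of $(V_i/V'_i)^{\vee} \otimes V_{i+1}/V'_{i+1}$, so $\O_{W_{i+1\, i}}(-D) \cong (V_i/V'_i) \otimes (V_{i+1}/V'_{i+1})^{\vee}$. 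Lemma \ref{lem:Eii+1} (applied with source weight shifted from $\l + \alpha_i$ to $\l$) identifies $\sE_i * \sE_{i+1}$ with a line bundle $L_1$ on $W_{ii+1}$ and $\sE_{i+1} * \sE_i$ with a line bundle $L_2$ on $W_{i+1\, i}$. Using filtration short exact sequences such as $0 \to V_i/V'_i \to V_{i+2}/V'_i \to V_{i+2}/V_i \to 0$ and $0 \to V'_i/V_{i-1} \to V_{i+1}/V_{i-1} \to V_{i+1}/V'_i \to 0$, a direct determinantal match on each stratum then shows that $M|_{W_{ii+1}} = L_1\{1\}$ and $M|_{W_{i+1\, i}} \otimes (V_i/V'_i)^{\vee} \otimes V_{i+1}/V'_{i+1} = L_2$. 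The equivariant shift $\{1\}$ appearing on the first restriction is exactly what is needed to accommodate the $[-1]\{1\}$ in the definition of $\sE_{ii+1}$.

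The conclusion follows from uniqueness. By Lemma \ref{lem:Eij}, the space $\Hom(\sE_i * \sE_{i+1}[-1]\{1\}, \sE_{i+1} * \sE_i)$ is one-dimensional and spanned by $T_{ii+1}$. The connecting map $\delta$ of the rotated $M$-twisted Mayer--Vietoris triangle lives in this space, and is nonzero since the Mayer--Vietoris sequence cannot split: any splitting would yield a section of $\O_{U_{ii+1}}$ equal to $1$ on $W_{ii+1}$ and $0$ on $W_{i+1\, i}$, which is impossible at a generic point of $D$. Hence $\delta$ agrees with $T_{ii+1}$ up to a nonzero scalar, and so $\sE_{ii+1} = \Cone(T_{ii+1}) \cong \Cone(\delta) \cong \O_{U_{ii+1}} \otimes M$. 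The principal obstacle is the bookkeeping of the determinantal line bundles and $\C^\times$-weights on the two strata so that the shifts match exactly; once that calculation is done, the uniqueness step is automatic.
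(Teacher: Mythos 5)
Your overall strategy mirrors the paper's: take the short exact sequence relating $\O_{U_{ii+1}}$ to the two components $W_{ii+1}$ and $W_{i+1\,i}$, tensor by a line bundle so the outer terms become $\sE_i * \sE_{i+1}$ and $\sE_{i+1} * \sE_i$, rotate, and identify the (nonzero) connecting map with $T_{ii+1}$ via the one-dimensionality of $\Hom(\sE_i * \sE_{i+1}[-1]\{1\},\sE_{i+1}*\sE_i)$ from Lemma~\ref{lem:Eij}. However, there is a genuine gap at the very first step: you write down the ``standard short exact sequence'' $0 \to \O_{W_{i+1\,i}}(-D) \to \O_{U_{ii+1}} \to \O_{W_{ii+1}} \to 0$ as though it is automatic. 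This sequence only exists because $U_{ii+1}$ is \emph{reduced with exactly the two irreducible components $W_{ii+1}$ and $W_{i+1\,i}$}, and that is not obvious from the definitions: both $W_{ii+1}$ and $W_{i+1\,i}$ are carved out of $U_{ii+1}$ by an \emph{extra} closed condition ($V_i \subset V'_{i+1}$, resp.\ $XV_{i+1}\subset V'_i$), so a priori $W_{ii+1}\cup W_{i+1\,i}$ could be a proper closed subset of $U_{ii+1}$. The paper's proof devotes its first paragraph to showing $U_{ii+1}=W_{ii+1}\cup W_{i+1\,i}$: given $(X,V,V')\in U_{ii+1}$, the inclusion $V'_i\subset V_i\cap V'_{i+1}$ together with $\dim V_i/V'_i=1$ forces either $V_i\subset V'_{i+1}$ (so the point lies in $W_{ii+1}$) or $V_i\cap V'_{i+1}=V'_i$; in the latter case the nilpotency conditions $XV_{i+1}\subset V_i$ and $XV_{i+2}\subset V'_{i+1}$ combine with $V_{i+2}=V'_{i+2}$ to yield $XV_{i+1}\subset V_i\cap V'_{i+1}=V'_i$, putting the point in $W_{i+1\,i}$. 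Without this argument the Mayer--Vietoris sequence you use is unjustified, and the rest of the proof does not get off the ground.

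Two smaller remarks. First, there is a sign slip in your line-bundle bookkeeping: you correctly deduce $\O_{W_{i+1\,i}}(-D)\cong (V_i/V'_i)\otimes(V_{i+1}/V'_{i+1})^{\vee}$, but then state the matching condition as $M|_{W_{i+1\,i}}\otimes(V_i/V'_i)^{\vee}\otimes V_{i+1}/V'_{i+1}=L_2$; for the twisted subsheaf $\O_{W_{i+1\,i}}(-D)\otimes M$ to equal $\sE_{i+1}*\sE_i$, the relation should instead read $M|_{W_{i+1\,i}}\otimes(V_i/V'_i)\otimes(V_{i+1}/V'_{i+1})^{\vee}=L_2$. (In fact this is exactly what a determinantal computation using $\det(V_i/V'_i)\det(V_{i+1}/V_i)=\det(V'_{i+1}/V'_i)\det(V_{i+1}/V'_{i+1})$ produces.) Second, your explanation of why the connecting map is nonzero --- a splitting would force a global section of $\O_{U_{ii+1}}$ that is $1$ on one component and $0$ on the other, contradicting that the components meet along $D$ --- is the right idea, and fleshes out the paper's terse ``the first map in this distinguished triangle is non-zero.''
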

\begin{proof}
Recall that by Lemma \ref{lem:Eii+1} the objects $\sE_i * \sE_{i+1}$ and $\sE_{i+1} * \sE_i$ in $D(Y(\l) \times Y(\l + \alpha_i + \alpha_{i+1})) $ are line bundles supported on smooth varieties $W_{i i+1}$ and $W_{i+1 i}$. Notice that the difference between the varieties $ W_{i i+1} $ and $ W_{i+1 i} $ is that in the former we demand that  $V_i \subset V'_{i+1} $, while in the latter we demand that $X V_{i+1} \subset V'_i$. 

We claim that $ U_{i i+1} = W_{i i+1} \cup W_{i+1 i} $.  To see this, let $ (X, V, V') \in U_{i i+1} $.  Then $ V'_i \subset V_i $ and $ V'_i \subset V'_{i+1} $.  Since $ \dim V_i/V'_i = 1$, this implies that either $ V_i \subset V'_{i+1} $ or $ V'_{i+1} \cap V_i = V'_i $.  If $ V_i \subset V'_{i+1} $, then $ (X, V, V') \in W_{i i+1} $.  On the other hand, if $ V'_{i+1} \cap V_i = V'_i $, then the conditions $ X V_{i+1} \subset V_i $ and $ X V_{i+2} \subset V'_{i+1} $, force $ X V_{i+1} \subset V_i \cap V'_{i+1} = V'_i $ and so we see that $(X, V, V') \in W_{i+1 i}$.  

Thus $ U_{i i+1} = W_{i i+1} \cup W_{i+1 i} $ and these are the two irreducible components of $ U_{i i+1} $.  These two components intersect in a divisor $D$ and gives us a short exact sequence of sheaves
\begin{equation*}  
0 \rightarrow \O_{W_{i+1 i}}(-D) \rightarrow \O_{U_{i i+1}} \rightarrow \O_{W_{i i+1}} \rightarrow 0. 
\end{equation*}

Using the fact that $ D $ is cut out of $ W_{i+1 i} $ by a section of $ \Hom(V_i/V'_i, V_{i+1}/V'_{i+1})$, we see that 
$$ \O_{W_{i+1 i}}(-D) \cong  \O_{W_{i+1 i}} \otimes (V_{i+1}/V'_{i+1}) \otimes (V_i/V'_i)^\vee. $$  
Substituting this into the previous exact sequence and rotating, we obtain a distinguished triangle
\begin{equation*}
\O_{W_{i i+1}}[-1] \rightarrow \O_{W_{i+1 i}} \otimes (V_{i+1}/V'_{i+1})  (V_i/V'_i)^\vee \rightarrow \O_{U_{i i+1}}.
\end{equation*}
Tensoring with the line bundle $\det(V_{i+2}/V_{i+1})^\vee \det(V_i'/V_{i-1}) \{\l_i + \l_{i+1} - 1\}$, we obtain the distinguished triangle
\begin{equation*}
\sE_i * \sE_{i+1}[-1]\{1\} \rightarrow \sE_{i+1} * \sE_i \rightarrow \O_{U_{i i+1}} \otimes \det(V_{i+2}/V_{i+1})^{\vee} \det(V_i'/V_{i-1}) \{\l_i + \l_{i+1} - 1 \}
\end{equation*}

Moreover the first map in this distinguished triangle is non-zero. Since $T_{i i+1}$ is the unique such map (up to multiple) the first map must equal $T_{i i+1}$ up to multiple. The result follows.
\end{proof}

Now that we have identified $\sE_{i i+1}$ more explicitly we can write down a deformation $\tsE_{i i+1}$ over 
$$B := (\alpha_i + \alpha_{i+1})^\perp = (0, \dots 0, -1, 0, 1, 0, \dots, 0)^\perp = \{(x_1, \dots, x_{n-1}) : x_i = x_{i+2} \}.$$
Define the variety 
\begin{align*}
\tilde{U}_{i i+1} := &\{ (X, V, V', x) : (X,V,x) \in \tilde{Y}(\l), (X, V', x) \in \tilde{Y}(\l + \alpha_i + \alpha_{i+1}), x \in (\alpha_i+\alpha_{i+1})^\perp,\\ 
&V'_i \subset V_i, V'_{i+1} \subset V_{i+1}, V_j = V'_j \text{ for } j \ne i, i+1 \}
\end{align*}
and consider 
$$\tilde{\sE}_{i i+1} := \O_{\tilde{U}_{i i+1}} \otimes  \det(V_{i+2}/V'_i)^\vee \det(V_{i+1}/V_{i-1}) \{\l_i + \l_{i+1} -1\} \in D(\tilde{Y}(\l)|_{B} \times_{B} \tilde{Y}(\l + \alpha_i + \alpha_{i+1})|_{B}).$$

\begin{Proposition} 
We have $j^* \tilde{\sE}_{i i+1} = \sE_{i i+1}$ where $j$ is the inclusion of the central fibre $ Y(\l) \times Y(\l + \alpha_i + \alpha_{i+1}) $ into $\tY(\l)|_B \times_{B} \tY(\l + \alpha_i + \alpha_{i+1})|_B$. 
\end{Proposition}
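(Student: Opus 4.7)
Since $\tsE_{i i+1}$ differs from $\O_{\tilde{U}_{i i+1}}$ only by tensor with a line bundle whose restriction to the central fibre matches the line bundle appearing in $\sE_{i i+1}$, the claim reduces to showing
$$j^* \O_{\tilde{U}_{i i+1}} \cong \O_{U_{i i+1}}.$$
Equivalently, $\tilde{U}_{i i+1}$ must be flat over $B$ near the central fibre, with scheme-theoretic fibre over $0$ equal to $U_{i i+1}$.

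The plan is to reproduce the preceding lemma's argument in the deformed setting. At the central fibre one has $U_{i i+1} = W_{i i+1} \cup W_{i+1\, i}$. The component $W_{i i+1}$ deforms by its evident description: add the closed condition $V_i \subset V'_{i+1}$ to the definition of $\tilde{U}_{i i+1}$ to obtain a subvariety $\tilde{W}_{i i+1}$. For $W_{i+1\, i}$, whose central fibre condition is $X V_{i+1} \subset V'_i$, the correct deformed condition is $(X - x_{i+1}\,\id) V_{i+1} \subset V'_i$; let $\tilde{W}_{i+1\, i}$ denote the resulting subvariety. Each of $\tilde{W}_{i i+1}$ and $\tilde{W}_{i+1\, i}$ should admit the structure of a vector bundle (in the $X$-data) over an iterated Grassmannian bundle (in the flag and $x$ data) over $B$, making each smooth and flat over $B$ with central fibre $W_{i i+1}$ or $W_{i+1\, i}$ as appropriate.

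Next I would establish $\tilde{U}_{i i+1} = \tilde{W}_{i i+1} \cup \tilde{W}_{i+1\, i}$ scheme-theoretically. The set-theoretic equality follows by examining the induced action of $X$ on the one-dimensional quotient $V_{i+1}/V'_{i+1}$: compatibility with both parabolic structures forces the eigenvalue there to be either $x_{i+1}$ (equivalent to $V_i \subset V'_{i+1}$, hence membership in $\tilde{W}_{i i+1}$) or $x_i$ (equivalent to $V_i \cap V'_{i+1} = V'_i$ together with $(X - x_{i+1}\,\id)V_{i+1} \subset V'_i$, hence membership in $\tilde{W}_{i+1\, i}$). Identifying the intersection $\tilde{D} := \tilde{W}_{i i+1} \cap \tilde{W}_{i+1\, i}$ as a Cartier divisor in each component, flat over $B$ with central fibre $D$, then yields the deformed short exact sequence
$$0 \to \O_{\tilde{W}_{i+1\, i}}(-\tilde{D}) \to \O_{\tilde{U}_{i i+1}} \to \O_{\tilde{W}_{i i+1}} \to 0.$$
By the flatness of all three terms over $B$, applying $j^*$ recovers the central-fibre short exact sequence used in the preceding lemma, and tensoring with the line bundle yields $j^* \tsE_{i i+1} \cong \sE_{i i+1}$.

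The main obstacle I expect is pinning down the correct scheme structure for $\tilde{W}_{i+1\, i}$ (the naive lift of $X V_{i+1} \subset V_i$ fails in the deformation, since on $\tY(\l)$ one only has $XV_{i+1} \subset V_{i+1}$) and verifying the scheme-theoretic decomposition of $\tilde{U}_{i i+1}$ rather than just its set-theoretic counterpart. What makes this tractable is that $x_i \ne x_{i+1}$ on a dense open subset of $B$, so the two eigenvalue cases above are genuinely disjoint away from the central fibre; this forces $\tilde{U}_{i i+1}$ to consist of the two smooth deformations meeting transversally along $\tilde{D}$, and lets one conclude by a local calculation on the defining ideals.
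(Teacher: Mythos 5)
Your opening reduction is fine: it does suffice to show $j^*\O_{\tilde{U}_{ii+1}} \cong \O_{U_{ii+1}}$, since the twisting line bundle visibly restricts correctly. However, the decomposition on which the rest of the argument rests does not exist, and the paper explicitly warns about this in the remark that immediately follows this proposition: ``neither $W_{ii+1}$ nor $W_{i+1\,i}$ deform over $B$ \dots In fact $W_{ii+1}$ and $W_{i+1\,i}$ only deform over $\alpha_i^\perp \cap \alpha_{i+1}^\perp$.''

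Here is why the proposed $\tilde{W}_{ii+1}$ fails to be flat over $B$. Take $v \in V_i \setminus V'_i$ (a nonzero one-dimensional choice, since $V'_i \subset V_i$ has codimension one). The $\tilde{Y}(\l)$-condition $X|_{V_i/V_{i-1}} = x_i\,\id$ together with $V_{i-1}=V'_{i-1}\subset V'_i$ gives $Xv \equiv x_i v \pmod{V'_i}$. On the other hand, your extra condition $V_i \subset V'_{i+1}$ puts $v$ into $V'_{i+1}\setminus V'_i$, and the $\tilde{Y}(\l+\alpha_i+\alpha_{i+1})$-condition $X|_{V'_{i+1}/V'_i}= x_{i+1}\,\id$ gives $Xv \equiv x_{i+1}v \pmod{V'_i}$. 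Since $v \notin V'_i$, this forces $x_i = x_{i+1}$. The same computation (with $w\in V_{i+1}\setminus V'_{i+1}$ and $x_{i+2}=x_i$ on $B$) shows your proposed $\tilde{W}_{i+1\,i}$ also sits over the hyperplane $\{x_i = x_{i+1}\}$. So both putative components are supported over $\alpha_i^\perp \cap \alpha_{i+1}^\perp$, a proper closed subset of $B$, and neither is a flat family over $B$ with central fibre the corresponding $W$. In particular the set-theoretic equality $\tilde{U}_{ii+1}=\tilde{W}_{ii+1}\cup\tilde{W}_{i+1\,i}$ you claim cannot hold: over a generic point of $B$ (where $x_i \neq x_{i+1}$) the right-hand side is empty while the left-hand side is not. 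Your intuition that $x_i\neq x_{i+1}$ generically forces the two ``eigenvalue cases'' to separate is exactly backwards: it forces both cases to vanish.

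The true geometry is the opposite of what you envision: $\tilde{U}_{ii+1}$ is irreducible (the generic fibre is a single smooth variety), and it is only the central fibre that degenerates into the union $W_{ii+1}\cup W_{i+1\,i}$, much like a smooth conic degenerating to two lines. Because of this there is no deformed short exact sequence of structure sheaves of components to pull back, and one cannot sidestep a direct analysis of the scheme-theoretic central fibre. This is precisely why the paper passes to local coordinates (using Riche's associated-bundle reduction $\tilde{U}_{ii+1}\cong Z\times_P SL_N$) and exhibits an affine local model cut out by the single equation $x_i - x_{i+1} = a_k c_l$; that single equation at once shows irreducibility, correct dimension, and reducedness of the central fibre.
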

\begin{proof}
Since the line bundles on both sides agree, it suffices to show that $ j^*\O_{\tilde{U}_{i i+1}} = \O_{U_{i i+1}}$. To do this it suffices to show that $\tilde{U}_{i i+1}$ is an irreducible variety of dimension $ \dim U_{i i+1} + \dim B $ and that the scheme theoretic central fibre of $ \tilde{U}_{i i+1} \rightarrow B $ is reduced.  To do this, we will pass to local coordinates.  To simplify our task of finding local coordinates we will use an idea of Riche \cite{Ric} and pass to a subvariety.

Note that $ \tilde{U}_{i i+1} $ has an action of the group $SL_N $.  Let $ Z $ denote the subvariety of $ \tilde{U}_{i i+1} $ consisting of those points $ (X, V, V', x) $ where 
\begin{equation*}
0 \subset V_1 \subset \cdots \subset V_{i-1} \subset V'_i \subset V'_{i+1} \subset V_{i+1} \subset V_{i+2} \subset \cdots \subset V_{n-1} \subset \C^N
\end{equation*}
is the standard partial flag.  This means that $ V_1 = \spn(e_1, \dots, e_{\lambda_1}) $, etc.  The variety $ Z $ has an action of the parabolic subgroup $ P \subset SL_N $ which fixes the flag above.  Note that as before, we have a map $ Z \rightarrow B $.

The variety $ \tilde{U}_{i i+1} $ is obtained from $ Z $ by associated bundle construction, $ \tilde{U}_{i i+1} \cong Z \times_{P} SL_N $.  Moreover, this is actually an isomorphism as varieties over $ B$.  Hence it suffices to prove that $ Z $ is irreducible of expected dimension and the central fibre over $ B$ is reduced.

We will show that $ Z $ can be covered by open affine varieties $ A $ such that $ A $ can be embedded into $ \C^{n-2+r} $ (for some $r$) with the map to $ B $ given by projection onto the first $ n-2 $ coordinates.  Let us write the coordinates on $ \C^{n-2+r} $ as $ x_1, \dots,x_{i+1}, x_{i+3}, \dots,  x_{n-1}, z_1, \dots, z_r $.  We will show that under this embedding $ A $ is given by the single equation $ x_i - x_{i+1} = z_1 z_2 $.  This proves the desired facts concerning the central fibre of $ A $ and hence also for $ \tilde{U}_{i i+1} $.

To find this open affine variety $ A $, note that $ Z $ has a smooth surjective affine map $ Z \rightarrow \p(V_{i+1}/V'_i) $, taking $ (X, V, V',x) $ to $ V_i $.  Pick $ k, l $ such that 
$$ V'_{i+1} = V'_i \oplus \spn(e_k, \dots, e_{l-1}) \text{ and } V_{i+1} = V'_i \oplus \spn(e_k, \dots, e_l). $$ 
In $\p(V_{i+1}/V'_i)$ there is an open affine subspace consisting of those $ V_i $ which are of the form
\begin{equation} \label{eq:videf}
V_i = V'_i \oplus \langle e_k + c_{k+1} e_{k+1} + \cdots + c_l e_l \rangle
\end{equation}
We let $ A $ denote the preimage of this affine subspace in $ Z $.  So a point in $ A $ is described by $ (c_{k+1}, \dots, c_l) $ and $ (x_1, \dots, x_{n-1}) $ and the matrix $ X$.  We will now describe equations for $ A $.  To do this, let us introduce the variety 
$$ T = \{(X, V_i, x_1, \dots, x_{n-1}) : V_i \text{ as above in (\ref{eq:videf})}, (X-x_j) V'_j \subset V'_{j-1} \text{ for all $j$}, \text{ and } x_i =x_{i+2}  \}.
$$
Hence $ A$ is the closed subvariety of $ T$ defined by the equations $ (X-x_i)V_i \subset V_{i-1} $ and $ (X-x_{i+1})V_{i+1} \subset V_i $.  Since $ \dim(V_i/V_i') = 1 $ and $ \dim (V_{i+1}/V'_{i+1}) = 1 $, we see that these equations are equivalent to
\begin{equation*}
(X-x_i)(e_k + c_{k+1}e_{k+1} + \dots + c_le_l) \in V_{i-1} \quad (X - x_{i+1})(e_l) \in V_i.
\end{equation*}

Now, if $ (X, V_i, x_1, \dots, x_{n-1}) \in T $, then $ X $ is upper triangular where the diagonal is broken up into blocks corresponding to the $ V'_j $ with each block a diagonal matrix with $ x_j $ on the diagonal.  Hence $ T $ is an affine space with coordinates given by the entries in the matrices in the blocks above the diagonals along with $(c_{k+1}, \dots, c_l) $ and $ (x_1, \dots, x_{n-1})$.  Now let $ (X, V_i,x_1, \dots, x_{ n-1}) \in T$, and let us consider the square diagonal submatrix of $ X $ containing matrix coefficients for the basis elements $ e_{k-r}, \dots, e_l $, where $ r = \lambda_i - 1 $.  This square submatrix has the form
\begin{equation*}
\begin{bmatrix}
x_iI_r &w_k &w_{k+1} &&\dots& w_l\\
& x_{i+1} & & &&a_k\\
&&\ddots  & && a_{k+1}\\
&&& \ddots && \vdots\\
&&&& x_{i+1} & a_{l-1}\\
&&&&& x_i \\
\end{bmatrix}
\end{equation*}
where $I_r$ is the $r \times r$ identity matrix. Here we mean that 
\begin{align*}
X e_j &= x_{i+1}e_j + w_j + \dots \quad \quad \text{ for $ k \le j \le l-1 $}\\
X e_l &= x_i e_l + a_{l-1} e_{l-1} + \dots + a_k e_k + w_l + \dots 
\end{align*}
where $ w_j \in \spn(e_{k-r}, \dots, e_{k-1}) $ (so $ w_j $ is a column matrix of height $ r $) and $ \dots $ denotes terms in $ V_{i-1}$. Note also that $e_l \in V'_{i+2} $ so the $(l,l)$ matrix entry is $ x_i = x_{i+2} $.

Now $ (X, V_i,x_1, \dots, x_{n-1}) $ lies in $ A $ if and only if $ (X-x_i)(e_k + c_{k+1}e_{k+1} + \dots + c_le_l) \in V_{i-1} $ and $ (X - x_{i+1})(e_l) \in V_i$.  These conditions translate into the equations
\begin{align*}
a_{k+1} &= a_k c_{k+1} \\
&\vdots\\
a_{l-1} &= a_k c_{l-1} \\
x_i - x_{i+1} &= a_k c_l \\
w_k &= -(c_{k+1} w_{k+1} + \cdots + c_l w_l) 
\end{align*}
Hence we can embed $ A $ into the subaffine space given by the $x_1, \dots, x_{i+1},x_{i+3}, \dots,x_{n-1}$,$a_k$, the $c_j $, the entries in $w_j$ ($ j \ne k$), and all the other free matrix entries.  Inside of this affine space, $ A $ will be defined by the single equation $ x_i - x_{i+1} = a_k c_l$ as desired.
\end{proof}

\begin{Remark} It is interesting to notice that neither $W_{ii+1}$ nor $W_{i+1i}$ deform over $B$ but that $U_{ii+1} = W_{ii+1} \cup W_{i+1i}$ does deform. In fact $W_{ii+1}$ and $W_{i+1i}$ only deform over $\alpha_i^\perp \cap \alpha_{i+1}^\perp$. 
\end{Remark}

The proof that $\sE_i$ deforms over $\alpha_i^\perp$ (condition (\ref{co:Eidef})) is the same but easier since we already have an explicit description of $\sE_i$ as a line bundle supported on $W_i$.

\section{Preliminaries}\label{sec:preliminaries}

In this section we fix some further notation and prove various technical results about compositions of functors $\sE$ and $\sF$ and about spaces of maps (natural transformations) between them. The reader can choose to skim this section on a first reading, using it as a reference. 

\subsection{Some general notions} \label{se:idem}

\subsubsection{Idempotent completeness}
Let ${\mathcal C}$ be a graded additive category over $\k$ which is idempotent complete. Graded means that ${\mathcal C}$ has a shift functor $[1] $ which is an equivalence. Idempotent complete means that if $e \in \End(A)$ where $e^2=e$ then $A \cong A_1 \oplus A_2$ where $e$ acts by the identity on $A_1$ and by zero on $A_2$. Notice that the derived category of coherent sheaves on any variety is idempotent complete. This is because the derived category of any abelian category is idempotent complete (see, for instance, Corollary 2.10 of \cite{BS}). So all the categories we work with are idempotent complete.

Suppose that (each graded piece of) the space of homs between two objects is finite dimensional (by condition (\ref{co:trivial}) this is true in our setup). Then every object in ${\mathcal C}$ has a unique, up to isomorphism, direct sum decomposition into indecomposables (see section 2.2 of \cite{Rin}). Assume, moreover, the following fact:
$$\text{for any non-zero object } A \in {\mathcal C} \text{ we have } A \cong A [ k ] \Rightarrow k=0.$$

Then if $A,B,C \in {\mathcal C}$, we have the 
following cancellation laws:
\begin{eqnarray}\label{eq:A}A \oplus B \cong A \oplus C \Rightarrow B \cong C
\end{eqnarray}
\begin{eqnarray}\label{eq:B}A \otimes_\k V \cong B \otimes_\k V \Rightarrow A \cong B
\end{eqnarray}
where $V$ is a graded $\k$ vector space. The first law above follows by uniqueness of direct sum decomposition. To see the second law, decompose $A$ and $ B $ into indecomposables as
\begin{equation*}
A \cong \bigoplus_{i,j} X_i^{a_{ij}} [ j ] \quad B \cong \bigoplus_{i,j} X_i^{b_{ij}} [ j]
\end{equation*}
where $ X_i $ are indecomposable, $a_{ij}, b_{ij} \in \mathbb N$ and $X_i \cong X_{i'} [ j ]$ implies that $i = i'$.  We must show that $a_{ij} = b_{ij}$.   Now fix $i$ and consider just the summands $X_i [j ]$.  By the uniqueness of the direct sum decomposition, we get 
$$\oplus_{j} X_i^{a_{ij}} [ j ] \otimes_\k V \cong \oplus_{j} X_i^{b_{ij}} [ j ]\otimes_\k V$$
Now consider the Poincar\'e polynomials 
$$A(t) := \sum_j a_{ij} t^j, B(t) := \sum_j b_{ij} t^j \text{ and } V(t) := \sum_j \dim(V_j) t^j$$
where $V_j$ is the $j$th graded piece of $V$. Then since $A_i [ j ] \not\cong A_i [ j' ]$ for $j \ne j'$ it follows that $A(t)V(t) = B(t)V(t)$ which implies $A(t)=B(t)$ and we are done. 

\subsubsection{Bricks and ranks.} A brick is an indecomposable object $A$ in $ \mathcal{C} $ such that $\End(A) = \k \cdot \id$.  Suppose that $ A $ is a brick and that $ X, Y $ are arbitrary objects of $ \mathcal{C} $.  Let $ f :X \rightarrow Y $ be a morphism.  $f$ gives rise to a bilinear pairing $ \Hom(A, X) \times \Hom(Y, A) \rightarrow \Hom(A,A) = \k $.  We define the $ A$-rank of $ f $ to be the rank of this bilinear pairing.  

We may also define $ A$-rank as follows.  Choose (non-canonical) direct sum decompositions $X = A\otimes V \oplus B$ and $Y = A\otimes V' \oplus B'$ where $V, V' $ are $ \k $ vector spaces and $B, B'$ do not contain $A$ as a direct summand.  Then one of the matrix coefficients of $ f $ is a map $ A \otimes V \rightarrow A \otimes V' $, which (since $ A $ is a brick) is equivalent to a linear map $ V \rightarrow V' $.  The $A$-rank of $ f $ equals the rank of this linear map.  If the $A$-rank of $ f$ is $ k$, then we will say that ``$f $ gives an isomorphism on $ k $ summands isomorphic to $A $''.

Note that the notion of brick makes no reference to the shift functor in $ \mathcal C $.  On the other hand, if $ A $ is a brick, then $ A[n] $ is a brick for all $ n $.  Moreover, if $ f : X \rightarrow Y $ is a morphism, we will say that ``$f $ gives an isomorphism on $ k $ summands of the form $A[\cdot]$'' if the sum (over all $ n $) of the $ A[n]$-ranks of $ f $ is $ k $.

\subsubsection{Gaussian elimination}
Finally, we will repeatedly use the following cancellation Lemma which Bar-Natan \cite{BN} calls ``Gaussian elimination''. 
\begin{Lemma}\label{lem:cancel}
Let $ X, Y, Z, W$ be four objects in a triangulated category.  Let $f = \left( \begin{matrix} A & B \\ C & D \end{matrix} \right) : X \oplus Y \rightarrow Z \oplus W $ be a morphism.  If $D$ is an isomorphism, then $\Cone(f) \cong \Cone(A - BD^{-1}C: X \rightarrow Z)$.
\end{Lemma}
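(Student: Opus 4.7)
The plan is to reduce $f$ to a block-diagonal matrix by conjugating with invertible ``shear'' matrices on the source and target, then use that cones of isomorphisms vanish and cones commute with direct sums.

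First I would observe the factorization
$$\begin{pmatrix} A & B \\ C & D \end{pmatrix} = \begin{pmatrix} 1 & BD^{-1} \\ 0 & 1 \end{pmatrix} \begin{pmatrix} A - BD^{-1}C & 0 \\ 0 & D \end{pmatrix} \begin{pmatrix} 1 & 0 \\ D^{-1}C & 1 \end{pmatrix},$$
which is a routine check (expand from the right). The two outer matrices are upper/lower triangular with identity on the diagonal, hence automorphisms of $X \oplus Y$ and $Z \oplus W$ respectively (their inverses are obtained by negating the off-diagonal entry). Pre- and post-composing $f$ with automorphisms does not change the isomorphism class of its cone, so we may replace $f$ by the block-diagonal morphism
$$g := \begin{pmatrix} A - BD^{-1}C & 0 \\ 0 & D \end{pmatrix} : X \oplus Y \longrightarrow Z \oplus W.$$

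Next I would use that in a triangulated category the cone of a direct sum of morphisms is the direct sum of the cones: $\Cone(g) \cong \Cone(A - BD^{-1}C) \oplus \Cone(D)$. Since $D$ is an isomorphism, its cone is zero (the completion of the triangle $Y \xrightarrow{D} W \to 0 \to Y[1]$ is distinguished), and we conclude $\Cone(f) \cong \Cone(A - BD^{-1}C)$, as claimed.

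There is no real obstacle here; the only subtlety is confirming that $\Cone(f)$ is preserved under composing $f$ with source/target automorphisms (which follows from the functoriality of cone formation up to isomorphism, i.e.\ the axiom TR3 combined with the fact that the two outer morphisms in the relevant morphism of triangles are isomorphisms), and that the cone of a block-diagonal map is the direct sum of the individual cones (standard from the octahedral/direct sum axiom).
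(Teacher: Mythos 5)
Your proof is correct and is the standard direct argument. The paper does not actually prove this lemma---it simply cites Lemma 4.2 of Bar-Natan \cite{BN} (and Lemma 5.25 of \cite{ck2})---so there is no proof in the paper to compare against line by line. Your factorization into shear automorphisms times a block-diagonal map, followed by the observations that composing with automorphisms preserves cones up to isomorphism (via TR3 and the two-out-of-three property), that cones commute with finite direct sums, and that the cone of an isomorphism vanishes, is exactly the expected self-contained argument, and each of those ingredients is a routine consequence of the triangulated category axioms. If anything, your version is slightly more general in flavor than Bar-Natan's, which is phrased in terms of homotopy equivalences of chain complexes; yours works verbatim in an arbitrary triangulated category, which is the setting the paper needs.
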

\begin{proof}
This is essentially Lemma 4.2 from \cite{BN} (or Lemma 5.25 from \cite{ck2}). 
\end{proof}

\subsection{Some basic $\sl_2$ relations}

We begin by reviewing some of the relations which follow from the definition of a geometric categorical $\sl_2$ action. These results are strictly about $\sl_2$ actions and so they all follow from \cite{ckl2}. 

\begin{Proposition}\label{prop:EXE}
We have the direct sum decomposition 
$$\sE_i * \sE_i^{(r)} \cong \sE_i^{(r+1)} \otimes_\k H^\star(\p^r) \cong \sE_i^{(r)} * \sE_i.$$
More generally, we have 
$$\sE_i^{(r_1)} * \sE_i^{(r_2)} \cong \sE_i^{(r_1+r_2)} \otimes_\k H^\star(\bG(r_1,r_1+r_2))$$
where $\bG(r_1,r_1+r_2)$ denotes the Grassmannian of $r_1$-planes in $\C^{r_1+r_2}$. 
\end{Proposition}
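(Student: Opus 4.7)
The first isomorphism $\sE_i * \sE_i^{(r)} \cong \sE_i^{(r+1)} \otimes_\k H^\star(\p^r)$ is purely an $\sl_2$ statement, involving only the kernels and varieties attached to a single vertex $i$. Conditions (i)--(vii) and (x), when restricted to the chain $\{Y(\l + n\alpha_i)\}_{n \in \Z}$ together with $\sE_i, \sF_i$ and the one-parameter deformation $\tY_i$, are precisely the axioms of a geometric categorical $\sl_2$ action. Hence this isomorphism follows by citing the corresponding direct sum decomposition established in \cite{ckl2} for geometric categorical $\sl_2$ actions. The content there is to upgrade condition (iv), which only gives the statement at the level of cohomology sheaves, to an honest derived-category decomposition; the deformation $\tY_i$ is used essentially in \cite{ckl2} for a formality argument that kills the relevant differentials. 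This formality upgrade is the main obstacle in the whole proposition, but at this point we inherit it from loc.~cit. The equality $\sE_i * \sE_i^{(r)} \cong \sE_i^{(r)} * \sE_i$ in the statement then follows either by biadjunction (condition (iii)) applied to both sides, or as the $(r_1,r_2) = (1,r)$ and $(r,1)$ specializations of the general case below.

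For the more general statement I would induct on $r_1$. The base case $r_1 = 1$ is the first isomorphism applied with $r = r_2$, using $\bG(1, 1+r_2) = \p^{r_2}$. For the inductive step, compute $\sE_i * \sE_i^{(r_1-1)} * \sE_i^{(r_2)}$ in two ways. Using the base case to rewrite the first two factors,
\begin{equation*}
\sE_i * \sE_i^{(r_1-1)} * \sE_i^{(r_2)} \cong \bigl( \sE_i^{(r_1)} \otimes_\k H^\star(\p^{r_1-1}) \bigr) * \sE_i^{(r_2)} \cong \bigl( \sE_i^{(r_1)} * \sE_i^{(r_2)} \bigr) \otimes_\k H^\star(\p^{r_1-1}).
\end{equation*}
Alternatively, applying the inductive hypothesis to the last two factors and then the base case to $\sE_i * \sE_i^{(r_1+r_2-1)}$,
\begin{equation*}
\sE_i * \sE_i^{(r_1-1)} * \sE_i^{(r_2)} \cong \sE_i^{(r_1+r_2)} \otimes_\k H^\star(\p^{r_1+r_2-1}) \otimes_\k H^\star\bigl(\bG(r_1-1, r_1+r_2-1)\bigr).
\end{equation*}

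Equating these two expressions and applying the cancellation law \eqref{eq:B} to strip off the common factor $H^\star(\p^{r_1-1})$ yields the desired isomorphism, provided one verifies the graded vector space identity
\begin{equation*}
H^\star(\p^{r_1+r_2-1}) \otimes_\k H^\star\bigl(\bG(r_1-1, r_1+r_2-1)\bigr) \cong H^\star(\p^{r_1-1}) \otimes_\k H^\star\bigl(\bG(r_1, r_1+r_2)\bigr).
\end{equation*}
At the level of Poincar\'e polynomials in a formal variable $q$, this amounts to the well-known $q$-binomial identity $[r_1+r_2]_q \binom{r_1+r_2-1}{r_1-1}_q = [r_1]_q \binom{r_1+r_2}{r_1}_q$, which is immediate from the definitions. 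Thus, assuming the base case from \cite{ckl2}, the general case reduces to combinatorial bookkeeping and an application of cancellation in our idempotent complete category.
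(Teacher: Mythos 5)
Your proof is correct. The paper's own proof is a one-line citation: both the $\p^r$ decomposition and the general Grassmannian statement are deferred to Proposition 4.2 and Corollary 4.7 of \cite{ckl2} respectively. You cite \cite{ckl2} only for the $\p^r$ case (which is indeed where the deformation-theoretic content lives) and then derive the Grassmannian case by an induction on $r_1$ using associativity, the $q$-binomial identity, and the cancellation law \eqref{eq:B}. This is a valid alternative to citing Corollary 4.7 directly, and it almost certainly mirrors what \cite{ckl2} does internally; the upside is that your writeup makes explicit why the second statement is a formal consequence of the first in any idempotent complete, Krull--Schmidt, graded setting (so it transfers verbatim to the abstract strong categorical setting later in the paper), at the cost of a short combinatorial verification the paper simply outsources.
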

\begin{proof}
This follows by Proposition 4.2 and Corollary 4.7 of \cite{ckl2}. 
\end{proof}

\begin{Proposition}\label{prop:FXE}
We have the direct sum decompositions:
$$\sF_i(\l) * \sE_i(\l) \cong \sE_i(\l-\alpha_i) * \sF_i(\l-\alpha_i) \oplus \O_\Delta \otimes_\k H^\star(\p^{- \la \l, \alpha_i \ra - 1}) \text{ if } \la \l, \alpha_i \ra \le 0$$
$$\sE_i(\l-\alpha_i) * \sF_i(\l-\alpha_i) \cong \sF_i(\l) * \sE_i(\l) \oplus \O_\Delta \otimes_\k H^\star(\p^{\la \l, \alpha_i \ra - 1}) \text{ if } \la \l, \alpha_i \ra \ge 0.$$
\end{Proposition}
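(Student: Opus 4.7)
My plan is to deduce this proposition from the corresponding $\sl_2$-result already proved in \cite{ckl2}. As noted in the remark following the definition in Section \ref{sec:geomcat}, conditions (\ref{co:trivial})--(\ref{co:contain}) and (\ref{co:Eidef}) restricted to a single vertex $i$ say exactly that $\{Y(\l+n\alpha_i)\}_{n\in\Z}$ together with the kernels $\sE_i, \sF_i$ and the one-parameter deformation $\tY_i$ form a geometric categorical $\sl_2$ action in the sense of \cite{ckl2}. The statement to be established is then the direct translation to this setting of the analogue of Proposition \ref{prop:EXE}, and should follow from the same citation (Proposition 4.2 and Corollary 4.7 of \cite{ckl2}).

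The content of the proposition beyond condition (\ref{co:EF}) is a formality statement: condition (\ref{co:EF}) only pins down the extra summand $\sP$ up to its cohomology sheaves $\sH^*(\sP) \cong \O_\Delta \otimes_\k H^\star(\p^k)$, whereas the proposition asserts a splitting at the chain level. My first step is to invoke (\ref{co:EF}) together with the idempotent completeness and cancellation formalism of Section \ref{se:idem} to isolate $\sP$ as an actual direct summand of $\sF_i(\l) * \sE_i(\l)$ (respectively $\sE_i(\l-\alpha_i) * \sF_i(\l-\alpha_i)$, in the $\la\l,\alpha_i\ra\ge 0$ case). After this, formality of $\sP$ reduces to showing that the potentially non-trivial extension classes between the shifted summands $\O_\Delta[k]$ of $\sH^*(\sP)$ all vanish.

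The main obstacle — and what is actually carried out in \cite{ckl2} — is to control these extension classes via the deformation $\tY_i$. The key input is the obstruction analysis reviewed in Section \ref{sec:role}, and in particular the non-degeneracy statement of Proposition \ref{th:deformsum}: the class $c_{\Lambda_i,\Lambda_i}(\sE_i)$ induces an isomorphism on the relevant $\sE_i^{(2)}$ summands, and by the biadjunction between $\sE_i$ and $\sF_i$ (condition (\ref{co:adjoints})) this non-degeneracy transfers to the $\sF_i*\sE_i$ and $\sE_i*\sF_i$ side, where it rules out the potential higher Ext classes between the $\O_\Delta[k]$ summands. Once formality of $\sP$ is obtained, the cohomology identification from condition (\ref{co:EF}) immediately yields the claimed isomorphisms, completing the proof.
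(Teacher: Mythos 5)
Your proposal takes essentially the same approach as the paper: Proposition \ref{prop:FXE} is a pure $\sl_2$ statement (involving only one vertex $i$), and the paper simply cites the $\sl_2$ result from \cite{ckl2} — specifically Proposition 4.5 of \cite{ckl2}, rather than Proposition 4.2 and Corollary 4.7, which is the reference used for Proposition \ref{prop:EXE}. Your additional commentary on how \cite{ckl2} upgrades the cohomology-level statement of condition (\ref{co:EF}) to a chain-level direct sum via the deformation and obstruction analysis is a fair account of the mechanism behind the cited result.
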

\begin{proof}
This follows by Proposition 4.5 of \cite{ckl2}. 
\end{proof}

\begin{Corollary}\label{cor:iso2} We have
$$\sE_i^{(b)} * \sF_i^{(a)} \cong \bigoplus_{j \ge 0} \sF_i^{(a-j)} * \sE_i^{(b-j)}(\l) \otimes_\k H^\star(\bG(j,\la \l, \alpha_i \ra -a+b)) \text{ if } \la \l, \alpha_i \ra -a+b \ge 0$$
$$\sF_i^{(b)} * \sE_i^{(a)}(\l) \cong \bigoplus_{j \ge 0} \sE_i^{(a-j)} * \sF_i^{(b-j)} \otimes_\k H^\star(\bG(j, - \la \l, \alpha_i \ra + a-b)) \text{ if } \la \l, \alpha_i \ra -a+b \le 0$$
where, by convention, $\sE_i^{(l)} = 0 = \sF_i^{(l)}$ if $l < 0$. Moreover, if $i \ne j$ then 
$$\sF_i^{(b)} * \sE_j^{(a)} \cong \sE_j^{(a)} * \sF_i^{(b)}.$$
\end{Corollary}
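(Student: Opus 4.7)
The plan is to dispose of the commuting case first and then reduce the two divided-power identities to the $a=b=1$ case (Proposition~\ref{prop:FXE}) by a double induction using Proposition~\ref{prop:EXE} and the cancellation law (\ref{eq:B}).

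For the statement $\sF_i^{(b)} * \sE_j^{(a)} \cong \sE_j^{(a)} * \sF_i^{(b)}$ when $i \ne j$, I would iterate Proposition~\ref{prop:EXE} to write $\sE_j^a \cong \sE_j^{(a)} \otimes_\k V_a$ and $\sF_i^b \cong \sF_i^{(b)} \otimes_\k V_b$ for suitable graded $\k$-vector spaces $V_a,V_b$. Condition~(\ref{co:EiFj}) gives $\sF_i * \sE_j \cong \sE_j * \sF_i$, and iterating this $ab$ times yields $\sF_i^b * \sE_j^a \cong \sE_j^a * \sF_i^b$. Substituting the divided-power decompositions gives
\[
\sF_i^{(b)} * \sE_j^{(a)} \otimes_\k V_a \otimes_\k V_b \cong \sE_j^{(a)} * \sF_i^{(b)} \otimes_\k V_a \otimes_\k V_b,
\]
and the cancellation law (\ref{eq:B}) lets us strip off $V_a \otimes_\k V_b$.

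For the first two identities I would induct on $a+b$. The base case $a+b = 2$, namely $a=b=1$, is Proposition~\ref{prop:FXE}. For the inductive step, assume the formula holds for all $(a',b')$ with $a'+b' < a+b$. Using Proposition~\ref{prop:EXE},
\[
\sE_i * \sE_i^{(b-1)} * \sF_i^{(a)} \cong \sE_i^{(b)} * \sF_i^{(a)} \otimes_\k H^\star(\p^{b-1}).
\]
On the left I apply the inductive hypothesis to $\sE_i^{(b-1)} * \sF_i^{(a)}$, obtaining a direct sum of terms of the form $\sF_i^{(a-j)} * \sE_i^{(b-1-j)} \otimes_\k H^\star(\bG(j,\star))$ with appropriate weight shifts. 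Then I commute the outer $\sE_i$ past each $\sF_i^{(a-j)}$, again by the inductive hypothesis (which, for the $b'=1$ factor, is the base case $\sE_i * \sF_i^{(a-j)}$, itself provable by a parallel induction on $a$ using Propositions~\ref{prop:EXE} and~\ref{prop:FXE}). Collecting terms, I obtain an expression of the form $\bigoplus_{j} \sF_i^{(a-j)} * \sE_i^{(b-j)} \otimes_\k W_j$ for some graded vector spaces $W_j$, tensored overall with $H^\star(\p^{b-1})$. Matching Poincar\'e polynomials on both sides and applying the cancellation law (\ref{eq:B}) to strip off the $H^\star(\p^{b-1})$ factor yields the claimed decomposition. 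The dual relation for $\sF_i^{(b)} * \sE_i^{(a)}$ follows by the symmetry between $\sE_i$ and $\sF_i$ provided by the adjunction~(\ref{co:adjoints}), or by a parallel induction.

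The main obstacle is the combinatorial bookkeeping: I must verify that the Grassmannian cohomologies $H^\star(\bG(j,\la\l,\alpha_i\ra - a + b))$ arise with the correct multiplicities after combining $H^\star(\p^{b-1})$ with the Grassmannian factors produced by the inductive hypothesis. This amounts to the classical identity $[b]_t \binom{n}{j}_t = \binom{n}{j-1}_t[n-j+1]_t + t^{\pm(\cdot)} \binom{n}{j}_t[b-?]$-type relation, i.e.\ the Pascal-style recursion for quantum binomial coefficients. Once this identity is checked, the rest of the argument is mechanical, and the cancellation law handles the extraction of the desired direct summand from the tensor product with $H^\star(\p^{b-1})$ on both sides.
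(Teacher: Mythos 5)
Your proposal takes essentially the same route as the paper: the paper's proof is a one-line citation stating that the first two identities are a formal consequence of Proposition~\ref{prop:FXE} and the cancellation laws (deferring details to Lemma 4.2 of \cite{ckl3}), and that the third follows by cancellation from Proposition~\ref{prop:EXE} together with repeated use of $\sF_i * \sE_j \cong \sE_j * \sF_i$ from condition~(\ref{co:EiFj}). Your fleshed-out double induction, the stripping of tensor factors via (\ref{eq:B}), and the reduction of the $i \ne j$ case to condition~(\ref{co:EiFj}) plus cancellation are exactly the formal manipulations the paper alludes to; the "Pascal recursion for quantum binomial coefficients" you flag is precisely the bookkeeping that \cite{ckl3} carries out.
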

\begin{proof}
This first statement is a formal consequence of Proposition \ref{prop:FXE} and the cancellation relations. See \cite{ckl3} Lemma 4.2 for a sketch of the proof. The second commutation relation follows by cancellation from Proposition \ref{prop:EXE} and by repeatedly applying the fact that $\sF_i * \sE_j \cong \sE_j * \sF_i$ if $i \ne j$. 
\end{proof}

\subsection{Spaces of maps}

Next we have some results about maps between various combinations of $\sE$s.

\begin{Lemma}\label{lem:Eij} If $i,j \in I$ are joined by an edge then 
\begin{eqnarray}\label{eq:1}
\Ext^k(\sE^{(b)}_i * \sE^{(a)}_j, \sE^{(a)}_j * \sE^{(b)}_i) \cong
\left\{\begin{array}{ll}
0 & \text{if $k < ab$} \\
\k & \text{if $k = ab$}
\end{array}
\right.
\end{eqnarray}
while
\begin{eqnarray}\label{eq:2}
\Ext^k(\sE^{(b)}_i * \sE^{(a)}_j, \sE^{(b)}_i * \sE^{(a)}_j) \cong
\left\{\begin{array}{ll}
0 & \text{if $k < 0$} \\
\k \cdot \id & \text{if $k = 0$}
\end{array}
\right.
\end{eqnarray}
for any $a,b \ge 0$ (here we are assuming that $\sE_i^{(b)} * \sE_j^{(a)} \ne 0$). Thus $ \sE_i^{(b)} * \sE_j^{(a)} $  is a brick. The same results hold if we replace all $\sE$s by $\sF$s. 
\end{Lemma}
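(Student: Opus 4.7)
My plan is induction on $a + b$, combining adjunction with the $\sl_2$-decompositions of Propositions \ref{prop:EXE} and \ref{prop:FXE} (repackaged in Corollary \ref{cor:iso2}) and the commutation $\sF_i^{(b)} * \sE_j^{(a)} \cong \sE_j^{(a)} * \sF_i^{(b)}$ available for $i \neq j$.

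The base case $a = 0$ (the case $b = 0$ being symmetric) reduces both claims to showing that each divided power $\sE_i^{(b)}$ is a brick. Since the data indexed by a single vertex $i$ by itself forms a geometric categorical $\sl_2$ action, this follows from the corresponding $\sl_2$ result established in \cite{ckl2}.

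For the inductive step on (\ref{eq:1}), I would apply adjunction to transfer one copy of $\sE_i^{(b)}$ from the source to the target:
$$\Ext^k(\sE_i^{(b)} * \sE_j^{(a)}, \sE_j^{(a)} * \sE_i^{(b)}) \cong \Hom^{k-s}(\sE_j^{(a)}, \sF_i^{(b)} * \sE_j^{(a)} * \sE_i^{(b)})$$
for a weight-dependent shift $s = b(\la \l, \alpha_i \ra + b)$. Commute $\sF_i^{(b)}$ past $\sE_j^{(a)}$ (using $i \neq j$), then expand $\sF_i^{(b)} * \sE_i^{(b)}$ via Corollary \ref{cor:iso2} as a direct sum $\bigoplus_{l \geq 0} \sE_i^{(b-l)} * \sF_i^{(b-l)} \otimes_\k H^\star(\bG(l,\cdot))$. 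Reversing the adjunction and commuting where possible, each resulting summand with $l \geq 1$ becomes an $\Ext$ computation between shorter strings of the form $\sE_i^{(b-l)} * \sE_j^{(a)} \leftrightarrow \sE_j^{(a)} * \sE_i^{(b-l)}$, to which the inductive hypothesis applies. The residual $l = 0$ summand, after further expansion of $\sF_j^{(a)} * \sE_j^{(a)}$ and iteration of the same procedure on the $j$-side, ultimately reduces to $\End^\star(\O_\Delta)$. Condition (\ref{co:trivial}) then contributes precisely one dimension of $\k$ in the prescribed degree $ab$ and nothing below. The argument for (\ref{eq:2}) is entirely parallel with source and target in the same order; the distinguished element in degree zero is the image of $\id_{\O_\Delta}$ under the chain of adjunctions.

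The main obstacle will be the careful accounting of cohomological shifts: each adjunction introduces a shift of the form $\pm r(\la\l,\alpha_i\ra + r)$ depending on the ambient weight, and each $\sl_2$-decomposition introduces a graded multiplicity from $H^\star(\bG(l,\cdot))$. Verifying that these combine so that the first non-zero contribution sits precisely at $k = ab$ in (\ref{eq:1}) (respectively $k = 0$ in (\ref{eq:2})), with one-dimensional multiplicity and no earlier contributions, is the delicate combinatorial heart of the proof. Once this is in hand, the brick claim for $\sE_i^{(b)} * \sE_j^{(a)}$ is immediate from (\ref{eq:2}).
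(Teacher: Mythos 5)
Your overall toolkit is the same as the paper's: move a divided power across by adjunction, commute the resulting $\sF$ past the other vertex using condition (\ref{co:EiFj}), expand $\sF*\sE$ at one vertex via Corollary \ref{cor:iso2}, and reverse the adjunction to land on $\Ext$-groups between shorter strings. The paper also runs an intertwined induction in which (\ref{eq:2}) implies (\ref{eq:1}) and vice versa, so that part of your plan is sound. However, there is a genuine gap in the induction scheme you propose.

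You propose induction on $a+b$, but this does not close. After expanding $\sF_i^{(b)} * \sE_i^{(b)}$ (or $\sF_j^{(a)} * \sE_j^{(a)}$) via Corollary \ref{cor:iso2}, the $l=0$ summand is $\sE_i^{(b)} * \sF_i^{(b)}$ (resp.\ $\sE_j^{(a)} * \sF_j^{(a)}$): reversing the adjunction on it returns an $\End$-computation for the \emph{same} pair $(a,b)$, merely at a shifted weight (e.g.\ $\l - a\alpha_j$) with an additional cohomological shift. So $a+b$ does not decrease, and "iterating the same procedure on the $j$-side" produces exactly the same phenomenon — the $l=0$ piece again has the same $(a,b)$. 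The paper resolves this by running a decreasing induction on the weight $\la \l, \alpha_j \ra$ (together with induction on $a$), and the induction terminates only because of the integrability hypothesis, which forces $Y(\l + n\alpha_j) = \emptyset$ for $|n| \gg 0$. Your proposal never invokes integrability, and without it the argument does not terminate. Relatedly, the claim that the residual "ultimately reduces to $\End^\star(\O_\Delta)$" is not what happens: the base case is $a = 0$ or $b = 0$, which is the statement that $\sE_i^{(b)}$ (or $\sE_j^{(a)}$) is a brick — an $\sl_2$ fact from \cite{ckl2} — not $\End(\O_\Delta)$. Finally, be aware that Corollary \ref{cor:iso2} only gives the direct-sum expansion in one direction depending on the sign of the relevant weight pairing; the paper explicitly branches on $\la \l, \alpha_j \ra \le 0$ versus $\ge 0$ (moving $\sE_j^{(a)}$ to whichever side makes the decomposition available), a case analysis your proposal omits. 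None of this means the result is unreachable by your route, but as stated the induction is not well-founded and the weight/integrability mechanism that actually drives the termination and the degree bound $k < ab$ is missing.
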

We will denote the unique map (up to non-zero multiple) in (\ref{eq:1}) when $k=ab$ by 
$$T_{ij}^{(b)(a)}: \sE^{(b)}_i * \sE^{(a)}_j[-ab] \rightarrow \sE^{(a)}_j * \sE^{(b)}_i.$$
When $a=b=1$ we omit the superscripts. 
\begin{proof}
The proof is by (decreasing) induction on $\la \l, \alpha_j \ra$ and also $a,b$. The base case being $a=0$ or $b=0$ and follows by Lemma 4.9 of \cite{ckl2}.  

Using adjunction and (\ref{co:EiFj}), we have 
\begin{eqnarray*}
& & \Ext^k(\sE^{(b)}_i(\l+ a \alpha_j) * \sE^{(a)}_j(\l), \sE^{(a)}_j(\l+ b \alpha_i) * \sE^{(b)}_i(\l)) \\
&\cong& \Ext^k(\sF^{(a)}_j(\l+ b \alpha_i) [-a(\la \l+ b\alpha_i, \alpha_j \ra + a)] * \sE^{(b)}_i(\l+ a \alpha_j) * \sE^{(a)}_j(\l), \sE^{(b)}_i(\l)) \\
&\cong& \Ext^k(\sE^{(b)}_i(\l) * \sF^{(a)}_j(\l) * \sE^{(a)}_j(\l) [-a(\la \l, \alpha_j \ra + a-b)], \sE^{(b)}_i(\l))
\end{eqnarray*}
Now, let us suppose $\la \l, \alpha_j \ra \le 0$ (if not then we rewrite the equation above by moving the left $\sE_j^{(a)}$ to the right hand side using adjunction and proceeding in the same way). Then by Corollary \ref{cor:iso2}, we have
$$\sF^{(a)}_j(\l) * \sE^{(a)}_j(\l) \cong \bigoplus_{s=0}^{s=a} \sE^{(a-s)}_j * \sF^{(a-s)}_j \otimes_\k H^\star(\bG(s, - \la \l, \alpha_j \ra))$$
so we need to understand 
$$\Ext^k(\sE_i^{(b)} * \sE_j^{(a-s)} * \sF_j^{(a-s)}(\l-(a-s)\alpha_j) \otimes_\k H^\star(\bG(s, - \la \l, \alpha_j \ra)), \sE_i^{(b)} [a(\la \l, \alpha_j \ra + a-b)]).$$
But $\sF_j^{(a-s)}(\l-(a-s)\alpha_j)_L \cong \sE_j^{(a-s)}(\l-(a-s)\alpha_j)[(a-s)(\la \l - (a-s)\alpha_j, \alpha_j \ra + a - s)]$ so this equals 
$$\Ext^k(\sE_i^{(b)} * \sE_j^{(a-s)} \otimes_\k H^\star(\bG(s, - \la \l, \alpha_j \ra)), \sE_i^{(b)} * \sE_j^{(a-s)}(\l-(a-s)\alpha_j) [(2a-s)(\la \l, \alpha_j \ra + s) -ab]).$$
Now $H^\star(\bG(s, - \la \l, \alpha_j \ra))$ is supported in degrees $* \le -s(\la \l, \alpha_j \ra+s)$. So we get summands of the form
$$\Ext^k(\sE_i^{(b)} * \sE_j^{(a-s)}, \sE_i^{(b)} * \sE_j^{(a-s)}(\l-(a-s)\alpha_j) [2(a-s)(\la \l, \alpha_j \ra + s) -ab-*])$$
where $* \ge 0$. If $k < ab$ then $2(a-s)(\la \l, \alpha_j \ra + s) -ab-*+k < 0$ so we get zero (here we use that 
$$\Ext^{<0}(\sE_i^{(b)} * \sE_j^{(a-s)}, \sE_i^{(b)} * \sE_j^{(a-s)}(\l-(a-s)\alpha_j)) = 0$$
by the induction hypothesis). If $k = ab$ then $2(a-s)(\la \l, \alpha_j \ra + s)-*$ is non-negative precisely when $*=0$ and $s=a$ and we get only one such summand since $H^\star(\bG(s, - \la \l, \alpha_j \ra))$ is one-dimensional in top degree. 

This completes half the induction argument (i.e. relation (\ref{eq:2}) implies (\ref{eq:1})). To prove the other half we repeat the analogous argument with 
$$\Ext^k(\sE_i^{(b)} * \sE_j^{(a)}, \sE_i^{(b)} * \sE_j^{(a)})$$
to show that relation (\ref{eq:2}) holds assuming (\ref{eq:1}). 

Notice that to ensure the induction terminates we need the assumption that the action is integrable. The corresponding result for $\sF$s follows by taking adjoints. 
\end{proof}

The following result shows that $ \sE_i^{(b)} * \sE_j^{(a)} $ are bricks if $ i $ and $ j $ are not connected.

\begin{Corollary}\label{cor:Eij} If $i, j \in I$ are not joined by an edge then 
\begin{eqnarray}\label{eq:3}
\Ext^k(\sE^{(b)}_i * \sE^{(a)}_j, \sE^{(b)}_i * \sE^{(a)}_j) \cong
\left\{\begin{array}{ll}
0 & \mbox{if $k < 0$} \\
\k \cdot \id & \mbox{if $k = 0$}
\end{array}
\right.
\end{eqnarray}
and similarly if we replace all the $\sE$s by $\sF$s (here we are assuming that $\sE_i^{(b)} * \sE_j^{(a)} \ne 0$). 
\end{Corollary}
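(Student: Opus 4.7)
The plan is to follow the template of the proof of Lemma \ref{lem:Eij}, but substantially simplified by the fact that in the non-edge case we have full commutation: $\sE_i^{(b)} * \sE_j^{(a)} \cong \sE_j^{(a)} * \sE_i^{(b)}$ and $\sF_j^{(a)} * \sE_i^{(b)} \cong \sE_i^{(b)} * \sF_j^{(a)}$, both obtained from condition (\ref{co:EiFj}) by iterated cancellation, as in Corollary \ref{cor:iso2}. Because of this, there is no analogue of relation (\ref{eq:1}) to prove here, so relation (\ref{eq:3}) alone closes up the induction; correspondingly, since $\la \alpha_i, \alpha_j \ra = 0$, the adjunction shifts will line up more cleanly than they do in Lemma \ref{lem:Eij}.

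I would argue by (decreasing) induction on $\la \l, \alpha_j \ra$ together with induction on $a$, with base case $a = 0$ handled by Lemma 4.9 of \cite{ckl2} (the single divided power case). For the induction step, apply adjunction to rewrite
$$\Ext^k(\sE_i^{(b)} * \sE_j^{(a)}(\l), \sE_i^{(b)} * \sE_j^{(a)}(\l)) \cong \Ext^k(\sF_j^{(a)} * \sE_i^{(b)} * \sE_j^{(a)}(\l)[-a(\la \l+b\alpha_i,\alpha_j\ra + a)], \sE_i^{(b)}(\l)).$$
Since $\la \alpha_i,\alpha_j\ra = 0$, the adjunction shift collapses to $-a(\la \l,\alpha_j\ra+a)$, and by the commutation $\sF_j^{(a)} * \sE_i^{(b)} \cong \sE_i^{(b)} * \sF_j^{(a)}$ we may pull the $\sE_i^{(b)}$ all the way to the left.

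Now, assuming $\la \l, \alpha_j \ra \le 0$ (the other case is symmetric via the second isomorphism in Corollary \ref{cor:iso2}), apply Corollary \ref{cor:iso2} to decompose
$$\sF_j^{(a)}(\l) * \sE_j^{(a)}(\l) \cong \bigoplus_{s=0}^{a} \sE_j^{(a-s)} * \sF_j^{(a-s)} \otimes_\k H^\star(\bG(s, -\la \l, \alpha_j\ra))$$
and readjunct each $\sF_j^{(a-s)}$ across to the right, producing summands of the form
$$\Ext^k\bigl(\sE_i^{(b)} * \sE_j^{(a-s)} \otimes_\k H^\star(\bG(s, -\la \l, \alpha_j\ra))[c_1], \ \sE_i^{(b)} * \sE_j^{(a-s)}(\l-(a-s)\alpha_j)[c_2]\bigr).$$
The shift $c_2 - c_1$ is computed exactly as in the proof of Lemma \ref{lem:Eij}, and combined with the fact that $H^\star(\bG(s,-\la \l, \alpha_j\ra))$ is supported in degrees $\le -s(\la \l,\alpha_j\ra+s)$ and one-dimensional in top degree, a direct degree count shows that the total contribution vanishes for $k < 0$ by the induction hypothesis, and that at $k = 0$ only the summand with $s = a$ and $H^\star$ in top degree contributes, giving a one-dimensional space generated by the identity.

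The main obstacle is purely bookkeeping: one must verify that the adjunction shifts and the Grassmannian cohomology shifts cancel exactly when $s=a$ so that the unique surviving class is the identity (and, more importantly, that for $s<a$ and for $k<0$ the grading estimate is strict enough to invoke the induction hypothesis). The computation for $\sF_i^{(b)} * \sF_j^{(a)}$ follows formally by taking right adjoints of the $\sE$ statement, using condition (\ref{co:adjoints}).
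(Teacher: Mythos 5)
Your proposal is correct and follows essentially the same route the paper takes: the paper's proof simply says to rerun the induction of Lemma \ref{lem:Eij} with $\la\alpha_i,\alpha_j\ra=0$ in place of $-1$, noting that the induction now has only one part since $\sE_i^{(b)}$ and $\sE_j^{(a)}$ commute. Your writeup makes explicit the same adjunction/Corollary \ref{cor:iso2} decomposition and degree count, so there is nothing genuinely different here.
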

\begin{proof}
The proof is precisely the induction from Lemma \ref{lem:Eij}. The main difference is that in the computation we replace $\la \alpha_i, \alpha_j \ra = -1$ by $\la \alpha_i, \alpha_j \ra = 0$. Also, the induction has only one part since now $\sE^{(b)}_i$ and $\sE^{(a)}_j$ commute (because $\sE_i$ and $\sE_j$ commute). 
\end{proof}

\subsection{Some basic $\sl_3$ relations}

We first generalize the relation $\sE_i * \sE_j * \sE_i \cong \sE_i^{(2)} * \sE_j \oplus \sE_j * \sE_i^{(2)}$ when $i,j \in I$ are joined by an edge. 

\begin{Proposition}\label{prop:E1E2E1} If $i, j \in I$ are joined by an edge then 
$$\sE_i^{(a)} * \sE_j * \sE_i \cong \sE_i^{(a+1)} * \sE_j \otimes_\k H^\star(\p^{a-1}) \oplus \sE_j * \sE_i^{(a+1)}$$
and similarly 
$$\sE_i * \sE_j * \sE_i^{(a)} \cong \sE_j * \sE_i^{(a+1)} \otimes_\k H^\star(\p^{a-1}) \oplus \sE_i^{(a+1)} * \sE_j.$$
\end{Proposition}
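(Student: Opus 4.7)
The plan is to prove both isomorphisms simultaneously by induction on $a \ge 1$. Set $L_a := \sE_i^{(a)} * \sE_j * \sE_i$ and $R_a := \sE_i * \sE_j * \sE_i^{(a)}$, and write $\tilde{L}_a$, $\tilde{R}_a$ for the proposed right-hand sides. The base case $a=1$ reduces directly to the Serre relation of condition (\ref{co:EiEj}), after identifying $H^\star(\p^0) = \k$.

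For the inductive step, assume both identities for $a-1$. Composing the $L_{a-1}$-identity with $\sE_i$ on the left, absorbing $\sE_i * \sE_i^{(a-1)} \cong \sE_i^{(a)} \otimes_\k H^\star(\p^{a-1})$ on one side and $\sE_i * \sE_i^{(a)} \cong \sE_i^{(a+1)} \otimes_\k H^\star(\p^a)$ on the other (both via Proposition \ref{prop:EXE}), I would obtain
\begin{equation*}
L_a \otimes_\k H^\star(\p^{a-1}) \cong \sE_i^{(a+1)} * \sE_j \otimes_\k H^\star(\p^a) \otimes_\k H^\star(\p^{a-2}) \oplus R_a.
\end{equation*}
The mirror computation, composing the $R_{a-1}$-identity with $\sE_i$ on the right, yields the symmetric relation with $\sE_j * \sE_i^{(a+1)}$ and $L_a$ replacing $\sE_i^{(a+1)} * \sE_j$ and $R_a$.

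To decouple these into separate identities for $L_a$ and $R_a$, I would tensor the first relation with $H^\star(\p^{a-1})$, substitute the second, and cancel one copy of $L_a$ by the cancellation law (\ref{eq:A}). The key combinatorial ingredient, a short Poincar\'e polynomial check, is
\begin{equation*}
H^\star(\p^{a-1})^{\otimes 2} \cong H^\star(\p^a) \otimes_\k H^\star(\p^{a-2}) \oplus \k
\end{equation*}
of graded $\k$-vector spaces. Using this identity, the decoupled equation simplifies to $L_a \otimes_\k V \cong \tilde{L}_a \otimes_\k V$ with $V := H^\star(\p^a) \otimes_\k H^\star(\p^{a-2})$. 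Since $V$ is a nonzero graded vector space when $a \ge 2$, cancellation law (\ref{eq:B}) from Section \ref{se:idem} gives $L_a \cong \tilde{L}_a$. The same argument with the two relations interchanged produces $R_a \cong \tilde{R}_a$.

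The main conceptual obstacle is that a single-variable induction does not close: the inductive step for $L_a$ naturally produces $R_a$ as a summand, forcing the two statements to be bootstrapped together. Overcoming this requires the graded vector space identity above, which is the hinge that makes the coupled system solvable and lets the two cancellation laws eliminate all auxiliary tensor factors.
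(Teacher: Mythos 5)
Your proof is correct, but it takes a genuinely different route from the paper's. The paper proves the first isomorphism by an uncoupled induction: it applies the divided power $\sE_i^{(a)}$ to the left of the $a=1$ Serre relation, tensors with $H^\star(\p^1)$, and then invokes the inductive hypothesis (for the \emph{first} relation at level $a$) inside the resulting expression, eventually isolating the $(a+1)$-case after cancelling with law (\ref{eq:A}) via $H^\star(\p^1)\otimes_\k H^\star(\p^a)\cong H^\star(\p^{a-1})\oplus H^\star(\p^{a+1})$ and then law (\ref{eq:B}). The second relation is stated to follow ``similarly'' by a mirror induction. Your approach instead composes a single $\sE_i$ with the $(a-1)$-case identities for $L$ and $R$, which produces a coupled $2\times 2$ system in the unknowns $L_a, R_a$; you then solve it by tensoring and substituting, with the decoupling hinging on the quantum-integer identity $[a]^2=[a+1][a-1]+1$, i.e.\ $H^\star(\p^{a-1})^{\otimes 2}\cong H^\star(\p^a)\otimes_\k H^\star(\p^{a-2})\oplus\k$ as graded vector spaces. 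Both arguments use the same ingredients (Proposition \ref{prop:EXE} and cancellation laws (\ref{eq:A}), (\ref{eq:B}) from Section \ref{se:idem}) and both are sound; the paper's has the mild advantage of needing only the first relation within its induction, whereas yours exhibits the symmetry between the two relations more explicitly by bootstrapping them together. One small point worth making explicit in your write-up: the decoupling vector space $V=H^\star(\p^a)\otimes_\k H^\star(\p^{a-2})$ vanishes exactly when $a=1$ (by the convention $H^\star(\p^{-1})=0$), so it is essential that the inductive step is only invoked for $a\ge 2$ where the base case has already been handled directly; you note this, and the bookkeeping checks out.
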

\begin{proof}
We prove the first relation by induction on $a$ (the second relation follows similarly). The base case is $a=1$ which is precisely one of the conditions of having a geometric categorical $\g$ action. 

Applying $\sE_i^{(a)}$ to the left of the relation for $a=1$ we get
$$\sE_i^{(a)} * \sE_i * \sE_j * \sE_i \cong \sE_i^{(a)} * \sE_j * \sE_i^{(2)} \oplus \sE_i^{(a)} * \sE_i^{(2)} * \sE_j.$$
Tensoring both sides with $H^\star(\p^1)$ and using that $\sE_i^2 = \sE_i^{(2)} \otimes_\k H^\star(\p^1)$ we get
\begin{eqnarray*}
\sE_i^{(a)} * \sE_i * \sE_j * \sE_i \otimes_\k H^\star(\p^1) 
&\cong& \sE_i^{(a)} * \sE_j * \sE_i^2 \oplus \sE_i^{(a)} * \sE_i^2 * \sE_j \\
&\cong& \left( \sE_i^{(a+1)} * \sE_j * \sE_i \otimes_\k H^\star(\p^{a-1}) \oplus \sE_j * \sE_i^{(a+1)} * \sE_i \right) \oplus \sE_i^{(a)} * \sE_i^2 * \sE_j
\end{eqnarray*}
where the second isomorphism follows by induction. Thus we get
$$\sE_i^{(a+1)} * \sE_j * \sE_i \otimes_\k H^\star(\p^1 \times \p^a) \cong \sE_i^{(a+1)} * \sE_j * \sE_i \otimes_\k H^\star(\p^{a-1}) \oplus \sE_j * \sE_i^{(a+1)} * \sE_i \oplus \sE_i^{(a)} * \sE_i^2 * \sE_j.$$
By cancellation law (\ref{eq:A}), we obtain
\begin{eqnarray*}
\sE_i^{(a+1)} * \sE_j * \sE_i \otimes_\k H^\star(\p^{a+1}) 
&\cong& \sE_j * \sE_i^{(a+1)} * \sE_i \oplus \sE_i^{(a)} * \sE_i^2 * \sE_j \\
&\cong& \sE_j * \sE_i^{(a+2)} \otimes_\k H^\star(\p^{a+1}) \oplus \sE_i^{(a+2)} * \sE_j \otimes_\k H^\star(\p^a \times \p^{a+1}).
\end{eqnarray*}
Using cancellation law (\ref{eq:B}) we get 
$$\sE_i^{(a+1)} * \sE_j * \sE_i \cong \sE_j * \sE_i^{(a+2)} \oplus \sE_i^{(a+2)} * \sE_j \otimes_\k H^\star(\p^a)$$
and the induction step is complete. 
\end{proof}

\begin{Corollary}\label{cor:E1E2E1} If $i, j \in I$ are joined by an edge then 
$$\sE_i^{(a)} * \sE_j * \sE_i^{(b)} \cong \sE_i^{(a+b)} * \sE_j \otimes_\k H^\star(\bG(b,a+b-1)) \oplus \sE_j * \sE_i^{(a+b)} \otimes_\k H^\star(\bG(a,a+b-1)).$$
\end{Corollary}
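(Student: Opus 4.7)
The plan is to prove Corollary \ref{cor:E1E2E1} by induction on $b$, with $a$ fixed (or equivalently, by induction on $a+b$). The base case $b=1$ is precisely Proposition \ref{prop:E1E2E1}. For the inductive step, I would compute $\sE_i^{(a)} * \sE_j * \sE_i^{(b)} * \sE_i$ in two different ways and then cancel a common factor.

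On the one hand, Proposition \ref{prop:EXE} gives $\sE_i^{(b)} * \sE_i \cong \sE_i^{(b+1)} \otimes_\k H^\star(\p^b)$, so
$$\sE_i^{(a)} * \sE_j * \sE_i^{(b)} * \sE_i \cong \sE_i^{(a)} * \sE_j * \sE_i^{(b+1)} \otimes_\k H^\star(\p^b).$$
On the other hand, applying the inductive hypothesis to $\sE_i^{(a)} * \sE_j * \sE_i^{(b)}$ and then composing on the right with $\sE_i$, I would use Proposition \ref{prop:E1E2E1} to expand $\sE_i^{(a+b)} * \sE_j * \sE_i$ and Proposition \ref{prop:EXE} to expand $\sE_i^{(a+b)} * \sE_i$. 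The result is
\begin{align*}
\sE_i^{(a)} * \sE_j * \sE_i^{(b+1)} \otimes_\k H^\star(\p^b) &\cong \sE_i^{(a+b+1)} * \sE_j \otimes_\k \bigl[H^\star(\p^{a+b-1}) \otimes H^\star(\bG(b,a+b-1))\bigr] \\
&\oplus \sE_j * \sE_i^{(a+b+1)} \otimes_\k \bigl[H^\star(\bG(b,a+b-1)) \oplus H^\star(\p^{a+b}) \otimes H^\star(\bG(a,a+b-1))\bigr].
\end{align*}

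To obtain the desired decomposition, I would then verify the two identities of graded vector spaces
$$H^\star(\p^{a+b-1}) \otimes H^\star(\bG(b,a+b-1)) \cong H^\star(\bG(b+1,a+b)) \otimes H^\star(\p^b),$$
$$H^\star(\bG(b,a+b-1)) \oplus H^\star(\p^{a+b}) \otimes H^\star(\bG(a,a+b-1)) \cong H^\star(\bG(a,a+b)) \otimes H^\star(\p^b).$$
The first is the $q$-Pascal identity $[a+b]_{q^2}\binom{a+b-1}{b}_{q^2} = [b+1]_{q^2} \binom{a+b}{b+1}_{q^2}$ translated into Poincar\'e polynomials, while the second follows from the $q$-Pascal recursion $\binom{a+b}{a}_{q^2} = q^{2b}\binom{a+b-1}{a-1}_{q^2} + \binom{a+b-1}{a}_{q^2}$ together with $[a]_{q^2}\binom{a+b-1}{a}_{q^2} = [b]_{q^2}\binom{a+b-1}{b}_{q^2}$. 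Plugging these in and invoking the cancellation law (\ref{eq:B}) to remove the common factor $H^\star(\p^b)$ delivers exactly the claimed decomposition for $b+1$.

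The main technical obstacle will be the careful bookkeeping of cohomological degree shifts: the identities above need to hold as graded vector spaces (i.e., as Poincar\'e polynomials), not merely as total dimensions, since otherwise the cancellation law (\ref{eq:B}) would not apply in the graded sense. Once the shifts are tracked properly (using the normalization from the paper's introduction, in which $H^\star(\p^n)$ lies in degrees $-n,\dots,n$ and $H^\star(\bG(k,n))$ is similarly centered around zero), both identities reduce to well-known $q$-binomial identities and the induction closes.
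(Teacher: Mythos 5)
Your proof is correct and is essentially the mirror image of the paper's: the paper inducts on $a$ and left-multiplies by $\sE_i$ (computing $\sE_i * \sE_i^{(a)} * \sE_j * \sE_i^{(b)} \cong \sE_i^{(a+1)} * \sE_j * \sE_i^{(b)} \otimes_\k H^\star(\p^a)$), while you induct on $b$ and right-multiply, but the two routes are symmetric. In either case the reduction goes through Proposition \ref{prop:E1E2E1}, Proposition \ref{prop:EXE}, and the graded cancellation law (\ref{eq:B}), ending in the same pair of $q$-binomial identities, so your argument matches the paper's.
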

\begin{proof}
The proof is by induction on $a$. To compute $\sE_i^{(a+1)} * \sE_j * \sE_i^{(b)}$ one looks at 
$$\sE_i * \sE_i^{(a)} * \sE_j * \sE_i^{(b)} \cong \sE_i^{(a+1)} * \sE_j * \sE_i^{(b)} \otimes_\k H^\star(\p^a).$$ 
By induction the left hand side is 
$$\sE_i * \left( \sE_i^{(a+b)} * \sE_j \otimes_\k H^\star(\bG(b,a+b-1)) \oplus \sE_j * \sE_i^{(a+b)} \otimes_\k H^\star(\bG(a,a+b-1)) \right).$$
Now we have 
$$\sE_i * \sE_i^{(a+b)} * \sE_j \cong \sE_i^{(a+b+1)} * \sE_j \otimes_\k H^\star(\p^{a+b})$$
and by Proposition \ref{prop:E1E2E1}
$$\sE_i * \sE_j * \sE_i^{(a+b)} \cong \sE_i^{(a+b+1)} * \sE_j \oplus \sE_j * \sE_i^{(a+b+1)} \otimes_\k H^\star(\p^{a+b-1})$$
So by the cancellation law (\ref{eq:B}), the induction step comes down to proving that 
$$H^\star(\bG(b,a+b-1) \times \p^{a+b}) \oplus H^\star(\bG(a,a+b-1)) \cong H^\star(\bG(b,a+b) \times \p^a)$$
and
$$H^\star(\bG(a,a+b-1) \times \p^{a+b-1}) \cong H^\star(\bG(a+1,a+b) \times \p^a)$$
which one can prove by standard techniques. 
\end{proof}

\subsection{Induced maps}

\begin{Lemma}\label{lem1} If $i,j \in I$ are connected by an edge then 
$$\sE_{ij} * \sE_i \cong \Cone(\sE_i^{(2)} * \sE_j [-1] \xrightarrow{T_{ij}^{(2)(1)}[1]} \sE_j * \sE_i^{(2)}[1])$$
and 
$$\sE_{ij} * \sE_j \cong \Cone(\sE_i * \sE_j^{(2)}[-2] \xrightarrow{T_{ij}^{(1)(2)}} \sE_j^{(2)} * \sE_i).$$
In particular, 
$$\sE_i * \sE_j * \sE_i [-1] \xrightarrow{T_{ij}I} \sE_j * \sE_i * \sE_i$$ 
induces an isomorphism on the $\sE_j * \sE_i^{(2)}[-1]$ summand while
$$\sE_i * \sE_j * \sE_j [-1] \xrightarrow{T_{ij}I} \sE_j * \sE_i * \sE_j$$
induces an isomorphism on the $\sE_i * \sE_j^{(2)}$ summand. We also have the analogous results for $\sE_i * \sE_{ij}$ and $\sE_j * \sE_{ij}$. 
\end{Lemma}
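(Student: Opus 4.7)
My plan is to collapse the defining cone $\sE_{ij}*\sE_i = \Cone(T_{ij}*I: \sE_i * \sE_j * \sE_i[-1] \to \sE_j * \sE_i * \sE_i)$ by a Gaussian elimination argument, once its source and target have been decomposed. By Proposition \ref{prop:E1E2E1} (with $a=1$) the source splits as $\sE_i^{(2)} * \sE_j[-1] \oplus \sE_j * \sE_i^{(2)}[-1]$, and by Proposition \ref{prop:EXE} the target splits as $\sE_j * \sE_i^{(2)}[-1] \oplus \sE_j * \sE_i^{(2)}[1]$. Relative to these decompositions $T_{ij}*I$ becomes a $2\times 2$ matrix, whose four entries I will pin down via Lemma \ref{lem:Eij}.

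Applying Lemma \ref{lem:Eij} with $(a,b)=(1,2)$: the $\sE_i^{(2)} * \sE_j[-1] \to \sE_j * \sE_i^{(2)}[-1]$ entry lies in $\Hom(\sE_i^{(2)} * \sE_j, \sE_j * \sE_i^{(2)}) = 0$ (the lowest nonzero Ext is in degree $ab = 2$), so vanishes; the $\sE_i^{(2)} * \sE_j[-1] \to \sE_j * \sE_i^{(2)}[1]$ entry lies in the 1-dimensional space $\Ext^2(\sE_i^{(2)}*\sE_j, \sE_j*\sE_i^{(2)})$ spanned by a shift of $T_{ij}^{(2)(1)}$, hence equals $\alpha\cdot T_{ij}^{(2)(1)}[1]$ for some $\alpha \in \k$; and the diagonal $\sE_j * \sE_i^{(2)}[-1] \to \sE_j * \sE_i^{(2)}[-1]$ entry lies in $\End^0(\sE_j * \sE_i^{(2)}) = \k\cdot\id$ (since $\sE_j * \sE_i^{(2)}$ is a brick), hence equals $c\cdot\id$. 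Once one knows $c \neq 0$, this diagonal block is an isomorphism and Lemma \ref{lem:cancel} reduces the total cone to $\Cone(\alpha T_{ij}^{(2)(1)}[1])$; a parallel indecomposability check will yield $\alpha \neq 0$, identifying $\sE_{ij}*\sE_i$ with $\Cone(T_{ij}^{(2)(1)}[1])$.

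The main obstacle is showing $c \neq 0$. I would argue this by passing to the long exact sequence of cohomology sheaves of the triangle $\sE_i * \sE_j * \sE_i[-1] \to \sE_j * \sE_i^2 \to \sE_{ij} * \sE_i$. Because $\sE_i * \sE_j * \sE_i$ is a sum of sheaves in cohomological degree $0$ (Proposition \ref{prop:E1E2E1}), the shifted source lives in degree $+1$ only, while the target $\sE_j * \sE_i^{(2)} \otimes_\k H^\star(\p^1)$ has cohomology $\sE_j * \sE_i^{(2)}$ in each of degrees $\pm 1$. The induced map on $H^{+1}$ is exactly the row $(0, c\cdot\id)$, so $c = 0$ would force $\sE_j * \sE_i^{(2)}$ to appear as $H^{+1}(\sE_{ij}*\sE_i)$. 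I would then derive a contradiction using that $\sE_{ij}$ itself is a sheaf in degree $0$ (it is an extension of $\sE_i * \sE_j$ by $\sE_j * \sE_i$, as seen from the long exact sequence for its own defining triangle), combined with a $\Hom(\sE_j*\sE_i^{(2)}, \sE_{ij}*\sE_i)$-computation through the defining triangle that rules out a direct summand of the form $\sE_j * \sE_i^{(2)}$ (or a shift thereof) in $\sE_{ij}*\sE_i$. The argument for $\alpha \neq 0$ is similar: if $\alpha = 0$, the remaining cone would be the trivial extension $\sE_j*\sE_i^{(2)}[1] \oplus \sE_i^{(2)}*\sE_j$, and one rules out this splitting by the same Hom-space analysis.

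With $c, \alpha \neq 0$ the claimed isomorphism $\sE_{ij}*\sE_i \cong \Cone(T_{ij}^{(2)(1)}[1])$ follows. The ``in particular'' statement is precisely the assertion $c \neq 0$, hence a direct corollary. The second identity $\sE_{ij}*\sE_j \cong \Cone(T_{ij}^{(1)(2)})$ and the statements about $\sE_i * \sE_{ij}$ and $\sE_j * \sE_{ij}$ follow by the same strategy after swapping $i \leftrightarrow j$ and/or invoking the biadjunction to pass to the other side.
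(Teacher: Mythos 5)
Your setup matches the paper: decompose source and target, write $T_{ij}I$ as a $2\times 2$ matrix, pin down off-diagonal and lone-entry vanishing via Lemma \ref{lem:Eij}, and apply Gaussian elimination once the diagonal entry $c$ (the paper's $\alpha$) and the entry $\alpha$ (the paper's $\gamma$) are shown nonzero. The essential content of the lemma, as the paper emphasizes, is precisely that nonvanishing, and here your argument diverges from the paper's and runs into trouble.

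Your strategy for $c\neq 0$ passes to cohomology sheaves and assumes that $\sE_i * \sE_j$, $\sE_j * \sE_i$, $\sE_i^{(2)}*\sE_j$, and $\sE_j*\sE_i^{(2)}$ are sheaves concentrated in degree $0$ (so that the source lives in degree $+1$, the target in degrees $\pm 1$, and $\sE_{ij}$ is a sheaf extension of $\sE_i*\sE_j$ by $\sE_j*\sE_i$). This is not among the axioms. Condition (\ref{co:sheaves}) in the definition of a geometric categorical $\g$ action only asserts that the individual kernels $\sE_i^{(r)}$ and $\sF_i^{(r)}$ are sheaves; it says nothing about convolutions $\sE_i*\sE_j$, which can acquire higher cohomology via the derived pushforward in $*$. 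Proposition \ref{prop:E1E2E1}, which you cite, gives a direct sum decomposition of $\sE_i*\sE_j*\sE_i$ but does not assert that the summands are sheaves. (In the specific flag-variety example, Lemma \ref{lem:Eii+1} verifies this by a geometric calculation, but that is a feature of the example, not a consequence of the axioms.) Moreover, even granting the sheaf input, the claimed contradiction is not drawn: $\sE_j*\sE_i^{(2)}$ appearing as $H^{+1}(\sE_{ij}*\sE_i)$ does not by itself produce a direct summand isomorphic to a shift of $\sE_j*\sE_i^{(2)}$, so the Hom-computation you gesture at needs a different form, and it is not evident it would close the gap. The same concern applies to your sketch for $\alpha\neq 0$.

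The paper's proof of $\alpha\neq 0$ is fundamentally different and relies on the deformation data: it chooses $v\in\h'_\k$ with $\la v,\alpha_i\ra=1$, $\la v,\alpha_j\ra=-1$, observes that $c_{v,v}(\sE_{ij})=0$ by condition (\ref{co:Eijdef}), uses the commutative diagram coming from Proposition \ref{th:propc1}(i) applied to the defining triangle of $\sE_{ij}$, and then invokes Proposition \ref{th:deformsum} (the non-degeneracy of $c_{v,v}(\sE_i)$ when $\la v,\alpha_i\ra\neq 0$) to force a factorization through $T_{ij}I$ that exhibits the required isomorphism on the $\sE_j*\sE_i^{(2)}$ summand. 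The proof of $\gamma\neq 0$ is a separate summand-counting argument on $\sE_i*\sE_i*\sE_j*\sE_i$ associated in two ways. This reliance on deformations is unavoidable at this stage: the paper makes it explicit in the proof of Theorem \ref{thm:main2} that Lemma \ref{lem1} is exactly the point where extra structure must enter, and that in the abstract (non-geometric) setting the deformation argument is replaced by the relation $T_{ji}\circ T_{ij} = X_i I + I X_j$. A purely formal argument from the numerical relations and brick properties alone will not suffice.
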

\begin{proof}
We deal with the case of $\sE_{ij} * \sE_i$ since the other cases follow similarly. 

Now $(\sE_i * \sE_j [-1] \xrightarrow{T_{ij}} \sE_j * \sE_i) * \sE_i$ induces a map 
$$\left( \begin{matrix} \alpha & 0 \\ \beta & \gamma \end{matrix} \right): \left( \begin{matrix} \sE_j * \sE_i^{(2)} [-1] \\ \sE_i^{(2)} * \sE_j [-1] \end{matrix} \right) \rightarrow \left( \begin{matrix} \sE_j * \sE_i^{(2)} [-1] \\ \sE_j * \sE_i^{(2)}[1] \end{matrix} \right).$$
We need to show that $\alpha \ne 0 \ne \gamma$ because then $\alpha$ is a non-zero multiple of the identity and by the cancellation Lemma \ref{lem:cancel} the cone is isomorphic to $\Cone(\gamma)$ where $\gamma$ must be $T_{ij}^{(2)(1)}[1]$ (up to a multiple) by Lemma \ref{lem:Eij}.  

Let $ v \in \h'_\k $ be a vector with $ \langle v, \alpha_i \rangle = 1$ and $ \langle v, \alpha_j \rangle = -1 $.

Denote by $t$ the natural inclusion 
$$t: Y(\l) \times Y(\l + \alpha_i + \alpha_j) \rightarrow \tY(\l)|_{\spn(v)} \times_{\spn(v)} \tY(\l+\alpha_i+\alpha_j)|_{\spn(v)}.$$

From Proposition \ref{th:propc1}.(i), for all $ \sA \in Y(\l) \times Y(\l + \alpha_i+ \alpha_j) $, we have the functorial distinguished triangle
$$
\sA[-1] \xrightarrow{c_{v,v}(\sA)} \sA[1] \rightarrow t^* t_* \sA \rightarrow \sA.
$$
Applying this to the distinguished triangle $ \sE_i * \sE_j[-1] \xrightarrow{T_{ij}} \sE_j * \sE_i \rightarrow \sE_{ij} $ we obtain the commutative diagram

\begin{equation}\label{eq:14}
\xymatrix{
t^* t_* (\sE_i * \sE_j [-1]) \ar[rr]^-{t^* t_* T_{ij}} \ar[d]^{(adj)} && t^* t_* (\sE_j * \sE_i) \ar[d]^{(adj)} \ar[r] & t^* t_* \sE_{ij} \ar[d]^{(adj)} \\
\sE_i * \sE_j [-1] \ar[d]^{c_{v,v}(\sE_i *\sE_j)} \ar[rr]^-{T_{ij}} && \sE_j * \sE_i \ar[d]^{c_{v,v}(\sE_j*\sE_i)[1]} \ar[r] & \sE_{ij} \ar[d]^{c_{v,v}(\sE_{ij})} \\
\sE_i * \sE_j [1] \ar[rr]^-{T_{ij}[2]} && \sE_j * \sE_i [2] \ar[r]^{g'} & \sE_{ij}[2]. }
\end{equation}
As noted in section \ref{se:deform}, because $ \la v, \alpha_i + \alpha_j \ra = 0$, $ c_{v,v}(\sE_{ij}) = 0 $.

Now apply $* \sE_i$ to the whole diagram to get 
\begin{equation}\label{eq:15}
\xymatrix{
t^* t_* (\sE_i * \sE_j [-1])*\sE_i \ar[rr]^-{t^* t_* T_{ij}I} \ar[d]^{(adj)} && t^* t_* (\sE_j * \sE_i) *\sE_i \ar[d]^{(adj)} \ar[r] & t^* t_* \sE_{ij}*\sE_i \ar[d]^{(adj)} \\
\sE_i * \sE_j *\sE_i [-1] \ar[d]^{c_{v,v}(\sE_i *\sE_j)I} \ar[rr]^-{T_{ij}I} && \sE_j * \sE_i *\sE_i \ar[d]^{c_{v,v}(\sE_j*\sE_i)I[1]} \ar[r] & \sE_{ij}*\sE_i \ar[d]^{0} \\
\sE_i * \sE_j*\sE_i [1] \ar[rr]^-{T_{ij}I[2]} && \sE_j * \sE_i *\sE_i[2] \ar[r]^{g'I} & \sE_{ij} * \sE_i[2]}. 
\end{equation}

We now examine the map 
$$c_{v,v}(\sE_j*\sE_i)I[1]: \sE_j * \sE_i * \sE_i \cong \sE_j * \sE_i^{(2)} [-1] \oplus \sE_j * \sE_i^{(2)} [1] \rightarrow \sE_j * \sE_i^{(2)} [1] \oplus \sE_j * \sE_i^{(2)} [3] \cong \sE_j * \sE_i * \sE_i [2].$$
We claim that this map is an isomorphism on the summand $\sE_j * \sE_i^{(2)} [1]$. 

To see this, note that $c_{v,v}(\sE_j* \sE_i) I[1] = c_{v,0}(\sE_j * \sE_i) I[1] + c_{0,v}(\sE_j * \sE_i)I[1] $ by Proposition \ref{th:propc1}.(ii).  Let us consider each of these terms.

First, by Proposition \ref{th:propc2}.(i), $c_{v,0}(\sE_j* \sE_i) I = c_{v,0}(\sE_j* \sE_i * \sE_i) $.  Hence this map is given by the diagonal matrix
\begin{eqnarray*}
\left( 
\begin{matrix}
c_{v,0}(\sE_j * \sE_i^{(2)})[-1] & 0 \\
0 & c_{v,0}(\sE_j * \sE_i^{(2)})[1]
\end{matrix}
\right): 
\left( \begin{matrix} \sE_j * \sE_i^{(2)} [-2] \\ \sE_j * \sE_i^{(2)} \end{matrix} \right) \rightarrow
\left( \begin{matrix} \sE_j * \sE_i^{(2)} \\ \sE_j * \sE_i^{(2)} [2] \end{matrix} \right)
\end{eqnarray*}
because the deformation is only along the left-hand factor. In particular, it induces the zero map between the summands $ \sE_j * \sE_i^{(2)}$.

On the other hand, since $ \langle v, \alpha_i \rangle  = 1$, we see by Proposition \ref{th:deformsum} that $ c_{0,v}(\sE_j * \sE_i)I = I c_{0,v}(\sE_i) I $ gives an isomorphism between the $ \sE_j * \sE_i^{(2)} $ summands.  

Hence we conclude that $ c_{v,v}(\sE_j*\sE_i)I[1] $ is an isomorphism between the $\sE_j * \sE_i^{(2)} [1]$ summands.

Finally, looking back at diagram (\ref{eq:15}), we see that $(g'I) \circ c_{v,v}(\sE_j*\sE_i)I[1]  = 0$ so that $c_{v,v}(\sE_j*\sE_i)I[1] $ must factor as
$$\sE_j * \sE_i * \sE_i \rightarrow \sE_i * \sE_j * \sE_i [1] \xrightarrow{T_{ij}I[2]} \sE_j * \sE_i * \sE_i [2].$$
Since $c_{v,v}(\sE_j*\sE_i)I[1] $ induces an isomorphism on the summand $\sE_j * \sE_i^{(2)} [1]$ then so must $T_{ij}I[2]$.  Thus $T_{ij} I$ induces an isomorphism on the summand $\sE_j * \sE_i^{(2)}[-1]$.  This concludes the proof that $\alpha \ne 0$. 

It remains to show $\gamma \ne 0$. To see this we consider the map
$$\sE_i * \sE_i * \sE_j  * \sE_i[-1] \xrightarrow{I T_{ij} I} \sE_i * \sE_j * \sE_i * \sE_i.$$

We will examine this map in two ways, by associating in two different ways.  In particular, we will obtain a contradiction by examining the number of summands of the form $ \sE_i^{(3)} * \sE_j [\cdot] $ on which this map induces an isomorphism. 

On the one hand, we consider the last three factors together and obtain a map 
$$ \sE_i * (\sE_j * \sE_i^{(2)} [-1] \oplus   \sE_i^{(2)} * \sE_j[-1] \rightarrow \sE_j * \sE_i^{(2)} [-1] \oplus \sE_j * \sE_i^{(2)}[1])$$
which is $ I \Bigl( \begin{smallmatrix} \alpha & 0 \\ \beta &\gamma \end{smallmatrix} \Bigr)  $. 

Suppose $\gamma = 0$. Then this map induces an isomorphism on at most one summand of the form $ \sE_i^{(3)} * \sE_j [\cdot]$. This is because it can only induce an isomorphism on a summand coming from $ \sE_i * \sE_j * \sE_i^{(2)} $ and this summand contains one copy of $ \sE_i^{(3)} * \sE_j $ by Proposition \ref{prop:E1E2E1}.

On the other hand, we consider the first three factors together and obtain a map
$$  (\sE_i^{(2)} * \sE_j  \oplus \sE_i^{(2)} * \sE_j [-2]\rightarrow  \sE_i^{(2)} * \sE_j  \oplus \sE_j * \sE_i^{(2)} )* \sE_i $$
We can apply the above reasoning to this map as well since we can write it as$\Bigl( \begin{smallmatrix} \alpha' & 0 \\ \beta' &\gamma' \end{smallmatrix} \Bigr) I $.  Now since $ \alpha' \ne 0 $, we see that this map induces an isomorphism on at least two summands of the form $ \sE_i^{(3)} * \sE_j [\cdot]$ (since $ \sE_i^{(2)} * \sE_j * \sE_i $ contains two copies of $ \sE_i^{(3)} * \sE_j $).  This gives a contradiction and means that $\gamma \ne 0$ (so we are done). 
\end{proof}

\begin{Corollary}\label{cor1} If $i,j \in I$ are connected by an edge then for $s \ge 0$
$$\sE_{ij} * \sE_i^{(s)} \cong \Cone(\sE_i^{(s+1)} * \sE_j [-1] \xrightarrow{T_{ij}^{(s+1)(1)}[s]} \sE_j * \sE_i^{(s+1)} [s]).$$
In particular, 
$$\sE_i * \sE_j * \sE_i^{(s)} [-1] \xrightarrow{T_{ij}I} \sE_j * \sE_i * \sE_i^{(s)} $$ 
induces an isomorphism on all summands of the form $ \sE_j * \sE_i^{(s+1)} [\cdot] $ on the left hand side. Similarly
$$\sE_{ij} * \sE_j^{(s)} \cong \Cone(\sE_i * \sE_j^{(s+1)}[-s-1] \xrightarrow{T_{ij}^{(1)(s+1)}} \sE_j^{(s+1)} * \sE_i).$$
We also have the analogous results for $\sE_i^{(s)} * \sE_{ij}$ and $\sE_j^{(s)} * \sE_{ij}$. 
\end{Corollary}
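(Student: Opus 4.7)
The proof is by induction on $s$, with the base case $s = 0$ trivial (reducing to the definition of $\sE_{ij}$ since $\sE_i^{(0)} \cong \O_\Delta$) and $s = 1$ given by Lemma \ref{lem1}.

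For the inductive step, I would expand
$$\sE_{ij} * \sE_i^{(s)} \cong \Cone\bigl(\sE_i * \sE_j * \sE_i^{(s)}[-1] \xrightarrow{T_{ij} I} \sE_j * \sE_i * \sE_i^{(s)}\bigr)$$
and decompose both ends using Propositions \ref{prop:EXE} and \ref{prop:E1E2E1}:
$$\sE_i * \sE_j * \sE_i^{(s)} \cong \sE_j * \sE_i^{(s+1)} \otimes_\k H^\star(\p^{s-1}) \oplus \sE_i^{(s+1)} * \sE_j, \quad \sE_j * \sE_i * \sE_i^{(s)} \cong \sE_j * \sE_i^{(s+1)} \otimes_\k H^\star(\p^s).$$
With respect to these decompositions, Lemma \ref{lem:Eij} forces the block of $T_{ij} I$ emanating from $\sE_i^{(s+1)} * \sE_j[-1]$ to land only in the top-degree target summand $\sE_j * \sE_i^{(s+1)}[s]$, where it must be a multiple of $T_{ij}^{(s+1)(1)}[s]$. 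Granted the ``in particular'' claim that the remaining diagonal block $\sE_j * \sE_i^{(s+1)} \otimes_\k H^\star(\p^{s-1})[-1] \to \sE_j * \sE_i^{(s+1)} \otimes_\k H^\star(\p^s)$ induces isomorphisms onto the $s$ matching target summands (under the standard inclusion $H^\star(\p^{s-1})[-1] \hookrightarrow H^\star(\p^s)$ omitting $\k[s]$), Gaussian elimination (Lemma \ref{lem:cancel}) cancels these $s$ matched pairs and leaves precisely $\Cone(\sE_i^{(s+1)} * \sE_j[-1] \xrightarrow{c\, T_{ij}^{(s+1)(1)}[s]} \sE_j * \sE_i^{(s+1)}[s])$ for some scalar $c$.

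The main obstacle is to establish the ``in particular'' claim and that $c \neq 0$. I would handle both by running the deformation-theoretic argument of Lemma \ref{lem1} verbatim, with $\sE_i^{(s)}$ inserted on the right throughout diagram (\ref{eq:14}) in place of $\sE_i$. The key inputs are: condition (\ref{co:Eijdef}) giving $c_{v,v}(\sE_{ij}) = 0$ for $v \in (\alpha_i + \alpha_j)^\perp$, which makes the relevant vertical map in the analogue of diagram (\ref{eq:15}) vanish; and the computation of $c_{v,v}(\sE_j * \sE_i * \sE_i^{(s)})$ using additivity of the obstruction class under convolution together with Proposition \ref{th:deformsum}. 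Choosing $v$ with $\la v, \alpha_i \ra = 1$ and $\la v, \alpha_j \ra = -1$, the calculation of section \ref{se:deform} generalizes to show that the induced map on the $\sE_j * \sE_i^{(s+1)} \otimes_\k H^\star(\p^s)[\cdot]$ summands is non-degenerate on exactly the $s$ target components matching $H^\star(\p^{s-1})[-1]$; this simultaneously forces the claimed isomorphisms and $c \neq 0$ via the same factorization argument as in Lemma \ref{lem1}. The companion statement for $\sE_{ij} * \sE_j^{(s)}$ follows by swapping the roles of $i$ and $j$ (choosing $v$ with $\la v, \alpha_i \ra = -1, \la v, \alpha_j \ra = 1$), and the left-multiplication analogues $\sE_i^{(s)} * \sE_{ij}$ and $\sE_j^{(s)} * \sE_{ij}$ by placing the divided power on the left in the analogous diagram.
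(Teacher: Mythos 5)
The paper's proof of this Corollary is a \emph{formal} counting argument, not a deformation-theoretic one. This is stated explicitly in the Remark immediately following the statement: ``The proof only assumes the result when $s=1$ (everything else is a formal consequence of the fact that $\sE_i^{(a)} * \sE_j^{(b)}$ are bricks).'' Concretely, the paper compares two ways of computing $\sE_{ij} * \sE_i^{(s)} * \sE_i$ (associating as $\sE_{ij}*(\sE_i^{(s)}*\sE_i)$ using $\sE_i^{(s)}*\sE_i \cong \sE_i^{(s+1)} \otimes H^\star(\p^s)$, versus $(\sE_{ij}*\sE_i^{(s)})*\sE_i$ using the induction hypothesis) and counts the number of $\sE_j*\sE_i^{(s+2)}[\cdot]$ summands on which the relevant map is an isomorphism; the only consistent value then forces the Gaussian elimination to leave exactly the claimed cone. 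Nonvanishing of $g$ (Step 2) is also obtained formally, by deriving a contradiction in the number of $\sE_j * \sE_i^{(s+2)}$ summands if $g = 0$.

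Your proposal, by contrast, wants to run the deformation argument of Lemma~\ref{lem1} ``verbatim'' with $\sE_i^{(s)}$ in place of $\sE_i$. There is a genuine gap here. The deformation argument hinges on knowing the cone of $Ic_{v,0}(\sE_i)$ acting on $\sE_i * \sE_i$, which by Proposition~\ref{th:propc2}.(ii) is $i_{23*}\sE_i * i_{12*}\sE_i$, and whose cohomology is pinned down by axiom~(\ref{co:EEdef}) to be $\sE_i^{(2)}[-1]\oplus\sE_i^{(2)}[2]$. That axiom is formulated \emph{only} for $\sE_i * \sE_i$. To establish that $c_{0,v}(\sE_i)I_{\sE_i^{(s)}}$ induces an isomorphism on the relevant $\sE_i^{(s+1)}$ summands of $\sE_i * \sE_i^{(s)}$, you would need to control the cohomology of $i_{23*}\sE_i * i_{12*}\sE_i^{(s)}$ (middle variable deformed), and there is no axiom providing this for $s>1$, nor is it an immediate consequence of the ones given. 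So the assertion that ``the calculation of section~\ref{se:deform} generalizes'' is precisely where the proof would fail; that calculation does not transfer verbatim, and one of the main points of the paper's formal counting argument is to avoid having to prove such a generalized deformation statement. (There is also a small notational slip: the vertical arrows in the analogue of diagram~(\ref{eq:15}) are $c_{v,v}(\sE_j * \sE_i) I_{\sE_i^{(s)}}$, not $c_{v,v}(\sE_j * \sE_i * \sE_i^{(s)})$; these obstruction classes live on different varieties.) The Gaussian-elimination framework and the identification of the surviving map via Lemma~\ref{lem:Eij} are correctly set up in your proposal; the missing ingredient is a valid proof of the ``in particular'' claim, which the paper supplies by counting rather than by deformation.
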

\begin{Remark} The proof only assumes the result when $s=1$ (everything else is a formal consequence of the fact that $\sE_i^{(a)} * \sE_j^{(b)}$ are bricks). 
\end{Remark}
\begin{proof}

We prove only the first identity as the others follow similarly. 

{\bf Step 1.} First we show by induction on $s$ that 
\begin{eqnarray}\label{eq:temp}
\sE_{ij} * \sE_i^{(s)} \cong \Cone(\sE_i^{(s+1)} * \sE_j [-1] \xrightarrow{g} \sE_j * \sE_i^{(s+1)} [s])
\end{eqnarray}
for some map $g$. The base case $s=1$ is covered in Lemma \ref{lem1}. Consider 
$$(\sE_i * \sE_j [-1] \xrightarrow{T_{ij}} \sE_j * \sE_i) * \sE_i^{(s+1)}$$ 
which we can rewrite as
\begin{equation}\label{eq1}
\sE_j * \sE_i^{(s+2)} \otimes_\k H^\star(\p^s)[-1] \oplus \sE_i^{(s+2)} * \sE_j[-1] \xrightarrow{f_1} \sE_j * \sE_i^{(s+2)} \otimes_\k H^\star(\p^{s+1}).
\end{equation}
Let $ t $ be the number of summands of the form $ \sE_j * \sE_i^{(s+2)} [\cdot]$ on which $ f_1 $ induces an isomorphism.

On the other hand we also have the map 
\begin{equation} \label{eq:cor413}
(\sE_i * \sE_j [-1] \xrightarrow{T_{ij}} \sE_j * \sE_i) * \sE_i^{(s)} * \sE_i.
\end{equation}
Since $ \sE_i^{(s)} * \sE_i \cong \sE_i^{(s+1)} \otimes H^\star(\p^{s}) $ we see that (\ref{eq:cor413}) induces an isomorphism on $t(s+1)$ summands of the form $ \sE_j * \sE_i^{(s+2)} [\cdot] $.

Now we can rewrite (\ref{eq:cor413}) as
$$\left( \sE_j * \sE_i^{(s+1)} \otimes_\k H^\star(\p^{s-1})[-1] \oplus \sE_i^{(s+1)} * \sE_j [-1] \xrightarrow{f_2} \sE_j * \sE_i^{(s+1)} \otimes_\k H^\star(\p^{s}) \right) * \sE_i.$$
By induction, $f_2$ induces an isomorphism on $s$ summands of the form $\sE_j * \sE_i^{(s+1)} [\cdot] $. Now we can rewrite both sides as 
\begin{eqnarray*}
\sE_j * \sE_i^{(s+2)} \otimes_\k H^\star(\p^{s-1} \times \p^{s+1})[-1] \oplus \sE_j * \sE_i^{(s+2)} [-1] \oplus \sE_i^{(s+2)} * \sE_j \otimes_\k H^\star(\p^s)[-1] \\
\xrightarrow{f_3} \sE_j * \sE_i^{(s+2)} \otimes_\k H^\star(\p^s \times \p^{s+1}).
\end{eqnarray*}
The map $f_3$ induces an isomorphism on either $s(s+2)$ or $s(s+2)+1$ summands of the form $ \sE_j * \sE_i^{(s+2)} [\cdot] $  (we do not know {\em a priori} if it induces an isomorphism on the middle summand on the left hand side). 

Combining with above, we see that $t(s+1) = s(s+2) $ or $ t(s+1) = s(s+2) + 1 $ for some $ t $ with $0 \le t \le s+1$.  This forces $t=s+1$.  Hence by Gaussian elimination,
$$\sE_{ij} * \sE_i^{(s+1)} \cong \Cone(\sE_i^{(s+2)} * \sE_j [-1] \xrightarrow{g} \sE_j * \sE_i^{(s+2)} [s+1])$$
for some map $g$. This completes the induction. 

{\bf Step 2.} Next we show that the map $g$ in (\ref{eq:temp}) is non-zero since then by Lemma \ref{lem:Eij}, $g = T_{ij}^{(s+1)(1)}[s]$ (up to a non-zero multiple). If $g=0$ then applying $* \sE_i$ to (\ref{eq:temp}) we get $s+1$ summands 
$$\sE_{ij} * \sE_i^{(s+1)} \cong \Cone(\sE_i^{(s+2)} * \sE_j[-1] \rightarrow \sE_j * \sE_i^{(s+2)}[s+1])$$ 
on the left hand side and 
$$\sE_i^{(s+1)} * \sE_j * \sE_i \oplus \sE_j * \sE_i^{(s+1)} * \sE_i [s]$$
on the right hand side. But then the right side contains $s+3$ summands $\sE_j * \sE_i^{(s+2)}$ instead of $s+1$ on the left side (contradiction). Thus $g \ne 0$. 
\end{proof}

\begin{Corollary}\label{cor:E1E2E1iso} If $i,j \in I$ are connected by an edge then the composition 
$$\sE_i^{(s+1)} * \sE_j \xrightarrow{\i I} \sE_i * \sE_i^{(s)} [-s] * \sE_j \xrightarrow{I T_{ij}^{(s)(1)}} \sE_i * \sE_j * \sE_i^{(s)}$$
is an isomorphism of $\sE_i^{(s+1)} * \sE_j$ onto the lone summand in 
$$\sE_i * \sE_j * \sE_i^{(s)} \cong \sE_i^{(s+1)} * \sE_j \oplus \sE_j * \sE_i^{(s+1)} \otimes_\k H^\star(\p^s).$$
\end{Corollary}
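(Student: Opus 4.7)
My strategy is to first establish that $\Hom^0(\sE_i^{(s+1)} * \sE_j, \sE_i * \sE_j * \sE_i^{(s)})$ is one-dimensional, so that the composition $\phi_s := (I T_{ij}^{(s)(1)}) \circ (\i I)$ is automatically either zero or a non-zero scalar multiple of the canonical inclusion $\i_0$ into the distinguished summand. I then show $\phi_s \ne 0$ by post-composing with a suitable map and invoking Corollary \ref{cor1}.

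For the Hom computation, Corollary \ref{cor:E1E2E1} (with $a=1$, $b=s$) provides the decomposition
$$\sE_i * \sE_j * \sE_i^{(s)} \cong \sE_i^{(s+1)} * \sE_j \oplus \sE_j * \sE_i^{(s+1)} \otimes_\k H^\star(\p^{s-1}).$$
By Lemma \ref{lem:Eij} one has $\End^0(\sE_i^{(s+1)} * \sE_j) = \k$, while $\Hom^0(\sE_i^{(s+1)} * \sE_j, \sE_j * \sE_i^{(s+1)} \otimes H^\star(\p^{s-1})) = 0$, since $\Ext^k(\sE_i^{(s+1)} * \sE_j, \sE_j * \sE_i^{(s+1)})$ vanishes for $k < s+1$ while $H^\star(\p^{s-1})$ only shifts by degrees at most $s-1$.

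For the non-vanishing, I would post-compose $\phi_s$ with the shifted map $T_{ij} I [1] : \sE_i * \sE_j * \sE_i^{(s)} \to \sE_j * \sE_i * \sE_i^{(s)}[1]$. Comparing the two descriptions of $\sE_{ij} * \sE_i^{(s)}$ from Corollary \ref{cor1} (as $\Cone(T_{ij} I)$ and as $\Cone(T_{ij}^{(s+1)(1)}[s])$) via Gaussian elimination applied to the matrix form of $T_{ij} I$ with respect to the direct-sum decompositions of source and target, one sees that $T_{ij} I [1] \circ \i_0$ coincides, up to a non-zero scalar, with $T_{ij}^{(s+1)(1)}[s+1]$ followed by the inclusion of the top summand $\sE_j * \sE_i^{(s+1)}[s+1] \subset \sE_j * \sE_i * \sE_i^{(s)}[1]$. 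Therefore $\phi_s \ne 0$ if and only if the triple composite $T_{ij} I [1] \circ (I T_{ij}^{(s)(1)}) \circ (\i I)$ is non-zero.

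The main obstacle is proving this triple composite is non-zero. I would argue this by a brick-counting argument of the kind used in the second half of the proof of Lemma \ref{lem1} (and employed again in Corollary \ref{cor1}): if the composite vanishes, then tensoring on an appropriate side with an additional $\sE_i$ and re-associating in two different ways using Proposition \ref{prop:EXE} and Corollary \ref{cor:E1E2E1} gives two inconsistent counts of the number of $\sE_j * \sE_i^{(s+2)}[\cdot]$ or $\sE_i^{(s+2)} * \sE_j[\cdot]$ summands on which the enlarged composite induces an isomorphism. The bookkeeping with Grassmannian Poincar\'e polynomials and multiple cohomological shifts is the principal technical hurdle.
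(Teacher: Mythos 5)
Your reduction in the first half is sound and parallels the paper's: since $\Hom^0(\sE_i^{(s+1)}*\sE_j, \sE_i*\sE_j*\sE_i^{(s)}) \cong \k$ (spanned by the inclusion of the distinguished summand, by Lemma \ref{lem:Eij}), the composite $\phi_s$ is either $0$ or a nonzero multiple of that inclusion, so everything hinges on $\phi_s\ne 0$. Your observation that $T_{ij}I[1]\circ\i_0$ equals (up to nonzero scalar) the surviving map from Gaussian elimination is also correct — the potentially troublesome block $C:\sE_i^{(s+1)}*\sE_j[-1]\to\sE_j*\sE_i^{(s+1)}[k]$ (with $k\le s-2$) vanishes by the degree bound in Lemma \ref{lem:Eij}, so the $(1,1)$-entry literally is the surviving map, which Step 2 of Corollary \ref{cor1} proves nonzero.

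However, the argument has a genuine gap: the crucial non-vanishing of the triple composite is never actually established. You correctly identify that a brick-counting argument is the right kind of tool, but you explicitly leave it as "the principal technical hurdle" rather than carry it out; as written, the proof is not complete. This is also the one place where the paper's route is genuinely more efficient than the post-composition you propose. The paper instead \emph{pre}-composes with $\sE_i*$: it notes (via Corollary \ref{cor1}) that $T_{ij}^{(s)(1)}[s-1]$ is the component of $T_{ij}I:\sE_i*\sE_j*\sE_i^{(s-1)}[-1]\to\sE_j*\sE_i*\sE_i^{(s-1)}$ surviving Gaussian elimination, so the target summand is controlled by $IT_{ij}I:\sE_i*\sE_i*\sE_j*\sE_i^{(s-1)}[-1]\to\sE_i*\sE_j*\sE_i*\sE_i^{(s-1)}$. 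Then, decomposing the target as $\sE_i^{(2)}*\sE_j*\sE_i^{(s-1)}\oplus\sE_j*\sE_i^{(2)}*\sE_i^{(s-1)}$, the analogous-result-for-$\sE_i*\sE_{ij}$ clause of Lemma \ref{lem1} (applied to $IT_{ij}$, then tensored on the right with $\sE_i^{(s-1)}$) shows $IT_{ij}I$ is an isomorphism onto the first summand. Since all $\sE_i^{(s+1)}*\sE_j[\cdot]$ copies in the target live inside that first summand, $IT_{ij}I$ hits them all, and the desired non-vanishing follows — no fresh brick-counting with Grassmannian Poincar\'e polynomials is needed. If you want to salvage your route, you should either carry out the counting argument in full (you will need to re-associate $\sE_i*\sE_i*\sE_j*\sE_i^{(s)}$ in two ways and compare ranks, as in Lemma \ref{lem1}) or replace that step with the Lemma \ref{lem1} application above.

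A small additional remark: the direct sum in the statement should read $H^\star(\p^{s-1})$, not $H^\star(\p^s)$ (this follows from Proposition \ref{prop:E1E2E1} with $a=s$, or Corollary \ref{cor:E1E2E1} with $a=1$, $b=s$); you used the correct form $\p^{s-1}$ in your Hom computation, which is what makes the degree-vanishing argument for $\Hom^0(\sE_i^{(s+1)}*\sE_j, \sE_j*\sE_i^{(s+1)}\otimes H^\star(\p^{s-1}))=0$ go through.
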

\begin{proof}
Since $\i I$ is an inclusion (into lowest cohomological degree) it suffices to show that 
\begin{equation*}
\sE_i^{(s+1)} * \sE_j \otimes_\k H^\star(\p^{s}) [-s] \cong \sE_i * \sE_i^{(s)} [-s] * \sE_j \xrightarrow{I T_{ij}^{(s)(1)}} \sE_i * \sE_j * \sE_i^{(s)} \cong \sE_i^{(s+1)} * \sE_j \oplus \sE_j * \sE_i^{(s+1)} \otimes_\k H^\star(\p^s)
\end{equation*}
is an isomorphism onto the one copy of $\sE_i^{(s+1)} * \sE_j$ on the right hand side. 

Now consider the map
\begin{equation*}
\sE_i * \sE_j * \sE_i^{(s-1)} [-1] \xrightarrow{T_{ij} I} \sE_j * \sE_i * \sE_i^{(s-1)}.
\end{equation*}
By Corollary \ref{cor1}, on the corresponding summands of both sides, this map restricts to
$$\sE_i^{(s)} * \sE_j [-1] \xrightarrow{T_{ij}^{(s)(1)}[s-1]} \sE_j * \sE_i^{(s)}[s-1].$$
So applying $\sE_i *$ it suffices to show that  
\begin{equation}\label{eq:20}
\sE_i * \sE_i * \sE_j * \sE_i^{(s-1)} [-1] \xrightarrow{I T_{ij} I} \sE_i * \sE_j * \sE_i * \sE_i^{(s-1)}
\end{equation}
is an isomorphism onto all copies of $\sE_i^{(s+1)} * \sE_j$ on the right hand side. 

Now, the right hand side of (\ref{eq:20}) is isomorphic to $\sE_i^{(2)} * \sE_j * \sE_i^{(s-1)} \oplus \sE_j * \sE_i^{(2)} * \sE_i^{(s-1)}$ and (by Lemma \ref{lem1}) the map induces an isomorphism onto the first summand. Since all copies of $\sE_i^{(s+1)} * \sE_j$ on the right hand side of $(\ref{eq:20})$ come from this first summand the map in $(\ref{eq:20})$ surjects onto all summands $\sE_i^{(s+1)} * \sE_j$ on the right side. 
\end{proof}

\begin{Corollary}\label{cor3} If $i,j \in I$ are joined by an edge then 
\begin{eqnarray*}
\Ext^k(\sE^{(a)}_i * \sE_{ij}, \sE^{(a)}_i * \sE_{ij}) \cong
\begin{cases}
0 & \text{if $k < 0$} \\
\k \cdot \id & \text{if $k = 0$.}
\end{cases}
\end{eqnarray*}
Here we are assuming that $\sE^{(a)}_i * \sE_{ij} \ne 0$.
\end{Corollary}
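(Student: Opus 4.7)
The plan is to use the ``analogous'' form of Corollary \ref{cor1} to present $X := \sE_i^{(a)} * \sE_{ij}$ as a cone, and then compute $\Ext^l(X, X)$ for $l \le 0$ by two successive applications of the long exact $\Hom$-sequence. Writing $A := \sE_i^{(a+1)} * \sE_j$ and $B := \sE_j * \sE_i^{(a+1)}$, Corollary \ref{cor1} provides a distinguished triangle
$$A[-1] \xrightarrow{g} B[a] \to X \to A,$$
in which $g = T_{ij}^{(a+1)(1)}[a]$; equivalently, the shifted map $A \to B[a+1]$ is the unique (up to scalar) non-zero element of $\Hom(A, B[a+1]) = \k$ supplied by Lemma \ref{lem:Eij}.

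The first preparatory step is to show that $\Ext^l(A, X) = 0$ for $l \le 0$. Applying $\Hom(A, -)$ to the triangle yields the sequence
$$\Ext^{l-1}(A, A) \to \Ext^{l+a}(A, B) \to \Ext^l(A, X) \to \Ext^l(A, A) \xrightarrow{\mu} \Ext^{l+a+1}(A, B).$$
By Lemma \ref{lem:Eij}, $\Ext^k(A, A) = 0$ for $k < 0$ (and is $\k \cdot \id$ at $k = 0$), while $\Ext^k(A, B) = 0$ for $k < a+1$ (and is $\k$ at $k = a+1$). For $l < 0$ all terms flanking $\Ext^l(A, X)$ vanish, so $\Ext^l(A, X) = 0$. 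For $l = 0$ the map $\mu$ is post-composition with the shifted map $A \to B[a+1]$, which sends $\id_A$ to a non-zero element of $\k$; hence $\mu$ is an isomorphism and $\Hom(A, X) = 0$. A parallel application of $\Hom(B, -)$, using Lemma \ref{lem:Eij} with $i, j$ swapped, gives $\Ext^l(B, X) = 0$ for $l < -a$ and $\Ext^{-a}(B, X) \cong \k$, generated by the class coming from $\id_B$.

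Finally, applying $\Hom(-, X)$ to the same triangle produces
$$\Ext^{l-1-a}(B, X) \to \Ext^l(A, X) \to \Ext^l(X, X) \to \Ext^{l-a}(B, X) \to \Ext^{l+1}(A, X).$$
For $l < 0$ all four outer terms vanish by the previous step (note $l-1-a < -a$ and $l-a < -a$), forcing $\Ext^l(X, X) = 0$. For $l = 0$ this reduces to $0 \to \End(X) \to \k \to \Ext^1(A, X)$, so $\End(X)$ injects into $\k$; the identity $\id_X$ is a non-zero element, whence $\End(X) = \k \cdot \id$.

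The only non-routine input is the cone presentation of $X$: the ``analogous'' statement in Corollary \ref{cor1} for $\sE_i^{(s)} * \sE_{ij}$ is established by the same Gaussian elimination as the stated case $\sE_{ij} * \sE_i^{(s)}$, using Corollary \ref{cor:E1E2E1iso} to identify and cancel the matching $\sE_i^{(a+1)} * \sE_j[\cdot]$ summands arising in both $\sE_i^{(a)} * \sE_i * \sE_j$ and $\sE_i^{(a)} * \sE_j * \sE_i$.
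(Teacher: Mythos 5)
Your proof follows the same strategy as the paper's: present $X = \sE_i^{(a)} * \sE_{ij}$ as a cone via the analogous form of Corollary~\ref{cor1}, compute $\Ext(A, X)$ and $\Ext(B, X)$ by applying $\Hom(A,-)$ and $\Hom(B,-)$ to that triangle (with Lemma~\ref{lem:Eij} supplying the needed vanishings and the key observation that $\id_A \mapsto T_{ij}^{(a+1)(1)}$ is an isomorphism $\k \to \k$), and then apply $\Hom(-, X)$ to conclude. This is precisely the paper's argument, only reorganized to front-load the intermediate $\Ext$ computations. One small bookkeeping point: the correct shift in the analogous cone presentation of $\sE_i^{(a)} * \sE_{ij}$ is $\sE_i^{(a+1)} * \sE_j[-a-1] \to \sE_j * \sE_i^{(a+1)}$ (as the paper writes), not $A[-1] \to B[a]$ as you wrote — these differ by an overall shift of $[a]$. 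Since $\Ext^\bullet(X, X)$ is invariant under shifting $X$ and your intermediate claims are internally consistent with your own shift convention, this discrepancy is harmless and the conclusion stands.
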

\begin{proof}
By Corollary \ref{cor1} we have the following exact triangle
$$\sE_i^{(a+1)} * \sE_j [-a-1] \xrightarrow{T_{ij}^{(a+1)(1)}} \sE_j * \sE_i^{(a+1)} \rightarrow \sE^{(a)}_i * \sE_{ij}.$$
Applying $\Hom(\cdot, \sE_i^{(a)} * \sE_{ij})$ to this triangle we get 
$$\dots \rightarrow \Ext^{k+a}(\sE_i^{(a+1)} * \sE_j, \sE_i^{(a)} * \sE_{ij}) \rightarrow \Ext^k(\sE^{(a)}_i * \sE_{ij}, \sE^{(a)}_i * \sE_{ij}) \rightarrow \Ext^k(\sE_j * \sE_i^{(a+1)}, \sE^{(a)}_i * \sE_{ij}) \rightarrow \dots$$ 
Now, applying $\Hom(\sE_j * \sE_i^{(a+1)}, \cdot)$ to the exact triangle and using Lemma \ref{lem:Eij} it is easy to see that $\Ext^k(\sE_j * \sE_i^{(a+1)}, \sE^{(a)}_i * \sE_{ij}) = 0$ if $k < 0$ and is one dimensional if $k=0$. Similarly, applying $\Hom(\sE^{(a+1)}_i * \sE_j, \cdot)$ we get
\begin{eqnarray*}
\dots &\rightarrow& \Ext^{k+a}(\sE^{(a+1)}_i * \sE_j, \sE_j * \sE_i^{(a+1)}) \rightarrow \Ext^{k+a}(\sE^{(a+1)}_i * \sE_j, \sE^{(a)}_i * \sE_{ij}) \\
&\rightarrow& \Ext^k(\sE^{(a+1)}_i * \sE_j, \sE_i^{(a+1)} * \sE_j) \rightarrow \Ext^{k+a+1}(\sE^{(a+1)}_i * \sE_j, \sE_j * \sE_i^{(a+1)}) \rightarrow \dots 
\end{eqnarray*}
By Lemma \ref{lem:Eij} this means $\Ext^{k+a}(\sE^{(a+1)}_i * \sE_j, \sE^{(a)}_i * \sE_{ij}) = 0$ if $k < 0$ and if $k = 0$ we get 
$$0 \rightarrow \Ext^{a}(\sE^{(a+1)}_i * \sE_j, \sE^{(a)}_i * \sE_{ij}) \rightarrow \k \cdot \id \rightarrow \k \cdot T_{ij}^{(a+1)(1)} \rightarrow 0$$
where the third map is an isomorphism. Thus $\Ext^{k+a}(\sE^{(a+1)}_i * \sE_j, \sE^{(a)}_i * \sE_{ij}) = 0$ for any $k \le 0$. Thus, if $k \le 0$, we get the exact sequence
$$0 \rightarrow \Ext^k(\sE^{(a)}_i * \sE_{ij}, \sE^{(a)}_i * \sE_{ij}) \rightarrow \Ext^k(\sE_j * \sE_i^{(a+1)}, \sE^{(a)}_i * \sE_{ij})$$
and the result follows. 
\end{proof}

\begin{Lemma}\label{lem2} If $i,j \in I$ are connected by an edge and $\la \l, \alpha_i \ra \ge 0$ then 
$$\sE_{ij} * \sF_i(\l) \cong \Cone(\sF_i * \sE_i * \sE_j [-1] \xrightarrow{IT \oplus  \e I} \sF_i * \sE_j * \sE_i \oplus \sE_j [\la \l, \alpha_i \ra + 1]).$$
In particular, 
$$\sE_i * \sE_j * \sF_i [-1] \xrightarrow{T_{ij}I} \sE_j * \sE_i * \sF_i$$
induces an isomorphism on every summand of the form $\sE_j[\cdot]$ on the left hand side. 
\end{Lemma}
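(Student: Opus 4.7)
The plan is to apply $*\sF_i$ to the defining triangle of $\sE_{ij}$, decompose both sides using Proposition~\ref{prop:FXE} and condition (\ref{co:EiFj}), then use Gaussian elimination (Lemma~\ref{lem:cancel}) to peel off the paired summands. The key input before Gaussian elimination is the ``In particular'' statement, which we will prove using a deformation argument modeled on Lemma~\ref{lem1}.

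\textbf{Step 1.} From $\sE_{ij} \cong \Cone(\sE_i * \sE_j[-1] \xrightarrow{T_{ij}} \sE_j * \sE_i)$ we obtain
$$\sE_{ij} * \sF_i \cong \Cone\bigl(\sE_i * \sE_j * \sF_i[-1] \xrightarrow{T_{ij}I} \sE_j * \sE_i * \sF_i\bigr).$$

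\textbf{Step 2.} Combining $\sE_j * \sF_i \cong \sF_i * \sE_j$ (condition (\ref{co:EiFj})) with Proposition~\ref{prop:FXE} (valid since $\la \l, \alpha_i\ra \ge 0$ makes the relevant weights non-negative) I would obtain the decompositions
\begin{align*}
\sE_i * \sE_j * \sF_i &\cong \sF_i * \sE_i * \sE_j \oplus \sE_j \otimes_\k H^\star(\p^{\la \l, \alpha_i\ra}), \\
\sE_j * \sE_i * \sF_i &\cong \sF_i * \sE_j * \sE_i \oplus \sE_j \otimes_\k H^\star(\p^{\la \l, \alpha_i\ra + 1}).
\end{align*}
Setting $n = \la \l, \alpha_i\ra$ and counting shifts, the $\sE_j$-parts of the two sides (remembering the extra $[-1]$ on the left) share exactly $n+1$ common shifts $-n-1, -n+1, \dots, n-1$, and the right hand side contains a single extra copy $\sE_j[n+1] = \sE_j[\la \l, \alpha_i\ra + 1]$.

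\textbf{Step 3 (the ``In particular'' claim; main obstacle).} I would prove that $T_{ij} I$ induces an isomorphism on each of the $\sE_j[\cdot]$ summands of $\sE_i * \sE_j * \sF_i[-1]$. Choose $v \in \h'_\k$ with $\la v, \alpha_i+\alpha_j\ra = 0$ and $\la v, \alpha_i\ra \ne 0$; condition (\ref{co:Eijdef}) gives $c_{v,v}(\sE_{ij})=0$, while Proposition~\ref{th:deformsum} ensures $c_{v,v}(\sE_i)$ is non-degenerate. Applying the functorial deformation triangle of Proposition~\ref{th:propc1}(i) to the defining triangle for $\sE_{ij}$ and then convolving with $\sF_i$ produces a commutative diagram (compare diagram~(\ref{eq:15}) in the proof of Lemma~\ref{lem1}) whose rightmost column is $c_{v,v}(\sE_{ij})I = 0$, so that $c_{v,v}(\sE_j * \sE_i)I[1]$ must factor through $T_{ij}I[2]$. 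Writing $c_{v,v} = c_{v,0} + c_{0,v}$ and invoking Proposition~\ref{th:propc2}, the piece $c_{0,v}(\sE_j * \sE_i)I = I c_{0,v}(\sE_i) I$ acts, under the decomposition of Step~2, as the non-degenerate map $c_{0,v}(\sE_i)$ (tensored with identities) on every matching pair of $\sE_j[\cdot]$ summands; the $c_{v,0}$-piece only contributes diagonal Atiyah-class terms that cannot destroy this non-degeneracy. Consequently every matching pair of $\sE_j[\cdot]$ summands receives an isomorphism through the factorization, and hence $T_{ij}I$ must itself induce an isomorphism on each $\sE_j[\cdot]$ summand on its left hand side.

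\textbf{Step 4.} With the ``In particular'' statement established, Lemma~\ref{lem:cancel} cancels all $n+1$ paired $\sE_j[\cdot]$ summands from the cone of $T_{ij} I$, leaving
$$\sE_{ij} * \sF_i \cong \Cone\bigl(\sF_i * \sE_i * \sE_j[-1] \xrightarrow{\phi} \sF_i * \sE_j * \sE_i \oplus \sE_j[\la \l, \alpha_i\ra + 1]\bigr)$$
for some map $\phi$. Its first component is a non-zero multiple of $IT_{ij}$ by the uniqueness in Lemma~\ref{lem:Eij}, and its second component is a non-zero multiple of the adjunction counit $\e I$; otherwise the summand $\sE_j[\la \l, \alpha_i\ra+1]$ would split off of $\sE_{ij} * \sF_i$, contradicting the structure forced by the cone. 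This yields the desired presentation.

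The hard part is Step~3: tracking the deformation obstructions across the Step~2 decompositions and confirming that the non-degeneracy of $c_{0,v}(\sE_i)$ transfers to non-degeneracy on \emph{every} pair of matching $\sE_j[\cdot]$ summands, not just one as in Lemma~\ref{lem1}. In particular, one must verify that the $c_{v,0}$-contribution is block-diagonal in a way that does not interfere, and that the resulting isomorphism comes coherently from a single global factorization through $T_{ij}I[2]$.
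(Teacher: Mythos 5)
Your proposal takes a genuinely different route from the paper and has two concrete gaps. The paper's Remark~\ref{rem:lem2} explicitly flags Lemma~\ref{lem2} as ``a formal consequence of Lemma~\ref{lem1} and Corollary~\ref{cor:iso2}''; its proof does not re-run the deformation machinery. Instead it studies the composite $T_{ij}II : \sE_i * \sE_j * \sE_i * \sF_i(\l-\alpha_i)[-1] \rightarrow \sE_j * \sE_i * \sE_i * \sF_i$, associates two ways (first three factors vs.\ last two factors) and bounds the number of $\sE_j * \sE_i[\cdot]$-summands on which it is an isomorphism from below by $\la\l,\alpha_i\ra+1$ (via Lemma~\ref{lem1}) and from above by $\la\l,\alpha_i\ra+1$ (via an $\Ext$-vanishing); the deformation input enters only once, already packaged into Lemma~\ref{lem1}.

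The gap in your Step~3: you assert that $c_{0,v}(\sE_j * \sE_i)I_{\sF_i} = I\, c_{0,v}(\sE_i)\, I_{\sF_i}$ acts non-degenerately on the $\sE_j[\cdot]$-summands. But the only non-degeneracy established in this paper is Proposition~\ref{th:deformsum}, which concerns $\sE_i * \sE_i$ (via $\sH^0(\sE_i * \sE_i[\pm 1]) \cong \sE_i^{(2)}$). Once you convolve with $\sF_i$ on the opposite side, $Y(\l)$ becomes the \emph{intermediate} variety of the composite, so Proposition~\ref{th:propc2}(ii) gives you only a distinguished triangle, not an identification, and the statement you actually need --- that $c_{0,v}(\sE_i)\,I_{\sF_i}$ induces isomorphisms on the $\O_\Delta[\cdot]$-summands of $\sE_i * \sF_i \cong \sF_i * \sE_i \oplus \O_\Delta\otimes_\k H^\star(\p^{n+1})$ --- is a genuinely new claim, established in \cite{ckl2} as part of the $\sl_2$ theory but not available here. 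Relatedly, in Step~4 the argument that $f_2 \ne 0$ (``otherwise $\sE_j[\la\l,\alpha_i\ra+1]$ would split off, contradicting the structure forced by the cone'') names no structure being violated; nothing you have established forbids $\sE_j[\cdot]$ from splitting off $\sE_{ij} * \sF_i$. The paper instead proves $f_2 \ne 0$ by applying $\sE_i *$ to the map (\ref{eq:TijI}), associating two ways, and counting isomorphisms on $\sE_i * \sE_j[\cdot]$-summands.
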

\begin{Remark}\label{rem:lem2} This result is a formal consequence of Lemma \ref{lem1} and Corollary \ref{cor:iso2}.
\end{Remark}
\begin{proof} 
First we consider the map
\begin{equation} \label{eq:4}
\sE_i * \sE_j * \sE_i * \sF_i (\l - \alpha_i)[-1] \xrightarrow{T_{ij} I I} \sE_j * \sE_i * \sE_i * \sF_i.
\end{equation}
On the one hand, we can group the first three factors together to obtain
\begin{equation} \label{eq:first3}
(\sE_j * \sE_i^{(2)} [-1]  \oplus \sE_i^{(2)} * \sE_j [-1]\rightarrow \sE_j * \sE_i^{(2)}[-1] \oplus \sE_j * \sE_i^{(2)}[1] ) * \sF_i(\l-\alpha_i)
\end{equation}
The map on the first summands is an isomorphism by Lemma \ref{lem1}.
Using Corollary \ref{cor:iso2} we have
\begin{equation} \label{eq:first3again2}
\sE_j * \sE_i^{(2)} [-1] * \sF_i(\l-\alpha_i) \cong \sE_j * \sE_i [-1] \otimes_\k H^\star(\p^{\la \l,\alpha_i \ra}) \oplus \sF_i * \sE_j * \sE_i^{(2)} [-1].
\end{equation}
So this induces an isomorphism between at least $\la \l, \alpha_i \ra + 1$ summands of the form $\sE_j * \sE_i [\cdot]$. 

On the other hand, using Proposition \ref{prop:FXE} we have
\begin{equation*}
\sE_i * \sF_i (\l-\alpha_i) \cong \O_{\Delta} \otimes H^\star(\p^{\la \l, \alpha_i \ra -1}) \oplus \sF_i * \sE_i
\end{equation*}
where, as before, $H^\star(\p^{-1}) = 0$ by convention. Hence we can rewrite map (\ref{eq:4}) as
\begin{equation} \label{eq:last2}
\sE_i * \sE_j \otimes  H^\star(\p^{\la \l, \alpha_i \ra - 1})[-1] \oplus \sE_i * \sE_j * \sF_i * \sE_i [-1] \xrightarrow{T_{ij} \oplus T_{ij}II}
\sE_j * \sE_i \otimes  H^\star(\p^{\la \l, \alpha_i \ra -1}) \oplus \sE_j * \sE_i * \sF_i * \sE_i \\
\end{equation}
and then as 
\begin{gather*}
\sE_i * \sE_j \otimes  H^\star(\p^{\la \l, \alpha_i \ra-1})[-1] \oplus \sE_j * \sE_i \otimes H^\star(\p^{\la \l + \alpha_j, \alpha_i \ra + 1})[-1] \oplus \sF_i * \sE_i * \sE_j * \sE_i [-1] \rightarrow  \\
\sE_j * \sE_i \otimes  H^\star(\p^{\la \l, \alpha_i \ra-1}) \oplus \sE_j * \sE_i \otimes H^\star(\p^{\la \l, \alpha_i \ra + 1}) \oplus \sF_i * \sE_j * \sE_i * \sE_i.
\end{gather*}
Now $\sF_i * \sE_i * \sE_j * \sE_i$ contains no summand $\sE_j * \sE_i$ since 
\begin{eqnarray*}
& & \Hom(\sE_j * \sE_i, \sF_i(\l+\alpha_i+\alpha_j) * \sE_i * \sE_j * \sE_i) \\
&\cong& \Hom(\sE_i [\la \l+\alpha_i + \alpha_j, \alpha_i \ra + 1] * \sE_j * \sE_i, \sE_i * \sE_j * \sE_i) \\
&\cong& \Hom(\sE_i^{(2)} * \sE_j \oplus \sE_j * \sE_i^{(2)}, \sE_i^{(2)} * \sE_j \oplus \sE_j * \sE_i^{(2)} [-\la \l, \alpha_i \ra -2])
\end{eqnarray*}
vanishes by using Lemma \ref{lem:Eij} and $\la \l, \alpha_i \ra \ge 0$. Hence (\ref{eq:4}) induces an isomorphism between at most $ \la \l + \alpha_j, \alpha_i \ra + 2 = \la \l, \alpha_i \ra + 1 $ summands of the form $\sE_j * \sE_i [\cdot]$.

Combining these two observations, we see that the map in (\ref{eq:4}) induces an isomorphism between exactly $ \la \l, \alpha_i \ra + 1 $ summands of the form $\sE_j * \sE_i [\cdot]$. This means that the map $T_{ij} II$ from (\ref{eq:last2}) also induces an isomorphism on $ \la \l, \alpha_i \ra + 1 $ summands of the form $\sE_j * \sE_i [\cdot]$.

Now, let us consider 
\begin{equation} \label{eq:TijI}
\sE_i * \sE_j * \sF_i(\l)[-1] \xrightarrow{T_{ij}I} \sE_j * \sE_i * \sF_i 
\end{equation}
which we can rewrite as
\begin{equation*}
\sE_j \otimes H^\star(\p^{\la \l, \alpha_i \ra})[-1] \oplus \sF_i * \sE_i * \sE_j[-1] \rightarrow 
\sE_j \otimes H^\star(\p^{\la \l, \alpha_i \ra + 1}) \oplus \sF_i * \sE_j * \sE_i
\end{equation*}
Since the map $T_{ij}II$ from (\ref{eq:last2}) induces an isomorphism on $ \la \l, \alpha_i \ra + 1 $ summands of the form $ \sE_j * \sE_i [\cdot]$ the map from (\ref{eq:TijI}) must also induce an isomorphism on  $ \la \l, \alpha_i \ra + 1 $ summands of the form $\sE_j [\cdot]$.  Hence we can apply Gaussian elimination to conclude that 
\begin{equation*}
\sE_{ij} * \sF_i(\l) \cong \Cone(\sF_i * \sE_i * \sE_j [-1] \xrightarrow{f_1 \oplus f_2} \sF_i * \sE_j * \sE_i \oplus \sE_j [\la \l, \alpha_i \ra + 1])
\end{equation*}
for some maps $f_1, f_2$. Now 
\begin{equation*}
\Hom(\sF_i (\l + \alpha_i + \alpha_j) * \sE_i * \sE_j [-1], \sE_j [\la \l, \alpha_i \ra + 1]) \cong \Hom(\sE_i * \sE_j [-1], \sE_i * \sE_j [-1]) \cong \k
\end{equation*}
which is spanned by the adjunction map $\e I$. Similarly, we have
\begin{eqnarray*}
& & \Hom(\sF_i (\l+\alpha_i + \alpha_j) * \sE_i * \sE_j [-1], \sF_i * \sE_j * \sE_i) \\
&\cong& \Hom(\sE_i * \sE_j [-1], \sE_i * \sF_i * \sE_j * \sE_i [-\la \l, \alpha_i \ra - 2]) \\
&\cong& \Hom(\sE_i * \sE_j [\la \l, \alpha_i \ra + 1], \sF_i(\l+2\alpha_i+\alpha_j) * \sE_i * \sE_j * \sE_i \oplus \sE_j * \sE_i \otimes_\k H^\star(\p^{\la \l + \alpha_i + \alpha_j, \alpha_i \ra + 1})) \\
&\cong& \Hom(\sE_i * \sE_i * \sE_j [2 \la \l, \alpha_i \ra + 4], \sE_i * \sE_j * \sE_i) \oplus \Hom(\sE_i * \sE_j [\la \l, \alpha_i \ra + 1], \sE_j * \sE_i \otimes_\k H^\star(\p^{\la \l, \alpha_i \ra + 2})). 
\end{eqnarray*}
The first term above equals 
$$\Hom(\sE_i^{(2)} * \sE_j [-1] \oplus \sE_i^{(2)} * \sE_j [1], (\sE_i^{(2)} * \sE_j \oplus \sE_j * \sE_i^{(2)}) [-2 \la \l, \alpha_i \ra  - 4])$$
and thus vanishes by Lemma \ref{lem:Eij} since $\la \l, \alpha_i \ra \ge 0$. The second term is one-dimensional. Thus 
\begin{eqnarray}\label{eq:10}
\Hom(\sF_i * \sE_i * \sE_j [-1], \sF_i * \sE_j * \sE_i) \cong \k
\end{eqnarray}
which is spanned by $IT_{ij}$. So it remains to show that $f_1$ and $f_2$ are non-zero. 

To show that $ f_1 \ne 0 $, we look again at the map (\ref{eq:4}).  When we rewrite it as in (\ref{eq:first3}), we know that the map on first summands is an isomorphism.  By (\ref{eq:first3again2}), these first summands contain a copy of $  \sF_i * \sE_j * \sE_i^{(2)}[-1] $.  On the other hand, when we rewrite (\ref{eq:4}) as in (\ref{eq:last2}), we see that this copy of $ \sF_i * \sE_j * \sE_i^{(2)}[-1] $ is a direct summand of $ \sF_i * \sE_i * \sE_j * \sE_i[-1]$.  Thus the map on $ \sF_i * \sE_i * \sE_j * \sE_i[-1] $ must be non-zero.  However, this is precisely $ f_1I $ and hence $ f_1 \ne 0 $.

To show $f_2 \ne 0$ we consider the map
\begin{equation} \label{eq:ITijI}
 \sE_i * \bigl(\sE_i * \sE_j[-1] \xrightarrow{T_{ij}} \sE_j * \sE_i \bigr)* \sF_i (\l)
 \end{equation}
On the one hand we can rewrite this as
\begin{equation*}
\sE_i^{(2)} * \sE_j * \sF_i [-2] \oplus \sE_i^{(2)} * \sE_j * \sF_i \rightarrow \sE_j * \sE_i^{(2)} * \sF_i \oplus \sE_i^{(2)} * \sE_j * \sF_i
\end{equation*}
where the map is an isomorphism on the second summands. Since 
\begin{equation*}
\sE_i^{(2)} * \sE_j * \sF_i (\l) 
\cong \sE_i * \sE_j \otimes_\k H^\star(\p^{\la \l, \alpha_i \ra + 1}) \oplus \sF_i * \sE_i^{(2)} * \sE_j,
\end{equation*}
this means that (\ref{eq:ITijI}) is an isomorphism on at least $\la \l, \alpha_i \ra + 2$ summands of the form $ \sE_i * \sE_j [\cdot] $. On the other hand, as we showed above, (\ref{eq:TijI}) is an isomorphism on $\la \l, \alpha_i \ra + 1$ summands of the form $\sE_j[\cdot]$. Now, above we showed using Gaussian elimination that the map (\ref{eq:TijI}) can be written as a direct sum of a map which is an isomorphism on $\la \l , \alpha_i \ra + 1 $ summands of the form $\sE_j[\cdot]$ and the map $ f_1 \oplus f_2$. But since (\ref{eq:ITijI}) is obtained from (\ref{eq:TijI}) by applying $ \sE_i * $, this shows that
$$\sE_i * \sF_i * \sE_i * \sE_j [-1] \xrightarrow{I f_1 \oplus I f_2} \sE_i * \sF_i * \sE_j * \sE_i \oplus \sE_i * \sE_j [\la \l, \alpha_i \ra+1]$$
must be an isomorphism on precisely one summand of the form $ \sE_i * \sE_j [\cdot] $.  This means that $I f_2$ must induce an isomorphism on the right hand summand $\sE_i * \sE_j$.  In particular, $f_2 \ne 0$. 
\end{proof}

\begin{Lemma} For any $i \ne j \in I$ the composition $\sF_i^{(s)} * \sE_j \cong \sE_j * \sF_i^{(s)}$ is brick.
\end{Lemma}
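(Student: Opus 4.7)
The plan is as follows. First, the commutation isomorphism $\sF_i^{(s)} * \sE_j \cong \sE_j * \sF_i^{(s)}$ is immediate from the second clause of Corollary \ref{cor:iso2}, so the entire content of the lemma is the brick assertion. I would establish this by following the adjunction-based scheme of the proof of Lemma \ref{lem:Eij}. The key input is that $\sF_j * \sF_i^{(s)}$ is already a brick: this follows from Lemma \ref{lem:Eij} (applied with the roles of $i,j$ suitably arranged, with $(a,b) = (s,1)$, and with $\sE$'s replaced by $\sF$'s) when $i,j$ are connected, and from Corollary \ref{cor:Eij} similarly applied when they are not.

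To compute $\Ext^k(\sE_j * \sF_i^{(s)}, \sE_j * \sF_i^{(s)})$ for $k \le 0$, I would apply the adjunction $(\sE_j)_L = \sF_j[-c]$ with $c = \la \l, \alpha_j \ra + 1$ (where $\l$ is the weight between the two factors) to obtain
$$\Ext^k(\sE_j * \sF_i^{(s)}, \sE_j * \sF_i^{(s)}) \cong \Hom(\sF_j * \sE_j * \sF_i^{(s)}, \sF_i^{(s)}[k+c]).$$
In the case $\la \l, \alpha_j \ra \le 0$, Proposition \ref{prop:FXE} gives the direct sum decomposition $\sF_j * \sE_j \cong \sE_j * \sF_j \oplus \O_\Delta \otimes H^\star(\p^{-c})$. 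The $\O_\Delta$-piece contributes $\bigoplus_{a=0}^{-c} \End^{k+2c+2a}(\sF_i^{(s)})$ which, by the brick property of $\sF_i^{(s)}$, is exactly $\k \cdot \id$ at $k=0$ (from $a = -c$) and vanishes for $k<0$. For the $\sE_j * \sF_j$-piece, a second application of adjunction on the shifted $\sE_j(\l-\alpha_j)$ (giving shift $-c+2$) identifies the resulting Hom with $\End^{k+2c-2}(\sF_j * \sF_i^{(s)})$, which vanishes in all relevant degrees by the brick property of $\sF_j * \sF_i^{(s)}$.

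When $\la \l, \alpha_j \ra \ge 0$, I would first use the commutation iso to pass to the computation for $\sF_i^{(s)} * \sE_j$, whose natural middle weight differs from $\l$ and may land back in the favorable case. When neither form admits a favorable middle weight, I would instead invoke the alternate decomposition $\sE_j * \sF_j \cong \sF_j * \sE_j \oplus \O_\Delta \otimes H^\star(\p^{c-2})$ from Proposition \ref{prop:FXE} and apply Gaussian elimination (Lemma \ref{lem:cancel}) to extract the desired $\Hom$ into $\sF_j * \sE_j * \sF_i^{(s)}$ from the tractable $\Hom$ into $\sE_j * \sF_j * \sF_i^{(s)}$ by cancelling the $\O_\Delta \otimes H^\star(\p^{c-2})$ contribution.

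The main obstacle will be the careful bookkeeping of adjunction shifts and the $H^\star(\p^\cdot)$ degree counting in the case $\la \l, \alpha_j \ra \ge 0$, needed to confirm that the total Ext collapses to a single copy of $\k \cdot \id$ at $k=0$ with no negative-degree contributions. This bookkeeping runs in parallel with the inductive counting in the proof of Lemma \ref{lem:Eij}.
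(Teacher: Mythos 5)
Your reduction to $\sF_j * \sF_i^{(s)}$ being a brick identifies the correct key input, and the adjunction-plus-$\sF\sE$-decomposition route works cleanly when $\la \l, \alpha_j \ra \le 0$. However, your treatment of the case $\la \l, \alpha_j \ra \ge 0$ has a genuine gap. The Gaussian elimination / direct-summand argument you outline would express the desired $\Hom$ as a complement of $\bigoplus_a \End^{k+2c-2-2a}(\sF_i^{(s)})$ inside $\End^{k+2c-2}(\sF_j * \sF_i^{(s)})$; but when $c > 1$ these are Ext groups in \emph{positive} cohomological degree, about which Lemma \ref{lem:Eij} and Corollary \ref{cor:Eij} say nothing. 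The brick property controls degree $\le 0$ only, so the "subtraction" cannot be carried out. The suggestion to pass between $\sF_i^{(s)} * \sE_j$ and $\sE_j * \sF_i^{(s)}$ to flip the sign of the middle pairing is also not reliable: the middle weight shifts by $s\alpha_i$, which changes $\la \l, \alpha_j \ra$ by $-s\la \alpha_i, \alpha_j \ra$, and that can leave the pairing non-negative in both readings.

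The paper avoids this entirely by a slicker two-sided adjunction. Using $\sE_j * \sF_i^{(s)} \cong \sF_i^{(s)} * \sE_j$ on only the \emph{first} argument, it moves $\sE_j(\l)$ out of the second argument by its left adjoint \emph{and} $\sE_j(\l + s\alpha_i)$ out of the first argument by its left adjoint. The two adjunctions contribute shifts $[-\la \l, \alpha_j \ra - 1]$ and $[\la \l + s\alpha_i, \alpha_j \ra + 1]$, which combine to the weight-independent $[-s\la \alpha_i, \alpha_j \ra]$; one lands immediately on $\Ext^k(\sF_j * \sF_i^{(s)}, \sF_i^{(s)} * \sF_j[-s\la\alpha_i,\alpha_j\ra])$, which Lemma \ref{lem:Eij} (resp.\ Corollary \ref{cor:Eij}) answers in a single step. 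No $\sF\sE$-decomposition, no case split on the sign of $\la\l,\alpha_j\ra$, and no need to control higher Ext groups. You should revise by adopting this double-adjunction move: the $\l$-dependence cancels before you ever have to decompose, which is exactly what keeps the argument in the regime where the brick property applies.
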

\begin{proof}
We have 
\begin{eqnarray*}
\Ext^k(\sE_j(\l) * \sF_i^{(s)}, \sE_j(\l) * \sF_i^{(s)}) 
&\cong& \Ext^k(\sF_i^{(s)} * \sE_j(\l+s\alpha_i), \sE_j(\l) * \sF_i^{(s)}) \\
&\cong& \Ext^k(\sE_j(\l)_L * \sF_i^{(s)}, \sF_i^{(s)} * \sE_j(\l+s\alpha_i)_L) \\
&\cong& \Ext^k(\sF_j * \sF_i^{(s)} [-\la \l, \alpha_j \ra - 1], \sF_i^{(s)} * \sF_j [- \la \l+s\alpha_i, \alpha_j \ra - 1]) \\
&\cong& \Ext^k(\sF_j * \sF_i^{(s)}, \sF_i^{(s)} * \sF_j [-s \la \alpha_i, \alpha_j \ra]).
\end{eqnarray*}
The result now follows from Lemma \ref{lem:Eij} if $i$ and $j$ are joined by an edge and from Corollary \ref{cor:Eij} if $i$ and $j$ are not joined. 
\end{proof}

\begin{Corollary}\label{cor2} If $i,j \in I$ are connected by an edge and $\la \l, \alpha_i \ra + s \ge 0$ then 
$$\sE_{ij} * \sF_i^{(s)}(\l) \cong \Cone(\sF_i^{(s)} * \sE_i * \sE_j [-1] \xrightarrow{IT \oplus I \e I \circ \i I I} \sF_i^{(s)} * \sE_j * \sE_i \oplus \sF_i^{(s-1)} * \sE_j [\la \l, \alpha_i \ra + s]).$$
In particular, 
$$\sE_i * \sE_j * \sF_i^{(s)} [-1] \xrightarrow{T_{ij}I} \sE_j * \sE_i * \sF_i^{(s)}$$
induces an isomorphism on every summand of the form $\sF_i^{(s-1)} * \sE_j [\cdot]$ on the left hand side. 
\end{Corollary}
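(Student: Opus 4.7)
The plan is to follow essentially the same strategy as Lemma \ref{lem2}, but with $\sF_i$ replaced by the divided power $\sF_i^{(s)}$. Applying $*\sF_i^{(s)}$ to the defining triangle $\sE_i*\sE_j[-1]\xrightarrow{T_{ij}}\sE_j*\sE_i\to\sE_{ij}$ gives
\[
\sE_{ij}*\sF_i^{(s)}\cong \Cone\bigl(\sE_i*\sE_j*\sF_i^{(s)}[-1]\xrightarrow{T_{ij}I}\sE_j*\sE_i*\sF_i^{(s)}\bigr),
\]
so the whole task is to understand the map $T_{ij}I$. Using condition (ix) to commute $\sE_j$ past $\sF_i^{(s)}$, and Corollary \ref{cor:iso2} (together with $\la\l+\alpha_j,\alpha_i\ra=\la\l,\alpha_i\ra-1$), both sides break into two summands:
\begin{align*}
\sE_i*\sE_j*\sF_i^{(s)} &\cong \sF_i^{(s)}*\sE_i*\sE_j\oplus \sF_i^{(s-1)}*\sE_j\otimes_\k H^\star(\p^{\la\l,\alpha_i\ra+s-2}),\\
\sE_j*\sE_i*\sF_i^{(s)} &\cong \sF_i^{(s)}*\sE_j*\sE_i\oplus \sF_i^{(s-1)}*\sE_j\otimes_\k H^\star(\p^{\la\l,\alpha_i\ra+s-1}).
\end{align*}

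Next I would identify the four components of $T_{ij}I$ as a matrix with respect to these decompositions. By Lemma \ref{lem:Eij}, the Hom spaces $\Hom(\sF_i^{(s)}*\sE_i*\sE_j[-1],\sF_i^{(s)}*\sE_j*\sE_i)$ and $\Hom(\sF_i^{(s)}*\sE_i*\sE_j[-1],\sF_i^{(s-1)}*\sE_j[\la\l,\alpha_i\ra+s])$ are each one-dimensional, spanned by $IT_{ij}$ and by $(I\e I)\circ(\i II)$ respectively (after computing via adjunction and using that $\sF_i^{(s-1)}*\sE_j$ is a brick). To pin down which summands $T_{ij}I$ is actually an isomorphism on, I would repeat the two-way counting argument from Lemma \ref{lem2}: consider the larger map
\[
\sE_i*\sE_j*\sE_i*\sF_i^{(s)}(\l-\alpha_i)[-1]\xrightarrow{T_{ij}II}\sE_j*\sE_i*\sE_i*\sF_i^{(s)},
\]
group the first three factors using Lemma \ref{lem1} (which gives an isomorphism on certain $\sE_j*\sE_i^{(2)}$ summands, hence after applying $*\sF_i^{(s)}$ on the same number of $\sE_j*\sE_i*H^\star(\p^{\la\l,\alpha_i\ra+s})$ summands), and independently group the last three factors using $\sE_i*\sF_i^{(s)}\cong \sF_i^{(s)}*\sE_i\oplus \sF_i^{(s-1)}\otimes H^\star(\p^{\cdot})$; a dimension count shows the middle-column map of the rewritten $T_{ij}II$ must induce an isomorphism on exactly $\la\l,\alpha_i\ra+s-1$ summands of the form $\sF_i^{(s-1)}*\sE_j[\cdot]$.

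Finally I would apply Gaussian elimination (Lemma \ref{lem:cancel}) to cancel those $\la\l,\alpha_i\ra+s-1$ summands, leaving exactly the single uncancelled copy $\sF_i^{(s-1)}*\sE_j[\la\l,\alpha_i\ra+s]$ on the target side. The $(1,1)$ component must be a non-zero multiple of $IT_{ij}$ (otherwise the leftover cone would have the wrong decomposition, by the same contradiction used at the end of the proof of Lemma \ref{lem1}), and the second component is forced to be $(I\e I)\circ(\i II)$ up to a non-zero scalar by the uniqueness of the Hom space. The main obstacle is the bookkeeping in the counting argument: tracking all the $H^\star(\p^{\cdot})$ factors that appear when moving $\sE_i$ past $\sF_i^{(s)}$ and confirming the final count matches $\la\l,\alpha_i\ra+s-1$, but this is a direct generalization of what is already done for $s=1$ in Lemma \ref{lem2}.
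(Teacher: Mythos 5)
Your proposal diverges from the paper's strategy at the key structural step, and I think the divergence creates a genuine gap. The paper's Step 1 is an \emph{induction on $s$}: the base case $s=1$ is Lemma \ref{lem2}, and for $s+1$ the paper applies the triangle $\sE_i * \sE_j[-1] \to \sE_j * \sE_i$ to $\sF_i * \sF_i^{(s)} \cong \sF_i^{(s+1)} \otimes_\k H^\star(\p^s)$, then uses Lemma \ref{lem2} plus the inductive hypothesis to get a precise count $(s+1)(\la\l,\alpha_i\ra+s+1)$ of summands on which the map is an isomorphism, and divides out by the multiplicity $s+1$. You instead try to re-run the two-sided counting argument internal to Lemma \ref{lem2}, applied to $\sE_i*\sE_j*\sE_i*\sF_i^{(s)}[-1] \to \sE_j*\sE_i*\sE_i*\sF_i^{(s)}$. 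This is not just a harder bookkeeping problem; the counting does not close by itself. When you group the first three factors and apply $*\sF_i^{(s)}$, the summand $\sE_j*\sE_i^{(2)}*\sF_i^{(s)}$ now decomposes (via Corollary \ref{cor:iso2}) into \emph{three} blocks $\sF_i^{(s)}*\sE_j*\sE_i^{(2)}$, $\sF_i^{(s-1)}*\sE_j*\sE_i \otimes H^\star(\p^{\cdot})$, and $\sF_i^{(s-2)}*\sE_j \otimes H^\star(\bG(2,\cdot))$, whereas for $s=1$ the third block is absent; the presence of the $\sF_i^{(s-2)}$ tier means the ``$\sF_i^{(s-1)}*\sE_j$ summand count from the other grouping'' is no longer a priori an upper bound on the count from the first grouping, so the two bounds do not pin down a single number without additional input. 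More to the point, when you factor $\sE_i*\sF_i^{(s)}(\l-\alpha_i) \cong \sF_i^{(s)}*\sE_i \oplus \sF_i^{(s-1)} \otimes H^\star(\p^{\cdot})$, the restriction of $T_{ij}II$ to the second block is the map $T_{ij}I$ you are trying to analyze \emph{at level $s-1$} — so you are implicitly appealing to a recursion you have not set up. The paper's induction on $s$ is precisely the device that makes this recursion legitimate.

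There are also consistent off-by-one errors: the correct decompositions are
\begin{align*}
\sE_i*\sE_j*\sF_i^{(s)}(\l) &\cong \sF_i^{(s)}*\sE_i*\sE_j \oplus \sF_i^{(s-1)}*\sE_j \otimes_\k H^\star(\p^{\la\l,\alpha_i\ra+s-1}),\\
\sE_j*\sE_i*\sF_i^{(s)}(\l) &\cong \sF_i^{(s)}*\sE_j*\sE_i \oplus \sF_i^{(s-1)}*\sE_j \otimes_\k H^\star(\p^{\la\l,\alpha_i\ra+s}),
\end{align*}
so the map must be an isomorphism on $\la\l,\alpha_i\ra+s$ (not $\la\l,\alpha_i\ra+s-1$) left-hand summands of $\sF_i^{(s-1)}*\sE_j[\cdot]$, leaving a single uncancelled copy on the right. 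These slips are minor on their own, but taken together with the missing induction they indicate the proposal has not actually been run to completion. I would recommend restructuring the argument as in the paper: first establish the claimed Postnikov shape of $\sE_{ij}*\sF_i^{(s)}$ by induction on $s$ (with Lemma \ref{lem2} as the base and the identification $\sF_i*\sF_i^{(s)} \cong \sF_i^{(s+1)} \otimes H^\star(\p^s)$ driving the step), then separately identify the two components of the connecting map using the one-dimensionality of the relevant Hom spaces and non-vanishing arguments (the paper's Steps 2 and 3).
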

\begin{Remark}\label{rem:cor2} This result is a formal consequence of Lemma \ref{lem2} and Corollary \ref{cor:iso2}.
\end{Remark}
\begin{proof}
The proof is similar to that of Corollary \ref{cor1}.

{\bf Step 1. } First we prove by induction on $s$ that 
\begin{equation}\label{eq:7}
\sE_{ij} * \sF_i^{(s)} \cong \Cone(\sF_i^{(s)} * \sE_i * \sE_j [-1] \xrightarrow{g_1 \oplus g_2} \sF_i^{(s)} * \sE_j * \sE_i \oplus \sF_i^{(s-1)} * \sE_j [\la \l, \alpha_i \ra +s])
\end{equation}
for some maps $g_1, g_2$. The base case $s=1$ is covered by Lemma \ref{lem2}. Now suppose $\la \l, \alpha_i \ra + s + 1  \ge 0$ and consider
\begin{equation}\label{eq:6}
(\sE_i * \sE_j [-1] \xrightarrow{T_{ij}} \sE_j * \sE_i) * \sF_i^{(s+1)}(\l).
\end{equation}
We have 
\begin{eqnarray*}
\sE_i * \sE_j * \sF_i^{(s+1)}(\l) [-1] &\cong& \sF_i^{(s)} * \sE_j \otimes_\k H^\star(\p^{\la \l+\alpha_j, \alpha_i \ra + s+1}) [-1]  \oplus \sF_i^{(s+1)} * \sE_i * \sE_j [-1] \\
\sE_j * \sE_i * \sF_i^{(s+1)}(\l) &\cong& \sF_i^{(s)} * \sE_j \otimes_\k H^\star(\p^{\la \l, \alpha_i \ra + s+1}) \oplus \sF_i^{(s+1)} * \sE_j * \sE_i.
\end{eqnarray*}
We first want to show that the map induced in (\ref{eq:6}) is an isomorphism on all $\la \l, \alpha_i \ra + s+1$ summands $\sF_i^{(s)} * \sE_j$ on the left hand side. If $\la \l, \alpha_i \ra + s + 1 = 0$ we are done since there are no such summands. 

On the other hand, if $\la \l, \alpha_i \ra + s + 1 > 0$ then we also have the map
$$(\sE_i * \sE_j [-1] \xrightarrow{T_{ij}} \sE_j * \sE_i) * \sF_i (\l) * \sF_i^{(s)}(\l+\alpha_i)$$
which induces a map 
$$\left[ \sE_j \otimes_\k H^\star(\p^{\la \l+\alpha_j, \alpha_i \ra + 1}) [-1] \oplus \sF_i * \sE_i * \sE_j [-1] \xrightarrow{f} \sE_j \otimes_\k H^\star(\p^{\la \l, \alpha_i \ra + 1}) \oplus \sF_i * \sE_j * \sE_i \right] * \sF^{(s)}_i(\l+\alpha_i).$$
By Lemma \ref{lem2}, the map $fI$ induces an isomorphism on all the $\la \l, \alpha_i \ra + 1$ summands of the form $\sE_j * \sF_i^{(s)}[\cdot]$ on the left hand side and also induces 
$$\left( \sF_i * \sE_i * \sE_j [-1] \xrightarrow{IT_{ij}} \sF_i * \sE_j * \sE_i \right) * \sF^{(s)}_i(\l+\alpha_i).$$
By induction this induces an isomorphism on all the $\la \l+\alpha_i+\alpha_j, \alpha_i \ra + s+1$ summands of the form $\sF_i * \sF_i^{(s-1)} * \sE_j [\cdot]$ on the left hand side. So in total $fI$ induces an isomophism on 
$$(\la \l, \alpha_i \ra + 1) + s(\la \l+\alpha_i+\alpha_j, \alpha_i \ra + s+1) = (s+1)(\la \l, \alpha_i \ra + s+1)$$
summands of the form $\sF_i^{(s)} * \sE_j [\cdot]$. This shows that (\ref{eq:6}) induces an isomorphism on the $\la \l, \alpha_i \ra + s+1$ summands (which is what we wanted to show). 

Thus, by the cancellation Lemma \ref{lem:cancel}, 
$$\sE_{ij} * \sF_i^{(s+1)} \cong \Cone(\sF_i^{(s+1)} * \sE_i * \sE_j [-1] \rightarrow \sF_i^{(s+1)} * \sE_j * \sE_i \oplus \sF_i^{(s)} * \sE_j [\la \l, \alpha_i \ra +s+1])$$
which completes the induction. 

{\bf Step 2.} Next, one can check 
$$\Hom(\sF_i^{(s)} * \sE_i * \sE_j [-1], \sF_i^{(s)} * \sE_j * \sE_i) \cong \k \cong \Hom(\sF_i^{(s)} * \sE_i * \sE_j [-1], \sF_i^{(s-1)} * \sE_j [\la \l, \alpha_i \ra + s]).$$
To do this one moves the factor $\sF_i^{(s)}$ from the left side to the right side using adjunction, simplifies $(\sE_i^{(s)} * \sF_i^{(s)}) * \sE_j * \sE_i$ and then uses adjunction again (just like in the computation used to prove (\ref{eq:10})). This is a long but straight-forward calculation which we omit. 

{\bf Step 3.} Finally we show that $g_1$ and $g_2$ are non-zero. This implies that $g_1$ must must be $IT_{ij}$ and $g_2$ must be the composition
\begin{eqnarray*}
\sF_i^{(s)} * \sE_i * \sE_j [-1] 
&\xrightarrow{\i I I}& \sF_i^{(s-1)} * \sF_i(\l+s\alpha_i+\alpha_j) [-s+1] * \sE_i * \sE_j [-1] \\
&\xrightarrow{I \e I}& \sF_i^{(s-1)} * \sE_j [\la \l, \alpha_i \ra + s]
\end{eqnarray*}
(up to a non-zero multiple). Recall that $\i$ denotes the unique inclusion of $\sF_i^{(s)}$ into the lowest degree summand of $\sF_i^{(s-1)} * \sF_i$. 

To show $g_1 \ne 0$ we look at $\sE_{ij} * \sF_i^{(s)} * \sF_i$. Now $\sF_i^{(s)} * \sF_i \cong \sF_i^{(s+1)} \otimes_\k H^\star(\p^{s})$ and from the proof of Step 1, $\left( \sE_i * \sE_j [-1] \xrightarrow{T_{ij}} \sE_j * \sE_i \right) * \sF_i^{(s+1)}$ induces an isomorphism on all $\la \l, \alpha_i \ra + s + 1$ summands of the form $\sF_i^{(s)} * \sE_j [\cdot]$ on the left hand side. Thus 
\begin{equation}\label{eq:temp2}
(\sE_i * \sE_j [-1] \xrightarrow{T_{ij}} \sE_j * \sE_i) * \sF_i^{(s)} * \sF_i
\end{equation}
induces an isomorphism on $(s+1)(\la \l, \alpha_i \ra + s + 1)$ such summands. 

On the other hand, (\ref{eq:temp2}) induces an isomorphism on $\la \l, \alpha_i \ra + s$ summands $\sF_i^{(s-1)} * \sE_j * \sF_i$ or equivalently on $s(\la \l, \alpha_i \ra + s)$ summands of the form $\sF_i^{(s)} * \sE_j [\cdot]$ and what is left over is the map from equation (\ref{eq:7}): 
$$\left( \sF_i^{(s)} * \sE_i * \sE_j [-1] \xrightarrow{g_1 \oplus g_2} \sF_i^{(s)} * \sE_j * \sE_i \oplus \sF_i^{(s-1)} * \sE_j [\la \l, \alpha_i \ra +s] \right) * \sF_i.
$$
This means that the map above must induce an isomorphism on $\la \l, \alpha_i \ra +2s+1$ summands of the form $\sF_i^{(s)} * \sE_j [\cdot]$. This is impossible if $g_1 = 0$ since $\sF_i^{(s-1)} * \sE_j * \sF_i \cong \sF_i^{(s)} * \sE_j \otimes_\k H^\star(\p^{s-1})$ contains only $s$ such summands. Thus $g_1 \ne 0$. 

To show that $g_2 \ne 0$ we apply $\sE_i *$ to (\ref{eq:6}). On the one hand we get 
\begin{equation}\label{eq:8}
\left( \sE_i * \sE_i * \sE_j [-1] \xrightarrow{I T_{ij}} \sE_i * \sE_j * \sE_i \right) * \sF_i^{(s)}.
\end{equation}
By Lemma \ref{lem1}, this induces an isomorphism $(\sE_i^{(2)} * \sE_j \xrightarrow{\sim} \sE_i^{(2)} * \sE_j) * \sF_i^{(s)}$ and the map
$$\left( \sE_i^{(2)} * \sE_j [-2] \xrightarrow{T_{ij}^{(1)(2)}} \sE_j * \sE_i^{(2)} \right) * \sF_i^{(s)}.$$
Now one can show, along the same lines as above, that the map $T_{ij}^{(1)(2)}$ induces an isomorphism on every summand of the form $\sF_i^{(s-2)} * \sE_j [\cdot]$ on the left hand side. Thus the map in (\ref{eq:8}) induces an isomorphism on every summand of the form $\sF_i^{(s-2)} * \sE_j$ on the left hand side. On the other hand, if $g_2 = 0$ then the map
$$\sE_i * \left( \sF_i^{(s)} * \sE_i * \sE_j [-1] \xrightarrow{g_1 \oplus g_2} \sF_i^{(s)} * \sE_j * \sE_i \oplus \sF_i^{(s-1)} * \sE_j [\la \l, \alpha_i \ra + s] \right)$$
cannot induce an isomorphism on all summands $\sF_i^{(s-2)} * \sE_j$ on the left hand side (contradiction). So we must have $g_2 \ne 0$. 
\end{proof}

\section{Proof of Main Theorem \ref{thm:main}}\label{sec:proof}

In this section we will assume that $i,j \in I$ are joined by an edge. For convenience we also assume that $\la \l, \alpha_i \ra \ge 0 $ and $ \la \l, \alpha_i + \alpha_j \ra \ge 0 $, since the other cases are similar.

The main idea of the proof is as follows.  We will show that $ \sT_i * \sT_j = \sT_{ij} * \sT_i $ where $ \sT_{ij} $ is an equivalence coming from an $ \sl_2 $ action generated by the kernel $ \sE_{ij} $.  From a similar argument, we will also show that $ \sT_i * \sT_j =  \sT_j * \sT_{ij}  $.  This immediately implies the braid relation.  The kernel $ \sE_{ij} $ should be thought of as a root vector for the root $ \alpha_i + \alpha_j $.  

In order to prove that $ \sT_i * \sT_j = \sT_{ij} * \sT_i $, we will compute $ \sE_{ij} * \sT_i $.  Recall that $\sT_i$ is the convolution of a complex where each term in the complex is of the form $\sT_i^s = \sF_i^{(\la \l, \alpha_i \ra +s)} * \sE_i^{(s)} [-s]$. So to compute $\sE_{ij} * \sT_i$ we first calculate $\sE_{ij} * \sF_i^{(\la \l, \alpha_i \ra +s)}$ (Step 1) which follows directly from Corollary \ref{cor2}. Next we calculate $\sE_{ij} * \sF_i^{(\la \l, \alpha_i \ra +s)} * \sE_i^{(s)}$ (Step 2) which basically follows from Corollary \ref{cor1}. This gives us a simplified expression for $\sE_{ij} * \sT_i^s$. 

Next, in the most difficult step, we put all these terms together and simplify to come up with an expression for $\sE_{ij} * \sT_i$. We compare with a similarly simplified expression for $\sT_i * \sE_j$ (this is much easier to calculate) and conclude that $\sE_{ij} * \sT_i \cong \sT_i * \sE_j$ (Corollary \ref{cor:EijTi=TiEj}).  It then follows by formal arguments that $\sT_{ij} * \sT_i \cong \sT_i * \sT_j$.

\subsection{Step 1: Calculation of $\sE_{ij} * \sF_i^{(\la \l, \alpha_i \ra +s)}$}

The first step is to compute 
$$\sE_{ij} * \sF_i^{(\la \l, \alpha_i \ra +s)} \in D(Y(\l+s\alpha_i) \times Y(\l-\la \l, \alpha_i \ra \alpha_i)).$$
To simplify things we will abuse notation a little and write $d_i^s$ for any map obtained as the composition 
$$\sF_i^{(k)} * \sE_i^{(s)} \xrightarrow{\i\i} \sF_i^{(k-1)} * \sF_i * \sE_i * \sE_i^{(s-1)} \xrightarrow{I \e I} \sF_i^{(k-1)} * \sE_i^{(s-1)}$$
for any $ k \in \mathbb{N} $ (we omit the necessary shifts here to simplify notation).

\begin{Proposition}\label{prop:step1} $\sE_{ij} * \sF_i^{(\la \l, \alpha_i \ra +s)}(\l-\la \l, \alpha_i \ra \alpha_i)$ is isomorphic to the cone of
$$\sF_i^{(\la \l, \alpha_i \ra + s)} * \sE_i * \sE_j [-1] \xrightarrow{IT_{ij} \oplus d_i^1 I} \sF_i^{(\la \l, \alpha_i \ra + s)} * \sE_j * \sE_i \oplus \sF_i^{(\la \l, \alpha_i \ra + s-1)} * \sE_j [s].$$
\end{Proposition}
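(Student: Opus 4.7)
The plan is to derive Proposition \ref{prop:step1} as an immediate specialization of Corollary \ref{cor2}. The appropriate substitution is to apply that corollary at weight $\mu := \l - \la \l, \alpha_i \ra \alpha_i$ with divided-power parameter $r := \la \l, \alpha_i \ra + s$, so that $\sF_i^{(r)}(\mu)$ matches the kernel $\sF_i^{(\la \l, \alpha_i \ra + s)}(\l - \la \l, \alpha_i \ra \alpha_i)$ appearing in our statement.

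The one hypothesis of Corollary \ref{cor2} that needs to be verified is $\la \mu, \alpha_i \ra + r \ge 0$. A short computation gives $\la \mu, \alpha_i \ra = \la \l, \alpha_i \ra - 2\la \l, \alpha_i \ra = -\la \l, \alpha_i \ra$, hence $\la \mu, \alpha_i \ra + r = s$, which is non-negative since $s$ indexes a term of the complex $\sT_i^\bullet$. Corollary \ref{cor2} then produces a cone description for $\sE_{ij} * \sF_i^{(r)}(\mu)$ whose shift $[\la \mu, \alpha_i \ra + r] = [s]$ on the second target summand exactly matches the shift appearing in our proposition, and whose first target summand is $\sF_i^{(r)} * \sE_j * \sE_i$ as required.

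The two map labels also agree. The first component $IT_{ij}$ is identical in both statements, and the map $d_i^1 I$ in our proposition is, by the very definition of $d_i^1$, the composite $I\e I \circ \i II$ with $k = \la \l, \alpha_i \ra + s$; this is precisely the second component of the map produced by Corollary \ref{cor2}. Substituting $r = \la \l, \alpha_i \ra + s$ back into the cone then yields the stated formula on the nose.

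I do not anticipate any real obstacle here: Proposition \ref{prop:step1} is essentially a renaming of variables in Corollary \ref{cor2} under the weight substitution above. All the substantive work has been front-loaded into the preliminary section, where induction on $s$, repeated Gaussian elimination, and the base case Lemma \ref{lem2} combine to give Corollary \ref{cor2}. The genuine difficulties of the braiding argument will instead arise in Step 2 and in the convolution argument that glues the individual kernels $\sE_{ij} * \sT_i^s$ into $\sE_{ij} * \sT_i$.
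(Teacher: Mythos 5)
Your argument is exactly the paper's own proof: specialize Corollary \ref{cor2} at the weight $\l - \la \l, \alpha_i \ra \alpha_i$ with divided-power parameter $\la \l, \alpha_i \ra + s$, check that the hypothesis reduces to $s \ge 0$, and observe that the second component of the corollary's map is $d_i^1 I$ by definition. The substitutions and the shift computation are correct; no gap.
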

\begin{proof}
This follows directly from Corollary \ref{cor2} since 
$$\la \l-\la \l, \alpha_i \ra \alpha_i, \alpha_i \ra + (\la \l, \alpha_i \ra + s) = s \ge 0.$$
\end{proof}

\subsection{Step 2: Calculation of $\sE_{ij} * \sF_i^{(\la \l, \alpha_i \ra +s)} * \sE_i^{(s)}$}

The second step is to compute 
$$\sE_{ij} * \sF_i^{(\la \l, \alpha_i \ra +s)} * \sE_i^{(s)}(\l) \in D(Y(\l) \times Y(\l-\la \l, \alpha_i \ra \alpha_i)).$$

\begin{Proposition}\label{prop:step2} $\sE_{ij} * \sF_i^{(\la \l, \alpha_i \ra +s)} * \sE_i^{(s)}(\l)$ is isomorphic to the cone of
$$\sF_i^{(\la \l, \alpha_i \ra + s)} * \sE_i^{(s+1)} * \sE_j [-1] \xrightarrow{IT_{ij}^{(s+1)(1)}[s] \oplus \gamma_s} \sF_i^{(\la \l, \alpha_i \ra + s)} * \sE_j * \sE_i^{(s+1)}[s] \oplus \sE_j * \sF_i^{(\la \l, \alpha_i \ra + s-1)} * \sE_i^{(s)} [s]$$
where $\gamma_s$ is the composition 
\begin{eqnarray*}
\sF_i^{(\la \l, \alpha_i \ra + s)} * \sE_i^{(s+1)} * \sE_j [-1] 
&\xrightarrow{d_i^{s+1} I}& \sF_i^{(\la \l, \alpha_i \ra + s-1)} * \sE_i^{(s)} * \sE_j \\
&\xrightarrow{IT_{ij}^{(s)(1)}[s]}& \sF_i^{(\la \l, \alpha_i \ra + s-1)} * \sE_j * \sE_i^{(s)} [s]. \\ 
\end{eqnarray*}
\end{Proposition}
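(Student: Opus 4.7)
The plan is to derive Step 2 from Step 1 by applying $* \sE_i^{(s)}$ and then simplifying the resulting complex via Gaussian elimination using the results from Section \ref{sec:preliminaries}.

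First, I would apply $* \sE_i^{(s)}(\l)$ to the isomorphism of Proposition \ref{prop:step1} to obtain that $\sE_{ij} * \sF_i^{(\la \l, \alpha_i \ra + s)} * \sE_i^{(s)}$ is the cone of
$$\sF_i^{(\la \l, \alpha_i \ra + s)} * \sE_i * \sE_j * \sE_i^{(s)} [-1] \xrightarrow{IT_{ij}I \;\oplus\; d_i^1 II} \sF_i^{(\la \l, \alpha_i \ra + s)} * \sE_j * \sE_i * \sE_i^{(s)} \oplus \sF_i^{(\la \l, \alpha_i \ra + s-1)} * \sE_j * \sE_i^{(s)} [s].$$
Then I would decompose both $\sE_i * \sE_j * \sE_i^{(s)}$ and $\sE_j * \sE_i * \sE_i^{(s)}$ using Corollary \ref{cor:E1E2E1} and Proposition \ref{prop:EXE}:
$$\sE_i * \sE_j * \sE_i^{(s)} \cong \sE_i^{(s+1)} * \sE_j \oplus \sE_j * \sE_i^{(s+1)} \otimes_\k H^\star(\p^s), \quad \sE_j * \sE_i * \sE_i^{(s)} \cong \sE_j * \sE_i^{(s+1)} \otimes_\k H^\star(\p^s).$$
This exhibits a common summand $\sF_i^{(\la \l, \alpha_i \ra + s)} * \sE_j * \sE_i^{(s+1)} \otimes_\k H^\star(\p^s)$ appearing on both the LHS (shifted by $[-1]$) and in the first factor of the RHS.

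Next I would apply Corollary \ref{cor1} (after convolving on the left with $\sF_i^{(\la \l, \alpha_i \ra + s)}$): the restriction of $IT_{ij}I$ to these common summands is an isomorphism. Hence Gaussian elimination (Lemma \ref{lem:cancel}) cancels them, leaving $\sE_{ij} * \sF_i^{(\la \l, \alpha_i \ra + s)} * \sE_i^{(s)}$ as the cone of a map
$$\sF_i^{(\la \l, \alpha_i \ra + s)} * \sE_i^{(s+1)} * \sE_j [-1] \xrightarrow{g_1 \;\oplus\; g_2} \sF_i^{(\la \l, \alpha_i \ra + s)} * \sE_j * \sE_i^{(s+1)}[s] \oplus \sF_i^{(\la \l, \alpha_i \ra + s-1)} * \sE_j * \sE_i^{(s)} [s]$$
for some maps $g_1, g_2$ to be identified.

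For the first component $g_1$, Lemma \ref{lem:Eij} (applied after the adjunction argument used in Corollaries \ref{cor1}--\ref{cor2}) tells us that the space of maps $\sF_i^{(\la \l, \alpha_i \ra + s)} * \sE_i^{(s+1)} * \sE_j [-s-1] \to \sF_i^{(\la \l, \alpha_i \ra + s)} * \sE_j * \sE_i^{(s+1)}$ is one-dimensional, so $g_1$ is a scalar multiple of $IT_{ij}^{(s+1)(1)}[s]$; non-vanishing is forced by comparing the number of $\sF_i^{(\la \l, \alpha_i \ra + s-1)} * \sE_j * \sE_i^{(s)}[\cdot]$ summands in $\sE_{ij} * \sF_i^{(\la \l, \alpha_i \ra + s)} * \sE_i^{(s)}$ against those in Step 1 (exactly as in the arguments of Corollaries \ref{cor1} and \ref{cor2}). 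For the second component $g_2$, the analogous Hom-space computation shows that maps $\sF_i^{(\la \l, \alpha_i \ra + s)} * \sE_i^{(s+1)} * \sE_j [-1] \to \sF_i^{(\la \l, \alpha_i \ra + s-1)} * \sE_j * \sE_i^{(s)} [s]$ form a one-dimensional space, and $\gamma_s$ is a non-zero element of this space (the composition $d_i^{s+1}I$ is non-zero by inclusion into the lowest cohomological degree, and $IT_{ij}^{(s)(1)}[s]$ is non-zero by Lemma \ref{lem:Eij}). Thus, once non-vanishing of $g_2$ is established, $g_2$ must be a scalar multiple of $\gamma_s$.

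The main obstacle will be proving $g_2 \neq 0$, since this does not follow formally from the earlier bricks lemmas. I would handle it by a counting argument of the same flavour as Step 3 of Corollary \ref{cor2}: compose the whole cone on the right with a suitable $\sE_i$, expand both sides via Proposition \ref{prop:EXE} and Corollary \ref{cor:E1E2E1}, and count how many summands of the form $\sF_i^{(\la \l, \alpha_i \ra + s-1)} * \sE_j * \sE_i^{(s+1)}[\cdot]$ the map must induce isomorphisms on; if $g_2$ vanished, the count would be short by exactly the contribution of $\gamma_s$, giving a contradiction. Once $g_1$ and $g_2$ are both identified (up to harmless non-zero scalars, which can be absorbed into the choice of representatives of the unique maps), the proposition follows.
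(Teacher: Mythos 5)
Your overall strategy matches the paper's: apply $* \sE_i^{(s)}$ to Proposition \ref{prop:step1}, decompose via Propositions \ref{prop:EXE} and \ref{prop:E1E2E1}, and cancel the matching summands using Corollary \ref{cor1}. However there are two concrete problems.

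First, the decomposition $\sE_i * \sE_j * \sE_i^{(s)} \cong \sE_i^{(s+1)} * \sE_j \oplus \sE_j * \sE_i^{(s+1)} \otimes_\k H^\star(\p^s)$ is wrong: Proposition \ref{prop:E1E2E1} gives $H^\star(\p^{s-1})$, not $H^\star(\p^s)$. This is not a cosmetic slip, because your cancellation argument depends on it. After the shift $[-1]$ the source supplies $s$ copies of $\sF_i^{(\la \l, \alpha_i \ra + s)} * \sE_j * \sE_i^{(s+1)}$ sitting in degrees $[-s],[-s+2],\dots,[s-2]$, which is exactly what matches the lower $s$ of the $s+1$ copies in $\sF_i^{(\la \l, \alpha_i \ra + s)} * \sE_j * \sE_i^{(s+1)} \otimes_\k H^\star(\p^s)$, leaving the single residual in degree $[s]$. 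With your $H^\star(\p^s)$ the source copies sit in degrees $[-s-1],[-s+1],\dots,[s-1]$, which do not line up with the target degrees at all; the intended cancellation does not even start.

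Second, you have missed the tool the paper actually uses to pin down the residual maps, namely Corollary \ref{cor:E1E2E1iso}. That corollary describes explicitly how $\sE_i^{(s+1)} * \sE_j$ sits inside $\sE_i * \sE_j * \sE_i^{(s)}$ (via $\iota I$ followed by $I T_{ij}^{(s)(1)}$). Composing this with the map in (\ref{eq:13}) and commuting $d_i^1$ past $T_{ij}^{(s)(1)}$ identifies the second component of the residual map directly as $\gamma_s$, with no need for a separate one-dimensionality computation or a non-vanishing count. Similarly the first component comes straight out of Corollary \ref{cor1}'s description of $\sE_{ij} * \sE_i^{(s)}$ as $\Cone(T_{ij}^{(s+1)(1)}[s])$. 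Your proposed replacement --- one-dimensional Hom spaces plus non-vanishing by counting --- is plausible in outline, but as written the argument for $g_1 \neq 0$ compares counts of summands $\sF_i^{(\la \l, \alpha_i \ra + s-1)} * \sE_j * \sE_i^{(s)}[\cdot]$, which is not the target of $g_1$; and the $g_2 \neq 0$ argument is only sketched. You would need to work these out carefully, which amounts to redoing in disguise the content of Corollary \ref{cor:E1E2E1iso}. Using that corollary directly is both shorter and what makes the paper's proof of this proposition essentially a bookkeeping exercise.
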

\begin{proof}
This is a direct consequence of Proposition \ref{prop:step1}. Applying $* \sE_i^{(s)}$ to the main expression in Proposition \ref{prop:step1} we get the two maps
\begin{equation}\label{eq:12}
\sF_i^{(\la \l, \alpha_i \ra + s)} * \sE_i * \sE_j [-1] * \sE_i^{(s)} \xrightarrow{IT_{ij}I} \sF_i^{(\la \l, \alpha_i \ra + s)} * \sE_j * \sE_i * \sE_i^{(s)}
\end{equation}
\begin{equation}\label{eq:13}
\sF_i^{(\la \l, \alpha_i \ra + s)} * \sE_i * \sE_j [-1] * \sE_i^{(s)} \xrightarrow{d_i^1 I I} \sF_i^{(\la \l, \alpha_i \ra + s-1)} * \sE_j [s] * \sE_i^{(s)}.
\end{equation}
By Corollary \ref{cor1}, (\ref{eq:12}) induces an isomorphism on all summands of the form  $\sF_i^{(\la \l, \alpha_i \ra + s)} * \sE_j * \sE_i^{(s+1)} [\cdot]$ on the left hand side and cancelling out these terms leaves
$$\sF_i^{(\la \l, \alpha_i \ra + s)} * \sE_i^{(s+1)} * \sE_j [-1] \xrightarrow{I T_{ij}^{(s+1)(1)}[s]} \sF_i^{(\la \l, \alpha_i \ra + s)} * \sE_j * \sE_i^{(s+1)} [s].$$
Now the map in (\ref{eq:13}) when restricted to the summand $\sF_i^{(\la \l, \alpha_i \ra + s)} * \sE_i^{(s+1)} * \sE_j[-1]$ is by Corollary \ref{cor:E1E2E1iso} the composition 
\begin{eqnarray*} 
\sF_i^{(\la \l, \alpha_i \ra + s)} * \sE_i^{(s+1)} * \sE_j[-1] 
&\xrightarrow{I \i I}& \sF_i^{(\la \l, \alpha_i \ra + s)} * \sE_i * \sE_i^{(s)} * \sE_j [-s-1] \\
&\xrightarrow{d_i^1 T_{ij}^{(s)(1)}}& \sF_i^{(\la \l, \alpha_i \ra + s-1)} * \sE_j * \sE_i^{(s)} [s].
\end{eqnarray*}
Up to multiple this is the same as the map $\gamma_s$ (completing the proof). 
\end{proof}

\subsection{Step 3: Calculation of $\sE_{ij} * \sT_i$}\label{sec:step3}

\subsubsection{Convolutions} First we recall the precise definition of a (right) convolution in a triangulated category (see \cite{GM} section IV, exercise 1). 

Let $ (A_\bullet, f_\bullet) = A_n \xrightarrow{f_n} A_{n-1} \rightarrow \cdots \xrightarrow{f_1} A_0 $ be a sequence of objects and morphisms such that $ f_i \circ f_{i+1} = 0 $. Such a sequence is called a complex. A (right) convolution of a complex $ (A_\bullet, f_\bullet) $ is any object $ B $ such that there exist
\begin{enumerate}
\item objects $ A_0 = B_0, B_1, \dots, B_{n-1}, B_n = B $ and
\item morphisms $ g_i : B_i [-i] \rightarrow A_i $, $ h_i : A_i \rightarrow B_{i-1}[-(i-1)] $ (with $ h_0 = id $)
\end{enumerate}
such that
\begin{equation} \label{eq:distB}
B_i[-i] \xrightarrow{g_i} A_i \xrightarrow{h_i} B_{i-1}[-(i-1)]
\end{equation}
is a distinguished triangle for each $ i $ and $ g_{i-1} \circ h_{i} = f_i $. Such a collection of data is called a Postnikov system. Notice that in a Postnikov system we also have $f_{i+1} \circ g_i = (g_{i+1} \circ h_i) \circ g_i = 0 $ since $h_i \circ g_i = 0$.

The convolution of a complex need not exist nor is it always unique. However, in the case of the complex
$$\dots \xrightarrow{d_i^{s+1}} \sT_i^s(\l) \xrightarrow{d_i^s} \sT_i^{s-1}(\l) \xrightarrow{d_i^{s-1}} \dots \xrightarrow{d_i^1} \sT_i^0(\l)$$
where $\sT_i^s(\l) = \sF_i^{(\la \l, \alpha_i \ra +s)} * \sE_i^{(s)}(\l) [-s]$ we showed in \cite{ckl3} that the right convolution exists, is unique and gives an object $\sT_i(\l)$ which is invertible.

\subsubsection{Calculation}

We denote the partial right convolution 
$$\sT_i^{\le s}(\l) := \mbox{Conv} \left( \sT_i^{s}(\l) \xrightarrow{d_i^{s}} \sT_i^{s-1}(\l) \xrightarrow{d_i^{s-1}} \dots \xrightarrow{d_i^1} \sT_i^0(\l) \right).$$
Subsequently we have a standard exact triangle 
$$\sT_i^{\le s} \rightarrow \sT_i^{\le s+1} \xrightarrow{\pi} \sT_i^{s+1}[s+1].$$

\begin{Proposition}\label{prop:step3} For any $s \ge -1$, $\sE_{ij} * \sT_i^{\le s}$ is isomorphic to 
$$\Cone \left( \sT_i^{\le s+1} * \sE_j [-1] \xrightarrow{IT_{ij}^{(s+1)(1)}[s] \circ \pi} \sE_j * \sF_i^{(\la \l, \alpha_i \ra + s)} * \sE_i^{(s+1)} [s] \right)$$
where the map above is the composition 
\begin{eqnarray*}
\sT_i^{\le s+1} * \sE_j [-1] 
& & \longrightarrow  
\sT_i^{s+1} * \sE_j [s] = \sF_i^{(\la \l+\alpha_j, \alpha_i \ra + s + 1)} * \sE_i^{(s+1)} * \sE_j [-1] \\
&& \xrightarrow{IT_{ij}^{(s+1)(1)}[s]} \sE_j * \sF_i^{(\la \l, \alpha_i \ra + s)} * \sE_i^{(s+1)} [s].
\end{eqnarray*}
\end{Proposition}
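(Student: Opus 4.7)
The proof proceeds by induction on $s \geq -1$. For the base case $s = -1$, the left-hand side $\sE_{ij} * \sT_i^{\le -1}(\l) = 0$ (empty convolution), while on the right-hand side $\sT_i^{\le 0}(\l+\alpha_j) = \sF_i^{(\la\l,\alpha_i\ra - 1)}$, and unwinding $T_{ij}^{(0)(1)} = \id$ together with $\pi = \id$ shows that the claimed map is just the commutation isomorphism $\sF_i^{(\la\l,\alpha_i\ra-1)} * \sE_j \cong \sE_j * \sF_i^{(\la\l,\alpha_i\ra-1)}$ from condition (ix); its cone vanishes.

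For the inductive step, I apply $\sE_{ij} *$ to the Postnikov triangle $\sT_i^{\le s} \to \sT_i^{\le s+1} \xrightarrow{\pi} \sT_i^{s+1}[s+1]$ and rotate to obtain
\begin{equation*}
\sE_{ij} * \sT_i^{s+1}[s] \to \sE_{ij} * \sT_i^{\le s} \to \sE_{ij} * \sT_i^{\le s+1}.
\end{equation*}
Abbreviating $X_k := \sE_j * \sF_i^{(\la\l,\alpha_i\ra+k)} * \sE_i^{(k+1)}$, the inductive hypothesis identifies the middle term with $\Cone(\sT_i^{\le s+1}(\l+\alpha_j) * \sE_j[-1] \to X_s[s])$, while Proposition \ref{prop:step2} applied with $s$ replaced by $s+1$ and shifted by $[-1]$ identifies $\sE_{ij} * \sT_i^{s+1}[s]$ with
\begin{equation*}
\Cone\bigl(\sT_i^{s+2}(\l+\alpha_j) * \sE_j[s] \to \sF_i^{(\la\l,\alpha_i\ra+s+1)} * \sE_j * \sE_i^{(s+2)}[s] \oplus X_s[s]\bigr).
\end{equation*}

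The plan is then to weave these two triangle presentations together via the octahedral axiom and to apply Gaussian elimination (Lemma \ref{lem:cancel}) to cancel the common $X_s[s]$ pair. After cancellation, the new source is the cone $\Cone(\sT_i^{s+2}(\l+\alpha_j) * \sE_j[s] \to \sT_i^{\le s+1}(\l+\alpha_j) * \sE_j[-1])$, which by the rotated Postnikov triangle for $\sT_i^{\le s+2}(\l+\alpha_j)$ is exactly $\sT_i^{\le s+2}(\l+\alpha_j) * \sE_j[-1]$; the new target is $\sF_i^{(\la\l,\alpha_i\ra+s+1)} * \sE_j * \sE_i^{(s+2)}[s+1] \cong X_{s+1}[s+1]$ via condition (ix).

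The main obstacle is verifying that the connecting map $\sE_{ij} * \sT_i^{s+1}[s] \to \sE_{ij} * \sT_i^{\le s}$ restricts to a non-zero (hence, by uniqueness of maps between the relevant bricks via Lemma \ref{lem:Eij} and Corollary \ref{cor3}, an isomorphism) map between the $X_s[s]$ summands on each side -- this is the crucial input to the Gaussian elimination. The factorization $\gamma_{s+1} = IT_{ij}^{(s+1)(1)}[s+1] \circ (d_i^{s+2}I)$ from Step 2 makes this component of the connecting map explicit, and tracing how the Postnikov structure maps interact shows it matches the target-map $IT_{ij}^{(s+1)(1)}[s] \circ \pi$ from the inductive hypothesis up to a non-zero scalar. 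The same uniqueness-of-brick-map argument identifies the resulting cone map after cancellation with the prescribed $IT_{ij}^{(s+2)(1)}[s+1] \circ \pi$, completing the induction.
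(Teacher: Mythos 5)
Your overall strategy matches the paper's: apply $\sE_{ij} *$ to the Postnikov triangle $\sT_i^{\le s} \to \sT_i^{\le s+1} \to \sT_i^{s+1}[s+1]$, rewrite the source and target of the connecting map via Proposition \ref{prop:step2} (shifted) and the inductive hypothesis respectively, observe that the top-left square commutes by the factorization of $\gamma_{s+1}$, and then use a $3\times 3$-type argument (the paper invokes Proposition 1.1.11 of [BBD], which is what your ``octahedral axiom plus Gaussian elimination'' amounts to) to cancel the common $X_s[s]$ and identify $\sE_{ij} * \sT_i^{\le s+1}$ with the cone of a map $g$ into $X_{s+1}[s+1]$. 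Your base case $s=-1$ is also handled exactly as in the paper.

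However, there is a genuine gap in the step where you pin down the maps. You invoke ``uniqueness of maps between the relevant bricks via Lemma \ref{lem:Eij} and Corollary \ref{cor3}'' to identify the connecting map with $IT^{(s+1)(1)}_{ij}[s]\circ\pi$ and then the resulting cone map with $IT^{(s+2)(1)}_{ij}[s+1]\circ\pi$. But the objects on which these maps live are not bricks: the source $\sT_i^{\le s+2}*\sE_j[-1]$ is an iterated convolution, and the targets involve $\sF_i^{(a)}*\sE_i^{(b)}$, which by Corollary \ref{cor:iso2} decomposes into many summands and is certainly not a brick. What is actually required is the one-dimensionality (and negative-degree vanishing) of $\Hom(\sT_i^{\le s+2}*\sE_j[-1],\, X_{s+1}[s+1])$ and of $\Hom(\sE_{ij}*\sT_i^{s+1}[s],\, \sE_{ij}*\sT_i^{\le s})$. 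In the paper this is exactly the content of Lemma \ref{lem:maps}, whose proof requires a nontrivial degree-counting estimate (a quadratic inequality in the summation index, using Corollary \ref{cor:E1E2E1} as well as Lemma \ref{lem:Eij} and Corollary \ref{cor3}); it does not follow merely from the brick property of $\sE_i^{(b)}*\sE_j^{(a)}$ or $\sE_i^{(a)}*\sE_{ij}$. Without this lemma, the BBD/octahedral argument only tells you that \emph{some} $f,g$ exist with $\Cone(f)\cong\Cone(g)$; you cannot conclude that the $f$ appearing in the diagram is (up to nonzero scalar) the induced Postnikov connecting map, nor that $g$ is $IT^{(s+2)(1)}_{ij}[s+1]\circ\pi$. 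You need to either cite Lemma \ref{lem:maps} or reproduce its $\Ext$-vanishing estimate.
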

\begin{proof}
The proof is by induction on $s$. The base case is when $s = -1$ which follows since $IT_{ij}^{(0)(1)}[-1]$ is an isomorphism and $\sE_{ij} * \sT_i^{\le -1} = 0$. 

Now we will prove the result for $s+1$ assuming it holds for $s$. The key is the following commutative diagram.

\begin{equation}\label{eq:9}
\xymatrix{
\sF_i^{(\la \l, \alpha_i \ra + s+1)} * \sE_i^{(s+2)} * \sE_j [-2] \ar[rr]^-{{\begin{smallmatrix} \gamma_{s+1}[-1] \\ \oplus IT_{ij}^{(s+2)(1)}[s] \end{smallmatrix}}} \ar[d]^{d_i^{s+2} I} && *{\begin{matrix} \sE_j * \sF_i^{(\la \l, \alpha_i \ra + s)} * \sE_i^{(s+1)} [s] \\ \oplus \sE_j * \sF_i^{(\la \l, \alpha_i \ra + s+1)} * \sE_i^{(s+2)} [s] \end{matrix}} \ar[r] \ar[d]^-{I \oplus 0} & \sE_{ij} * \sT_i^{s+1} [s] \ar@{-->}[d]^{f} \\
\sT_i^{\le s+1} * \sE_j[-1] \ar[d] \ar[rr]^-{IT_{ij}^{(s+1)(1)}[s] \circ \pi} && \sE_j * \sF_i^{(\la \l, \alpha_i \ra + s)} * \sE_i^{(s+1)} [s] \ar[d] \ar[r] & \sE_{ij} * \sT_i^{\le s} \\
\sT_i^{\le s+2} * \sE_j[-1] \ar@{-->}[rr]^-{g} && \sE_j * \sF_i^{(\la \l, \alpha_i \ra + s+1)} * \sE_i^{(s+2)} [s+1] & 
}
\end{equation}
The top left square commutes because, by definition, $\gamma_{s+1}[-1] = (IT_{ij}^{(s+1)(1)}[s]) \circ (d_i^{s+2} I)$. The first two rows and two columns are exact triangles -- note that the second column is exact by the cancellation Lemma \ref{lem:cancel}. The maps $f$ and $g$ are to be determined as explained below. 

In general, if one has a commutative square such as the upper left square in (\ref{eq:9}), then one can fill it with some maps $f,g$ making all the squares commute and so that $\Cone(f) \cong \Cone(g)$ (see Proposition 1.1.11 of [BBD]). 

Using the exact sequence $\sT_i^{\le s} \rightarrow \sT_i^{\le s+1} \rightarrow \sT_i^{s+1}[s+1]$ and Lemma \ref{lem:maps} we find that 
\begin{eqnarray*}
& & \Hom(\sT_i^{\le s+2} * \sE_j[-1], \sE_j * \sF_i^{(\la \l, \alpha_i \ra + s + 1)} * \sE_i^{(s+2)} [s+1]) \\
& \cong & \Hom(\sT_i^{s+2} * \sE_j[-1], \sE_j * \sF_i^{(\la \l, \alpha_i \ra + s + 1)} * \sE_i^{(s+2)} [s+1])
\end{eqnarray*}
is one dimensional and hence it must be spanned by $IT_{ij}^{(s+2)(1)} [s+1] \circ \pi$. It is easy to see that $g \ne 0$ and thus $g = IT_{ij}^{(s+2)(1)} [s+1] \circ \pi$ (up to multiple). 

Similarly, by Lemma \ref{lem:maps} we have 
$$\Hom(\sE_{ij} * \sT_i^{s+1}[s], \sE_{ij} * \sT_i^{\le s}) \cong \Hom(\sE_{ij} * \sT_i^{s+1}[s], \sE_{ij} * \sT_i^{s}) \cong \k.$$
Since $f \ne 0$ we find $f = Id_i^{s+1}$ (up to multiple). Subsequently 
$$\sE_{ij} * \sT_i^{\le s+1} \cong \Cone(I d_i^{s+1}) \cong \Cone(f) \cong \Cone(g) \cong \Cone(IT_{ij}^{(s+2)(1)} [s+1] \circ \pi)$$
and the induction is complete. 
\end{proof}

If we take $s \gg 0$ in Proposition \ref{prop:step3} then we find that 

\begin{Corollary}\label{cor:EijTi=TiEj} $\sE_{ij} * \sT_i \cong \sT_i * \sE_j$. 
\end{Corollary}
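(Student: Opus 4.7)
The plan is to deduce this corollary from Proposition \ref{prop:step3} by taking $s$ large enough that the right-hand term of the cone vanishes and the partial convolution stabilizes to the full $\sT_i$. By the integrability assumption, there exists $N$ such that $Y(\mu + n\alpha_i) = \emptyset$ for all weights $\mu$ appearing in our finite collection and all $n \ge N$. For such $n$ the kernel $\sE_i^{(n)}$, which lives on a product of two such varieties, is automatically zero, and hence $\sT_i^n = \sF_i^{(\la \mu,\alpha_i\ra + n)} * \sE_i^{(n)}[-n] = 0$.

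Two consequences follow. First, once $s+1 \ge N$, the exact triangle $\sT_i^{\le s} \to \sT_i^{\le s+1} \to \sT_i^{s+1}[s+1]$ shows $\sT_i^{\le s} \cong \sT_i^{\le s+1}$, and both agree with the full convolution $\sT_i$. Second, for such $s$ the codomain $\sE_j * \sF_i^{(\la \l, \alpha_i \ra + s)} * \sE_i^{(s+1)}[s]$ appearing in Proposition \ref{prop:step3} vanishes, because its innermost factor $\sE_i^{(s+1)}$ is zero.

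Fix any such $s$. Then Proposition \ref{prop:step3} yields
$$\sE_{ij} * \sT_i \;\cong\; \sE_{ij} * \sT_i^{\le s} \;\cong\; \Cone\bigl(\sT_i * \sE_j[-1] \longrightarrow 0\bigr) \;\cong\; \sT_i * \sE_j,$$
where the last isomorphism uses the elementary fact that the cone of the unique map $X[-1] \to 0$ is $X$.

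There is no real obstacle at this stage: all of the hard work sits inside Proposition \ref{prop:step3}, whose inductive commutative-diagram chase packages everything into a form from which the corollary follows by a one-line stabilization argument.
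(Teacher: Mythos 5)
Your argument is exactly the one the paper intends: the text immediately preceding the corollary simply says ``If we take $s \gg 0$ in Proposition \ref{prop:step3} then we find that $\sE_{ij} * \sT_i \cong \sT_i * \sE_j$.'' Your proposal spells out the stabilization correctly, using integrability to show that $\sE_i^{(s+1)}$ (and hence both $\sT_i^{s+1}$ and the codomain of the cone) vanish for $s$ large, so there is nothing to add.
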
 

The significance of this is that $\sE_{ij}$ is conjugate to $\sE_j$, namely $\sE_{ij} \cong \sT_i * \sE_j * \sT_i^{-1}$. Thus, for any $k \ge 0$ we can define
$$\sE^{(k)}_{ij}(\l) := \sT_i * \sE_j^{(k)}(\l - \la \l, \alpha_i \ra \alpha_i) * \sT_i^{-1} \text{ and } \sF^{(k)}_{ij}(\l) := \sT_i * \sF_j^{(k)}(\l-\la \l, \alpha_i \ra \alpha_i) * \sT_i^{-1}.$$

\begin{Corollary}\label{cor:step3} Assuming both $\sE_{ij}^{(r)}$ and $\sF_{ij}^{(r)}$ are sheaves they generate a geometric categorical $\sl_2$ action where the one parameter defomation of $Y(\l)$ is the restriction of $\tY(\l)$ to the subspace spanned by $\alpha_i+\alpha_j$. Moreover, we have 
$$\sE_{ij}^{(r)} * \sT_i \cong \sT_i * \sE_j^{(r)} \text{ and } \sF_{ij}^{(r)} * \sT_i \cong \sT_i * \sF_j^{(r)}.$$
\end{Corollary}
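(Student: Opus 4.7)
The commutation identities $\sE_{ij}^{(r)} * \sT_i \cong \sT_i * \sE_j^{(r)}$ and $\sF_{ij}^{(r)} * \sT_i \cong \sT_i * \sF_j^{(r)}$ are immediate from the definitions $\sE_{ij}^{(r)}(\l) := \sT_i * \sE_j^{(r)}(s_i\l) * \sT_i^{-1}$ and $\sF_{ij}^{(r)}(\l) := \sT_i * \sF_j^{(r)}(s_i\l) * \sT_i^{-1}$ together with the fact that $\sT_i^{-1} * \sT_i \cong \id$.

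For the $\sl_2$ axioms of section \ref{sec:geomcat} applied to $\sE_{ij}^{(r)}, \sF_{ij}^{(r)}$ with simple root $\alpha_i + \alpha_j$, the plan is to transport the $\sl_2$ structure associated to vertex $j$ through the autoequivalence $\sT_i$. The categorical conditions (\ref{co:trivial}), (\ref{co:adjoints}), (\ref{co:EE}), (\ref{co:EF}), (\ref{co:contain}) are preserved by conjugation: condition (\ref{co:trivial}) uses that $\sT_i$ preserves $\Hom$-spaces; the fact that $\sT_i^{-1}$ is both the left and right adjoint to $\sT_i$, together with the identity $\la s_i\l, \alpha_j\ra = \la \l, s_i\alpha_j\ra = \la \l, \alpha_i+\alpha_j\ra$, delivers the shifts required in (\ref{co:adjoints}); and (\ref{co:EE}), (\ref{co:EF}), (\ref{co:contain}) follow because conjugation preserves compositions, direct-sum decompositions, and supports of Fourier-Mukai kernels. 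Condition (\ref{co:sheaves}) is the hypothesis of the corollary.

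The main obstacle is the deformation condition (\ref{co:EEdef}):
$$\sH^*(i_{23*}\sE_{ij} * i_{12*}\sE_{ij}) \cong \sE_{ij}^{(2)}[-1] \oplus \sE_{ij}^{(2)}[2],$$
where $i_{12}, i_{23}$ now denote the inclusions of the central fibres into the restriction $\tY(\l)|_{\mathrm{span}(\alpha_i+\alpha_j)}$. I would follow the argument of section \ref{se:deform} with $\sE_{ij}$ playing the role of $\sE_i$. Choose $v = \alpha_i+\alpha_j$, so that $\la v, \alpha_i+\alpha_j\ra = 2 \ne 0$. By Proposition \ref{th:propc2}(ii), the cohomology in question is determined by the cone of $Ic_{v,0}(\sE_{ij})$ acting on $\sE_{ij} * \sE_{ij} \cong \sE_{ij}^{(2)} \otimes_\k H^\star(\p^1)$ (the EE decomposition transported from $\sE_j$). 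Axiom (\ref{co:Eijdef}) of the $\g$ action supplies $c_{v',v'}(\sE_{ij}) = 0$ for $v' \in (\alpha_i+\alpha_j)^\perp$, the analogue for $\sE_{ij}$ of axiom (\ref{co:Eidef}) for $\sE_i$. Combining this vanishing with the decomposition $c_{v,v} = c_{v,0} + c_{0,v}$ and the fact (from Proposition \ref{th:propc2}(i)) that $c_{0,v}(\sE_{ij}) = c_{0,v}(\sE_{ij} * \sE_{ij})$ is diagonal on the summands of $\sE_{ij} * \sE_{ij}$ and hence zero on $\sH^0$, everything reduces to establishing the non-degeneracy of $c_{v,v}(\sE_{ij})$ on the $\sE_{ij}^{(2)}$-summands.

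This last non-degeneracy is the hard part of the argument. The plan is to bootstrap from Proposition \ref{th:deformsum}(ii) applied to $\sE_j$. Under the identification $\sE_{ij} = \sT_i * \sE_j * \sT_i^{-1}$, the obstruction class $c_{v,v}(\sE_{ij})$, which is the product of the Atiyah class of $\sE_{ij}$ with the Kodaira-Spencer map of $\tY(\l)|_{\mathrm{span}(v)}$, corresponds under $\sT_i$-conjugation to the class $c_{s_iv, s_iv}(\sE_j)$ evaluated at $s_iv = s_i(\alpha_i+\alpha_j) = \alpha_j$ on the conjugate one-parameter deformation. Since $\la s_iv, \alpha_j\ra = \la \alpha_j, \alpha_j\ra = 2 \ne 0$, Proposition \ref{th:deformsum}(ii) yields non-degeneracy for $\sE_j$, which transfers back to $\sE_{ij}$ and closes the verification.
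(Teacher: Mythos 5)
Your proof of the commutation identities and of the formal $\sl_2$ axioms -- conditions (\ref{co:trivial}), (\ref{co:adjoints}) (with the correct shift $\la s_i\l,\alpha_j\ra = \la\l,\alpha_i+\alpha_j\ra$), (\ref{co:EE}), (\ref{co:EF}) -- is sound; these really do transport cleanly under conjugation by the equivalence $\sT_i$, and the paper itself offers no explicit proof here, so the onus is on you to supply the argument, which you largely do. Your reduction of the deformation axiom (\ref{co:EEdef}) to the non-degeneracy of $\sH^0(Ic_{v,v}(\sE_{ij}))$, via the decomposition $c_{v,v} = c_{v,0} + c_{0,v}$, the diagonality of $Ic_{0,v}$, and Proposition \ref{th:propc2}(ii), faithfully follows the paper's template in section \ref{se:deform}, and condition (\ref{co:Eijdef}) does give you $c_{v',v'}(\sE_{ij})=0$ for $v' \in (\alpha_i+\alpha_j)^\perp$.

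The gap is the final step, where you assert that $c_{v,v}(\sE_{ij})$ ``corresponds under $\sT_i$-conjugation'' to $c_{s_iv,s_iv}(\sE_j)$ and invoke Proposition \ref{th:deformsum}(ii) for $\sE_j$. This is not justified, and I do not think it can be. The obstruction class $c_{v,v}(\sA)$ is the product of the Atiyah class of $\sA$ with the Kodaira--Spencer class of a \emph{specific} one-parameter deformation of the underlying varieties, here $\tY(\l)|_{\spn(v)} \times_\k \tY(\l+\alpha_i+\alpha_j)|_{\spn(v)}$. Conjugation by $\sT_i$ is an equivalence of the central-fibre categories $D(Y(\l))$ only; for it to carry $c_{v,v}(\sE_{ij})$ to $c_{s_iv,s_iv}(\sE_j)$ you would need $\sT_i$ to extend to an equivalence over the deformed bases, i.e.\ to deform along $\spn(\alpha_i+\alpha_j)$. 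But $\sT_i$ is assembled from the $\sE_i^{(r)}$, and by axiom (\ref{co:Eidef}) these deform only over $\alpha_i^\perp$; since $\la\alpha_i+\alpha_j,\alpha_i\ra = 1 \ne 0$, neither $\sE_i$ nor $\sT_i$ deforms in the required direction. More to the point, a derived autoequivalence need not respect the Kodaira--Spencer direction inside Hochschild cohomology at all, so there is no a priori relation between $c_{v,v}(\sT_i*\sE_j*\sT_i^{-1})$ and $c_{s_iv,s_iv}(\sE_j)$. The ``conjugate one-parameter deformation'' you invoke is not an object of the paper's framework, and its obstruction class is not the thing Proposition \ref{th:deformsum}(ii) controls. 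To close the argument one would have to establish the non-degeneracy of $\sH^0(Ic_{\alpha_i+\alpha_j,\alpha_i+\alpha_j}(\sE_{ij}))$ directly from the $\g$-action axioms -- for instance by exploiting the triangle $\sE_i*\sE_j[-1]\to\sE_j*\sE_i\to\sE_{ij}$, the functoriality of $c$ as in diagram (\ref{eq:14}), the linearity $c_{\alpha_i+\alpha_j,\alpha_i+\alpha_j} = c_{\alpha_i,\alpha_i}+c_{\alpha_j,\alpha_j}$, and Proposition \ref{th:deformsum} for $\sE_i$ and $\sE_j$ separately -- or else to verify the deformation condition directly in each example, as the remark following the corollary suggests for the sheaf hypothesis.
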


\begin{Remark}
In most examples one can verify directly that $\sE_{ij}^{(r)}$ and $\sF_{ij}^{(r)}$ are sheaves. 
\end{Remark}

\begin{Lemma}\label{lem:maps} If $k < 0$ and $t \ge s$ or $k = 0$ and $t > s+1$ 
$$\Ext^k(\sT_i^{t} * \sE_j(\l), \sE_j * \sF_i^{(\la \l, \alpha_i \ra + s-1)} * \sE_i^{(s)}(\l)) = 0 \text{ and }
\Ext^k(\sE_{ij} * \sT_i^t, \sE_{ij} * \sT_i^s) = 0.$$
When $k=0$ and $t = s$ or $t = s+1$ then both of these spaces are one-dimensional. 
\end{Lemma}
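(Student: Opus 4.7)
The plan is to reduce both Ext computations to morphism spaces of the form $\Hom(\sE_i^{(a)} * \sE_j, \sE_i^{(b)} * \sE_j[\ell])$ and $\Hom(\sE_i^{(a)} * \sE_j, \sE_j * \sE_i^{(b)}[\ell])$, which are controlled by Lemma \ref{lem:Eij}. Throughout, write $N = \la\l, \alpha_i\ra$, so that $\sT_i^t(\l + \alpha_j) = \sF_i^{(N+t-1)} * \sE_i^{(t)}[-t]$.

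For Part 1, first use the commutation $\sE_j * \sF_i \cong \sF_i * \sE_j$ (condition (\ref{co:EiFj})) to rewrite the target as $\sF_i^{(N+s-1)} * \sE_j * \sE_i^{(s)}$. Then strip off the outer $\sF_i^{(N+t-1)}$ from the source by passing it to the target via its right adjoint, which by condition (\ref{co:adjoints}) is $\sE_i^{(N+t-1)}$ with an explicit cohomological shift. Commuting the newly appearing $\sE_j$ across $\sF_i^{(N+s-1)}$ and applying Corollary \ref{cor:iso2} together with Proposition \ref{prop:EXE} reduces the remaining $\sE_i^{(\cdot)} * \sF_i^{(\cdot)}$ and $\sF_i^{(\cdot)} * \sE_i^{(\cdot)}$ blocks to direct sums, indexed by Grassmannian parameters, of simpler products. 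After one more round of adjunction to re-situate the $\sF_i$-factors and Corollary \ref{cor:E1E2E1} to split each appearing $\sE_i^{(a)} * \sE_j * \sE_i^{(b)}$ into $\sE_i^{(a+b)} * \sE_j \oplus \sE_j * \sE_i^{(a+b)} \otimes H^\star(\p^{\cdot})$, every summand is a morphism space controlled by Lemma \ref{lem:Eij}. A careful tally of the accumulated shifts (from the adjunctions, Grassmannian factors, and the $[-t]$ in $\sT_i^t$) shows that in the stated ranges all summands lie in strictly positive degrees, except for exactly one match when $k = 0$ and $t \in \{s, s+1\}$, which contributes the one-dimensional space.

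For Part 2, convolve the defining triangle $\sE_i * \sE_j[-1] \to \sE_j * \sE_i \to \sE_{ij}$ with $\sT_i^t$ on the right and apply $\Hom(-, \sE_{ij} * \sT_i^s)$ to obtain a long exact sequence reducing the Ext to $\Ext^*(\sE_j * \sE_i * \sT_i^t, \sE_{ij} * \sT_i^s)$ and $\Ext^*(\sE_i * \sE_j * \sT_i^t[-1], \sE_{ij} * \sT_i^s)$. Simplify $\sE_i * \sT_i^t$ and $\sE_j * \sT_i^t$ using the commutation and Corollary \ref{cor:iso2}; on the target side, use Proposition \ref{prop:step2} to replace $\sE_{ij} * \sT_i^s = \sE_{ij} * \sF_i^{(N+s)} * \sE_i^{(s)}[-s]$ by its explicit cone description, whose terms are precisely of the form $\sF_i^{(\cdot)} * \sE_i^{(\cdot)} * \sE_j$ and $\sE_j * \sF_i^{(\cdot)} * \sE_i^{(\cdot)}$ that appear in Part 1. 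Repeated applications of Gaussian elimination (Lemma \ref{lem:cancel}) cancel matching summands and reduce the whole computation to Ext spaces of exactly the form already handled in Part 1.

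The principal obstacle is the bookkeeping of cohomological shifts: each adjunction and Grassmannian factor contributes a specific shift depending on $t, s, N$, and one must verify that these shifts align precisely so that exactly one summand survives at $k = 0$ and $t \in \{s, s+1\}$, while the other stated ranges see total cancellation.
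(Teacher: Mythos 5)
Your Part 1 reduction is essentially the paper's: adjunction to move the $\sF_i^{(\cdot)}$ factors, Corollary \ref{cor:iso2}, and then Corollary \ref{cor:E1E2E1} to split $\sE_i^{(l)} * \sE_j * \sE_i^{(s)}$, concluding by Lemma \ref{lem:Eij}. The paper is more explicit about the degree accounting (it reuses the quadratic bound from Proposition~5.2 of \cite{ckl3} and observes the degree ranges strictly shrink when passing from $\sE_i^{(l)}*\sE_i^{(s)}*\sE_j \cong \sE_i^{(l+s)}*\sE_j \otimes H^\star(\bG(s,s+l))$ to the two summands of $\sE_i^{(l)}*\sE_j*\sE_i^{(s)}$), but your outline is the same.

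Your Part 2 is a genuinely different route and has a gap. The paper takes adjoints to convert the problem to $\Ext^k(\sT_i^t * \sE_{ij}, \sT_i^s * \sE_{ij})$, runs the same $\sl_2$ calculus as Part 1 to reduce to $\bigoplus_l \Ext^k(\sE_i^{(s+l)} * \sE_{ij}, \sE_i^{(s+l)} * \sE_{ij} \otimes_\k V'_l)$ with $V'_l$ in negative degrees (except the single degree-zero contribution), and finishes by Corollary~\ref{cor3}, the brick property of $\sE_i^{(a)} * \sE_{ij}$. That corollary was proved precisely to make this step work, and your proposal never invokes it. Instead you propose to unwind $\sE_{ij}$ twice: on the source via its defining triangle, and on the target via the cone description from Proposition~\ref{prop:step2}. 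This produces a cascade of long exact sequences whose boundary maps you would have to identify explicitly to conclude vanishing (and to isolate the surviving one-dimensional piece when $k=0$, $t \in \{s,s+1\}$). Simply asserting that ``Gaussian elimination cancels matching summands'' does not close this: Lemma~\ref{lem:cancel} is a statement about simplifying cones inside the triangulated category, not about cancelling terms of a long exact sequence of $\Ext$-groups -- for the latter you must show the relevant connecting morphisms are isomorphisms, which is a separate (and in this setting nontrivial) verification. So the reduction of Part 2 to ``Ext spaces already handled in Part 1'' is not justified as written; the missing ingredient is precisely the brick statement of Corollary~\ref{cor3}, together with the adjunction that keeps a single $\sE_{ij}$-factor intact on both sides rather than expanding it.
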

\begin{proof}
Proposition 5.2 of \cite{ckl3} claims that $\Ext^k(\sT_i^t, \sT_i^s) = 0$ if $t > s+1$ and $k \le 0$. However, what we showed there is a little stronger than that. One has
$$\Ext^k(\sT_i^{t}, \sT_i^{s}) \cong \Ext^k({\sF_i^{(\la \l + \alpha_j, \alpha_i \ra + s)}}_L * \sF_i^{(\la \l + \alpha_j, \alpha_i \ra + t)} * \sE_i^{(t)} [-t], \sE_i^{(s)} [-s])$$
and one can repeatedly use Corollary \ref{cor:iso2} to write ${\sF_i^{(\la \l + \alpha_j, \alpha_i \ra + s)}}_L * \sF_i^{(\la \l + \alpha_j, \alpha_i \ra + t)} * \sE_i^{(t)} [-t+s]$ as a direct sum 
$$\bigoplus_{l \ge 0} \sF_i^{(l)} * \sE_i^{(s+l)} \otimes_\k V_l$$
for some graded vector spaces $V_l$. Then one can rewrite $\bigoplus_{l \ge 0} \Ext^k(\sF_i^{(l)} * \sE_i^{(s+l)} \otimes_\k V_l, \sE_i^{(s)})$ as 
$$\bigoplus_{l \ge 0} \Ext^k(\sE_i^{(s+l)} \otimes_\k V_l, {\sF_i^{(l)}}_R * \sE_i^{(s)}) \cong \bigoplus_{l \ge 0} \Ext^k(\sE_i^{(s+l)}, \sE_i^{(s+l)} \otimes_\k V'_l)$$
for some (other) graded vector spaces $V'_l$. Then the statement proven in Proposition 5.2 in \cite{ckl3} is that each $V'_l$ is supported in degrees 
$$d \le -(2l^2 + 2l(-2b+t-s)+(t-s)^2+2b(b-t+s)-1)$$
where $b = \la \l, \alpha_i \ra + t - 1$. If we view this as a quadratic in $l$ then the discriminant simplifies to give $-4(t-s)^2+8$ (as it happens it is independent of $b$). This is negative if $t > s+1$ which shows that $V_l'$ must lie in negative degrees. If $t=s$ or $t=s+1$ then there is precisely one value of $l$ for which $d$ is non-negative, namely $l=b$, and we show in \cite{ckl3} that $\dim(V'_b) = 1$.  

Now consider 
$$\Ext^k(\sT_i^{t} * \sE_j, \sE_j * \sF_i^{(\la \l, \alpha_i \ra + s-1)} * \sE_i^{(s)}) \cong \Ext^k(\sT_i^{t} * \sE_j, \sF_i^{(\la \l, \alpha_i \ra + s-1)} * \sE_j * \sE_i^{(s)})$$ 
and suppose we are not in the case $k=0$ and $t=s$ or $t=s+1$. Then, by the same argument as above, we get
$$\bigoplus_{l \ge 0} \Ext^k(\sF_i^{(l)} * \sE_i^{(s+l)} * \sE_j \otimes_\k V_l, \sE_j * \sE_i^{(s)} [s]).$$
By adjunction we can move the term $\sF_i^{(l)}$ from the left side to the right side and try to simplify as before. But now, using Corollary \ref{cor:E1E2E1}, we get terms of the form 
$$\sE_i^{(l)} * \sE_j * \sE_i^{(s)} \cong \sE_i^{(l+s)} * \sE_j \otimes_\k H^\star(\bG(s,s+l-1)) \oplus \sE_j * \sE_i^{(l+s)} \otimes_\k H^\star(\bG(l,s+l-1))$$
instead of terms of the form
$$\sE_i^{(l)} * \sE_i^{(s)} * \sE_j \cong \sE_i^{(l+s)} * \sE_j \otimes_\k H^\star(\bG(s,s+l)).$$
In the old case we saw that we end up with terms of the form $\Ext^k(\sE_i^{(l+s)} * \sE_j, \sE_i^{(l+s)} * \sE_j[n])$ where $n < 0$. Since $H^\star(\bG(s,s+l))$ is supported in degrees $-ls \le m \le ls$ whereas $H^\star(\bG(s,s+l-1))$ in degrees $-(l-1)s \le m \le (l-1)s$ (a difference of $s$) and $H^\star(\bG(l,s+l-1))$ in degrees $-l(s-1) \le m \le l(s-1)$ (a difference of $l$) it must be that we end up with terms of the form 
$$\Ext^k(\sE_i^{(l+s)} * \sE_j, \sE_i^{(l+s)} * \sE_j[s][n-s]) \text{ and } \Ext^k(\sE_i^{(l+s)} * \sE_j, \sE_j * \sE_i^{(l+s)}[s][n-l])$$
where $n < 0$. These vanish by Lemma \ref{lem:Eij}. If $k=0$ and $t=s$ or $t=s+1$ then the right hand terms still vanish while the left hand terms also vanish except when $l=b$ when it is one dimensional. 

To deal with $\Ext^k(\sE_{ij} * \sT_i^t, \sE_{ij} * \sT_i^s)$ we take adjoints and work instead with $\Ext^k(\sT_i^t * \sE_{ij}, \sT_i^s * \sE_{ij})$. Now the same argument as above leaves us with 
$$\Ext^k(\sT_i^t * \sE_{ij}, \sT_i^s * \sE_{ij}) \cong \oplus_l \Ext^k(\sE_i^{(s+l)} * \sE_{ij}, \sE_i^{(s+l)} * \sE_{ij} \otimes V'_l)$$
where $V'_l$ is supported in negative degrees unless $k=0$ and $t=s$ or $t=s+1$ and $l=b$ in which case it is one-dimensional in degree zero. The result now follows by Corollary \ref{cor3}. 
\end{proof}

\subsection{Step 4: Proof that $\sT_{ij} * \sT_i \cong \sT_i * \sT_j$}
In analogy with $\sT_i^s(\l)$ we define
$$\sT_{ij}^s(\l) := \sF_{ij}^{(\la \l, \alpha_i+\alpha_j \ra + s)} * \sE_{ij}^{(s)}(\l) [-s] \in D(Y(\l) \times Y(\l - \la \l, \alpha_i+\alpha_j \ra (\alpha_i + \alpha_j))).$$
Notice that $\sT_{ij}^s \cong \sT_i * \sT_j^s * \sT_i^{-1}$ so that 
$$\Hom(\sT_{ij}^{s}, \sT_{ij}^{s-1}) \cong \Hom(\sT_i * \sT_j^{s} * \sT_i^{-1}, \sT_i * \sT_j^{s-1} * \sT_i^{-1}) \cong \Hom(\sT_j^{s}, \sT_j^{s-1}) \cong \k$$
where the last isomorphism follows from Lemma \ref{lem:dunique} (assuming $\sT_{ij}^s \ne 0$ or equivalently $\sT_j^s \ne 0$). We denote this map by $d_{ij}^s$ (it is well defined only up to a non-zero multiple). One can also describe $d_{ij}^s$ as before by the composition 
\begin{eqnarray*}
\sT_{ij}^s(\l)
&\cong& \sF_{ij}^{(\la \l, \alpha_i + \alpha_j \ra + s)} * \sE_{ij}^{(s)} [-s]  \\
&\xrightarrow{\i \i[-s]}& \sF_{ij}^{(\la \l, \alpha_i + \alpha_j \ra + s-1)} * \sF_{ij} [- \la \l, \alpha_i + \alpha_j \ra - s+1] * \sE_{ij} * \sE_{ij}^{(s-1)} [-(s-1)][-s] \\
&\xrightarrow{I \e I}& \sF_{ij}^{(\la \l, \alpha_i + \alpha_j \ra + s-1)} * \sE_{ij}^{(s-1)} [-s+1] \cong \sT_{ij}^{s-1}(\l). 
\end{eqnarray*}

\begin{Proposition}\label{prop:step4} The complex
$$\dots \rightarrow \sT_{ij}^s(\l) \xrightarrow{d_{ij}^s} \sT_{ij}^{s-1}(\l) \xrightarrow{d_{ij}^{s-1}} \dots \xrightarrow{d_{ij}^1} \sT_{ij}^0(\l)$$
has a unique convolution which we denote $\sT_{ij}(\l)$. Moreover, 
$$\sT_{ij} * \sT_i \cong \sT_i * \sT_j.$$
\end{Proposition}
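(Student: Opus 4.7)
The plan is to exploit the fact that, by the very definitions in Corollary \ref{cor:step3}, the entire complex $(\sT_{ij}^\bullet, d_{ij}^\bullet)$ is obtained from $(\sT_j^\bullet, d_j^\bullet)$ by conjugation with the equivalence $\sT_i * (-) * \sT_i^{-1}$, and then to invoke Theorem \ref{thm:ckl3}.

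First, I would check at the level of objects that
$$\sT_{ij}^s(\l) \cong \sT_i * \sT_j^s(s_i \l) * \sT_i^{-1},$$
where $s_i \l = \l - \la \l, \alpha_i \ra \alpha_i$. Substituting $\sE_{ij}^{(s)}(\l) = \sT_i * \sE_j^{(s)}(s_i\l) * \sT_i^{-1}$ and the analogous expression for $\sF_{ij}$, the inner $\sT_i^{-1} * \sT_i$ cancels, and one uses that $\la s_i \l, \alpha_j \ra = \la \l, \alpha_i+\alpha_j \ra$ (since $s_i \alpha_j = \alpha_i + \alpha_j$) to match the upper index on $\sF_j$. Next, because the Hom space $\Hom(\sT_{ij}^s, \sT_{ij}^{s-1}) \cong \Hom(\sT_j^s, \sT_j^{s-1}) \cong \k$ recorded above is one-dimensional, the differential $d_{ij}^s$ agrees up to a non-zero scalar with the image of $d_j^s$ under conjugation by $\sT_i$. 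Since rescaling the differentials of a complex by non-zero scalars yields an isomorphic complex (rescale each $\sT_{ij}^s$ by the appropriate cumulative product of scalars), the complexes $(\sT_{ij}^\bullet, d_{ij}^\bullet)$ and $\sT_i * (\sT_j^\bullet, d_j^\bullet) * \sT_i^{-1}$ are isomorphic as sequences of morphisms, and in particular one is a complex iff the other is.

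Second, I would transfer the Postnikov system from the $j$-side. By Theorem \ref{thm:ckl3} the complex $(\sT_j^\bullet, d_j^\bullet)$ has a unique convolution $\sT_j$; applying the exact autoequivalence $\sT_i * (-) * \sT_i^{-1}$ to every object and morphism of its Postnikov system produces a Postnikov system for the conjugated complex with convolution $\sT_i * \sT_j * \sT_i^{-1}$. For uniqueness, any other convolution $B$ of $(\sT_{ij}^\bullet, d_{ij}^\bullet)$ would, via the inverse equivalence $\sT_i^{-1} * (-) * \sT_i$, yield a second convolution of $(\sT_j^\bullet, d_j^\bullet)$, contradicting Theorem \ref{thm:ckl3}. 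Hence $\sT_{ij} \cong \sT_i * \sT_j * \sT_i^{-1}$, and multiplying by $\sT_i$ on the right gives the braid relation $\sT_{ij} * \sT_i \cong \sT_i * \sT_j$.

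The only real obstacle is bookkeeping: verifying that the weight shifts on $\sE_j^{(s)}$, $\sF_j^{(\la \l, \alpha_i+\alpha_j\ra+s)}$ match those inherited from the conjugation, and handling the scalar indeterminacy in $d_{ij}^s$. Both are routine once one invokes the functoriality of Postnikov systems under triangulated equivalences and the invariance of convolution under rescaling of differentials.
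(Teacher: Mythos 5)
Your proof matches the paper's own argument: the paper likewise builds a commutative ladder identifying $(\sT_{ij}^\bullet, d_{ij}^\bullet)$ with $\sT_i * (\sT_j^\bullet, d_j^\bullet) * \sT_i^{-1}$, using the one-dimensionality of the relevant Hom spaces to fix the vertical isomorphisms up to scalar, and then transfers existence/uniqueness of the convolution through the equivalence $\sT_i * (-) * \sT_i^{-1}$. You spell out the weight bookkeeping ($\la s_i\l,\alpha_j\ra = \la\l,\alpha_i+\alpha_j\ra$) and the cumulative-rescaling of differentials more explicitly than the paper does, but the approach and key steps are the same.
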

\begin{proof}
The key is the following commutative diagram
\begin{equation*}
\xymatrix{
\dots \ar[r] & \sT_{ij}^s \ar[r]^{d_{ij}^s} \ar[d]^{\cong} & \sT_{ij}^{s-1} \ar[d]^{\cong} \ar[r] & \dots \\
\dots \ar[r] & \sT_i * \sT_j^s * \sT_i^{-1} \ar[r]^{I d_j^s I} & \sT_i * \sT_j^{s-1} * \sT_i^{-1} \ar[r] & \dots
}
\end{equation*}
where one needs to choose appropriate multiples of the vertical isomorphisms. This is a consequence of the fact that $\Hom(\sT_{ij}^s, \sT_i * \sT_j^{s-1} * \sT_i^{-1}) \cong \Hom(\sT_{ij}^s, \sT_{ij}^{s-1}) \cong \k$ and that $d_{ij}^s \ne 0 \ne d_j^s$. Now $\sT_j^\bullet$ has a unique convolution so $\sT_i * \sT_j^\bullet * \sT_i^{-1}$ has a unique convolution and hence so does $\sT_{ij}^\bullet$.  Finally, the commutativity of the diagram also implies that the convolutions must be isomorphic: i.e. $\sT_{ij} \cong \sT_i * \sT_j * \sT_i^{-1}$. 
\end{proof}

\begin{Lemma}\label{lem:dunique} If $\sT_i^s \ne 0$ and $s \ge 1$ then the space of maps $\Hom(\sT_i^s, \sT_i^{s-1})$ is one-dimensional.
\end{Lemma}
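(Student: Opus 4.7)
The plan is to reduce the computation of $\Hom(\sT_i^s, \sT_i^{s-1})$ to a question purely about products of divided powers $\sE_i^{(r)}$ and $\sF_i^{(r)}$ with a single index $i$, and then appeal to the detailed analysis already carried out in the course of proving Lemma \ref{lem:maps} (and originally in \cite{ckl3}).

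First I would unfold the definition
\[
\Hom(\sT_i^s, \sT_i^{s-1}) = \Hom\bigl(\sF_i^{(b+s)} * \sE_i^{(s)}(\l),\, \sF_i^{(b+s-1)} * \sE_i^{(s-1)}(\l)[1]\bigr),
\]
where $b = \la \l, \alpha_i \ra$. Using the adjunction isomorphisms of condition (\ref{co:adjoints}) to move $\sF_i^{(b+s-1)}$ from the right-hand argument to the left, this becomes a $\Hom$ of the form
\[
\Hom\bigl({\sF_i^{(b+s-1)}}_L * \sF_i^{(b+s)} * \sE_i^{(s)},\, \sE_i^{(s-1)}[m]\bigr)
\]
for an explicit shift $m$, i.e.\ (up to shift) a $\Hom$ between compositions built only from $\sE_i$'s and $\sF_i$'s.

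Next, I would repeatedly apply Corollary~\ref{cor:iso2} to decompose ${\sF_i^{(b+s-1)}}_L * \sF_i^{(b+s)}$, and then Proposition~\ref{prop:EXE} to combine consecutive $\sE_i$'s, rewriting the left-hand argument as a direct sum
\[
\bigoplus_{l \ge 0} \sF_i^{(l)} * \sE_i^{(s-1+l)} \otimes_\k W_l
\]
for some graded $\k$-vector spaces $W_l$. Using adjunction once more on each summand to move $\sF_i^{(l)}$ across, and applying Proposition~\ref{prop:EXE} again, the Hom space decomposes as
\[
\bigoplus_{l \ge 0} \Ext^0\bigl(\sE_i^{(s-1+l)},\, \sE_i^{(s-1+l)} \otimes_\k W'_l\bigr)
\]
for suitable graded vector spaces $W'_l$. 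Since each $\sE_i^{(r)}$ is a brick (Lemma \ref{lem:Eij} with $a=0$), this reduces the question to computing the degree-$0$ part of each $W'_l$.

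The main (and really only) obstacle is the careful bookkeeping of shifts coming from adjunction, the Poincar\'e polynomials of the Grassmannian cohomologies that appear, and the resulting quadratic in $l$ controlling the top degree of $W'_l$. However, this is precisely the computation already carried out in Proposition~5.2 of \cite{ckl3} and summarized in the proof of Lemma~\ref{lem:maps}: the discriminant of the relevant quadratic equals $-4(t-s)^2 + 8$ (with $t = s$, $s \mapsto s-1$ in our present notation, i.e.\ the case $t - s = 1$), and in this case exactly one value of $l$ (namely $l = b+s-1$) produces a non-negative contribution, with the corresponding $W'_l$ one-dimensional concentrated in degree $0$. Thus $\Hom(\sT_i^s, \sT_i^{s-1}) \cong \k$, as required.
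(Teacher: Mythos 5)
Your proposal is correct and takes essentially the same route as the paper's own proof: both unfold the definition, move the $\sF$-factor by adjunction, decompose via Corollary~\ref{cor:iso2} and Proposition~\ref{prop:EXE} into summands $\Ext^0(\sE_i^{(r)}, \sE_i^{(r)} \otimes_\k W'_l)$, and then invoke the quantitative bound on the grading of $W'_l$ from Proposition~5.2 of \cite{ckl3} (restated in the proof of Lemma~\ref{lem:maps}) to isolate a single one-dimensional contribution, finishing with the brick property from Lemma~\ref{lem:Eij}. The only difference is presentational: you spell out the intermediate decompositions in slightly more detail, while the paper compresses them into a reference to \cite{ckl3}.
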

\begin{proof}
We must show that 
$$\Hom(\sF_i^{(\la \l, \alpha_i \ra +s)} * \sE_i^{(s)} [-s], \sF_i^{(\la \l, \alpha_i \ra +s-1)} * \sE_i^{(s-1)} [-s+1]) \cong \k.$$
This essentially done in \cite{ckl3} proof of Proposition 5.2. There we show that 
$$\Hom(\sT_i^s[k-1], \sT_i^{s-k}) = 0$$ 
for $k \ge 2$ whereas we are interested in the left side when $k=1$. The argument there shows that when $k=1$ the left side is equal to a direct sum of terms $\Hom(\sE_i^{(a-j+s)}, \sE_i^{(a-j+s)} [l])$ where $a = \l+s-1$, $j = 0, \dots, a$ and $l \le -2(j-a)(j-a+1)$. Thus $l < 0$ and these terms vanish unless $j=a$. If $j=a$ then $l=0$ and the argument shows we get exactly one such term and then 
$$\Hom(\sT_i^s, \sT_i^{s-1}) \cong \Hom(\sE_i^{(s)}, \sE_i^{(s)}) \cong \k$$
where the second isomorphism follows by Lemma \ref{lem:Eij}). 
\end{proof}

\subsection{Step 5: Proof of braid relation}

Proposition \ref{prop:step4} claims that $\sT_{ij} * \sT_i \cong \sT_i * \sT_j$ which follows from the fact that $\sE_{ij} * \sT_i \cong \sT_i * \sE_j$. 

Now the same proof can be used to show that $\sT_j * \sT_{ij} \cong \sT_i * \sT_j$. Namely, Step 1 is a consequence of the analogous version of Corollary \ref{cor2} which computes $\sF_i^{(s)} * \sE_{ij}$ (this in turn can be traced back to follow formally from Lemma \ref{lem1} and Corollary \ref{cor:iso2} -- see Remarks \ref{rem:cor2} and \ref{rem:lem2}). Then Step 2 and 3 follow formally (they also use Lemma \ref{lem1} and there is some vanishing one needs to check which is a formal consequence of the Lie algebra relations). This shows that $\sT_j * \sE_{ij} \cong \sE_i * \sT_j$ and then Step 4 follows as before. 

Putting these two identities together we get the braid relation 
$$\sT_i * \sT_j * \sT_i \cong \sT_j * \sT_{ij} * \sT_i \cong \sT_j * \sT_i * \sT_j.$$

Finally, if $i,j \in I$ are not joined by an edge then any $\sE_i$ or $\sF_i$ commutes with any $\sE_j$ or $\sF_j$. Since $\sT_i$ is build out of $\sE_i$'s and $\sF_i$'s and $\sT_j$ is built out of $\sE_j$'s and $\sF_j$'s we get the commutativity relation $\sT_i * \sT_j \cong \sT_j * \sT_i$. This concludes the proof of Theorem \ref{thm:main}.

\section{Braiding via strong categorical $\g$-actions}

Strong categorical $\g$-actions have been defined by Khovanov and Lauda in \cite{kl1, kl2, kl3} and independently by Rouquier in \cite{Rnew}. Their definitions are very similar though not identical. One should think of the geometric categorical $\g$-action introduced here as a geometric analogue of their definition which is easier to check in practice. 

In \cite{ckl2} we prove that when $\g = \sl_2$ a geometric $\g$-action implies a strong $\g$-action in the sense of Rouquier. There is good reason to believe the same is true for arbitrary (simply-laced) Kac-Moody Lie algebras $\g$. Nevertheless, in this paper we show that the braid relation follows directly from the geometric $\g$-action. 

On the other hand, our proof of Theorem \ref{thm:main} works to show that a strong $\g$-action gives a braid group action.  In fact it seems that not all the axioms of a strong $ \g $ action are needed to obtain the braid group action.  We will now explain this, starting with a simplified version of Rouquier's definition. 

A (simplified) strong categorical $\g$ action consists of
\begin{enumerate}
\item For each weight $\l$ we have a triangulated category $\D(\l)$.  
\item Exact functors $\E_i^{(r)}(\l): \D(\l) \rightarrow \D(\l+r\alpha_i) \text{ and } \F_i^{(r)}(\l): \D(\l+r\alpha_i) \rightarrow \D(\l)$.
\end{enumerate}

\begin{Remark}
We actually need a little more than this. For each pair $ \l, \l'$ there should be a triangulated category $ \D(\l, \l') $ and an additive functor $\Phi: \D(\l, \l') \rightarrow \Hom(\D(\l), \D(\l')) $, denoted $ \sE \mapsto \Phi_\sE $.  We assume that $ \Phi $ commutes with the cohomological shift $[1]$. We further assume that there is an associative monoidal structure $* : \D(\l, \l') \times \D(\l', \l'') \rightarrow \D(\l, \l'')$ such that $ \Phi $ intertwines this operation with the composition of functors. Moreover, if $ \sE \rightarrow \sF \rightarrow \sG \rightarrow \sE[ 1] $ is a distinguished triangle in $ \D(\l, \l') $, then for any $ A \in \D(\l)$, we require that $ \Phi_\sE(A)  \rightarrow \Phi_\sF(A) \rightarrow \Phi_\sG(A) \rightarrow \Phi_\sE(A)[ 1] $ be a distinguished triangle in $ \D(\l') $. 

Finally, we assume that $\D(\l, \l')$ is idempotent complete and that the hom space between any two objects is finite dimensional. This way $\D(\l,\l')$ satisfies the Krull-Schmidt property. Moreover, we assume that for any non-zero $\sE \in \D(\l, \l')$ we have $\sE \cong \sE [k] \Rightarrow k=0$. 
\end{Remark}

We then require the following relations:
\begin{enumerate}
\item For any weight $\l$, $\Hom(\id_{\D(\l)}, \id_{\D(\l)} [l]) = 0$ if $l < 0$ while $\End(\id_{\D(\l)}) = \k \cdot \id$. 
\item 
\begin{enumerate}
\item $\E^{(r)}_i(\l)_R = \F^{(r)}_i(\l) [r(\la \l, \alpha_i \ra + r)]$
\item $\E^{(r)}_i(\l)_L = \F^{(r)}_i(\l) [-r(\la \l, \alpha_i \ra + r)]$.
\end{enumerate}
\item 
$$\E_i \circ \E_i^{(r)}(\l) \cong \E_i^{(r+1)}(\l) \otimes_\k H^\star(\p^r) \cong \E_i^{(r)} \circ \E_i(\l)$$
while $\E_i \circ \E_j \cong \E_j \circ \E_i$ if $i,j \in I$ are not joined by an edge and 
$$\E_i \circ \E_j \circ \E_i \cong \E_i^{(2)} \circ \E_j \oplus \E_j \circ \E_i^{(2)}$$
if $i,j \in I$ are joined by an edge. 
\item If $\la \l, \alpha_i \ra \le 0$ then 
$$\F_i (\l) \circ \E_i (\l) \cong \E_i(\l - \alpha_i) \circ \F_i (\l - \alpha_i) \oplus \id \otimes_\k H^\star(\p^{-\la \l, \alpha_i \ra - 1})$$
while if $\la \l, \alpha_i \ra \ge 0$ then 
$$\F_i (\l - \alpha_i) \circ \E_i (\l - \alpha_i) \cong \E_i(\l) \circ \F_i (\l) \oplus \id \otimes_\k H^\star(\p^{\la \l, \alpha_i \ra - 1}).$$
If $i \ne j \in I$ then $\F_j \circ \E_j \cong \E_i \circ \F_j$. 
\end{enumerate}
along with the following natural transformations (2-morphisms):
\begin{enumerate}
\item $X_i: \E_i(\l) [-1] \rightarrow \E_i(\l) [1]$ for each $i \in I$ and weight $\l$
\item $T_{ij}: \E_i \circ \E_j (\l) [\la \alpha_i, \alpha_j \ra] \rightarrow \E_j \circ \E_i (\l) $ for any $i,j \in I$ and weight $\l$
\end{enumerate}
with relations
\begin{enumerate}
\item For each $i \in I$ the $X_i$'s and $T_{ii}$'s satisfy the nil affine Hecke relations:
\begin{enumerate}
\item $T_{ii}^2 = 0$
\item $(IT_{ii}) \circ (T_{ii} I) \circ (IT_{ii}) = (T_{ii} I) \circ (IT_{ii}) \circ (T_{ii}I)$ as endomorphisms of $ \E_i \circ \E_i \circ \E_i$.
\item $(X_iI) \circ T_{ii} - T_{ii} \circ (I X_i) = I = - (I  X_i) \circ T_{ii} + T_{ii} \circ (X_i I)$ as endomorphisms of $ \E_i \circ \E_i$.
\end{enumerate}
\item If $i \ne j \in I$ are joined by an edge then $T_{ji} \circ T_{ij} = X_iI + IX_j$ as endomorphisms of $\E_i \circ E_j$. 
\end{enumerate}

This list of 2-morphisms and the relations appear in both the Khovanov-Lauda and Rouquier definitions and it turns out they suffice to prove the braid relations. 

\begin{Remark} Relation (i) above is the only finiteness condition we need. It follows formally that the space of maps between any two compositions of $\E$'s and $\F$'s is finite. The reason we needed the stronger finiteness condition (i) in the definition of geometric categorical actions (section \ref{sec:geomcat}) is because in that case we do not require that $\sE^{(r)}_i * \sE_i$ and $\sF_i * \sE_i$ split as a direct sum (the condition is only at the level of cohomology). The argument that they split requires the cancellation property which in turn requires us to know that all maps are finite dimensional. 
\end{Remark}

\begin{Theorem}\label{thm:main2} A categorical strong $\g$-action as defined above gives rise to equivalences $\T_i$ ($i \in I$) satisfying the braid relations.
\end{Theorem}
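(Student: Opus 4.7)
The plan is to adapt the proof of Theorem \ref{thm:main} in Section \ref{sec:proof} essentially line by line, with the 2-morphisms $X_i$ and $T_{ij}$ of the strong categorical $\g$-action taking on the role that was played in the geometric setting by the obstruction maps $c_{v,v}(\sE_i)$ coming from the deformation and by the (essentially unique) $T_{ij}$ produced in Lemma \ref{lem:Eij}.

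First, for each $i \in I$ the data restricts to a strong categorical $\sl_2$-action attached to the $i$-th node. The construction and properties of $\T_i$ (Theorem \ref{thm:ckl3}) originate in \cite{ckl3}, whose argument is purely 2-categorical and depends only on the strong categorical $\sl_2$ axioms; thus the complex
\[
\dots \to \T_i^s(\l) \xrightarrow{d_i^s} \T_i^{s-1}(\l) \to \dots \to \T_i^0(\l), \qquad \T_i^s(\l) := \F_i^{(\la\l,\alpha_i\ra+s)} \circ \E_i^{(s)}[-s],
\]
still has a unique convolution $\T_i(\l)$, which is an equivalence. Next, set $\E_{ij} := \Cone(\E_i \circ \E_j[-1] \xrightarrow{T_{ij}} \E_j \circ \E_i)$ using the given 2-morphism $T_{ij}$. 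The preliminary results in Section \ref{sec:preliminaries}---Lemma \ref{lem:Eij}, Corollaries \ref{cor:iso2} and \ref{cor:E1E2E1}, Proposition \ref{prop:E1E2E1}, and so on---follow from finiteness, Krull--Schmidt, and the isomorphism-level Lie algebra relations, all of which are available in the strong categorical setting, so they carry over unchanged.

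The main obstacle, and the only nontrivial transliteration required, will be the analog of Lemma \ref{lem1}: that
\[
\E_i \circ \E_j \circ \E_i[-1] \xrightarrow{T_{ij}\, I} \E_j \circ \E_i \circ \E_i
\]
induces an isomorphism on the $\E_j \circ \E_i^{(2)}[-1]$-summand on the left (with a symmetric statement for the roles of $i$ and $j$ swapped). The geometric proof relied on two deformation-theoretic inputs provided by Proposition \ref{th:deformsum}: the obstruction $c_{\Lambda_i,\Lambda_i}(\sE_i)$ gives an isomorphism between the two $\E_i^{(2)}$-summands of $\sE_i\ast\sE_i$, while $c_{v,v}(\sE_{ij}) = 0$ for $v \in (\alpha_i+\alpha_j)^\perp$. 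In the strong categorical setting these are replaced respectively by (a) the nil affine Hecke relation $(X_iI) \circ T_{ii} - T_{ii} \circ (IX_i) = I_{\E_i \circ \E_i}$, which forces $X_i$ to restrict to an isomorphism between the two $\E_i^{(2)}$-summands of $\E_i \circ \E_i$, and (b) the mixed relation $T_{ji} \circ T_{ij} = X_iI + IX_j$ on $\E_i \circ \E_j$ for $i,j$ joined by an edge, which plays the role of the vanishing of the obstruction to deforming $\E_{ij}$ in the analog of diagram \eqref{eq:14}. Feeding these two ingredients into the rank-counting argument of Lemma \ref{lem1} yields the desired conclusion.

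Once this analog of Lemma \ref{lem1} is available, Steps 2--5 of Section \ref{sec:proof} are formal consequences of it, Corollary \ref{cor:iso2}, and the categorical Lie algebra relations, exactly as noted in Remarks \ref{rem:lem2} and \ref{rem:cor2} and in the final paragraph of Section \ref{sec:proof}. The outcome is $\E_{ij} \circ \T_i \cong \T_i \circ \E_j$, from which one deduces $\T_{ij} \circ \T_i \cong \T_i \circ \T_j$ and, by the symmetric argument, $\T_j \circ \T_{ij} \cong \T_i \circ \T_j$, yielding the braid relation $\T_i \circ \T_j \circ \T_i \cong \T_j \circ \T_i \circ \T_j$. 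The commutation relation when $i,j \in I$ are not joined by an edge is immediate from the commutation of $\E_i, \F_i$ with $\E_j, \F_j$, since $\T_i$ is built from the former and $\T_j$ from the latter.
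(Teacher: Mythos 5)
Your proposal matches the paper's strategy: restrict to an $\sl_2$-action to build each $\T_i$ via \cite{ckl3}, observe that the purely combinatorial preliminaries (Propositions \ref{prop:EXE}, \ref{prop:FXE}, \ref{prop:E1E2E1}, Lemma \ref{lem:Eij}, Corollary \ref{cor:iso2}, etc.) are formal and carry over, and isolate the analog of Lemma \ref{lem1} as the one place where the deformation-theoretic input must be replaced by the extra 2-morphism data. You also correctly identify that the nil affine Hecke relations and the mixed relation $T_{ji} \circ T_{ij} = X_iI + IX_j$ are exactly the axioms that step in for the non-degeneracy of $c_{\Lambda_i,\Lambda_i}(\sE_i)$ and the vanishing of $c_{v,v}(\sE_{ij})$.

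Where the proposal is too loose is in the phrase ``feeding these two ingredients into the rank-counting argument of Lemma \ref{lem1}'' and the appeal to ``the analog of diagram \eqref{eq:14}.'' Diagram \eqref{eq:14} has no analog here: it is built from the adjunction $t^*t_*$ for the inclusion of the central fibre, and no such functor exists in the abstract 2-categorical setting. What is actually needed is a fresh argument that the matrix coefficient $\alpha$ (that is, the induced map on the $\E_j\circ\E_i^{(2)}[-1]$ summand) is nonzero; the $\gamma \ne 0$ half of Lemma \ref{lem1}'s proof really is a formal rank-counting argument and does carry over. The paper's abstract argument for $\alpha \ne 0$ is short but genuinely different from the geometric one: if $\alpha = 0$ then $(IT_{ii}) \circ (T_{ij}I) = 0$ on $\E_i \circ \E_j \circ \E_i$, since $IT_{ii}$ kills the $\E_j\circ\E_i^{(2)}[1]$ summand; but pre-composing with $(T_{ji}I)\circ(IT_{ii})$ and expanding using $T_{ij}\circ T_{ji} = X_jI + IX_i$, then the nil affine Hecke relation $(X_iI)\circ T_{ii} = T_{ii}\circ(IX_i) + I$, and finally $T_{ii}^2 = 0$, one computes
\begin{equation*}
(IT_{ii}) \circ (T_{ij}I) \circ (T_{ji}I) \circ (IT_{ii}) = IT_{ii} \ne 0,
\end{equation*}
a contradiction. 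So the proposal has the right structure and cites the right axioms, but the crux needs this explicit 2-morphism computation rather than a transliteration of the obstruction-theoretic diagram chase.
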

\begin{proof}
The fact that we have the nil affine Hecke relations means that for each $i \in I$ we have a strong categorical $\sl_2$ action (in the sense of \cite{ckl3}) so we can construct equivalences $\T_i$. What remains is to show that they braid, which we do by running again through the proof of Theorem \ref{thm:main}. 

The more complicated relations among compositions of $\sE$'s and $\sF$'s (such as Propositions \ref{prop:EXE}, \ref{prop:FXE}, \ref{prop:E1E2E1} and Corollary \ref{cor:E1E2E1}) were all formal arguments which work in any abstract (idempotent complete) category. The same goes for the calculations of $\Hom$-spaces (Lemma \ref{lem:Eij} and Corollary \ref{cor:Eij}).

The only place where something more interesting happens is in the proof of Lemma \ref{lem1}. Notice that the $T_{ij}$ from that Lemma and our $T_{ij}$ defined above must be equal (up to non-zero scalars) since $\Hom(\E_i \circ \E_j [-1], \E_j \circ \E_i) \cong \k$ (by Lemma \ref{lem:Eij}). 

The whole proof of Lemma \ref{lem1} comes down to showing that the map 
$$\E_j \circ \E_i^{(2)} [-1] \oplus \E_i^{(2)} \circ \E_j [-1] \cong \E_i \circ \E_j \circ \E_i [-1] \xrightarrow{T_{ij} I} \E_j \circ \E_i \circ \E_i \cong \E_j \circ \E_i^{(2)} [-1] \oplus \E_j \circ \E_i^{(2)}[1]$$
induces an isomorphism on the $\E_j \circ \E_i^{(2)}[-1]$ summand. In \ref{lem1} we use the fact that $\sE_{ij}$ deforms to $\tsE_{ij}$ to show this. In the abstract setting we use instead the relation $T_{ji} \circ T_{ij} = X_iI + IX_j$. 

More precisely, suppose the map does not induce an isomorphism. This means it must induce zero since $\End(\E_j \circ \E_i^{(2)}) = \k \cdot \id$. But then, the composition 
$$\E_i \circ \E_j \circ \E_i [-1] \xrightarrow{T_{ij} I} \E_j \circ \E_i \circ \E_i \xrightarrow{I T_{ii}} \E_j \circ \E_i \circ \E_i [-2] $$
must be zero. On the other hand, pre-composing with $(T_{ji}I) \circ (IT_{ii})$ we get
\begin{eqnarray*}
(IT_{ii}) \circ (T_{ij}I) \circ (T_{ji}I) \circ (IT_{ii}) 
&=& (IT_{ii}) (X_jII + IX_iI) \circ (IT_{ii}) \\
&=& (X_jII) \circ (IT_{ii})^2 + (IT_{ii}) \circ ((IT_{ii}) \circ (IIX_i) + I) \\
&=& IT_{ii}
\end{eqnarray*}
where we use $T_{ii}^2=0$ twice in the last equality. This is non-zero (contradiction). 

This proves Lemma \ref{lem1}. Then Corollaries \ref{cor1} and \ref{cor:E1E2E1iso} follow by formal arguments. Lemma \ref{lem2} is a formal consequence of Lemma \ref{lem1} and Corollary \ref{cor:iso2} and Corollary \ref{cor2} follows from Lemma \ref{lem2} and Corollary \ref{cor:iso2}. 

This brings us up to section \ref{sec:proof}. One can easily check that the arguments there are formal consequences of the Lie algebra relations and results from section \ref{sec:preliminaries}. So the braid relation follows. 
\end{proof}
\begin{Remark} 
In the setting of (non-categorified) quantum groups, there is a braid group action on $ \U_q(\g) $ (constructed by Lusztig) which is compatible with the braid group action on representations.  Hence we would expect there should be a braid group action on the 2-category of Rouquier/Khovanov-Lauda which is compatible with the above action of the braid group on the representations.

From the proof of the main theorem in this paper, we would expect that generators $ \sigma_i $ of this braid group action would obey the following two conditions
\begin{gather} \label{eq:siEj}
\sigma_i(\E_j) = \T_i \circ \E_j \circ \T_i^{-1} = \Cone(\E_i \circ \E_j [-1] \rightarrow \E_j \circ \E_i) \quad \text{ if } i, j \in I \text{ are joined by an edge } \\
\label{eq:siEi}
\sigma_i(\E_i) = \T_i \circ \E_i \circ \T_i^{-1} = \F_i \text{ (up to a shift) }
\end{gather}

In a forthcoming paper, Khovanov-Lauda will construct a braid group action on the 2-categories from \cite{kl1, kl2, kl3} which satisfies (\ref{eq:siEj}) and (\ref{eq:siEi}) above. Our braid group action will then be compatible with theirs. 
\end{Remark}


\begin{thebibliography}{E-G-S}

\bibitem[A]{A}
R. Anno, Spherical functors; \textsf{math.CT/0711.4409}.

\bibitem[BBD]{BBD} 
A. Beilinson, J. Bernstein and P. Deligne, Faisceaux pervers. Analyse et topologie sur les espaces singuliers (I), \textit{Ast\'erisque} \textbf{100} (1982).

\bibitem[BMR]{BMR}
R. Bezrukavnikov, I. Mirkovic, and D. Rumynin,  Singular localization and intertwining 
functors for semisimple Lie algebras in prime characteristic, \textit{Nagoya Math. J.} \textbf{184} (2006), 1--55. 

\bibitem[BN]{BN}
D. Bar-Natan, Fast Khovanov homology computations, \textit{J. Knot Theory Ramifications}, \textbf{16} (2007) no. 3, 243--255; \textsf{math.GT/0606318}.

\bibitem[BS]{BS}
P Balmer and M. Schlichting, Idempotent completion of triangulated categories, \textit{J. Algebra}, {\bf 236} (2001), no. 2, 819--834.

\bibitem[C]{C}
S. Cautis, Equivalences and stratified flops, \emph{Compositio Math.} (to appear); \textsf{math.AG/0909.0817}.

\bibitem[CK]{ck2}
S. Cautis and J. Kamnitzer, Knot homology via derived categories of coherent sheaves II, sl(m) case, \textit{Invent. Math.} \textbf{174} (2008), no. 1, 165--232. \textsf{math.AG/0710.3216}

\bibitem[CKL1]{ckl1} S. Cautis, J. Kamnitzer and A. Licata, Categorical geometric skew Howe duality, \textit{Invent. Math.} \textbf{180} (2010), no. 1, 111--159. \textsf{math.AG/0902.1795}.

\bibitem[CKL2]{ckl2} S. Cautis, J. Kamnitzer and A. Licata, 
Coherent sheaves and categorical $\sl_2$ actions, \emph{Duke Math. J.} \textbf{154} (2010), no. 1, 135--179; \textsf{math.AG/0902.1796}.

\bibitem[CKL3]{ckl3} S. Cautis, J. Kamnitzer and A. Licata, Derived equivalences for cotangent bundles of Grassmannians via categorical $sl_2$ actions, \emph{J. Reine Angew. Math.} (to appear); \textsf{math.AG/0902.1797}. 

\bibitem[CKL4]{ckl4}
S. Cautis, J. Kamnitzer and A. Licata, Coherent sheaves on quiver varieties and categorification; \textsf{arXiv:1104.0352}.

\bibitem[CR]{CR}
J. Chuang and R. Rouquier, Derived equivalences for symmetric groups and $\sl_2$-categorification, \textit{Ann. of Math.} \textbf{167} (2008), no. 1, 245--298; \textsf{math.RT/0407205}.

\bibitem[GM]{GM}
S. Gelfand and Y. Manin, \textit{Methods of homological algebra, second edition}, Springer-Verlag, 2003.

\bibitem[Ho]{Ho} 
R.P. Horja, Derived Category Automorphisms from Mirror Symmetry, \textit{Duke Math. J.} \textbf{127} (2005), 1--34; \textsf{math.AG/0103231}. 

\bibitem[Hu]{Hu}
D. Huybrechts, \textit{Fourier-Mukai Transforms in Algebraic Geometry}, Oxford University Press, 2006. 

\bibitem[HT]{HT}
D. Huybrechts and R. Thomas, {$\p$-objects and autoequivalences of derived categories}, \textit{Math. Res. Lett.} \textbf{13} (2006), no. 1, 87--98; \textsf{math.AG/0507040}.

\bibitem[KL1]{kl1}
M. Khovanov and A. Lauda, A diagrammatic approach to categorification of quantum groups I, \textit{Represent. Theory} \textbf{13} (2009), 309--347; \textsf{math.QA/0803.4121}.

\bibitem[KL2]{kl2}
M. Khovanov and A. Lauda, A diagrammatic approach to categorification of quantum groups II, \textit{Trans. Amer. Math. Soc.} \textbf{363} (2011), 2685--2700; \textsf{math.QA/0804.2080}.

\bibitem[KL3]{kl3}
M. Khovanov and A. Lauda, A diagrammatic approach to categorification of quantum groups III, \textit{Quantum Topology} \textbf{1}, Issue 1 (2010), 1--92; \textsf{math.QA/0807.3250}.

\bibitem[KT]{KT}
M. Khovanov and R. Thomas, Braid cobordisms, triangulated categories, and flag varieties, \textit{Homology, Homotopy, Appl.} \textbf{9} (2007), 19--94; \textsf{math.QA/0609335}.

\bibitem[Ric]{Ric}
S. Riche, Geometric braid group action on derived category of coherent sheaves,  \textit{Represent. Theory} \textbf{12} (2008), 131--169.

\bibitem[Rin]{Rin}
C. M. Ringel, Tame Algebras and Integral Quadratic Forms, \textit{Lecture Notes in Mathematics}, \textbf{1099} (1984), Springer Berlin.

\bibitem[Ro1]{Rold}
R. Rouquier, Categorification of $\sl_2$ and braid groups, \textit{Trends in representation theory of algebras and related topics} (2006) Amer. Math. Soc., 137--167. 

\bibitem[Ro2]{Rnew}
R. Rouquier, 2-Kac-Moody algebras; \textsf{math.RT/0812.5023}.

\bibitem[ST]{ST}
P. Seidel and R. Thomas, Braid group actions on derived categories of coherent sheaves, \textit{Duke Math J.}, \textbf{108}, (2001), 37--108; \textsf{math.AG/0001043}.

\end{thebibliography}
\end{document}